\newtheorem{thm}{Theorem}
\newtheorem{lemma}{Lemma}[section]
\newtheorem{prop}[lemma]{Proposition}
\newtheorem{cor}[lemma]{Corollary}
\theoremstyle{definition}
\newtheorem{idefn}{Definition}
\newtheorem{defn}[lemma]{Definition}
\newtheorem{example}[lemma]{Example}
\newtheorem{remark}[lemma]{Remark}
\newtheorem{claim}{Claim}
\newcommand\A{{\mathbb A}}
\newcommand\C{{\mathbb C}}
\newcommand\Q{{\mathbb Q}}
\newcommand\N{{\mathbb N}}
\newcommand{\cN}{{\mathcal N}}
\newcommand\IH{{\mathbb H}}
\newcommand{\ti}{\vartheta}
\newcommand{\Ti}{\Theta}
\newcommand{\oti}{\widehat{\vartheta}}
\newcommand{\oTi}{\widehat{\Theta}}
\newcommand\W{{\mathrm W}}
\newcommand\NN{{\mathfrak N}}
\newcommand\cO{{\mathcal O}}
\newcommand\cC{{\mathcal C}}
\newcommand\X{{\mathrm X}}
\newcommand\Z{{\mathbb Z}}
\newcommand\cP{{\mathcal P}}
\newcommand\cQ{{\mathcal Q}}
\newcommand\cS{{\mathcal S}}
\newcommand\AS{{\mathfrak S}}
\newcommand\CS{{\mathfrak C}}
\newcommand\al{\alpha}
\newcommand\la{\lambda}
\newcommand{\e}{e}
\newcommand{\f}{f}
\newcommand{\g}{g}
\newcommand\s{{\sigma}}
\newcommand\ta{{\tau}}
\newcommand\Sh{{\mathcal S}}
\newcommand\ssm{\smallsetminus}
\newcommand\gequ{\geq}
\newcommand\lequ{\leq}
\newcommand\noin{\noindent}
\newcommand\bull{{\scriptscriptstyle \bullet}}
\newcommand\eqto{\stackrel{\lower1.5pt\hbox{$\scriptstyle\sim\,$}}\to}
\newcommand\ov{\overline}
\newcommand\wh{\widehat}
\newcommand\wt{\widetilde}
\DeclareMathOperator{\Pf}{Pfaffian}
\DeclareMathOperator{\Sp}{Sp}
\DeclareMathOperator{\LG}{LG}
\DeclareMathOperator{\IG}{IG}
\DeclareMathOperator{\OG}{OG}
\DeclareMathOperator{\G}{G}
\DeclareMathOperator{\ev}{ev}
\DeclareMathOperator{\HH}{\mathrm{H}}
\newcommand{\ignore}[1]{}
\newcommand{\pic}[2]{\includegraphics[scale=#1]{#2}}
\begin{document}

\title[A Giambelli formula for isotropic Grassmannians]
{A Giambelli formula for isotropic Grassmannians}

\date{August 3, 2010}

\author{Anders Skovsted Buch}
\address{Department of Mathematics, Rutgers University, 110
  Frelinghuysen Road, Piscataway, NJ 08854, USA}
\email{asbuch@math.rutgers.edu}

\author{Andrew Kresch}
\address{Institut f\"ur Mathematik,
Universit\"at Z\"urich, Winterthurerstrasse 190,
CH-8057 Z\"urich, Switzerland}
\email{andrew.kresch@math.uzh.ch}

\author{Harry~Tamvakis} \address{Department of Mathematics, University of
Maryland, 1301 Mathematics Building, College Park, MD 20742, USA}
\email{harryt@math.umd.edu}

\subjclass[2000]{Primary 14N15; Secondary 05E15, 14M15}

\thanks{The authors were supported in part by NSF Grants DMS-0603822
  and DMS-0906148 (Buch), the Swiss National Science Foundation
  (Kresch), and NSF Grants DMS-0639033 and DMS-0901341 (Tamvakis).}

\begin{abstract}
  Let $X$ be a symplectic or odd orthogonal Grassmannian parametrizing
  isotropic subspaces in a vector space equipped with a nondegenerate
  (skew) symmetric form. We prove a Giambelli formula which expresses
  an arbitrary Schubert class in $\HH^*(X,\Z)$ as a polynomial in
  certain special Schubert classes.  We study {\em theta polynomials},
  a family of polynomials defined using raising operators whose
  algebra agrees with the Schubert calculus on $X$. Furthermore, we
  prove that theta polynomials are special cases of Billey-Haiman
  Schubert polynomials and use this connection to express the former
  as positive linear combinations of products of Schur $Q$-functions and
  $S$-polynomials.
\end{abstract}

\maketitle

\setcounter{section}{-1}

\section{Introduction}

Let $\G=\G(m,N)$ denote the Grassmannian of $m$-dimensional subspaces
of $\C^N$.  To each integer partition $\la=(\la_1,\ldots,\la_m)$ whose
Young diagram is contained in an $m\times (N-m)$ rectangle, we
associate a Schubert class $\s_\la$ in the cohomology ring of $\G$.
The {\em special} Schubert classes $\s_1,\ldots,\s_{N-m}$ are the
Chern classes of the universal quotient bundle $\cQ$ over $\G(m,N)$;
they generate the cohomology ring $\HH^*(\G,\Z)$.  The classical {\em
Giambelli formula} \cite{G}
\begin{equation}
\label{giamS}
\s_{\la} = \det(\s_{\la_i+j-i})_{i,j}
\end{equation}
is an explicit expression for $\s_\la$ as a polynomial in the special
classes; as is customary, we agree here and in later formulas that
$\s_0=1$ and $\s_r=0$ for $r<0$.

The relation between the Schubert calculus on the Grassmannian
$\G(m,N)$ and the algebra of Schur's $S$-functions $s_\la$ (originally
defined by Cauchy \cite{C} and Jacobi \cite{J}) is well known.  Given
an infinite list $x=(x_1,x_2,\ldots)$ of commuting independent
variables, we define the elementary symmetric functions $e_r(x)$ by
the formal relation
\[
\prod_{i=1}^{\infty}(1+x_it) = \sum_{r=0}^{\infty}e_r(x)t^r
\]
and set, for any partition $\la$,
$s_{\la'}(x)=\det(e_{\la_i+j-i}(x))_{i,j}$.  Here $\la'$ is the
partition whose Young diagram is the transpose of the diagram of
$\la$.  The ring $\Lambda = \Z[e_1,e_2,\ldots]$ of symmetric functions
in $x$ has a free $\Z$-basis consisting of the Schur functions
$s_{\la}$, for all partitions $\la$.  These Schur $S$-functions enjoy
many good combinatorial properties, such as nonnegativity of their
coefficients, and multiply exactly like the Schubert classes on
$\G(m,N)$, when $m$ and $N$ are sufficiently large.

There is a closely analogous story to the above for the Lagrangian
Grassmannian $\LG(n,2n)$ which parametrizes maximal isotropic
subspaces of $\C^{2n}$, with respect to a symplectic form.
The Schubert classes in $\HH^*(\LG,\Z)$
are indexed by {\em strict} partitions, i.e., partitions with distinct
(non-zero) parts, whose diagrams fit in a square of side $n$.
The special Schubert classes $\s_r=c_r(\cQ)$ again
generate the cohomology ring, and there is a Giambelli-type formula
due to Pragacz \cite{Pra}.  This latter may be
described in two steps: For partitions $\la=(a,b)$
with only two parts, we have
\begin{equation}
\label{giamQ0}
\s_{a,b}=\s_a\s_b-2\s_{a+1}\s_{b-1}+2\s_{a+2}\s_{b-2}-\cdots
\end{equation}
while for $\la$ with $3$ or more parts,
\begin{equation}
\label{giamQ}
\s_\la = \Pf(\s_{\la_i,\la_j})_{i<j}.
\end{equation}

The identities (\ref{giamQ0}) and (\ref{giamQ}) in
fact also go back to the work of Schur \cite{S2}, who considered
a family of symmetric functions $\{Q_\la\}$ known as Schur
$Q$-functions.  We define $q_r(x)$ by the equation
\[
\prod_{i=1}^{\infty}\frac{1+x_it}{1-x_it} = \sum_{r=0}^{\infty}q_r(x)t^r
\]
and then use the same relations (\ref{giamQ0}) and (\ref{giamQ}) with
$q_r(x)$ in place of $\s_r$ to define $Q_{a,b}(x)$ and then
$Q_\la(x)$, for each strict partition $\la$.  If we let $\Gamma =
\Z[q_1,q_2,\ldots]$ denote the ring of Schur $Q$-functions, then the
$\{Q_\la\}$ for $\la$ strict form a $\Z$-basis for $\Gamma$, whose
algebra agrees with Schubert calculus on $\LG(n,2n)$, as $n \to
\infty$.  Moreover, there is a well developed combinatorial theory for
the $Q$-functions, analogous to that for the $S$-functions.

Choose $k\geq 0$ and consider now the Grassmannian $\IG(n-k,2n)$ of
isotropic $(n-k)$-dimensional subspaces of $\C^{2n}$, equipped with a
symplectic form.  We call a partition $\la$ {\em $k$-strict} if no
part greater than $k$ is repeated, i.e., $\la_j > k \Rightarrow \la_j
> \la_{j+1}$.  The Schubert classes on $\IG$ are indexed by $k$-strict
partitions whose diagrams fit in an $(n-k)\times (n+k)$
rectangle.  Given such a $\la$ and a complete flag of subspaces
$F_\bull: \, 0=F_0 \subsetneq F_1 \subsetneq \cdots \subsetneq
F_{2n}=\C^{2n}$ such that $F_{n+i}= F_{n-i}^\perp$ for $0\leq i \leq
n$, we have a Schubert variety
\[ X_\lambda(F_\bull) := \{ \Sigma \in \IG \mid \dim(\Sigma \cap
   F_{p_j(\lambda)}) \gequ j \ \ \forall\, 1 \lequ j \lequ
   \ell(\lambda) \} \,,
\]
where $\ell(\la)$ denotes the number of (non-zero) parts of $\la$ and
\begin{equation}
\label{pdefC}
p_j(\lambda) := n+k+j-\lambda_j - \#\{i<j : \lambda_i+\lambda_j
> 2k+j-i \}.
\end{equation}
This variety has codimension $|\la|=\sum \la_i$ and defines, using
Poincar\'e duality, a Schubert class $\s_{\la}=[X_{\la}(F_\bull)]$ in
$\HH^{2|\la|}(\IG,\Z)$.  As above, we consider the special Schubert classes
$\s_r=[X_r(F_\bull)]=c_r(\cQ)$ for $1\lequ r \lequ n+k$.

In \cite{BKT}, we proved a Pieri rule for the products $\s_r\s_{\la}$
in $\HH^*(\IG)$.  Equipped with this rule and the help of a computer,
we observed that (i) when $\la_j\leq k$ for all $j$, then $\s_\la$ is
given by the determinantal formula (\ref{giamS}); (ii) when $\la_j >
k$ for all non-zero $\la_j$, then $\la$ is strict and $\s_\la$ is
given by the Pfaffian formulas (\ref{giamQ0}), (\ref{giamQ}).  It is
tempting to ask for an analogous Giambelli formula for $\s_\la$ when
$\la$ is a general $k$-strict partition.  Note that the formula is
determined only up to an ideal of relations; whatever the answer, it
must naturally interpolate between the Jacobi-Trudi determinant
(\ref{giamS}) and the Schur Pfaffian (\ref{giamQ}). A similar question
was also raised by Pragacz and Ratajski \cite{PRpieri}, who were using
a different set of special Schubert classes.

The answer we give depends crucially on our choice of $k$-strict
partitions to index the Schubert classes, and uses Young's {\em
raising operators} \cite[p.\ 199]{Y}.  For any integer sequence
$\alpha=(\alpha_1,\alpha_2,\ldots)$ with finite support and $i<j$, we
define $R_{ij}(\alpha) =
(\alpha_1,\ldots,\alpha_i+1,\ldots,\alpha_j-1, \ldots)$; a raising
operator $R$ is any monomial in these $R_{ij}$'s.  Set $m_\alpha =
\prod_i \s_{\alpha_i}$ and $R\,m_{\al} = m_{R\al}$ for any raising
operator $R$ \footnote{As is customary, we slightly abuse the notation
and consider that the raising operator $R$ acts on the index $\al$,
and not on the monomial $m_\al$ itself.}.  Using these operators, the
Giambelli formulas (\ref{giamS}) and (\ref{giamQ0})--(\ref{giamQ}) can
be expressed as
\begin{equation}
\label{spgiambelli}
\s_\la = \prod_{i<j}(1-R_{ij})\,m_{\la} \ \ \ \text{and} \ \ \
\s_\la = \prod_{i<j}\frac{1-R_{ij}}{1+R_{ij}}\,m_{\la},
\end{equation}
respectively.

\begin{idefn}
\label{maindefn}
For a general $k$-strict partition $\la$, we define the
operator
\[
R^{\la} = \prod_{i<j} (1-R_{ij})\prod_{\la_i+\la_j > 2k+j-i}
(1+R_{ij})^{-1}
\]
where the first product is
over all pairs $i<j$ and second product is over pairs $i<j$ such that
$\la_i+\la_j > 2k+j-i$.
\end{idefn}

\begin{thm}
\label{mainthm}
For any $k$-strict partition $\la$, we have
$\s_\la =R^{\la}m_{\la}$
in the cohomology ring of $\IG(n-k,2n)$.
\end{thm}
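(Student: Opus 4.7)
The plan is to reduce the Giambelli identity to a matching algebraic Pieri rule for the polynomial expressions $R^\la m_\la$, and then conclude by a short module-theoretic argument. The Pieri rule proved in \cite{BKT} gives, for every special class $\s_r$ and every $k$-strict partition $\mu$, an expansion
\[
\s_r \cdot \s_\mu = \sum_\nu c^\nu_{r,\mu} \, \s_\nu
\]
in $\HH^*(\IG(n-k, 2n))$ with explicit nonnegative integer coefficients of the form $c^\nu_{r,\mu} = 2^{N(\mu,\nu)}$. The key step is to prove the analogous identity with $R^\mu m_\mu$ and $R^\nu m_\nu$ in place of $\s_\mu$ and $\s_\nu$, with the same summation range and the same coefficients.

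Granting this, one defines the $\Z$-linear endomorphism $L$ of $\HH^*(\IG(n-k,2n),\Z)$ by $L(\s_\la) := R^\la m_\la$ on Schubert basis elements, extended linearly. The matching Pieri rules yield
\[
L(\s_r \cdot \s_\mu) = \sum_\nu c^\nu_{r,\mu} R^\nu m_\nu = \s_r \cdot (R^\mu m_\mu) = \s_r \cdot L(\s_\mu),
\]
so $L$ commutes with multiplication by every special class, and hence with multiplication by every element of $\HH^*(\IG,\Z)$, since the special classes generate the cohomology ring. Combined with $L(1) = R^\emptyset m_\emptyset = 1$, this forces $L = \mathrm{id}$, whence $\s_\la = R^\la m_\la$ for every $k$-strict $\la$.

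The principal obstacle is therefore the algebraic Pieri identity for the polynomials $R^\la m_\la$. Its proof will be a purely combinatorial raising-operator computation in $\Z[\s_1,\s_2,\ldots]$. One begins by rewriting $\s_r \cdot m_\mu = m_{(r,\mu)}$, and then compares the raising-operator products defining $R^\mu$ and $R^\nu$, tracking in particular which pairs $(i,j)$ switch between $\mu_i + \mu_j \leq 2k + j - i$ and $\mu_i + \mu_j > 2k + j - i$ as $\mu$ deforms into $\nu$. The multiplicities $2^{N(\mu,\nu)}$ should emerge from the interplay of the $(1 - R_{ij})$ factors with the $(1 + R_{ij})^{-1}$ factors, while the vanishing of non-Pieri terms should reflect cancellations of the form $(1 - R_{ij}) m_\alpha = 0$ whenever $\alpha_j = \alpha_i + 1$. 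Establishing this identity will likely require reformulating $R^\la m_\la$ through intermediate lemmas, together with a careful reduction to local cases where only finitely many pairs are affected. This should constitute the technical heart of the proof.
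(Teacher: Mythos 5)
Your overall reduction is the same as the paper's.  Both you and the paper reduce Theorem~\ref{mainthm} to showing that the raising-operator expression $R^\la m_\la$ satisfies the same Pieri rule (with the multiplicities $2^{\NN(\la,\mu)}$ of \cite{BKT}) as the Schubert class $\s_\la$, and then close out the argument by observing that the Pieri structure constants, together with the fact that the special classes generate, force $R^\la m_\la = \s_\la$.  Your ``module endomorphism'' phrasing of the final step is a clean reformulation of the paper's induction on the dominance order; either works once the Pieri identity is established.

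There are, however, two real problems with what you propose.  First, you assert that the Pieri identity for the expressions $R^\la m_\la$ ``will be a purely combinatorial raising-operator computation in $\Z[\s_1,\s_2,\ldots]$.''  This is false: the identity does \emph{not} hold in the free polynomial ring.  Already for $k=0$, $\la=(2,1)$, $p=1$ one finds $\s_1\cdot R^{(2,1)}m_{(2,1)} = \s_1^2\s_2 - 2\s_1\s_3$ while $2\,R^{(3,1)}m_{(3,1)} = 2\s_3\s_1 - 4\s_4$, and these agree only after imposing the quadratic relations $\s_r^2 + 2\sum_{i\geq 1}(-1)^i\s_{r+i}\s_{r-i}=0$ for $r>k$ that hold in $\HH^*(\IG)$ (this is exactly what (\ref{smallcommuteC}) and Lemma~\ref{commuteC} encode).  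Any attempted proof must work modulo those relations, as the paper does.

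Second, and more seriously, your last paragraph is not an argument but a description of the \emph{shape} an argument might take.  Establishing the algebraic Pieri identity is the entire content of the paper's proof: it is carried out in \S\ref{igtoog}--\S\ref{mainpf} via a transfer to $\OG(n-k,2n+1)$, a careful study of how the set $\cC(\la)$ of ``active'' raising-operator pairs changes under $\la\to\mu$ (Lemma~\ref{lem:CinDinCDC}), an intricate \emph{substitution rule} and associated algorithm that builds a forest of ``valid 4-tuples'', a precise count showing that the surviving leaves reproduce the Pieri multiplicities $2^{\NN(\la,\mu)}$, and a delicate sign-reversing involution (Claim~\ref{claim2}, proved via Proposition~\ref{prop:invol}) cancelling the remaining leaves.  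The chief difficulty, which your sketch does not confront, is that the operator $R^\la$ depends on $\la$, so comparing $R^\mu$ with $R^\nu$ genuinely requires tracking how pairs enter and leave the set where $(1+R_{ij})^{-1}$ appears, and the cancellations mix Lemma~\ref{commuteA}-type identities (which are formal) with Lemma~\ref{commuteC}-type identities (which use the cohomology relations).  Without supplying this mechanism, the proposal has a gap exactly where the theorem is hard.
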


For example, in the ring
$\HH^*(\IG(4,10))$ (where $k=1$) we have
\[
\s_{321} =
\frac{1-R_{12}}{1+R_{12}}(1-R_{13})(1-R_{23})\, m_{321} =
(1-2R_{12}+2R_{12}^2 - 2 R_{12}^3)(1-R_{13}-R_{23}) \, m_{321}
\]
\[
= m_{321} - 2m_{411}+m_{42} + 2m_{51} -m_{33} =
\s_3\s_2\s_1 - 2\s_4\s_1^2 + \s_4\s_2 +2\s_5\s_1 - \s_3^2.
\]
Furthermore, the theorem implies that if the $k$-strict partition
$\la$ satisfies $\la_i+\la_j \leq 2k+j-i$ for all $i<j$, then 
equation (\ref{giamS}) is valid, while if $\la_i+\la_j >
2k+j-i$ for $i<j\leq \ell(\la)$, then equations 
(\ref{giamQ0}) and (\ref{giamQ}) hold.

Our proof of Theorem \ref{mainthm} proceeds by showing directly that
the expression $R^{\la} m_{\la}$ satisfies the Pieri rule for
isotropic Grassmannians from \cite{BKT}.  This is sufficient because
the Pieri rule can be used recursively to show that a general Schubert
class may be written as a polynomial in the special Schubert classes.
The argument is challenging because the operator $R^\la$ depends on
$\la$, in contrast to the fixed raising operator expressions in
(\ref{spgiambelli}).  We remark that the equations corresponding to
(\ref{spgiambelli}) for the Schur $S$- and $Q$-functions may be
deduced from the formal identities
\begin{equation}
\label{E:formal}
\det(x_i^{\ell-j}) = \prod_{i<j}
(x_i-x_j) \ \ \ \text{and} \ \ \
\Pf\left(\frac{x_i-x_j}{x_i+x_j}\right) =
\prod_{i<j}\frac{x_i-x_j}{x_i+x_j}
\end{equation}
due to Vandermonde and Schur, respectively (see e.g.\ \cite[I.3 and
III.8]{M}).

We next use raising operators to define a family of polynomials
$\{\Theta_\la\}$ indexed by $k$-strict partitions whose algebra is the
same as the Schubert calculus in the stable cohomology ring of
$\IG$. Fix an integer $k \geq 0$ and consider a finite set of
variables $y=(y_1,\ldots,y_k)$.  For any $r\gequ 0$, define
$\vartheta_r$ by the equation
\[
\prod_{i=1}^{\infty}\frac{1+x_it}{1-x_it}\prod_{j=1}^k(1+y_jt) =
\sum_{r=0}^{\infty}\vartheta_r(x\,;y)t^r,
\]
so that $\vartheta_r(x\,;y) = \sum_i q_{r-i}(x)e_i(y)$.  We call
$\Gamma^{(k)} := \Z[\vartheta_1,\vartheta_2,\ldots]$ the ring of {\em
theta polynomials}.  For any finite integer sequence $\alpha$, let
$\vartheta_{\alpha} = \prod_i\vartheta_{\alpha_i}$, and for any
$k$-strict partition $\la$, define the theta polynomial
\[\Theta_\la := R^{\la}\vartheta_{\la}.\] 
When $k=0$, we have that $\Ti_\la(x\,;y)=Q_\la(x)$ is a Schur
$Q$-function.  As a first application of Theorem \ref{mainthm}, we
obtain the next two results. The first implies that the algebra of
theta polynomials agrees with the Schubert calculus on isotropic
Grassmannians $\IG(n-k,2n)$ when $n$ is sufficiently large.

\begin{thm}
\label{productthm}
The $\Theta_\la$, for $\la$ $k$-strict, form a $\Z$-basis of
$\Gamma^{(k)}$. There is a surjective ring homomorphism 
$\Gamma^{(k)} \to \HH^*(\IG(n-k,2n))$ such that  
$\Theta_\la$ is mapped to $\sigma_\la$, if $\la$ fits inside an
$(n-k)\times (n+k)$ rectangle, and to zero, otherwise.
\end{thm}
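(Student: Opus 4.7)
My plan has three main steps: construct a surjective ring homomorphism $\phi_n \colon \Gamma^{(k)} \to \HH^*(\IG(n-k,2n))$, identify the image of $\Theta_\la$ via Theorem \ref{mainthm}, and deduce the basis claim.

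First, I would construct $\phi_n$ by sending $\vartheta_r$ to $\sigma_r$ for $r \lequ n+k$ and to $0$ otherwise. Surjectivity is automatic because $\HH^*(\IG(n-k,2n))$ is generated as a ring by the special Schubert classes. The key issue is well-definedness: one must check that every polynomial relation satisfied by the $\vartheta_r$ in $\Gamma^{(k)}$ is also satisfied by the $\sigma_r$. The principal relations come from the generating-function identity
\[
\vartheta(t)\,\vartheta(-t) \;=\; \prod_{j=1}^k (1 - y_j^2 t^2), \qquad \vartheta(t) := \sum_{r\gequ 0} \vartheta_r t^r,
\]
which yields $\sum_{i+j=2r}(-1)^j \vartheta_i \vartheta_j = 0$ in $\Gamma^{(k)}$ for $r > k$. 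These match the Chern class identity $c(\cQ)(t)\,c(\cQ)(-t) = c(\cS^\perp/\cS)(t)$, which follows from $c(\cS)\,c(\cQ)=1$ together with $c(\cS)\,c(\cS^\perp/\cS)\,c(\cS^*)=1$ (using that the ambient bundle is trivial and $\cS^* \cong \C^{2n}/\cS^\perp$ via the symplectic form); since $\cS^\perp/\cS$ has rank $2k$, the right-hand side is a polynomial of degree at most $2k$ in $t$, so the $t^{2r}$-coefficient vanishes for $r > k$. I would then verify via a Hilbert-series comparison that these relations are complete, i.e., that $\{\vartheta_\la : \la \text{ is $k$-strict}\}$ is a $\Z$-basis of $\Gamma^{(k)}$.

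Second, Theorem \ref{mainthm} immediately gives $\phi_n(\Theta_\la) = \phi_n(R^\la \vartheta_\la) = R^\la m_\la = \sigma_\la$ whenever $\la$ fits in the $(n-k)\times(n+k)$ rectangle. For $\la$ not fitting with $\la_1 > n+k$, every monomial $\vartheta_{R\la}$ in the raising-operator expansion of $\Theta_\la$ contains a factor $\vartheta_s$ with $s \gequ \la_1 > n+k$ and hence maps to zero; the remaining case $\ell(\la) > n-k$ is handled by an analogous extension of Theorem \ref{mainthm}, yielding $R^\la m_\la = 0$ in $\HH^*(\IG(n-k,2n))$.

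Third, linear independence of $\{\Theta_\la\}$ in $\Gamma^{(k)}$ follows from applying $\phi_n$ for $n$ large enough that every participating $\la$ fits in the rectangle: any relation $\sum c_\la \Theta_\la = 0$ then yields $\sum c_\la \sigma_\la = 0$ in $\HH^*(\IG(n-k,2n))$, forcing $c_\la = 0$ by linear independence of the Schubert basis. Spanning follows from the triangular expansion $\Theta_\la = \vartheta_\la + (\text{terms }\vartheta_\mu\text{ with }\mu\text{ obtained from }\la\text{ by nontrivial raising operators})$, combined with the reduction of any monomial $\vartheta_\mu$ with $\mu$ non-$k$-strict to a $\Z$-combination of $\vartheta_\nu$ with $\nu$ $k$-strict, iteratively eliminating repeated parts exceeding $k$ using the relations above. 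The main obstacle is establishing completeness of the relations in step one --- equivalently, the Hilbert-series claim that $\Gamma^{(k)}$ has the same graded rank as the set of $k$-strict partitions --- which underpins both the well-definedness of $\phi_n$ and the spanning argument.
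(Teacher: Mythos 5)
Your proof follows the same conceptual thread as the paper's -- Theorem \ref{mainthm} plus a basis argument for $\Gamma^{(k)}$ -- but runs the ring homomorphism in the opposite direction, and that reversal is where you accumulate extra work. The paper first proves (Proposition \ref{thbasis}(a)) that the monomials $\ti_\la$ with $\la$ $k$-strict form a $\Z$-basis of $\Gamma^{(k)}$, deduces triangularity $\Ti_\la = \ti_\la + \sum_{\mu\succ\la} a_{\la\mu}\ti_\mu$ from (\ref{later}), and then constructs the map in the easy direction: it sends the free polynomial ring $\Z[\s_1,\s_2,\ldots]$ modulo the \emph{known} relations defining $\IH(\IG_k)$ (from \cite[Thm.\ 1.2]{BKT}) onto $\Gamma^{(k)}$, which requires only the single-line check that the $\ti_r$ satisfy (\ref{later}); the resulting surjection takes the basis $\{\s_\la\}$ to the basis $\{\Ti_\la\}$, hence is an isomorphism, and composing its inverse with the projections $\IH(\IG_k)\to\HH^*(\IG(n-k,2n))$ finishes everything at once, including the vanishing of $\Theta_\la$ for $\la$ outside the rectangle. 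Constructing $\phi_n\colon\Gamma^{(k)}\to\HH^*(\IG(n-k,2n))$ directly, as you do, buys nothing and commits you to proving that your listed relations generate the \emph{entire} ideal of relations among the $\vartheta_r$, which you flag as ``the main obstacle'' and leave unresolved.

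Two concrete gaps. First, the ``Hilbert-series comparison'' you invoke is circular as stated: to compute the Hilbert series of $\Gamma^{(k)}$ you need to know its relations, which is exactly what you are trying to establish. The paper avoids this by proving linear independence of the $\ti_\la$ directly (the lowest $x$-degree component of $\ti_\la$ is $q_{\la^1}(x)e_{\la^2}(y)$, and these are visibly independent), which together with the spanning argument from (\ref{later}) gives the basis; a generating-function count then appears only in part (b) of that proposition, to count $k$-odd versus $k$-strict partitions, not to certify completeness of relations. Second, your treatment of $\phi_n(\Theta_\la)=0$ when $\ell(\la)>n-k$ is not an ``analogous extension of Theorem \ref{mainthm}'' -- that theorem is stated and proved only for $\la\in\cP(k,n)$. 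What actually works is to take $N$ large so that $\la\in\cP(k,N)$, apply Theorem \ref{mainthm} there to get $R^\la m_\la=\sigma_\la$ in $\HH^*(\IG(N-k,2N))$, and then restrict along the maps in the inverse system, under which $\sigma_\la\mapsto 0$ when $\la\notin\cP(k,n)$ and $\sigma_r\mapsto\sigma_r$. But this is precisely the stable-cohomology mechanism the paper packages into $\IH(\IG_k)$, so once you fill this gap honestly you have essentially rediscovered the paper's construction.
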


\begin{thm}
\label{thcor}
Let $\la$ be a $k$-strict partition.

\medskip
\noindent
{\em(a)} If $\la_i + \la_j \lequ 2k+j-i$ for all $i<j$, then
\[
\Theta_\la(x\,;y) = \sum_{\mu\subset\la}S_\mu(x)s_{\la'/\mu'}(y),
\ \ \ \text{where} \ \ \
S_\mu(x) = \det(q_{\mu_i+j-i}(x)).
\]

\noindent
{\em(b)} If $\la_i + \la_j > 2k+j-i$ for all $i<j\lequ \ell(\la)$, then
\[
\Theta_\la(x\,;y) = \sum_{\mu\subset\la}
Q_\mu(x) s_{\Sh(\la/\mu)'}(y),
\]
where the sum is over all strict partitions $\mu\subset\la$ such that
$\ell(\mu) \geq \ell(\la)-1$, and $\Sh(\la/\mu)$ denotes a shifted
skew diagram.
\end{thm}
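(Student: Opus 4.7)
The plan is to treat parts (a) and (b) separately, exploiting in each case that the hypothesis causes the operator $R^\la$ to simplify to one of the two uniform operators displayed in (\ref{spgiambelli}). Thereafter, classical Cauchy--Binet-type identities for generating-function convolutions finish the job, because $\vartheta_r$ is built by convolution: $\vartheta_r(x\,;y) = \sum_{c\gequ 0} q_{r-c}(x)e_c(y)$, and the corresponding generating function factors as a product over the $x$-variables times a product over the $y$-variables.

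For part (a), the assumption $\la_i+\la_j \lequ 2k+j-i$ for all $i<j$ means that the second product in the definition of $R^\la$ is empty, so $R^\la = \prod_{i<j}(1-R_{ij})$. The first identity in (\ref{spgiambelli}), which holds as a formal identity of raising operators, then gives
\[
\Theta_\la \,=\, \det\bigl(\vartheta_{\la_i+j-i}(x\,;y)\bigr)_{1\lequ i,j\lequ\ell(\la)}.
\]
Viewing each matrix entry as the $(\la_i-i,-j)$-entry of the product of two formal bi-infinite matrices $Q=(q_{a-b}(x))$ and $E=(e_{a-b}(y))$, the Cauchy--Binet formula expresses the determinant as a sum over strictly increasing intermediate index sequences. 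Reindexing those as partitions $\mu\subset\la$, the two determinantal factors match exactly $S_\mu(x) = \det(q_{\mu_i+j-i}(x))$ and, via the classical skew Jacobi--Trudi identity, $s_{\la'/\mu'}(y) = \det(e_{\la_i-\mu_j+j-i}(y))$, yielding the stated formula.

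For part (b), the hypothesis forces the second product in $R^\la$ to include every pair $1\lequ i<j\lequ\ell(\la)$; any pairs with $j>\ell(\la)$ act trivially on $\vartheta_\la$ because the corresponding entry of $\la$ vanishes and the geometric series $(1+R_{ij})^{-1}$ collapses to its constant term. Hence
\[
\Theta_\la \,=\, \prod_{1\lequ i<j\lequ\ell(\la)}\frac{1-R_{ij}}{1+R_{ij}}\,\vartheta_\la,
\]
and the second identity in (\ref{spgiambelli}) expresses $\Theta_\la$ as a Schur Pfaffian in the $\vartheta_{\la_i,\la_j}$. Substituting $\vartheta_r = \sum_c q_{r-c}(x)e_c(y)$ and applying a Pfaffian analogue of Cauchy--Binet (in the spirit of Ishikawa--Wakayama), the Pfaffian decomposes into a sum, indexed by strict partitions $\mu\subset\la$, of a Pfaffian expression in $q$ (which is $Q_\mu(x)$) times an $e$-polynomial in $y$. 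The shifted Jacobi--Trudi identity then identifies the $y$-factor as $s_{\Sh(\la/\mu)'}(y)$, and the pairing structure of the Pfaffian forces $\ell(\mu)\gequ\ell(\la)-1$: at most one ``unmatched'' row of $\la$ can be dropped in forming $\mu$.

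The main obstacle is expected to be the Pfaffian Cauchy--Binet step in part (b): one must track sign conventions under the Pfaffian pairings, handle the parity of $\ell(\la)$ separately (where odd $\ell(\la)$ produces an unpaired row that must be absorbed either by an extra zero row or by an ``edge'' contribution to the $y$-factor), and match the resulting $e$-polynomial with $s_{\Sh(\la/\mu)'}(y)$ via a shifted Jacobi--Trudi formula. Part (a) is by comparison routine, resting directly on the classical Cauchy--Binet identity and the standard skew Jacobi--Trudi formula.
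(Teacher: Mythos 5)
Your part (a) is essentially the paper's argument: both use $\cC(\la)=\emptyset$ to reduce $\Theta_\la$ to the Jacobi--Trudi determinant $\det(\vartheta_{\la_i+j-i})$ and then peel off the $y$-variables by a Cauchy--Binet-style factorization; the paper packages this as Proposition~\ref{Sprop} (Macdonald's I.(5.10) pushed through the ring map $e_i(\wt x)\mapsto q_i(x)$), which is the same content as your bi-infinite-matrix product.

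Part (b), however, is a genuinely different route from the paper's. You form the Schur Pfaffian $\Pf(\Theta_{\la_i,\la_j})$ first and then split it by an Ishikawa--Wakayama minor-summation, effectively writing $\Theta_{\la_i,\la_j}=(BMB^T)_{ij}$ with $B_{ia}=e_{\la_i-a}(y)$ and $M_{ab}=Q_{a,b}(x)$. This can be made rigorous: the hypothesis forces $\la_i>k$ for $i<\ell$, so the contributing indices satisfy $a+b>0$ where $M$ is antisymmetric; odd $\ell$ is handled by a zero-padding row; and the $\ell$-element subsets of $\{0,1,2,\ldots\}$ are exactly the strict $\mu$ with $\ell(\mu)\in\{\ell-1,\ell\}$. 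But this is considerably heavier machinery than the paper uses. The paper never forms the Pfaffian: it observes that $R^\la\vartheta_\la=\sum_\alpha R^\la q_{\la-\alpha}(x)\,e_\alpha(y)=\sum_\alpha Q_{\la-\alpha}(x)\,e_\alpha(y)$ over compositions $\alpha$ with $0\le\alpha_i\le k$ (identity~(\ref{key})), then uses the skew-symmetry of $Q_\gamma$ in its index (Lemma~\ref{commuteC}) to group terms over $w\in S_\ell$, with the inner alternating sum $\sum_w(-1)^w e_{\la-w(\mu)}(y)=\det(e_{\la_i-\mu_j}(y))=s_{\Sh(\la/\mu)'}(y)$ falling out directly. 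This is shorter, elementary, and avoids precisely the sign/parity bookkeeping you flag as the expected obstacle; your approach buys closer structural parallelism with part (a) at the cost of invoking the Pfaffian minor-summation formula and its attendant technicalities.
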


In general, we prove that the theta polynomial $\Theta_\la(x\,;y)$ is
equal to the type C Schubert polynomial $\CS_{w_\la}(x, y)$ of
Billey and Haiman \cite{BH} indexed by the corresponding Grassmannian
element $w_\la$ of the hyperoctahedral group (Proposition
\ref{bhprop}). Our Giambelli formula may therefore be used to further
understand these and related polynomials. For instance, it follows
that the type C Stanley symmetric function $F_{w_\la}(x)$ of \cite{BH,
FK, L} is equal to $R^{\la}q_\la(x)$ (Corollary
\ref{endcor}).  Moreover, this connection implies that the coefficients
of $\Ti_\la(x\,;y)$ are nonnegative integers. These integers have
several combinatorial interpretations; the one we provide stems from
the work of Kra\'skiewicz \cite{Kr} and Lam \cite{L}.

\begin{thm}\label{bhthm}
  For any $k$-strict partition $\la$, the polynomial $\Theta_\la$ is a
  linear combination of products of Schur $Q$-functions and
  $S$-polynomials:
  \[
  \Theta_\la(x\,;y) = \sum_{\mu,\nu}
  e_{\mu\nu}^\la
  Q_\mu(x)s_{\nu'}(y)
  \]
  where the sum is over partitions $\mu$ and $\nu$ such that
  $\mu$ is strict and $\nu\subset \la$ with $\nu_1\leq k$.
  Moreover, the coefficients $e_{\mu\nu}^\la$ are nonnegative
  integers, equal to the number of Kra\'skiewicz tableaux 
  for $w_{\la}w_{\nu}^{-1}$ of shape $\mu$.
\end{thm}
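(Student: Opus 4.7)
The strategy is to combine Proposition~\ref{bhprop} with two established expansion formulas. By Proposition~\ref{bhprop}, we have $\Theta_\la(x\,;y) = \CS_{w_\la}(x,y)$, so it suffices to expand the Billey--Haiman type $C$ Schubert polynomial on the right-hand side into a nonnegative integer combination of products $Q_\mu(x)\, s_{\nu'}(y)$.

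The first ingredient is the standard Billey--Haiman decomposition writing each $\CS_w(x,y)$ as a sum
\[
\CS_w(x,y) \;=\; \sum_{\substack{w = u\cdot v \\ \ell(u)+\ell(v)=\ell(w)}} F_u(x)\, \AS_v(y)
\]
over length-additive factorizations in which $v$ lies in the symmetric-group factor of the infinite hyperoctahedral group. For the Grassmannian element $w_\la$, I expect these factorizations (restricted to the ones contributing nonzero terms after specialization to $k$ variables $y$) to correspond bijectively to the subpartitions $\nu \subset \la$ with $\nu_1 \leq k$, via $v = w_\nu$ and $u = w_\la w_\nu^{-1}$; for such $v$ the type $A$ Schubert polynomial $\AS_{w_\nu}(y)$ collapses to $s_{\nu'}(y)$ in the $k$ variables $y$.

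The second ingredient is Lam's theorem~\cite{L}, extending Kra\'skiewicz~\cite{Kr}, which expands every type $C$ Stanley symmetric function as a nonnegative integer combination of Schur $Q$-functions:
\[
F_u(x) \;=\; \sum_{\mu\text{ strict}} a_{u\mu}\, Q_\mu(x),
\]
where $a_{u\mu}$ counts Kra\'skiewicz tableaux for $u$ of shape $\mu$. Substituting this into the Billey--Haiman decomposition and collecting terms yields the desired formula with $e_{\mu\nu}^\la = a_{w_\la w_\nu^{-1},\,\mu}$, which is automatically a nonnegative integer with the claimed combinatorial description.

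The main technical obstacle is the first step: verifying that the length-additive factorizations of the Grassmannian element $w_\la$ contributing a pure Schur polynomial in $y_1,\ldots,y_k$ are indexed precisely by subpartitions $\nu \subset \la$ with $\nu_1 \leq k$, via $v = w_\nu$ and $u = w_\la w_\nu^{-1}$. This amounts to a careful analysis of the descent structure and reduced words of hyperoctahedral Grassmannian elements. Once this combinatorial bookkeeping is in place, the stated positivity and the tableau interpretation of $e_{\mu\nu}^\la$ follow immediately from Lam's theorem; no further positivity argument is required.
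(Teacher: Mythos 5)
Your proposal follows essentially the same route as the paper: Proposition~\ref{bhprop} identifies $\Theta_\la$ with $\CS_{w_\la}$, the Billey--Haiman expansion (\ref{BHgeneq}) reduces the problem to identifying the reduced right factors $v\in S_\infty$ of $w_\la$, these are shown to be exactly the $w_\nu$ for $\nu\subset\la^2$ (equivalently $\nu\subset\la$ with $\nu_1\leq k$) so that $\AS_{w_\nu}(y)=s_{\nu'}(y)$, and Lam's theorem (\ref{Feq}) supplies the Kra\'skiewicz-tableau interpretation of the $Q$-expansion of $F_{w_\la w_\nu^{-1}}(x)$. The only difference is one of emphasis: where you flag the identification of right reduced factors as the ``main technical obstacle,'' the paper treats it as a routine verification, since any reduced right factor $v\in S_\infty$ of the $k$-Grassmannian element $w_\la$ inherits its unique descent $k$ and hence is itself $k$-Grassmannian, so $v=w_\nu$ for a uniquely determined $\nu$ with $\nu_1\leq k$, and the condition $\nu\subset\la^2$ comes from length-additivity.
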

\noindent
The definition of Kra\'skiewicz tableaux is recalled in \S\ref{BH}. In
\cite{T2}, an approach to tableau formulas via raising operators is
applied to obtain a different expression for $\Theta_\la(x\,;y)$,
which writes it as a sum of monomials $2^{n(U)}(xy)^U$ over all
`$k$-bitableaux' $U$ of shape $\la$.

We have described the theory here in the symplectic case, but there
are entirely analogous results for the odd orthogonal groups.  In
fact, for technical reasons, our proof of Theorem \ref{mainthm} is
obtained in the setting of orthogonal type B.  We also have analogues
of these Giambelli formulas for the quantum cohomology rings of
symplectic and odd orthogonal Grassmannians; this application will
appear in \cite{BKT2}.  In a sequel to this paper, we will discuss the
Giambelli formula for even orthogonal Grassmannians, which is more
involved.  Our results have been used in \cite{T3} to obtain a
Giambelli formula that expresses the equivariant Schubert classes on
any isotropic partial flag variety as polynomials in the special
Schubert classes.

This article is organized as follows.  The proof of Theorem
\ref{mainthm} occupies \S\ref{prelims}--\S\ref{mainpf}.  Section
\ref{thetasec} develops the theory of theta polynomials in a manner
parallel to the theory of Schur $Q$-functions, and contains our proofs
of Theorems \ref{productthm} and \ref{thcor}. Finally, in \S \ref{BH}
we show that theta polynomials are equal to certain Billey-Haiman
Schubert polynomials for the hyperoctahedral group, and prove Theorem
\ref{bhthm}.

\section{Preliminary Results}
\label{prelims}

\subsection{}
The Schubert varieties in $\IG=\IG(n-k,2n)$ are indexed by
$k$-strict partitions $\lambda$ which are contained in an $(n-k) \times
(n+k)$ rectangle; we denote the set of all such partitions by
$\cP(k,n)$.  Consider the exact sequence of vector bundles over $\IG$
\[
0 \to \cS \to E \to \cQ \to 0,
\]
where $E$ denotes the trivial bundle of rank $2n$ and $\cS$ is
the tautological subbundle of rank $n-k$.  The
special Schubert class $\sigma_p$
is equal to the Chern class $c_p(\cQ)$.

The symplectic form on $E$
gives a pairing $\cS \otimes \cQ \to \cO_{\IG}$, which in
turn produces an injection $\cS \hookrightarrow \cQ^*$.  For
$r>k$ we therefore have
\[
c_{2r}(\cQ \oplus \cQ^*) =
c_{2r}(E/\cS \oplus \cQ^*) = c_{2r}(\cQ^*/\cS) = 0,
\]
which implies that the relations
\begin{equation}
\label{presrel}
\sigma_r^2 + 2\sum_{i=1}^{n+k-r}(-1)^i \sigma_{r+i}\sigma_{r-i}= 0
\ \ \text{for} \ r > k
\end{equation}
hold in $\HH^*(\IG,\Z)$.

\subsection{}\label{S:delta0}

A {\em composition} $\al = (\al_1,\al_2,\dots,\al_r)$ is a vector of
integers from the set $\N=\{0,1,2,\ldots\}$; we let $|\al| = \sum
\al_i$.  For $\la$ any sequence of (possibly negative) integers, we
say that $\la$ has length $\ell$ if $\la_i=0$ for all $i>\ell$ and
$\ell\geq 0$ is the smallest number with this property.  All integer
sequences in this paper have finite length, and we will identify any
integer sequence of length $\ell$ with the vector consisting of its
first $\ell$ entries.  In analogy with Young diagrams of partitions,
we will say that a pair $[i,j]$ is a {\em box\/} of the integer
sequence $\lambda$ if $i \geq 1$ and $1 \leq j \leq \lambda_i$.

Let $\Delta^\circ = \{(i,j) \in \N \times \N \mid 1\leq i<j \}$ and
define a partial order on $\Delta^\circ$ by agreeing that $(i',j')\leq
(i,j)$ if $i'\leq i$ and $j'\leq j$.  We call a finite subset $D$ of
$\Delta^\circ$ a {\em valid set of pairs} if it is an order ideal,
i.e., $(i,j)\in D$ implies $(i',j')\in D$ for all $(i',j')\in
\Delta^\circ$ with $(i',j') \leq (i,j)$.

Any valid set of pairs $D$ defines the raising operator
\[
R^D = \prod_{i<j}(1-R_{ij})\prod_{i<j\, :\, (i,j)\in D}(1+R_{ij})^{-1}.
\]
Given a composition $\al$ and an integer $\ell > 0$, we denote by
$m(D,\al,\ell)$ the number of non-zero coordinates $\al_i$ such that
$(i,\ell) \in D$.  We say that $\al$ is {\em $(D,\ell)$-compatible} if
$\al_i\in\{0,1\}$ whenever $(i,\ell)\notin D$.

\begin{defn}
\label{recursedef}
For any valid set of pairs $D$ and any integer sequence $\la$ of length
$\ell$ we define a cohomology class $T_{\la}=T(D,\la)$
recursively as follows.  Set $T_p=\s_p$, and for
an arbitrary integer sequence $\mu=(\mu_1,\ldots,\mu_{\ell-1})$
and $r\in \Z$, set
\begin{equation}
\label{Tdef}
  T_{\mu,r} = \sum_\al (-1)^{|\al|} 2^{m(D,\al,\ell)}
  T_{\mu+\al} T_{r-|\al|} \,,
\end{equation}
where the sum is over all $(D,\ell)$-compatible vectors $\al \in
\N^{\ell-1}$.
\end{defn}

The sum (\ref{Tdef}) is well defined because only finitely many of its
summands are non-zero; we also have $T_{\mu,r}=0$ if $r<0$.  Notice
that definition (\ref{Tdef}) of $T(D,\lambda)$ is equivalent to
expanding the raising operator formula
\[
 R^D m_{\la} =
\prod_{i<j<\ell}(1-R_{ij}) \prod_{i<j<\ell\, :\, (i,j)\in
  D}(1+R_{ij})^{-1} \prod_{i=1}^{\ell-1}(1-R_{i\ell}) \prod_{i\, :\,
  (i,\ell)\in D}(1+R_{i\ell})^{-1} m_{\mu,r}
\]
after the last (i.e., the $\ell$-th) entry of $\lambda=(\mu,r)$.
Therefore $T_\la =  R^D m_{\la}$.

\subsection{}
If $D = \emptyset$ then for any integers $r$ and $s$ we have
\[
 T_{r,s} = T_r T_s - T_{r+1} T_{s-1}
\]
and so $T_{r,r+1} = 0$, while more generally $T_{r,s} = - T_{s-1, r+1}$.

We claim that if $D\neq \emptyset$ and $r,s\in \Z$ are
such that $r+s > 2k$, then $T_{s,r}=-T_{r,s}$;
in particular $T_{r,r}=0$ whenever $r > k$.  Indeed, from the definition
we obtain
\[
T_{r,s}=\s_r\s_s-2\,\s_{r+1}\s_{s-1}+2\,\s_{r+2}\s_{s-2}-\cdots
\]
and hence $T_{s,r}=-T_{r,s}$ whenever $r+s$ is odd.  If $r+s = 2m > 2k$
is even, we see that
\begin{equation}
\label{smallcommuteC}
T_{r,s} + T_{s,r} = (-1)^{\frac{r-s}{2}}\, 2\, (\s_m^2 -
2\,\s_{m+1}\s_{m-1} + 2\,\s_{m+2}\s_{m-2}- \cdots) = 0
\end{equation}
using the relations (\ref{presrel}) in the cohomology ring of $\IG$.

The previous observations are generalized in the next two lemmas.

\begin{lemma}\label{commuteA}
Let $\lambda=(\lambda_1,\ldots,\lambda_{j-1})$ and
$\mu=(\mu_{j+2},\ldots,\mu_\ell)$ be integer vectors.  Assume that
$(j,j+1)\notin D$ and that for each $h<j$, $(h,j)\in D$ if and only if
$(h,j+1)\in D$.  Then for any integers $r$ and $s$ we have
\[ T_{\lambda,r,s,\mu} = - T_{\lambda,s-1,r+1,\mu} \,.  \]
In particular, $T_{\lambda, r, r+1, \mu}=0$.
\end{lemma}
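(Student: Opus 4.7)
The plan is to factor the raising operator as $R^D = (1 - R_{j,j+1})\,Q$, where $Q$ collects all remaining factors from Definition~\ref{recursedef}, and then to show that $Q$ is invariant under the transposition $s_j$ swapping positions $j$ and $j+1$. Granted this, the antisymmetry will follow from the observation $(\la, s-1, r+1, \mu) = s_j R_{j,j+1}(\la, r, s, \mu)$, combined with the fact that $m_\alpha$ depends only on the underlying multiset of entries of $\alpha$.

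The first step will be to verify that $Q$ is $s_j$-invariant. Conjugation by $s_j$ permutes the raising operators by exchanging $R_{h,j} \leftrightarrow R_{h,j+1}$ for $h<j$, exchanging $R_{j,k}\leftrightarrow R_{j+1,k}$ for $k>j+1$, fixing $R_{j,j+1}$, and fixing every $R_{ab}$ not involving positions $j$ or $j+1$. Hence the product of the factors $(1 - R_{ab})$, one for every pair $a < b$, with $(1 - R_{j,j+1})$ removed is manifestly $s_j$-invariant. The additional factors $(1 + R_{ab})^{-1}$, appearing precisely for $(a,b)\in D$, must come in $s_j$-matched pairs: for $h<j$ this is the hypothesis $(h,j)\in D\Leftrightarrow(h,j+1)\in D$; the factor at $(j,j+1)$ is excluded by the hypothesis $(j,j+1)\notin D$; pairs avoiding positions $j$ and $j+1$ are $s_j$-fixed; and no such factor can occur for $k>j+1$, since the order ideal condition would force $(j+1,k)\in D\Rightarrow(j,k)\in D\Rightarrow(j,j+1)\in D$, contradicting the hypothesis.

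The second step will be the computation. Set $\nu = (\la, r, s, \mu)$ and $\nu' = (\la, s-1, r+1, \mu)$. Expanding via $R^D = (1 - R_{j,j+1})\,Q$ gives
\[
T_\nu = Q\,m_\nu - Q\,m_{R_{j,j+1}\nu},\qquad T_{\nu'} = Q\,m_{\nu'} - Q\,m_{R_{j,j+1}\nu'}.
\]
Because $m_\beta$ is symmetric in the entries of $\beta$, the $s_j$-invariance of $Q$ implies $Q\,m_{s_j\xi}=Q\,m_\xi$ for every integer sequence $\xi$. Applying this first to $\nu' = s_j R_{j,j+1}\nu$ yields $Q\,m_{\nu'}=Q\,m_{R_{j,j+1}\nu}$, and then to $R_{j,j+1}\nu' = s_j\nu$ yields $Q\,m_{R_{j,j+1}\nu'}=Q\,m_\nu$. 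Adding the two expansions produces $T_\nu + T_{\nu'} = 0$. The ``in particular'' assertion $T_{\la,r,r+1,\mu} = 0$ is the case $s = r+1$, where $\nu = \nu'$.

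The main obstacle is Step~1: the matching of the $(1+R_{ab})^{-1}$ factors is delicate, and the argument genuinely requires both halves of the hypothesis together with the order ideal property of $D$; in their absence a lone second-type factor could remain and break the $s_j$-symmetry of $Q$.
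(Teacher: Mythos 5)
Your proof is correct, but it takes a genuinely different route from the paper's. The paper first reduces to the case $\mu=\emptyset$ by induction on the length of $\mu$ (unwinding the recursion~(\ref{Tdef})), then expands $T_{\lambda,r,s}$ twice through the recursive definition, and finally observes that the hypotheses on $D$ make the two index sets --- the $(D,\ell)$-compatible and $(D,\ell-1)$-compatible sequences in $\N^{j-1}$ --- coincide, and likewise make the powers $2^{m(D,\alpha,\ell)}$ and $2^{m(D,\alpha,\ell-1)}$ agree, so that swapping $\alpha\leftrightarrow\beta$ in the double sum produces the sign reversal. You sidestep both the inductive reduction and this coefficient bookkeeping by working directly at the operator level: factoring $R^D=(1-R_{j,j+1})Q$, checking that $Q$ is invariant under conjugation by the transposition $s_j$ (here the order-ideal property of $D$ is what rules out any lone factor $(1+R_{j,k})^{-1}$ or $(1+R_{j+1,k})^{-1}$ with $k>j+1$, while the stated hypothesis pairs off the factors with $h<j$), and then exploiting $Q\,m_{s_j\xi}=Q\,m_\xi$. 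This last identity is exactly the operator-level expression of the same matching of compatible sequences and $2$-powers that the paper verifies by hand, so the two arguments are dual; yours is shorter, works uniformly in $\mu$ without induction, and makes the role of $s_j$-symmetry transparent, while the paper's stays closer to the recursion~(\ref{Tdef}) that drives the bulk of its subsequent computations. The only point worth stating explicitly (you gesture at it) is why $Q\,m_{s_j\xi}=Q\,m_\xi$ follows: writing $Q=\sum_\gamma c_\gamma R^\gamma$ and using $m_{s_j\beta}=m_\beta$, one has $m_{R^\gamma s_j\xi}=m_{s_j(s_jR^\gamma s_j)\xi}=m_{R^{\sigma(\gamma)}\xi}$, and $s_j$-invariance of $Q$ means $c_\gamma=c_{\sigma(\gamma)}$, so reindexing gives the claim.
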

\begin{proof}
If $\mu = (\tau,t)$ has positive length, we set $\rho =
(\lambda,r,s,\tau)$ and the identity follows by induction, because
\[
  T_{\rho,t}
  = \sum_\alpha (-1)^{|\alpha|}2^{m(D,\alpha,\ell)} T_{\rho+\alpha}
  T_{t-|\alpha|}.
\]
Therefore, we may assume that $\mu$ is empty.  Set $\ell=j+1$.  Then
we have
\[\begin{split}
T_{\lambda,r,s} &= \sum_{\alpha} (-1)^{|\alpha|}\,
2^{m(D,\alpha,\ell)}\, T_{\lambda+\alpha,r} T_{s-|\alpha|} -
\sum_{\alpha} (-1)^{|\alpha|}\, 2^{m(D,\alpha,\ell)}\,
T_{\lambda+\alpha,r+1} T_{s-|\alpha|-1} \\ &= \sum_{\alpha,\beta}
(-1)^{|\alpha|+|\beta|}\, 2^{m(D,\alpha,\ell)+ m(D,\beta,\ell-1)}\,
T_{\lambda+\alpha+\beta} T_{r-|\beta|} T_{s-|\alpha|} \\ & \ \ \ \ -
\sum_{\alpha,\beta} (-1)^{|\alpha|+|\beta|}\, 2^{m(D,\alpha,\ell)+
m(D,\beta,\ell-1)}\, T_{\lambda+\alpha+\beta} T_{r+1-|\beta|}
T_{s-1-|\alpha|}
\end{split}\]
where the sums are over all $(D,\ell)$-compatible sequences $\alpha
\in \N^{j-1}$ and $(D,\ell-1)$-compatible sequences
$\beta\in\N^{j-1}$.  The assumptions on $D$ imply that these two sets
of sequences coincide, and this proves the lemma.
\end{proof}

\begin{lemma}\label{commuteC}
Let $\lambda=(\lambda_1,\ldots,\lambda_{j-1})$ and
$\mu=(\mu_{j+2},\ldots,\mu_\ell)$ be integer vectors, assume
$(j,j+1)\in D$, and that for each $h>j+1$, $(j,h)\in D$ if and only if
$(j+1,h)\in D$.  If $r,s\in \Z$ are such that $r+s > 2k$, then we have
\[ T_{\lambda,r,s,\mu} = - T_{\lambda,s,r,\mu} \,.  \]
In particular, $T_{\lambda,r,r,\mu} = 0$ for any $r>k$.
\end{lemma}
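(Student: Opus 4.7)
The plan is to mirror the inductive structure of Lemma~\ref{commuteA}'s proof: reduce to the base case $\mu=\emptyset$ by induction on $|\mu|$, and handle that base case by a direct double expansion of (\ref{Tdef}) combined with the cohomology relations (\ref{presrel}) at the final step. For the inductive step I set $\mu=(\tau,t)$ and expand $T_{\lambda,r,s,\tau,t}$ by peeling off the last entry~$t$; the induction hypothesis applies to the inner factor $T_{\lambda+\alpha^{(1)},\,r+\alpha_j,\,s+\alpha_{j+1},\,\tau+\alpha^{(2)}}$ because $(r+\alpha_j)+(s+\alpha_{j+1})\geq r+s>2k$, and I reindex via the involution $\alpha_j\leftrightarrow\alpha_{j+1}$, which is a weight-preserving bijection on $(D,\ell)$-compatible sequences precisely thanks to the hypothesis ``$(j,h)\in D\Leftrightarrow(j+1,h)\in D$ for $h>j+1$''; the resulting sum is recognized as $-T_{\lambda,s,r,\tau,t}$.

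For the base case $\mu=\emptyset$, where the sequence $(\lambda,r,s)$ has length $\ell=j+1$, the key initial observation is that because $D$ is an order ideal and $(j,j+1)\in D$, all pairs $(i,j)$ and $(i,j+1)$ with $i<j$ automatically lie in $D$. A double expansion of $T_{\lambda,r,s}$ via (\ref{Tdef}) --- peeling position $j+1$ and then position $j$ --- yields
\[
T_{\lambda,r,s}\;=\;\sum_{\bar\alpha,\,\beta\,\in\,\N^{j-1}} X(\bar\alpha,\beta)\,P(r-|\beta|,\,s-|\bar\alpha|),
\]
where $X(\bar\alpha,\beta)=(-1)^{|\bar\alpha|+|\beta|}\,2^{|\mathrm{supp}(\bar\alpha)|+|\mathrm{supp}(\beta)|}\,T_{\lambda+\bar\alpha+\beta}$ is manifestly symmetric in its two arguments and $P(u,v)=T_uT_v-2T_{u+1}T_{v-1}+2T_{u+2}T_{v-2}-\cdots$ is the two-variable polynomial underlying (\ref{smallcommuteC}); the order-ideal observation guarantees that $\bar\alpha$ and $\beta$ each range freely over $\N^{j-1}$. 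Swapping dummy indices $\bar\alpha\leftrightarrow\beta$ in the analogous expansion of $T_{\lambda,s,r}$ and exploiting the symmetry of $X$ gives
\[
T_{\lambda,r,s}+T_{\lambda,s,r}\;=\;\sum_{\bar\alpha,\beta} X(\bar\alpha,\beta)\bigl(P(u,v)+P(v,u)\bigr),
\]
with $u=r-|\beta|$ and $v=s-|\bar\alpha|$.

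The main obstacle is that the polynomial identity $P(u,v)+P(v,u)=(-1)^{(u-v)/2}\cdot 2\,p_{(u+v)/2}$ (with $p_r:=T_r^2-2T_{r+1}T_{r-1}+\cdots$ and the right-hand side taken to be $0$ for $u+v$ odd), established exactly as in (\ref{smallcommuteC}), only yields $p_m=0$ in $\HH^*(\IG)$ for $m>k$ via (\ref{presrel}), whereas $u+v=r+s-|\bar\alpha|-|\beta|$ can easily fall below $2k$ as $|\bar\alpha|,|\beta|$ grow. I would resolve this by grouping the sum according to $\gamma:=\bar\alpha+\beta$: the inner sum $\sum_{\bar\alpha+\beta=\gamma}(-1)^{|\bar\alpha|}\,2^{|\mathrm{supp}(\bar\alpha)|+|\mathrm{supp}(\beta)|}$ factorizes componentwise as $\prod_i F(\gamma_i)$ where $F(c):=\sum_{b=0}^c(-1)^b\,2^{[b\geq 1]+[c-b\geq 1]}$, and a short direct check shows $F(0)=1$ and $F(c)=0$ for every $c\geq 1$. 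Only $\gamma=0$ therefore survives, which forces $\bar\alpha=\beta=0$ and hence $(u+v)/2=(r+s)/2>k$; the relation (\ref{presrel}) now gives $p_{(r+s)/2}=0$ in $\HH^*(\IG,\Z)$, so $T_{\lambda,r,s}+T_{\lambda,s,r}=0$. The final assertion $T_{\lambda,r,r,\mu}=0$ for $r>k$ follows by specializing $s=r$, since then $2r>2k$.
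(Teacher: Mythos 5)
Your proof is correct, and while the inductive step mirrors the paper's (the same peeling of the last entry, with the reindexing $\alpha_j \leftrightarrow \alpha_{j+1}$ that you rightly note is licensed by the hypothesis on $D$), your base case takes a genuinely different route. The paper handles $\mu = \emptyset$ via the Schur Pfaffian identity: it derives the recursion (\ref{pfaff1}) from the raising-operator form of Schur's classical Pfaffian formula, and then completes a second induction on $j$ down to the two-row case (\ref{smallcommuteC}). You instead do a direct double expansion of (\ref{Tdef}), absorb the sum over $\alpha_j$ into the two-variable polynomial $P(u,v)$, and then cancel the ``bad'' terms (those with $u+v \leq 2k$, where (\ref{presrel}) does not apply) by grouping on $\gamma = \bar\alpha + \beta$. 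The key points are all sound: the order-ideal observation that $(i,j),(i,j+1)\in D$ for $i<j$ is exactly what makes both compatibility constraints vacuous and the multiplicities equal to $|\mathrm{supp}|$; the symmetry of $X(\bar\alpha,\beta)$ under $\bar\alpha\leftrightarrow\beta$; the identity $P(u,v)+P(v,u) = (-1)^{(u-v)/2}\cdot 2\,p_{(u+v)/2}$ for $u+v$ even (and $=0$ for $u+v$ odd); and the componentwise evaluation $F(0)=1$, $F(c)=0$ for $c\geq 1$. One detail worth spelling out is where the $(-1)^{|\bar\alpha|}$ in your inner sum comes from: it is the $(\bar\alpha,\beta)$-dependent part of $(-1)^{(u-v)/2}$ after writing $(u-v)/2 = (r-s-|\gamma|)/2 + |\bar\alpha|$, since the prefactor $(-1)^{|\bar\alpha|+|\beta|}$ from $X$ equals $(-1)^{|\gamma|}$ and is constant on the fiber; your writeup glides past this. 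Also, the induction should be stated on the length of $\mu$, not on $|\mu|$. Comparing the two routes: the paper's Pfaffian recursion is shorter and fits the paper's theme (the Giambelli answer interpolates between determinants and Pfaffians), while your direct expansion is more elementary and self-contained, avoiding the Pfaffian machinery entirely, at the cost of a more delicate bookkeeping argument.
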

\begin{proof}
If $\mu = (\tau,t)$ has positive length, we set $\rho =
(\lambda,r,s,\tau)$ and $\rho' = (\lambda,s,r,\tau)$, and the identity
follows by induction, because
\[ T_{\rho,t}
  = \sum_\alpha (-1)^{|\alpha|}2^{m(D,\alpha,\ell)} T_{\rho+\alpha}
  T_{t-|\alpha|} = - \sum_\alpha (-1)^{|\alpha|}2^{m(D,\alpha,\ell)}
  T_{\rho'+\alpha} T_{t-|\alpha|} = - T_{\rho',t} \,.
\]
Thus we may assume that $\mu$ is empty.  Set $\ell=j+1$, and note
that $(h,h')\in D$ for all $h<h'\leq \ell$.  If $m>0$ is the least
integer such that $2m \geq \ell$, we claim that
$T_\rho=T_{\lambda,r,s}$ satisfies the relation
\begin{equation}
\label{pfaff1}
T_{\rho} = \sum_{i=2}^{2m} (-1)^i\,
T_{\rho_1,\rho_i}\,T_{\rho_2,\ldots,\wh{\rho_i},\ldots,\rho_{2m}}.
\end{equation}
Equation (\ref{pfaff1}) follows from the formal identity of
raising operators
\[
\prod_{1\leq h<h'\leq 2m}\frac{1-R_{hh'}}{1+R_{hh'}} =
\sum_{i=2}^{2m} (-1)^i\,
\frac{1-R_{1i}}{1+R_{1i}}\,
\prod_{\substack{2\leq h< h' \leq 2m\\h\neq i\neq h'}}
\frac{1-R_{hh'}}{1+R_{hh'}},
\]
which is equivalent the classical formula
\[
\prod_{1\leq h<h'\leq 2m}\frac{x_h-x_{h'}}{x_h+x_{h'}} =
\Pf\left(\frac{x_h-x_{h'}}{x_h+x_{h'}}\right)_{1\leq h,h' \leq 2m}
\]
due to Schur \cite[Sec.\ IX]{S2}.  The proof is completed using
induction, starting from the base case of $j=1$, which was obtained in
(\ref{smallcommuteC}).
\end{proof}

During the above discussion the set $D$ has remained fixed, but in
subsequent arguments we will need to modify it.  For this, we
use a simple observation.

\begin{lemma}
\label{easylm}
If $(i,j)\notin D$ and $D\cup (i,j)$ is a valid set of pairs, then
\[
T(D, \lambda) = T(D\cup (i,j), \lambda) +
T(D\cup (i,j), R_{ij}\lambda).
\]
\end{lemma}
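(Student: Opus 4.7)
The plan is to exploit the raising-operator interpretation of $T(D,\la)$ established in the paragraph following Definition \ref{recursedef}, namely $T(D,\la) = R^D m_\la$, where
\[
R^D = \prod_{i<j}(1-R_{ij})\prod_{(i,j)\in D}(1+R_{ij})^{-1}.
\]
The key observation is that, since $(i,j)\notin D$ and $D\cup (i,j)$ is a valid set of pairs by hypothesis, passing from $D$ to $D\cup (i,j)$ leaves the first product in $R^D$ unchanged and inserts exactly one additional factor $(1+R_{ij})^{-1}$ in the second. Thus $R^{D\cup (i,j)} = R^D\,(1+R_{ij})^{-1}$, or equivalently
\[
R^D \;=\; R^{D\cup (i,j)}\,(1+R_{ij}) \;=\; R^{D\cup (i,j)} \;+\; R^{D\cup (i,j)}\,R_{ij}.
\]

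First I would justify this as a formal identity of raising operators: the inverse $(1+R_{ij})^{-1}$ is the geometric series $\sum_{s\geq 0}(-R_{ij})^s$ in the commuting operators $R_{ab}$, and the cancellation $(1+R_{ij})(1+R_{ij})^{-1}=1$ holds term-by-term. Second, I would apply both sides to $m_\la$ using the convention $R\,m_\al = m_{R\al}$ from the introduction, obtaining
\[
T(D,\la) \;=\; R^D m_\la \;=\; R^{D\cup (i,j)} m_\la \,+\, R^{D\cup (i,j)} m_{R_{ij}\la} \;=\; T(D\cup (i,j),\la)\,+\,T(D\cup (i,j),\,R_{ij}\la),
\]
which is the desired identity.

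The only point requiring a little care is ensuring that the formal action of these infinite products on $m_\la$ is unambiguous. But $m_\la$ involves only finitely many nonzero indices, and each $R_{ij}$ preserves the total degree $|\la|$, so only finitely many terms from the formal expansions can contribute to $R^D m_\la$ or $R^{D\cup (i,j)} m_\la$. With the identification $T(D,\la) = R^D m_\la$ already established, the lemma reduces entirely to the one-line operator identity above, and no new induction on $\ell$ or direct manipulation of the recursive Definition \ref{recursedef} is needed. I do not foresee any genuine obstacle; this lemma is a straightforward bookkeeping step meant to support the later arguments where $D$ is modified by adjoining one pair at a time.
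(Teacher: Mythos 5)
Your proposal is correct and is essentially the paper's proof: the paper's one-line identity $1-R_{ij} = \frac{1-R_{ij}}{1+R_{ij}} + \frac{1-R_{ij}}{1+R_{ij}}R_{ij}$ is exactly your $R^D = R^{D\cup(i,j)}(1+R_{ij}) = R^{D\cup(i,j)} + R^{D\cup(i,j)}R_{ij}$, applied to $m_\la$ via $T(D,\la)=R^D m_\la$.
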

\begin{proof}
The assertion follows immediately from the identity
\[
1-R_{ij} = \frac{1-R_{ij}}{1+R_{ij}} + \frac{1-R_{ij}}{1+R_{ij}}\,R_{ij}.
\qedhere
\]
\end{proof}

\section{From $\IG(n-k,2n)$ to $\OG(n-k,2n+1)$}
\label{igtoog}

\subsection{}
For each $k\geq 0$, the odd orthogonal Grassmannian
$\OG=\OG(n-k,2n+1)$ parametrizes the $(n-k)$-dimensional isotropic
subspaces in $\C^{2n+1}$, equipped with a nondegenerate symmetric
bilinear form.  Our aim is to show that if $\lambda$ is any $k$-strict
partition, then $\sigma_\lambda$ is given by the raising operator
expression of Theorem \ref{mainthm}.
For technical reasons, we will use an isomorphism to transfer this
relation to the cohomology ring of $\OG$, and work with the latter
space.

The Schubert varieties in $\OG$ are indexed by the same set of
$k$-strict partitions $\cP(k,n)$ as for $\IG(n-k,2n)$.  Given a
complete flag $F_\bull$ of subspaces of $\C^{2n+1}$ such that
$F_{n+i}= F_{n+1-i}^\perp$ for $1 \leq i \leq n+1$ and
$\la\in\cP(k,n)$, we define the codimension $|\la|$ Schubert variety
\[
   X_\lambda(F_\bull) = \{ \Sigma \in \OG \mid \dim(\Sigma \cap
   F_{\ov{p}_j(\lambda)}) \gequ j \ \ \forall\, 1 \lequ j \lequ
   \ell(\lambda) \} \,,
\]
where
\begin{equation}
\label{pdefB}
\ov{p}_j(\lambda) = n+k+1+j-\lambda_j - \#\{i\leq j : \lambda_i+\lambda_j
> 2k+j-i \}.
\end{equation}
Let $\ta_{\la} \in
\HH^{2|\lambda|}(\OG,\Z)$  be the cohomology
class dual to the cycle given by $X_\lambda(F_\bull)$.

For any $\la\in\cP(k,n)$, let $\ell_k(\la)$ be the number of parts
$\la_i$ which are strictly greater than $k$.  Let $\cQ_{\IG}$ and
$\cQ_{\OG}$ be the universal quotient vector bundles over
$\IG(n-k,2n)$ and $\OG(n-k,2n+1)$, respectively.  It is known (see
e.g.\ \cite[\S 3.1]{BS}) that the map which sends $\s_p =
c_p(\cQ_{\IG})$ to $c_p(\cQ_{\OG})$ for all $p$ extends to a ring
isomorphism $\phi:\HH^*(\IG,\Q) \to \HH^*(\OG,\Q)$.  Moreover, we have
$\phi(\s_\la) = 2^{\ell_k(\la)} \ta_{\la}$ for all $\la\in \cP(k,n)$.

We let $c_p= c_p(\cQ_{\OG})$.
The Chern classes $c_p$ are related to the special Schubert classes
$\ta_p$ on $\OG$ by the equations
\[
c_p=
\begin{cases}
\ta_p & \text{if $p\lequ k$},\\
2\ta_p & \text{if $p> k$}.
\end{cases}
\]
Using the isomorphism $\phi$, we can therefore describe the Giambelli
formula for $\OG(n-k,2n+1)$ as follows.  For any integer sequence
$\alpha$, set $m_{\alpha} = \prod_ic_{\al_i}$; then for
every $\la\in\cP(k,n)$, we have
\begin{equation}
\label{mainthmB}
\ta_\la = 2^{-\ell_k(\la)}R^{\la}\,m_{\la}
\end{equation}
in $\HH^*(\OG,\Z)$.

\subsection{}
\label{classpieri}
For $\lambda$ any $k$-strict partition, we say that the box $[r,c]$ in
row $r$ and column $c$ of $\lambda$ is {\em $k$-related\/} to the box
$[r',c']$ if $|c-k-1|+r = |c'-k-1|+r'$.  If $c \leq k < c'$, then this
is equivalent to $c+c' = 2k+2+r-r'$.  For example, in the partition
displayed below, the grey box $[r,c]$ is $k$-related to $[r',c']$.
The notion of $k$-related boxes makes sense also for boxes outside the
Young diagram of $\la$.
\[ \pic{0.65}{shiftDrel2} \]

Given two Young diagrams $\mu$ and $\nu$ with $\mu\subset\nu$, the
skew diagram $\nu/\mu$ is called a horizontal (resp.\ vertical) strip
if it does not contain two boxes in the same column (resp.\ row).  For
any two $k$-strict partitions $\lambda$ and $\mu$, we write $\lambda
\to \mu$ if $\mu$ may be obtained by removing a vertical strip from
the first $k$ columns of $\lambda$ and adding a horizontal strip to
the result, so that

\medskip
\noin (1) if one of the first $k$ columns of $\mu$ has the same number
of boxes as the same column of $\lambda$, then the bottom box of this
column is $k$-related to at most one box of $\mu \smallsetminus
\lambda$; and

\medskip
\noin
(2) if a column of $\mu$ has fewer boxes than the same column of
$\lambda$, then the removed boxes and the bottom box of $\mu$ in this
column must each be $k$-related to exactly one box of $\mu
\smallsetminus \lambda$, and these boxes of $\mu \smallsetminus
\lambda$ must all lie in the same row.

\medskip

Equivalently, $\la\to\mu$ means that $\la_j-1\leq \mu_j \leq
\la_{j-1}$ for each $j$, $\la_j\leq \mu_j$ when $\la_j>k$, and
conditions (1) and (2) are true.  Let $\A$ be the set of boxes of
$\mu\ssm \la$ in columns $k+1$ through $k+n$ which are not mentioned
in (1) or (2), and define $\NN(\lambda,\mu)$ to be the number of
connected components of $\A$.  Here two boxes are connected if they
share at least a vertex.  In \cite[Theorem 2.1]{BKT} we proved that the
Pieri rule
\begin{equation}
\label{bktpieri}
c_p \cdot \ta_\lambda = \sum_{\lambda \to \mu, \,
  |\mu|=|\lambda|+p} 2^{\NN(\lambda,\mu)} \, \ta_\mu
\end{equation}
holds, for any $p\in [1,n+k]$.

\subsection{}
A comparison of (\ref{pdefC}) with (\ref{pdefB}) suggests modifying
the definition of valid sets of pairs from \S\ref{prelims} to
include elements along the diagonal $\{(i,i)\ |\ i>0\}$.  This
convention will make the formalism of our proof of Theorem
\ref{mainthm} cleaner, and is in fact crucial in the corresponding
proof of Giambelli for even orthogonal Grassmannians.

Set $\Delta = \{(i,j) \in \N\times \N \mid 1\leq i\leq j \}$ with the
same partial order as in \S\ref{S:delta0}, and define the notion of a
valid set of pairs exactly as before.
Given a $k$-strict partition
$\la$ and an integer $t \geq \ell(\la)$, we obtain a valid set of
pairs $\cC_t(\la)$ by
\[
\cC_t(\la) = \{(i,j)\in\Delta\ |\ \la_i+\la_j > 2k+j-i
\ \, \text{and} \ \, j \leq t\}.
\]
Furthermore, we let $\cC(\la)=\cC_{\ell(\la)}(\la)$.  Notice that $\cC(\la)$
includes the pairs $(i,i)$ such that $\la_i>k$.  It is easy to see
that when $k>0$, a set $D\subset\Delta$ is a valid set of pairs
if and only if there exists a $k$-strict partition $\la$ for which
$\cC(\la)=D$.

An {\em outer corner\/} of a valid set of pairs $D \subset
\Delta$ is a pair $(i,j) \in \Delta \ssm D$ such that $D \cup (i,j)$
is also a valid set of pairs.  The {\em outside rim} $\partial D$ of
$D$ is the set of pairs $(i,j)\in\Delta\ssm D$ such that $i=1$ or
$(i-1,j-1) \in D$.

\begin{lemma}\label{lem:CinDinCDC}
  Let $\mu$ be a $k$-strict partition such that $\la \to \mu$.  Then
  for any $t \geq \ell(\la)$, we have $\cC_t(\la) \subset
  \cC_{t+1}(\mu) \subset \cC_t(\la) \cup
  \partial\,\cC_t(\la)$.
\end{lemma}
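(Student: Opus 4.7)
The plan is to prove the two inclusions in turn, using the characterization of $\la \to \mu$ from the Pieri rule of \S\ref{classpieri}. The main numerical bounds on $\mu$ I will invoke are:
(a) $\mu_i \leq \la_{i-1}$ for $i \geq 2$, coming from the horizontal strip condition on $\mu/\la'$ (where $\la'$ is the partition after the vertical strip removal); and
(b) $\mu_i \geq \la_i - 1$ always, with $\mu_i \geq \la_i$ whenever $\la_i > k$, since only boxes in the first $k$ columns can be removed in going from $\la$ to $\la'$.

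For the second inclusion $\cC_{t+1}(\mu) \subset \cC_t(\la) \cup \partial\,\cC_t(\la)$, let $(i,j) \in \cC_{t+1}(\mu) \ssm \cC_t(\la)$. If $i = 1$ then $(i,j) \in \partial\,\cC_t(\la)$ by definition. Otherwise $i \geq 2$, and bound (a) applied to both $i$ and $j$ yields $\la_{i-1} + \la_{j-1} \geq \mu_i + \mu_j > 2k + (j-i) = 2k + ((j-1)-(i-1))$; since $j - 1 \leq t$, this places $(i-1, j-1) \in \cC_t(\la)$, whence $(i,j) \in \partial\,\cC_t(\la)$.

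For the first inclusion $\cC_t(\la) \subset \cC_{t+1}(\mu)$, let $(i,j) \in \cC_t(\la)$. The cases $i = j$ and $i < j$ with $\la_j > k$ follow at once from (b). The only subtle case is $i < j$ with $\la_i > k$, $\la_j \leq k$, and $\mu_j = \la_j - 1$ (i.e.\ the box $[j,\la_j]$ has been removed). Condition (2) of the Pieri rule, applied to column $\la_j$, then furnishes a common row $r'$ and a sequence of consecutive added boxes in row $r'$ of $\mu \ssm \la$; the rightmost such box $[r',c']$ is $k$-related to $[j,\la_j]$, giving $c' = 2k + 2 - \la_j + j - r'$, while the box $[r',c'-1]$ (the one $k$-related to the box just above the removed box) satisfies $\la_{r'} + r' \leq 2k + j - \la_j$. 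Combining this with $\la_i + \la_j > 2k + (j-i)$ rules out $r' \leq i$: the equality $r' = i$ is a direct contradiction, while $r' < i$ contradicts the $k$-strictness of $\la$ (which forces $\la_{r'} \geq \la_i + (i-r')$, hence $\la_i + \la_j \leq 2k + (j-i)$). The cases $r' \geq j$ are incompatible with $\mu_j = \la_j - 1$: for $r' = j$ one would need $c' \leq \mu_j = \la_j - 1$, contradicting $c' > k \geq \la_j$, and for $r' > j$ the partition shape forces $\mu_{r'} \leq \mu_j < c'$, again a contradiction. Hence $i < r' < j$, which also guarantees $c' > k$. Now $\mu_{r'} \geq c' > k$, and the $k$-strictness of $\mu$ applied to the decreasing chain $\mu_i > \mu_{i+1} > \cdots > \mu_{r'}$ (all exceeding $k$) yields $\mu_i \geq \mu_{r'} + (r' - i) \geq c' + (r' - i) = 2k + 2 - \la_j + (j - i)$, whence $\mu_i + \mu_j \geq 2k + 1 + (j - i) > 2k + (j - i)$ as required.

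The main technical obstacle is this final case analysis: one must correctly locate the common row $r'$ provided by condition (2), rule out $r'$ outside the interval $(i,j)$ using both the Schur-region inequality $\la_i + \la_j > 2k + (j-i)$ and the partition structure of $\mu$, and then exploit the $k$-strictness of $\mu$ to propagate the lower bound from $\mu_{r'}$ up to $\mu_i$.
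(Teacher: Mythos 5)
Your proof is correct. The second inclusion $\cC_{t+1}(\mu)\subset\cC_t(\la)\cup\partial\cC_t(\la)$ is exactly as in the paper. For the harder inclusion $\cC_t(\la)\subset\cC_{t+1}(\mu)$, your argument is a close relative of the paper's but frames it directly rather than by contradiction. The paper assumes $(i,j)\in\cC_t(\la)\ssm\cC_{t+1}(\mu)$, which pins down $\mu_i=\la_i$, $\mu_j=\la_j-1$, $\la_i+\la_j=2k+1+j-i$ at the outset; it then extracts the row $h$ from condition (2), invokes the order-ideal property of $\cC_{t+1}(\mu)$ to force $h<i$, and contradicts the $k$-strictness of $\la$. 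You instead locate $r'$ from condition (2), exclude $r'\le i$ numerically (using precisely the same $k$-strictness-of-$\la$ inequality $\la_{r'}+r'\geq\la_i+i$) and exclude $r'\ge j$ via the partition structure of $\mu$, then use the $k$-strictness of $\mu$ on the chain $\mu_i>\cdots>\mu_{r'}>k$ to propagate the bound $\mu_{r'}\ge c'$ up to $\mu_i$ and conclude $\mu_i+\mu_j>2k+j-i$. What your route buys is that it sidesteps the order-ideal structure of $\cC_{t+1}(\mu)$; what it costs is the extra appeal to the $k$-strictness of $\mu$ (rendered unnecessary in the paper by the equality $\mu_i=\la_i$ available under the contradiction hypothesis). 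One small point worth flagging: both your argument and the paper's assume the boxes of $\mu\ssm\la$ furnished by Pieri condition (2) lie in columns $>k$, so that the $k$-relatedness formula $c'+\la_j=2k+2+j-r'$ applies; for $r'<j$ this is forced since $c'\le k$ would give $c'=\la_j-(j-r')<\la_j\le\la_{r'}$, contradicting $[r',c']\in\mu\ssm\la$, so the gap is harmless, but it deserves a word.
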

\begin{proof}
  If $(i,j) \in \cC_{t+1}(\mu)$, then $\la_{i-1} + \la_{j-1} \geq
  \mu_i + \mu_j > 2k+j-i$.  This proves that $\cC_{t+1}(\mu)\subset
  \cC_t(\la) \cup \partial \cC_t(\la)$.  If there exists a pair $(i,j)
  \in \cC_t(\la) \ssm \cC_{t+1}(\mu)$, then $\la_i + \la_j > 2k+j-i
  \geq \mu_i+\mu_j$, so we must have $\mu_i = \la_i$, $\mu_j =
  \la_j-1$, and $\la_i+\la_j = 2k+1+j-i$.  Condition (2) of
  \S\ref{classpieri} implies that some box $[h,c]$ of $\mu\ssm\la$ is
  $k$-related to $[j,\la_j]$, and $[h,c-1]$ is also in $\mu\ssm\la$
  since this box is $k$-related to $[j-1,\la_j]$.  The equality
  $\la_j+c = 2k+2+j-h$ implies that $(h,j) \in \cC_{t+1}(\mu)$, and
  since $\cC_{t+1}(\mu)$ is a valid set of pairs, we must have $h <
  i$.  But we also obtain $\la_h < c-1 = 2k+1+j-h-\la_j = \la_i+i-h$,
  contradicting the fact that $\lambda$ is $k$-strict.  This proves
  that $\cC_t(\la) \subset \cC_{t+1}(\mu)$.
\end{proof}

\begin{defn}
\label{recursedefB}
For any valid set of pairs $D\subset \Delta$ and any integer
sequence $\la$ we define the cohomology class $T(D,\la) \in \HH^*(\OG)$ by
\[
T(D,\la) = 2^{-\#\{i\,| \,(i,i)\in D\}} \,\phi(T(D\cap\Delta^\circ,\la)),
\]
where $T(D\cap\Delta^\circ,\la) \in \HH^*(\IG)$ is defined by
(\ref{Tdef}).
\end{defn}

To prove (\ref{mainthmB}) and hence also establish
Theorem \ref{mainthm}, it suffices to show that if $\lambda$ is a
$k$-strict partition, the Pieri rule
\begin{equation}
\label{pieriTla}
c_p \cdot T(\cC(\la),\lambda) = \sum_{\lambda \to \mu, \,
|\mu|=|\lambda|+p} 2^{\NN(\lambda,\mu)} \, T(\cC(\mu),\mu) \,
\end{equation}
holds in $\HH^*(\OG,\Z)$, for all $p$.  To see this, write $\mu \succ \la$
if $\mu$ strictly dominates $\lambda$, i.e., $\mu \neq \lambda$ and
$\mu_1 + \dots + \mu_i \geq \lambda_1 + \dots + \lambda_i$ for each $i
\geq 1$.  We deduce from (\ref{bktpieri}) and (\ref{pieriTla}) that
\[
2^{\ell_k(\la)}\,\ta_\la + \sum_{\mu \succ \la} a_{\la\mu}\,\ta_\mu =
c_{\la_1}\cdots c_{\la_\ell} =
2^{\ell_k(\la)}\,T(\cC(\la),\la) +
\sum_{\mu \succ \la} a_{\la\mu}\, T(\cC(\mu),\mu),
\]
for some constants $a_{\la\mu}\in\Z$. By induction on $\la$, it
follows that $T_\la=T(\cC(\la),\la)$, which is a restatement of
(\ref{mainthmB}).

Observe that Lemmas \ref{commuteA}, \ref{commuteC}, and \ref{easylm}
carry over verbatim to our current setting where $D\subset
\Delta$.  These lemmas are the main properties of the cohomology
classes $T(D,\lambda)$ that we use, and as such constitute the
technical core of our proof of Theorem \ref{mainthm}.  But the
non-trivial scheme that puts them to work together is an algorithm with
a substitution rule; this is explained in the next section.

\section{The Substitution Rule}
\label{subrule}

\subsection{}
\label{initnot}
Throughout the next two sections we fix $p>0$, the $k$-strict
partition $\la$ of length $\ell$, and choose $n$ sufficiently large so
that we can ignore it in the sequel.  Set $\cC=\cC(\la)$.  For any
$d\geq 1$ define the raising operator $R^{\la}_d$ by
\[
R_d^{\la} = \prod_{1\leq i<j\leq d}(1-R_{ij})\,
\prod_{i<j\, :\, (i,j)\in\cC} (1+R_{ij})^{-1}.
\]
We compute that
\[
c_p\cdot T(\cC,\la) = c_p\cdot 2^{-\ell_k(\la)}R_{\ell}^{\la}\, m_{\la} =
2^{-\ell_k(\la)} R_{\ell+1}^\la\cdot
\prod_{i=1}^\ell(1-R_{i,\ell+1})^{-1} \, m_{\la,p}
\]
\[
= 2^{-\ell_k(\la)}
R^\la_{\ell+1}\cdot\prod_{i=1}^\ell(1+R_{i,\ell+1} +
R_{i,\ell+1}^2 + \cdots)\,m_{\la,p}
\]
and therefore
\begin{equation}\label{initeq}
  c_p\cdot T(\cC,\la) = \sum_{\nu\in \cN}T(\cC,\nu),
\end{equation}
where $\cN=\cN(\la,p)$ is the set of all compositions $\nu \geq \la$
such that $|\nu| = |\la|+p$ and $\nu_j =0$ for $j > \ell+1$.  Our
strategy for proving Theorem \ref{mainthm} is to show that the right
hand side of equation (\ref{initeq}) is equal to the right hand side
of the Pieri rule (\ref{pieriTla}).

\subsection{}\label{initialdefs}

The following objects will be used as book keeping tools in a
delicate process of rewriting the right hand side of (\ref{initeq}).
Let $m\geq 1$ be minimal such that $\la_m \leq k$; we call $m$ the
{\em middle} row of $\la$.  Notice that $m$ is the smallest positive
integer for which $(m,m)\notin \cC$.

\begin{defn}
  A {\em valid 4-tuple\/} of {\em level} $h$ is a 4-tuple $\psi =
  (D,\mu,S,h)$, such that $h$ is an integer with $0 \leq h \leq
  \ell+1$, $D$ is a valid set of pairs containing $\cC$, all pairs
  $(i,j)$ in $D$ satisfy $i\leq m$ and $j \leq \ell+1$, $S$ is a
  subset of $D \ssm \cC$, and $\mu$ is an integer sequence of length
  at most $\ell+1$.  The evaluation of $\psi$ is defined by $\ev(\psi)
  = T(D,\mu) \in \HH^*(\OG,\Q)$.
\end{defn}

All valid 4-tuples encountered in this paper will also satisfy that $D
\subset \cC \cup \partial\cC$ (see Lemma~\ref{lem:DinCDC}), but for
technical reasons we do not require this in the definition.  We will
represent the set $\Delta$ as the positions on or above the main
diagonal of a matrix, and the various sets of pairs $D$ as sets of
entries in this matrix.  In Figure~\ref{CrimandD} the white dots
represent a set of pairs $\cC$, the grey dots are a subset of the
outside rim of $\cC$, and we have $m=10$. The union of the white and
grey dots form the set $D$ in a typical valid $4$-tuple $(D,\mu,S,h)$.

\begin{figure}
\centering
\includegraphics[scale=0.5]{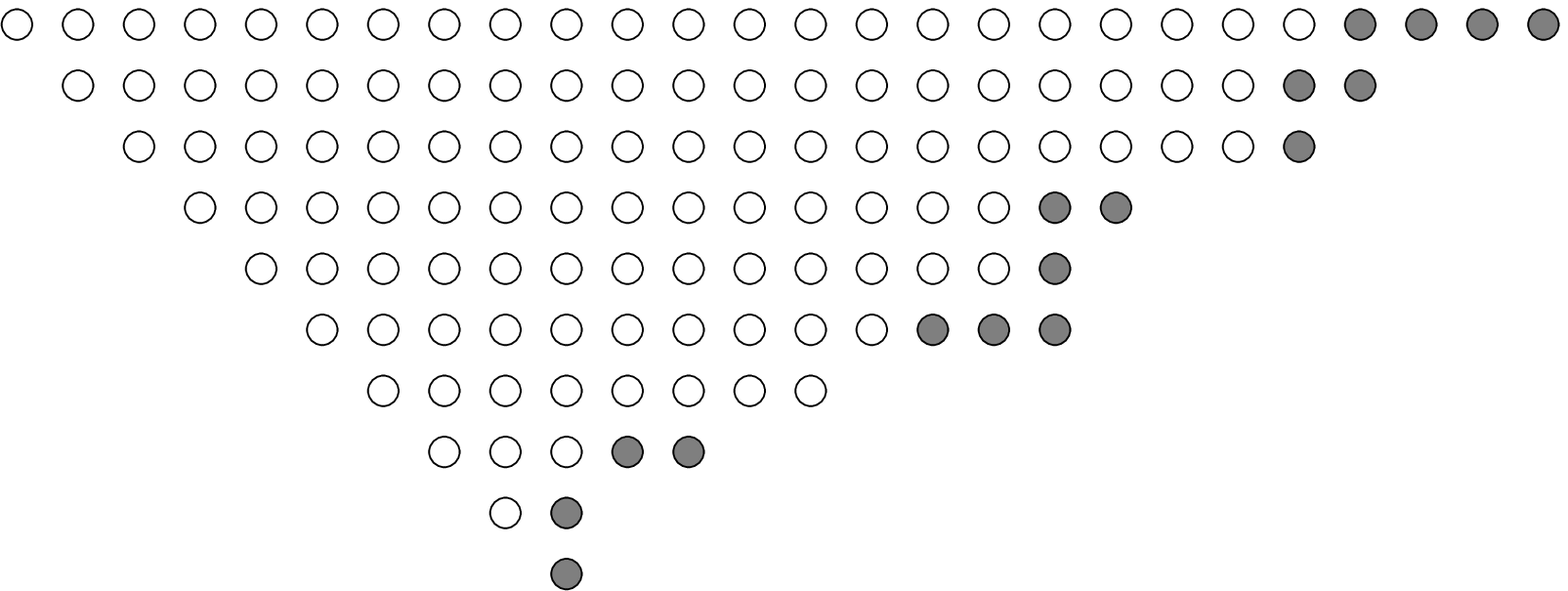}
\caption{A valid set of pairs $\cC$ (white dots) and a subset of
$\partial \cC$ (grey dots).}
\label{CrimandD}
\end{figure}

In the following we set $\mu_0 = \infty$ whenever $\mu$ is an integer
sequence.

\begin{defn}
  For any $y \in \Z$ we let $r(y)$ denote the largest integer such
  that $r(y) \leq \ell+1$ and $\la_{r(y)-1} > 2k+r(y)-y$.
\end{defn}

Since $\la_0 = \infty$ we have $r(y) \geq 1$.  Notice that for $(i,j)
\in \Delta$ and $j \leq \ell$ we have $(i,j) \in \cC \Leftrightarrow
\la_j > 2k+j-i-\la_i \Leftrightarrow j < r(i+\la_i+1)$.  This gives
the relation
\begin{equation}\label{eqn:Cfromr}
   \cC = \{ (i,j) \in \Delta \mid
   j < r(i+\la_i+1) \} \,.
\end{equation}

The function $r(y)$ is also connected to the notion of $k$-relatedness
of boxes.  Assume that some box $[i,c]$ with $c>k$ is $k$-related to a
box $[j,d]$ in the first $k$ columns of $\la$.  Then $\la_j \geq d =
2k+2+j-i-c$, which implies that $j < r(i+c)$.  Furthermore, if
$[j+1,d+1] \not\in \la$ then $r(i+c) = j+1$.

\begin{defn}\label{efg}
  Let $h\in \N$ satisfy $1\leq h \leq m$ and let $\mu$ be an integer
  sequence.\smallskip

  \noindent
  (a) We define $b_h = r(h+\la_h+1)$ and $g_h = b_{h-1}$.  By
  convention we set $g_1 = \ell+1$.  \smallskip

  \noindent
  (b) Set $R(\mu) = \{ [i,c] \in \mu\ssm\la \mid c>k \text{ and }
  \mu_{r(i+c)} \leq 2k+r(i+c)-i-c \}$.
  \smallskip

  \noindent
  (c) Assume that $h \geq 2$ and $\mu_h \geq \la_{h-1}$.  If
  $[h,\la_{h-1}] \in R(\mu)$ then set $e_h(\mu) = \la_{h-1}$.
  Otherwise choose $e_h(\mu) > \max\,\{k,\la_h\}$ minimal such that
  $[h,c] \not\in R(\mu)$ for $e_h(\mu) \leq c \leq \la_{h-1}$.
  Finally, set $f_h(\mu) = r(h+e_h(\mu))$.
\end{defn}

\begin{figure}\centering
\includegraphics[scale=0.6]{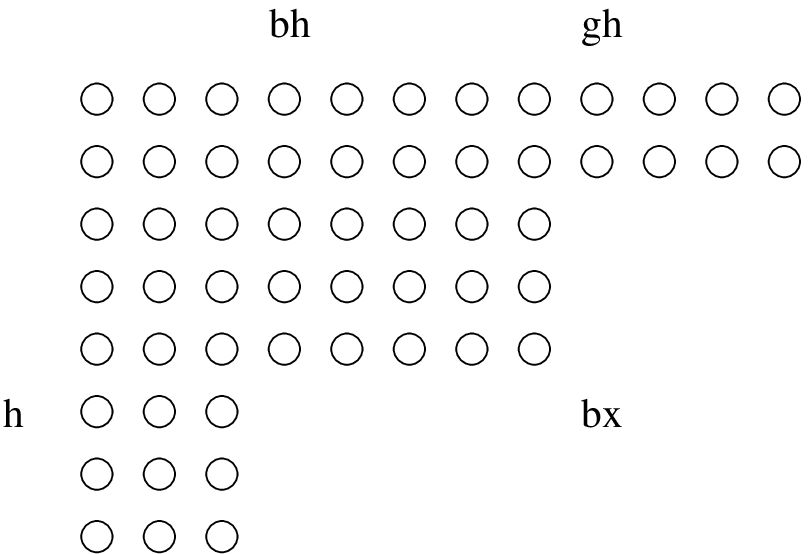}
\caption{The set $\cC$ near the pair ${\mathbf x} = (h,g_h)
  \in \partial \cC$.}
\label{abgh}
\end{figure}

Notice that for $h < m$ we have $b_h = \min \{j \geq m \mid (h,j)
\not\in \cC \}$.  The integers $b_h$ and $g_h$ are illustrated in
Figure~\ref{abgh}.  In the definition of $R(\mu)$, suppose that some
box $[i,c] \in \mu\ssm\la$ with $c>k$ is $k$-related to a box $[j,d]$
in the first $k$ columns of $\la$, such that $[j+1,d+1] \not\in \la$.
Then we have $r(i+c) = j+1$ and $d = 2k+2+j-i-c$.  It follows that
$[i,c] \in R(\mu)$ if and only if $\mu_{j+1} < d$.  In particular, if
$\mu$ is a $k$-strict partition such that $\la \to \mu$, then the set
$\A$ from \S\ref{classpieri} consists of the boxes of $\mu\ssm\la$ in
columns $k+1$ and higher which are not in $R(\mu)$.  Notice also that
since $h+\la_h+1 \leq h+e_h(\mu) \leq h+\la_{h-1}$ and $r(y)$ is a
monotone increasing function of $y$, we always have
\begin{equation}\label{eqn:bfg}
  b_h \leq f_h(\mu) \leq g_h \,.
\end{equation}
In particular we have $f_h(\mu) \geq m$ and $(h,f_h(\mu)) \notin \cC$;
when $h=m$ the inequality is true because $\la_{m-1} > 2k-e_h(\mu) =
2k+m-h-e_h(\mu)$.  Furthermore, if $[h,\la_{h-1}] \in R(\mu)$, then
$f_h(\mu) = g_h$.

\begin{example}
  Let $k=3$, $\la= (9,7,3,2,1,1)$, and $\mu = (11, 12, 7, 2, 2)$.
  Then
  \[
  \cC(\la) = \{(1,1),(1,2),(1,3),(1,4),(2,2),(2,3),(2,4) \}.
  \]
  Figure \ref{efgfig} illustrates $\la$ and $\mu$, with the boxes in
  $\mu\ssm\la$ shaded, and the boxes in $R(\mu)$ marked.  Note that
  there is one box in $\la\ssm\mu$.  We have $e_2(\mu)=8$,
  $f_2(\mu)=5$, $g_2=5$, $e_3(\mu)=6$, $f_3(\mu)=4$, and $g_3=5$.
  \begin{figure}
    \centering
    \includegraphics[scale=0.35,viewport=0 0 450 350]{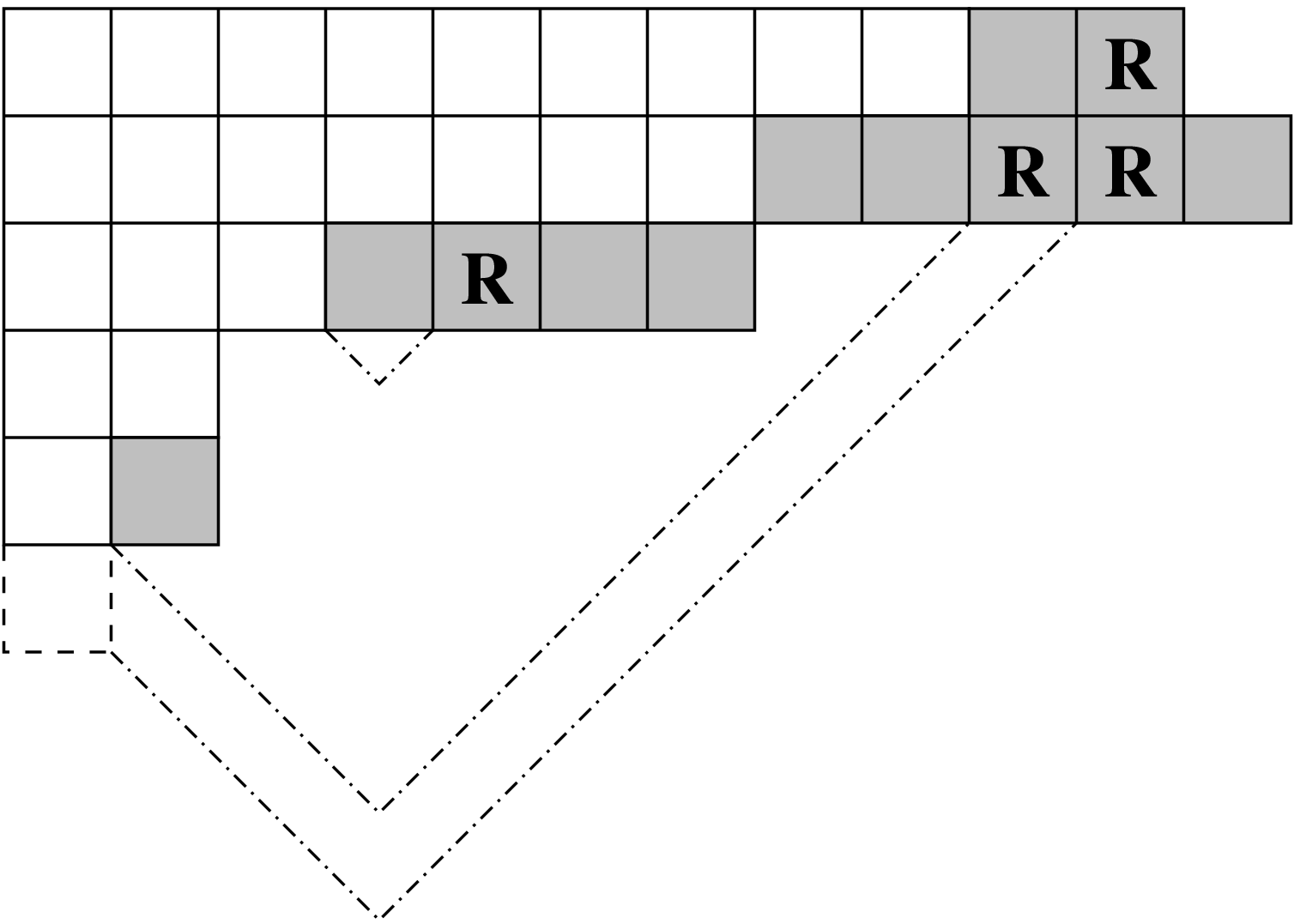}
    \caption{The shapes $\la$ and $\mu$, with $\mu\ssm\la$ shaded.}
    \label{efgfig}
  \end{figure}
\end{example}

\begin{lemma}\label{lambdabg}
  If $2\leq h\leq m$ then we have $\la_{h-1}-\la_h \geq g_h-b_h+1$.
\end{lemma}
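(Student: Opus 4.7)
Set $a = h+\la_h+1$ and $a' = h+\la_{h-1}$, so that by definition $b_h = r(a)$ and $g_h = r(a')$. The inequality to prove is equivalent to
\[
r(a') - r(a) \;\leq\; a' - a,
\]
since $a' - a = \la_{h-1} - \la_h - 1$. First I would verify $a' \geq a$: because $h \leq m$, the part $\la_{h-1}$ is strictly greater than $k$, and the $k$-strictness of $\la$ then forces $\la_{h-1} > \la_h$, so $a' - a = \la_{h-1} - \la_h - 1 \geq 0$. Thus both $r(a)$ and $r(a')$ are well defined and it suffices to control how fast $r(y)$ grows with $y$.

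The key step is the Lipschitz-type inequality
\[
r(y+1) \;\leq\; r(y) + 1 \qquad \text{for all } y,
\]
which then yields the lemma by iteration. To prove it, let $j = r(y+1)$; the case $j=1$ is trivial since $r(y) \geq 1$ always (because $\la_0 = \infty$), so assume $j \geq 2$. By the defining condition of $r$, we have $\la_{j-1} > 2k + j - (y+1)$, and since $\la_{j-1}$ is an integer this upgrades to $\la_{j-1} \geq 2k + j - y$. Monotonicity of $\la$ then gives $\la_{j-2} \geq \la_{j-1} \geq 2k+j-y > 2k + (j-1) - y$, which is exactly the condition that $j-1$ witness $r(y) \geq j-1 = r(y+1)-1$.

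Iterating this bound $a' - a$ times yields $r(a') \leq r(a) + (a'-a)$, which rearranges to $g_h - b_h \leq \la_{h-1} - \la_h - 1$, i.e., the desired inequality $\la_{h-1} - \la_h \geq g_h - b_h + 1$. The main (and really only) obstacle is recognizing that the lemma is a disguised statement about the slow growth of the function $r$; once one isolates the Lipschitz inequality, its proof is a one-line manipulation using $\la_0 = \infty$, the integrality of the parts of $\la$, and the weakly decreasing property of $\la$.
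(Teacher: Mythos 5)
Your proof is correct. You reformulate the lemma as $r(a') - r(a) \leq a' - a$ with $a = h+\la_h+1$ and $a' = h+\la_{h-1}$, and prove it by iterating a Lipschitz inequality $r(y+1) \leq r(y)+1$. The one-step bound is verified correctly: if $j = r(y+1) \geq 2$, then $\la_{j-1} > 2k+j-y-1$, so integrality gives $\la_{j-1} \geq 2k+j-y$, and monotonicity of $\la$ gives $\la_{j-2} \geq \la_{j-1} > 2k+(j-1)-y$, whence $r(y) \geq j-1$. The base observations ($r(y) \geq 1$ since $\la_0=\infty$; $\la_{h-1} > \la_h$ from $k$-strictness since $\la_{h-1} > k$) are also right.

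The paper's proof is structurally different but relies on the same facts. It splits into cases $b_h = g_h$ (where the claim reduces to $\la_{h-1} > \la_h$) and $b_h < g_h$, and in the latter case combines the two \emph{endpoint} inequalities $\la_{b_h} \leq 2k+b_h-h-\la_h$ (from $b_h$ being maximal) and $\la_{g_h-1} > 2k+g_h-h-\la_{h-1}$ (from the definition of $g_h$), then subtracts and uses $\la_{g_h-1} \leq \la_{b_h}$ to conclude $\la_{h-1}-\la_h > g_h-b_h$. Your approach is more modular: it isolates a clean sublemma (the function $r$ is $1$-Lipschitz) that eliminates the case split and makes the conclusion immediate by telescoping. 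Both are short and both ultimately rest on the definition of $r$, integrality, and the weakly decreasing property of $\la$; neither buys anything substantially more than the other for this particular lemma, though the Lipschitz formulation is a slightly cleaner abstraction and would be reusable elsewhere.
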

\begin{proof}
  The inequality is clear if $b_h=g_h$, as $\la$ is $k$-strict and $h\leq
  m$.  If $b_h<g_h$, then since $b_h = r(h+\la_h+1)$ and $g_h =
  r(h+\la_{h-1})$ we have $\la_{b_h} \leq 2k+b_h-h-\la_h$ and $\la_{g_h-1} >
  2k+g_h-h-\la_{h-1}$, which implies that $\la_{h-1}-\la_h >
  g_h-\la_{g_h-1}-b_h+\la_{b_h} \geq g_h-b_h$.
\end{proof}

\begin{lemma}\label{lem:fisg}
  Let $\mu$ be an integer sequence and $2 \leq h \leq m$.  If $\mu_h
  \geq \la_{h-1}$ and $\la_{h-1} + \mu_{g_h} \leq 2k+g_h-h$, then
  $f_h(\mu)=g_h$.
\end{lemma}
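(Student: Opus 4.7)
My plan is to reduce the conclusion $f_h(\mu)=g_h$ to verifying that the box $[h,\la_{h-1}]$ lies in $R(\mu)$. Indeed, $g_h = b_{h-1} = r((h-1)+\la_{h-1}+1) = r(h+\la_{h-1})$ by Definition \ref{efg}(a), so if $[h,\la_{h-1}] \in R(\mu)$ then Definition \ref{efg}(c) gives $e_h(\mu)=\la_{h-1}$, and hence $f_h(\mu) = r(h+e_h(\mu)) = r(h+\la_{h-1}) = g_h$. So everything reduces to establishing membership in $R(\mu)$.

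Next I would check the three conditions in the definition of $R(\mu)$ for the box $[h,\la_{h-1}]$. First, the hypothesis $\mu_h \geq \la_{h-1}$ immediately gives $[h,\la_{h-1}] \in \mu$. Second, since $h\leq m$ we have $h-1<m$ or $h-1=m-1$, and in either case the definition of the middle row $m$ forces $\la_{h-1}>k$. Combined with $k$-strictness of $\la$ (which forces $\la_{h-1}>\la_h$ when $\la_{h-1}>k$, using either $k$-strictness proper if $\la_h>k$ or the trivial bound $\la_h\leq k<\la_{h-1}$ if $h=m$), this shows $[h,\la_{h-1}] \notin \la$ and $\la_{h-1}>k$ as required.

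The last condition for $[h,\la_{h-1}] \in R(\mu)$ is $\mu_{r(h+\la_{h-1})} \leq 2k+r(h+\la_{h-1})-h-\la_{h-1}$, which upon substituting $r(h+\la_{h-1})=g_h$ becomes $\mu_{g_h} \leq 2k+g_h-h-\la_{h-1}$, i.e.\ $\la_{h-1}+\mu_{g_h} \leq 2k+g_h-h$. This is precisely the second hypothesis of the lemma.

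Combining these steps, $[h,\la_{h-1}] \in R(\mu)$, so $e_h(\mu)=\la_{h-1}$ and $f_h(\mu)=g_h$. There is essentially no obstacle here: the statement is really just an unpacking of definitions, with the one slightly nontrivial ingredient being the reinterpretation of $g_h$ as $r(h+\la_{h-1})$ so that the hypothesis translates word-for-word into the containment $[h,\la_{h-1}] \in R(\mu)$.
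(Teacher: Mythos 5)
Your proof is correct and follows the same route as the paper: reduce to showing $[h,\la_{h-1}]\in R(\mu)$ via the identity $g_h=r(h+\la_{h-1})$, then read off $e_h(\mu)=\la_{h-1}$ and $f_h(\mu)=g_h$ from Definition~\ref{efg}(c). You simply spell out the membership conditions for $R(\mu)$ in more detail than the paper's terse version does.
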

\begin{proof}
  We have $\la_{h-1}>k$ and $[h,\la_{h-1}] \in \mu\ssm\la$.  Since
  $g_h = r(h+\la_{h-1})$, the inequality $\mu_{g_h} \leq 2k + g_h - h
  - \la_{h-1}$ shows that $[h,\la_{h-1}] \in R(\mu)$.  This implies
  that $e_h(\mu) = \la_{h-1}$ and $f_h(\mu) = r(h+\la_{h-1}) = g_h$,
  as required.
\end{proof}

\begin{lemma}\label{Rflemma}
  Let $2 \leq h \leq m$ and let $\mu$ and $\mu'$ be integer
  sequences such that $\mu_h \geq \la_{h-1}$, $\mu'_h \geq
  \la_{h-1}$, and $\mu_j = \mu'_j$ for $\max(m,h+1) \leq j \leq
  g_h$.  Then $[h,c]\in R(\mu)$ if and only if $[h,c]\in R(\mu')$
  for all $c\leq \la_{h-1}$.  In particular, we have
  $e_h(\mu)=e_h(\mu')$ and $f_h(\mu)=f_h(\mu')$.
\end{lemma}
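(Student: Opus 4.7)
The plan is to fix $c$ with $k < c \leq \la_{h-1}$, analyze the two conditions ($[h,c] \in \mu\ssm\la$ and $\mu_{r(h+c)} \leq 2k + r(h+c) - h - c$) that jointly define membership in $R(\mu)$, and show that each is equivalent for $\mu$ and $\mu'$. Once this is established for every such $c$, the equalities $e_h(\mu) = e_h(\mu')$ and $f_h(\mu) = r(h + e_h(\mu)) = f_h(\mu')$ follow immediately from Definition~\ref{efg}(c), since $e_h$ depends only on the set of boxes $[h,c]$ with $c \leq \la_{h-1}$ that lie in $R$.

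The first condition is the easy part: both $\mu_h$ and $\mu'_h$ are at least $\la_{h-1} \geq c$, so $[h,c] \in \mu\ssm\la \iff c > \la_h \iff [h,c] \in \mu'\ssm\la$. For the second condition I would locate $r(h+c)$ using monotonicity of $r$ together with the assumption $\la_h < c \leq \la_{h-1}$, yielding
\[
  b_h = r(h+\la_h+1) \leq r(h+c) \leq r(h+\la_{h-1}) = g_h.
\]
A crucial sub-claim is that $b_h \geq m$ whenever $h \leq m$: for $h < m$ the strict decrease $\la_h > \la_{h+1} > \cdots > \la_{m-1} > k$ forced by $k$-strictness gives $\la_h + \la_{m-1} > 2k + (m-1-h)$, hence $r(h+\la_h+1) \geq m$; for $h = m$ a direct inspection shows $b_m = m$. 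Consequently $r(h+c)$ lies in $[\max(m,h+1), g_h]$ in the generic case, and the hypothesis $\mu_j = \mu'_j$ on this range makes the defining inequality of $R$ equivalent for $\mu$ and $\mu'$.

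I expect the only delicate point to be the boundary case $r(h+c) < \max(m,h+1)$. Since $r(h+c) \geq b_h \geq m$, this can happen only when $h = m$ and $r(h+c) = m$; then the defining inequality becomes $\mu_m \leq 2k - c$, which contradicts $\mu_m \geq \la_{m-1} > k > 2k - c$ (using $c > k$). So in this case $[h,c] \notin R(\mu)$, and symmetrically $[h,c] \notin R(\mu')$, closing the equivalence. Everything else is a mechanical unpacking of definitions.
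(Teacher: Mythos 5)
Your proof is correct and follows essentially the same approach as the paper: reduce to boxes $[h,c]$ with $k < c \leq \la_{h-1}$, note $[h,c]\in\mu\ssm\la$ is automatic given $c>\la_h$, show $m \leq r(h+c) \leq g_h$ using $k$-strictness and monotonicity of $r$, and invoke the hypothesis $\mu_j = \mu'_j$ on that range. The paper is terser (it derives $m \leq j \leq g_h$ directly from $k+m+1 \leq h+c \leq h+\la_{h-1}$ and dismisses the case $j = h$ with the phrase ``provided that $j > h$'' without spelling out why it is harmless); you handle the boundary case $r(h+c)=h=m$ explicitly, which is a worthwhile clarification but not a different argument.
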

\begin{proof}
  Let $[h,c] \in \mu\ssm\la$ satisfy $k<c\leq \la_{h-1}$, and set $j =
  r(h+c)$.  Since $k$-strictness of $\la$ implies that $k+m+1 \leq h+c
  \leq h+\la_{h-1}$, we obtain $m \leq j \leq g_h$ and hence $\mu_j =
  \mu'_j$ provided that $j>h$.  It follows that $[h,c]\in R(\mu)$ if
  and only if $[h,c] \in R(\mu')$, as required.
\end{proof}

If we are given a fixed valid 4-tuple $(D,\mu,S,h)$ with $1\leq h \leq
m$, we will use the shorthand notation $b = b_h$, $g=g_h$, $R =
R(\mu)$, $e = e_h(\mu)$, and $f = f_h(\mu)$; the values $e$ and $f$
will be used only when $\mu_h \geq \la_{h-1}$.  The precise value of
$f$ will play a crucial role in our proof that the Pieri terms in
(\ref{pieriTla}) appear in (\ref{initeq}) with the correct
multiplicities.  For example, it is part of the following definition
of a condition $\X$, that will be used to identify undesired valid
4-tuples.

\begin{defn}\label{wvdef}
  Let $(i,j)\in \Delta$ be arbitrary.
  We define two conditions $\W(i,j)$ and $\X$ on a valid $4$-tuple
  $(D,\mu,S,h)$ as follows.
  \[
  \W(i,j)\ :\ \mu_i+\mu_j > 2k + j-i \,.
  \]
  Condition $\X$ is true if and only if $(h,h) \in D$ and
  \[
  \mu_h \geq \mu_{h-1} \  {\mathrm{or}}  \ \mu_h > \la_{h-1} \
  {\mathrm{or}} \ (\mu_h = \la_{h-1} \ {\mathrm{and}}  \
  (h,\f)\notin S) \,.
  \]
\end{defn}

\subsection{}\label{ss:subrule}

The following {\em substitution rule\/} will be applied iteratively to
rewrite the right hand side of (\ref{initeq}).  It may be applied to
any valid 4-tuple of positive level and will result in either a
REPLACE statement, indicating that the 4-tuple should be replaced by
one or two new 4-tuples, or a STOP statement, indicating that the
4-tuple should not be replaced.

\medskip

\begin{center}
{\bf \underline{Substitution Rule}}
\end{center}

\medskip
\medskip

Let $(D,\mu,S,h)$ be a valid 4-tuple of level $h\geq 1$.  Assume first
that $(h,h)\notin D$.  If

\medskip
\begin{center}
{\bf (i)} there is an outer corner $(i,h)$ of $D$ with $i\leq m$
such that $\W(i,h)$ holds
\end{center}

\medskip
\noin
then REPLACE $(D,\mu,S,h)$ with
\[
(D \cup (i,h),\mu,S,h) \ \ \mathrm{and} \ \ (D \cup (i,h),R_{ih}\mu,
S\cup (i,h), h).
\]

\noin
Otherwise, if

\medskip
\begin{center}
{\bf (ii)} $D$ has no outer corner in column $h$ and $\mu_h > \la_{h-1}$,
\end{center}

\medskip
\noin
then STOP.

\medskip
Assume now that $(h,h)\in D$.  If

\medskip
\begin{center}
{\bf (iii)} there is an outer corner $(h,j)$ of $D$ with $j \leq \ell+1$
such that $\W(h,j)$ holds,
\end{center}

\medskip
\noin
then REPLACE $(D,\mu,S,h)$ with
\[
\begin{cases}
(D \cup (h,j),\mu,S,h)\ \ \mathrm{and} \ \ (D\cup (h,j),R_{hj}\mu, S\cup
(h,j),h) & \mathrm{if} \ \mu_j \leq \mu_{j-1}, \\
(D\cup (h,j),R_{hj}\mu, S\cup(h,j),h) & \mathrm{if} \
\mu_j > \mu_{j-1}.
\end{cases}
\]
Otherwise, if

\medskip
\begin{center}
  {\bf (iv)} $\W(h,\g)$ or $\X$ holds, and $D$ has an
  outer corner $(i,g)$ with $i \leq h$,
\end{center}

\medskip
\noin
then REPLACE $(D,\mu,S,h)$ with
\[
(D \cup (i,\g),\mu,S,h) \ \ \mathrm{and} \ \ (D \cup (i,\g),R_{i\g}\mu,
S\cup (i,\g), h).
\]
Otherwise, if

\medskip
\begin{center}
{\bf (v)}  $\X$ holds,
\end{center}

\medskip
\noin
then STOP.

\medskip

If  none of the above conditions hold, REPLACE
$(D,\mu,S,h)$ with $(D,\mu,S,h-1)$.

\medskip

\begin{defn}\label{meets}
  Let {\bf (x)} be one of the conditions {\bf (i)}--{\bf (v)} of the
  Substitution Rule.  We say that a valid $4$-tuple $\psi$ {\em meets}
  condition {\bf (x)} if $\psi$ reaches condition {\bf (x)} in the
  Substitution Rule, and condition {\bf (x)} is satisfied. Whenever
  the Substitution Rule REPLACES $\psi$ by one or two $4$-tuples
  $\psi_i$, we refer to $\psi$ as the {\em parent\/} term and the
  $\psi_i$ are its {\em children}.
\end{defn}

\subsection{}

Initially, we define the set $\Psi = \{(\cC,\nu,\emptyset,\ell+1) \mid
\nu\in \cN(\la,p)\}$; thus $\sum_{\psi\in\Psi}\ev(\psi)$ agrees with
the right hand side of (\ref{initeq}).  We then apply an {\em
  algorithm\/} which will change this set by replacing some $4$-tuples
with one or two new valid $4$-tuples.  The algorithm applies the
Substitution Rule to each element $(D,\mu,S,h)$ of level $h \geq 1$.
If the substitution rule results in a REPLACE statement, then the set
is changed accordingly; otherwise the substitution rule results in a
STOP statement, in which case the $4$-tuple $(D,\mu,S,h)$ is left
untouched.  These substitutions are iterated until no further elements
can be REPLACED, i.e., until the substitution rule results in a STOP
statement when applied to any remaining $4$-tuple with $h \geq 1$.

Since the set of pairs $D$ is not allowed to grow beyond column
$\ell+1$, the algorithm will terminate after a finite number of steps.
Notice that if $\psi = (D,\mu,S,h)$ is any 4-tuple produced by the
algorithm, then the initial 4-tuple $\psi_0 = (\cC, \nu, \emptyset,
\ell+1)$ that gave rise to $\psi$ can be recovered by the equation
$\nu = \prod_{(i,j)\in S} L_{ij}\mu$.  Here $L_{ij}$ denotes the
lowering operator which is the inverse of $R_{ij}$.  Furthermore, the
sequence of 4-tuples leading from $\psi_0$ to $\psi$ is uniquely
determined by $\psi$ because all choices made along the way are
recorded in the set $S$.  In particular, no $4$-tuple can be produced
multiple times.

Suppose that the $4$-tuple $\psi=(D,\mu,S,h)$ occurs in the
algorithm. If $\psi$ is REPLACED by two 4-tuples $\psi_1$ and
$\psi_2$, we deduce from Lemma \ref{easylm} that $\ev(\psi) =
\ev(\psi_1) + \ev(\psi_2)$.  Moreover, if $\psi$ meets {\bf (iii)} and
is REPLACED by the single 4-tuple $\psi'=(D\cup (h,j),R_{hj}\mu,
S\cup(h,j),h)$, then Lemmas \ref{commuteA} and \ref{easylm} imply that
$\ev(\psi)=\ev(\psi')$.  Indeed, it follows from Corollary
\ref{tamecor} below that $\mu_{j-1} = \mu_j - 1$ and $D\cup (h,j)$ has
no outer corner in column $j$, so Lemma~\ref{commuteA} shows that
$\ev(D\cup (h,j),\mu,S,h)=0$.

When the algorithm terminates, let $\Psi_0$ (respectively $\Psi_1$)
denote the collection of all $4$-tuples $(D,\mu,S,h)$ in the final set 
such that $h=0$ (respectively $h>0$). We say that a 4-tuple $\psi$ 
{\em survives the algorithm} if at least one of its successors lies 
in $\Psi_0$. The above analysis implies that 
\[
\sum_{\nu\in N}T(\cC,\nu) = \sum_{\psi\in\Psi_0}\ev(\psi) +
\sum_{\psi\in\Psi_1}\ev(\psi).
\]

In the next section, we will prove the following two claims.

\begin{claim}
\label{claim1}
For each 4-tuple $\psi = (D,\mu,S,0)$ in $\Psi_0$ with $\mu_{\ell+1}
\geq 0$, $\mu$ is a $k$-strict partition with $\lambda \to \mu$ and
$\ev(\psi) = T(\cC(\mu),\mu)$.  Furthermore, for each such partition
$\mu$, there are exactly $2^{\NN(\lambda,\mu)}$ such 4-tuples $\psi$,
in accordance with the Pieri rule.
\end{claim}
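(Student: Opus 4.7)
The plan is to prove Claim~\ref{claim1} by induction on the descending level $h$ of the algorithm, propagating the following invariant along any 4-tuple $(D,\mu,S,h)$ whose descendants reach level $0$: (a) $\la_i-1\leq\mu_i\leq\la_{i-1}$ for $i>h$ and $\mu_i\geq\la_i$ whenever $\la_i>k$; (b) $D\cap\{(i,j)\mid j>h\}=\cC(\mu)\cap\{(i,j)\mid j>h\}$; (c) the local forms of the Pieri conditions (1) and (2) of \S\ref{classpieri} hold for the rows past $h$.  At the terminal level $h=0$, (b) forces $D=\cC(\mu)$, while (a) and (c) together with $\mu_{\ell+1}\geq 0$ force $\mu$ to be a $k$-strict partition with $\la\to\mu$.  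The evaluation identity $\ev(\psi)=T(\cC(\mu),\mu)$ then follows directly from Definition~\ref{recursedefB}.

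For the induction step, I would analyze the Substitution Rule branch by branch.  The two-child REPLACE branches (i), (iii), and (iv) are each an application of Lemma~\ref{easylm}, and they preserve the invariant on the ``finalized'' regions.  In the single-child REPLACE of (iii) when $\mu_j>\mu_{j-1}$, one invokes the tameness property (Corollary~\ref{tamecor}) to deduce $\mu_{j-1}=\mu_j-1$, whereupon Lemma~\ref{commuteA} shows that the dropped child evaluates to zero.  The pivotal step is the ``decrement $h\to h-1$'' transition, which fires exactly when none of (i)--(v) applies: this is where the invariant must be extended to incorporate row $h$.  Here Lemmas~\ref{lem:CinDinCDC}, \ref{lambdabg}, \ref{lem:fisg}, and~\ref{Rflemma} are the main tools, together with the precise definitions of $b_h$, $g_h$, and $f_h(\mu)$, to confirm that all pairs of $\cC(\mu)$ in row/column $h$ have been correctly recorded in $D$ and that every possible $k$-related configuration has been enforced by the $\W$- and $\X$-conditions.

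For the counting, I would exhibit a bijection between the elements of $\Psi_0$ that yield a fixed Pieri partition $\mu$ and the subsets of the $\NN(\la,\mu)$ connected components of $\A(\la,\mu)$.  The idea is that each component of $\A$ produces exactly one ``genuine'' two-child REPLACE step in the algorithm: the two children differ only by whether $S$ records a raising operator associated with that component, and both branches survive all subsequent $\W$-/$\X$-tests.  Every other two-child branching is either forced to a single surviving child by Lemma~\ref{commuteA} or produces one child trapped in $\Psi_1$.  Multiplying over distinct components yields the factor $2^{\NN(\la,\mu)}$.

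The main obstacle will be the delicate case analysis inside the decrement step, especially when $(h,h)\in D$ and condition (iv) fails: one must show that $\mu_h<\la_{h-1}$, or else $\mu_h=\la_{h-1}$ with $(h,f_h(\mu))\in S$, and that in either case the pairs near $(h,g_h)$ agree with $\cC(\mu)$, and that Pieri condition (2) is met for the removed boxes in row $h$.  Separately, I would need to verify (to be stated as a companion to Claim~\ref{claim1}) that the ``bad'' survivors in $\Psi_1$ contribute zero to $\sum_{\psi\in\Psi_1}\ev(\psi)$, using Lemma~\ref{commuteC} and the $\X$-condition to produce the required pairwise cancellations; this is what justifies discarding everything but $\Psi_0$ in the final comparison with the Pieri rule.
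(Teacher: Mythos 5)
Your strategy---propagate an invariant down the substitution forest, conclude $\la\to\mu$ and a characterization of $D$ at level $0$, then match the $2^{\NN(\la,\mu)}$ multiplicity via a component-wise bijection---is essentially the paper's strategy, though the paper organizes \S\ref{sec3} as predecessor-tracing arguments about a terminal $4$-tuple rather than as a running invariant, and only introduces an invariant (the ``good'' $4$-tuples sandwiched between $D_{h}$ and $D_{h-1}$) in the bijection argument of \S\ref{sec4}. Two ingredients in your sketch are stated too loosely to survive a careful write-up. First, your invariant (b), $D\cap\{(i,j)\mid j>h\}=\cC(\mu)\cap\{(i,j)\mid j>h\}$, is not maintained in Phase~2 (where $(h,h)\in D$), because there the rule adds pairs $(h,j)$ with $j>h$ to row $h$ mid-level; moreover at $h=0$ the algorithm actually delivers $D=\cC_{\ell+1}(\mu)$ (Proposition~\ref{survivors:muD}), not $\cC(\mu)$. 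The correct condition is the paper's sandwich $D_{h}\subset D\subset D_{h-1}$ with $D_{h}=\cC\cup\{(i,j)\in\cC_{\ell+1}(\mu)\mid i>h\text{ or }(j>h\text{ and }(i,j-1)\in\cC)\}$, which mixes rows and columns.

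Second, the counting argument ``each component of $\A$ produces exactly one genuine two-child REPLACE step, both branches surviving'' conflates the distinguished boxes within a component. A single component generally has several distinguished boxes, each triggering a two-child branch; only the branch attached to the rightmost (optional) distinguished box gives two surviving children, while the non-optional ones force $S$ to contain the corresponding pair. The paper makes this precise via the disjoint decomposition $S=E'\cup F\cup G$ with $E'\subset E$ arbitrary (Lemma~\ref{lem:assoc} and the ensuing proposition), where $|E|=\NN(\la,\mu)$, $F$ records pairs attached to non-optional distinguished boxes, and $G$ records pairs coming from $k$-related removed boxes. Without this $E,F,G$ bookkeeping---together with the ``only if'' direction (Lemmas~\ref{lem:combine} and \ref{lem:neighbor}) and the constructive ``if'' direction (building the explicit path in the forest and showing the good invariant propagates)---the claim that ``every other two-child branching is forced or dies in $\Psi_1$'' is an assertion rather than an argument. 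Your overall plan would work, but these details are exactly where the substance of the proof lives.
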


\begin{claim}
\label{claim2}
There exists an involution $\iota:\Psi_1 \to\Psi_1$ of the form
$\iota(D,\mu,S,h) = (D,\mu',S',h)$ such that $\ev(\psi) +
\ev(\iota(\psi)) = 0$, for every $\psi\in\Psi_1$.
\end{claim}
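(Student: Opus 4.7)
The plan is to construct $\iota$ so that the partner $\iota(\psi)=(D,\mu',S',h)$ differs from $\psi=(D,\mu,S,h)$ only by the modification of a single pair of adjacent entries of $\mu$ (with $S'$ adjusted accordingly) and so that the identity $T(D\cap\Delta^\circ,\mu)+T(D\cap\Delta^\circ,\mu')=0$ follows from either Lemma~\ref{commuteA} or Lemma~\ref{commuteC}. Since $\iota$ fixes both $D$ and $h$, the common factor $2^{-\#\{i\,:\,(i,i)\in D\}}$ in Definition~\ref{recursedefB} cancels from $\ev(\psi)+\ev(\iota(\psi))$. Concretely, $\mu'$ agrees with $\mu$ outside positions $j,j+1$ for some well-chosen $j$ and on those positions is given either by the plain swap $(\mu_j,\mu_{j+1})\mapsto(\mu_{j+1},\mu_j)$ (when Lemma~\ref{commuteC} applies, requiring $(j,j+1)\in D$, matching $D$-patterns in rows $j$ and $j+1$ to the right of column $j+1$, and $\mu_j+\mu_{j+1}>2k$) or by the shifted swap $(\mu_j,\mu_{j+1})\mapsto(\mu_{j+1}-1,\mu_j+1)$ (when Lemma~\ref{commuteA} applies, requiring $(j,j+1)\notin D$ and matching $D$-patterns in columns $j$ and $j+1$ above row $j$). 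Both rules are self-inverse, and since $\iota$ fixes $D$, a 4-tuple with $(h,h)\notin D$ (case (ii)) can only be paired with another such, and similarly for $(h,h)\in D$ (case (v)); the two classes do not mix.

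For $\psi$ meeting condition (ii), namely $(h,h)\notin D$ with no outer corner in column $h$ and $\mu_h>\la_{h-1}$, I take $j=h-1$. The absence of the potential outer corner at $(h,h)$ forces $(h-1,h)\notin D$, since otherwise downward-closedness would place $(h-1,h-1)$ in $D$ and make $(h,h)$ an outer corner. The absence of other outer corners in column $h$ then forces columns $h-1$ and $h$ of $D$ to agree above row $h-1$ (any mismatch at row $i<h-1$ would create an outer corner $(i+1,h)$), and this is exactly the data of Lemma~\ref{commuteA}. For $\psi$ meeting condition (v), namely $(h,h)\in D$ with $\X$ true but neither (iii) nor (iv) triggered, I choose $j$ according to which disjunct of $\X$ is active: when $\mu_h\geq\mu_{h-1}$ the natural choice is $j=h-1$ and Lemma~\ref{commuteC} is brought to bear, using $(h-1,h)\in D$ (by downward-closedness from $(h,h)\in D$) and the fact that the failure of (iv) supplies the column-matching hypothesis below row $h$; the remaining two disjuncts of $\X$ are handled analogously, with the failure of (iii) providing the column-matching condition above row $h$ and the value $f=f_h(\mu)$ pinpointing the swap position in the third case.

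Having defined $\iota$ piecewise, the verification splits into three parts: showing $\mu'\neq\mu$ (no fixed points), showing $\iota(\psi)\in\Psi_1$ meeting the same sub-case of (ii) or (v) (so $\iota^2=\mathrm{id}$), and showing that the parent composition $\nu'=\prod_{(i,j)\in S'}L_{ij}\mu'$ lies again in $\cN(\la,p)$ so that $\iota(\psi)$ is actually produced by the algorithm. The sign cancellation then follows immediately from the cited lemma. The main obstacle will be controlling case (v): the three disjuncts of $\X$ interact with the outer-corner absences forced by the failures of (iii) and (iv) in a subtle way, and one must ensure that the swap does not inadvertently enable any of conditions (i), (iii), or (iv) at the partner, which would imply the algorithm should have rewritten the partner further, contradicting its membership in $\Psi_1$. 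A secondary difficulty is establishing the structural invariants (such as $\mu_{h-1}\geq\la_{h-1}$ in case (ii)) needed to show that the ``obvious'' partner actually lies in $\Psi_1$; these will require codifying how $\mu$ evolves under the Substitution Rule into auxiliary lemmas.
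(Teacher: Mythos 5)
Your high-level plan matches the paper's: a sign-reversing involution built from a local swap (plain in Phase~2, shifted in Phase~1) whose cancellation comes from Lemmas~\ref{commuteA} and~\ref{commuteC}, and a separate verification that the partner is actually produced by and stops the algorithm. However, the specifics as you describe them have gaps that would derail the argument.

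First, the paper \emph{does} allow fixed points: when $(h,h)\in D$ and $\mu_{h-1}=\mu_h$, it sets $\iota\psi=\psi$, which is consistent with Claim~\ref{claim2} because Lemma~\ref{commuteC} forces $T_{\dots,\mu_{h-1},\mu_h,\dots}=-T_{\dots,\mu_h,\mu_{h-1},\dots}$, hence $\ev(\psi)=0$. Your plan to ``show $\mu'\neq\mu$ (no fixed points)'' is not something you can establish, and trying to would send you down the wrong path; you should instead observe that the fixed points evaluate to zero. Second, the swap position is \emph{always} $j=h-1$ in the paper, independent of which disjunct of $\X$ is active. Your suggestion that the third disjunct should instead be handled by a swap whose position is determined by $f=f_h(\mu)$ is not correct: the hypotheses of Lemma~\ref{commuteC} can be verified at rows $h-1,h$ because at level $h$ no Phase-2 additions have been made in row $h-1$ (so row $h-1$ of $D$ stops at column $g$, matching row $h$ once Lemma~\ref{lem:hgbox}(b) guarantees $(h,g)\in D$); they will generally fail at other adjacent pairs. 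The role of $f$ is entirely different: it enters in the definition of $S'$, which you gloss over as ``adjusted accordingly.'' The paper sets $\wt S=\varpi(S)$ where $\varpi$ exchanges $(h-1,g)$ with $(h,f)$ and fixes everything else, and proving that this particular $\wt S$ makes $\iota\psi$ reachable (and that the conditions $\W$, $\X$, and rules (i)--(v) behave correctly along the new path) is where the bulk of the work lies (Lemmas~\ref{lem:ieq25c}--\ref{lem:cancelX} and Proposition~\ref{prop:invol}). You correctly flag the difficulty of showing $\iota(\Psi_1)\subset\Psi_1$, but without identifying the $\varpi$-twist of $S$ there is no candidate to verify, so this remains a genuine gap rather than a detail to be filled in.
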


\noindent
We remark that the 4-tuples $\psi\in\Psi_0$ with $\mu_{\ell+1} <0$
evaluate to zero trivially, by Definition \ref{recursedef}.  The two
claims therefore suffice to prove the Pieri rule (\ref{pieriTla}).

For each initial $4$-tuple $\psi_0=(\cC,\nu,\emptyset,\ell+1)$ of the
sum (\ref{initeq}), the algorithm produces a tree of 4-tuples with
root node given by $\psi_0$.  If the Substitution Rule REPLACES a
4-tuple $\psi$ by one or two other $4$-tuples $\psi_i$, we have a
branch in the tree from $\psi$ to the $\psi_i$.  The leaves of the tree
are exactly the $4$-tuples with $h=0$ or where the Substitution Rule
STOPS.  The fate of all the terms of the sum (\ref{initeq}) is encoded
by the collection of all the trees with root nodes
$(\cC,\nu,\emptyset,\ell+1)$ for $\nu \in \cN(\la,p)$.  This
collection will be called the {\em substitution forest}; the sum of
the cohomology classes represented by the roots of the substitution
forest is equal to the sum of classes given by the leaves.

\begin{example}
  We discuss an example of the substitution forest in detail.
  Consider the Grassmannian $\OG(n-1,2n+1)$ for $n\geq 5$, and the
  Pieri product
\[
c_1\cdot \ta_{2,1,1} = \ta_{2,1,1,1} + 2\,\ta_{3,1,1} + \ta_5.
\]
For simplicity, we will omit the commas in our notation for
compositions and pairs.  Thus $\la=211$, $ k= p = 1$, and we have
$\cC(\la) = \{ 11\}$ and $\cN(\lambda,p) = \{ 2111, 2120, 2210, 3110
\}$.  The substitution forest is pictured in Figure \ref{forest}, except
that we have
omitted those nodes $(D,\mu,S,h)$ which have $(D,\mu,S,h+1)$ as parent
and $(D,\mu,S,h-1)$ as child.

\begin{figure}\centering
\begin{tiny}
\psfrag{(i)}{(i)}
\psfrag{(ii)}{(ii)}
\psfrag{(iii)}{(iii)}
\psfrag{(iv)}{(iv)}
\psfrag{(v)}{(v)}
\psfrag{n1}{$(\{11\},2210,\emptyset,4)$}
\psfrag{n11}{$(\{11\},2210,\emptyset,2)$}
\psfrag{n111}{$(\{11,12\},2210,\emptyset,2)$}
\psfrag{n1111}{$(\{11,12,22\},2210,\emptyset,2)$}
\psfrag{n1112}{$(\{11,12,22\},2210,\{22\},2)$}
\psfrag{n112}{$(\{11,12\},3110,\{12\},2)$}
\psfrag{n1121}{$(\{11,12\},3110,\{12\},0)$}
\psfrag{n2}{$(\{11\},2111,\emptyset,4)$}
\psfrag{n21}{$(\{11\},2111,\emptyset,0)$}
\psfrag{n3}{$(\{11\},2120,\emptyset,4)$}
\psfrag{n31}{$(\{11\},2120,\emptyset,3)$}
\psfrag{n4}{$(\{11\},3110,\emptyset,4)$}
\psfrag{n41}{$(\{11\},3110,\emptyset,2)$}
\psfrag{n411}{$(\{11,12\},3110,\emptyset,2)$}
\psfrag{n4111}{$(\{11,12\},3110,\emptyset,0)$}
\psfrag{n412}{$(\{11,12\},4010,\{12\},2)$}
\psfrag{n4121}{$(\{11,12\},4010,\{12\},1)$}
\psfrag{n41211}{$(\{11,12,13\},5000,\{12,13\},1)$}
\psfrag{n412111}{$(\{11,12,13\},5000,\{12,13\},0)$}
\includegraphics[scale=0.5,viewport=60 0 629 324]{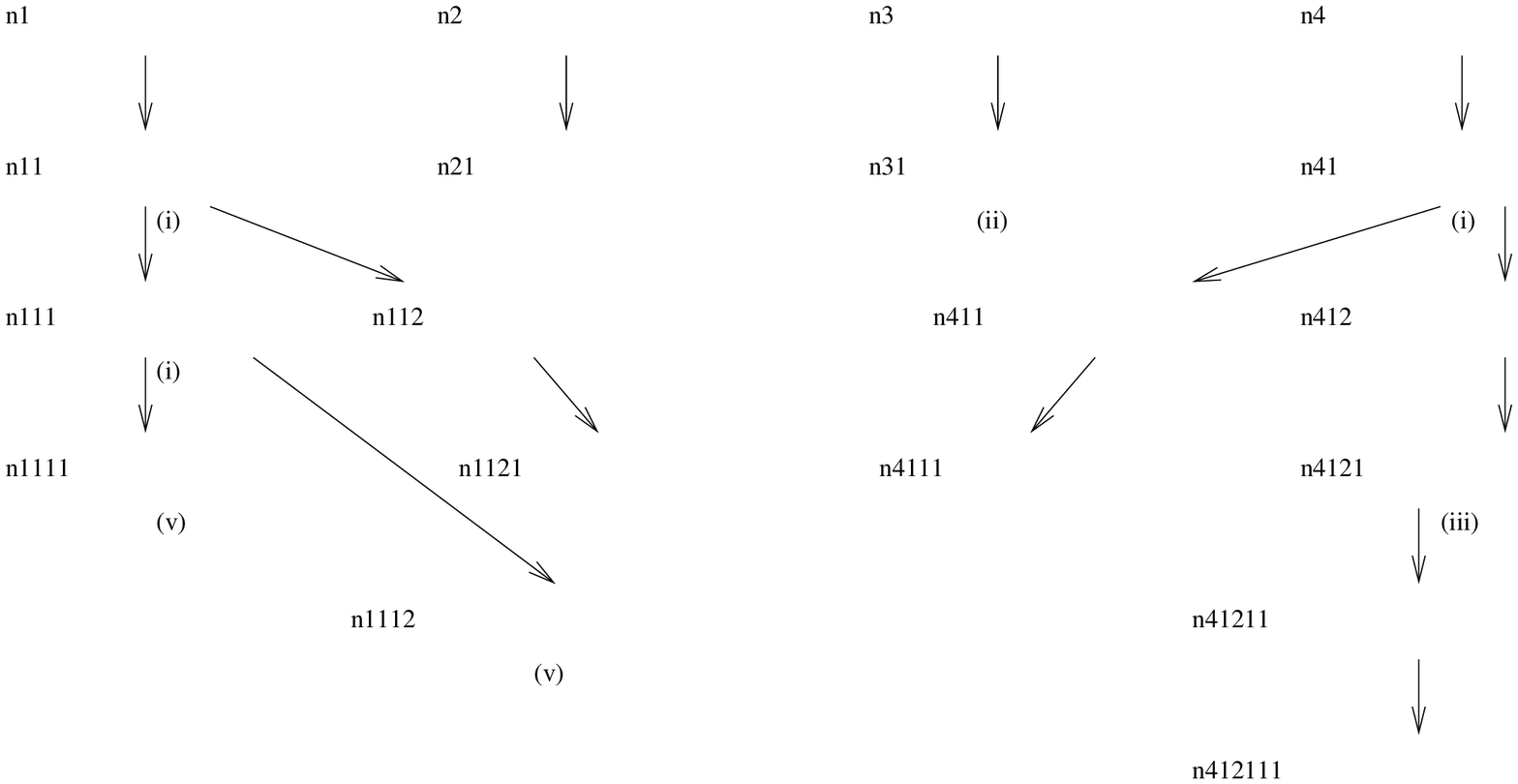}
\end{tiny}
\caption{The substitution forest for $\la=211$ and $k=p=1$.}
\label{forest}
\end{figure}

Observe that the root $(\{11\},2120,\emptyset,4)$ is the only
initial $4$-tuple that does not survive the algorithm.  We have
$\Psi_0 = \{$ $(\{11\},2111,\emptyset,0),$
$(\{11,12\},3110,\{12\},0),$ $(\{11,12\},3110,\emptyset,0),$
$(\{11,12,13\},5000,\{12,13\},0)$ $\}$, which corresponds exactly to
the terms in the Pieri product $c_1\cdot\tau_{211}$.  Furthermore,
each 4-tuple in the set $\Psi_1 = \{$
$(\{11,12,22\},2210,\emptyset,2),$ $(\{11,12,22\},2210,\{22\},2),$
$(\{11\},2120,\emptyset,3)$ $\}$
evaluates to zero in the cohomology ring of $\OG$.
\end{example}

\section{Proof of Theorem \ref{mainthm}}
\label{mainpf}

\subsection{}\label{sec1}

Recall the fixed choices of $p$, $\lambda$, $\ell$, $\cC$, and $m$
from \S \ref{initnot}.  In \S\ref{sec1} through \S\ref{sec3} we
furthermore let $\psi=(D,\mu,S,h)$ denote a $4$-tuple which occurs at
some step in the algorithm, i.e., a node of the substitution forest.
The symbols $D$, $\mu$, $S$, $h$ will refer to components of the
$4$-tuple $\psi$.  We will occasionally work with more than one valid
4-tuple.  If $(D',\mu',S',h')$ is an additional 4-tuple, then the sets
and values that Definition~\ref{efg} associates to this 4-tuple will
be called $R'$, $e'$, $f'$, and $g'$.

The algorithm has two phases.  A $4$-tuple $\psi$ is in Phase 1 if
$(h,h)\notin D$, and in Phase 2 if $(h,h)\in D$.  The level $h$ is
always used to index an entry of the integer sequence $\mu$ in $\psi$;
it begins at $h=\ell+1$ and decreases as the 4-tuple proceeds through
the algorithm.  In Phase 1 we have $h \geq m$, while $h \leq m$ in
Phase 2.  Throughout the algorithm we have $i \leq m \leq j$ for each
$(i,j)\in S$, so $\mu$ is obtained from the initial composition $\nu$
by removing boxes from rows weakly below the middle row of $\la$ and
adding them to rows weakly above the middle row.

The set $D$ is initially equal to $\cC$ and grows when REPLACE
statements are encountered.  Lemma~\ref{lem:DinCDC} below shows that
all pairs added to $D$ come from the outer rim $\partial\cC$.  In
Phase 1, pairs are added by rule {\bf(i)} to column $h$, so as the
level $h$ decreases from $\ell+1$ to $m$, these pairs are added along
vertical columns of $\partial\cC$, proceeding from top (row 1) to
bottom (row $m$) and right to left.  In Phase 2, the set $D$ mainly
grows when rule {\bf(iii)} adds pairs to row $h$, in which case the
pairs are added in horizontal rows of $\partial\cC$, from left to
right and bottom to top.  In some cases rule {\bf(iv)} will add extra
pairs $(i,g)$ to $D$, where $i \leq h$.  Lemma~\ref{lem:ivdooms}
implies that if $\psi$ meets {\bf(iv)}, then it will not survive the
algorithm, and only pairs from column $g$ of $\partial\cC$ can be
added to its successors.  In particular, all 4-tuples in $\Psi_0$ are
produced from the the initial 4-tuples by applications of rules
{\bf(i)} and {\bf(iii)}.

Our proof of Theorem \ref{mainthm} occupies the remainder of this
section.  In \S\ref{sec2} we prove some properties satisfied by
4-tuples that occur in the algorithm.  Additional properties for
4-tuples in $\Psi_0$ are proved in \S\ref{sec3}.  The proof of Claim
\ref{claim1} is then given in \S\ref{sec4}, while Claim \ref{claim2}
is justified in \S\ref{sec5}.

\subsection{}\label{sec2}

We prove some lemmas that reveal what can happen to the 4-tuple $\psi
= (D,\mu,S,h)$ during the algorithm.

\begin{lemma}\label{lem:DinCDC}
  We have $D \subset \cC \cup \partial\cC$.
\end{lemma}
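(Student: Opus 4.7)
The proof is by induction on the number of REPLACE operations that produce $\psi=(D,\mu,S,h)$ from the initial $4$-tuple $(\cC,\nu,\emptyset,\ell+1)$. Initially $D=\cC$, so the inclusion holds trivially. For the inductive step, I verify that each of the three REPLACE rules \textbf{(i)}, \textbf{(iii)}, \textbf{(iv)} which enlarge $D$ preserves the inclusion $D\subset\cC\cup\partial\cC$.

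When a pair $(i,j)$ is added to $D$, it is an outer corner of the previous $D$, so the two predecessors $(i-1,j)$ and $(i,j-1)$ (whenever they lie in $\Delta$) already belong to $D$. Since $\cC$ is an order ideal, it suffices to exhibit one of these neighbors inside $\cC$: then $(i-1,j-1)\in\cC$ follows automatically from the order ideal property, so $(i,j)\in\partial\cC$. The boundary case $i=1$ is immediate.

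My strategy is to use the stratified way in which the algorithm grows $D$. At each level $h$ the three rules act along a fixed direction: in Phase~1, the level $h$ descends from $\ell+1$ down to $m$ and rule~\textbf{(i)} enlarges only column $h$; in Phase~2, the level $h$ descends from $m$ down to $0$, rule~\textbf{(iii)} enlarges only row $h$, and rule~\textbf{(iv)} enlarges only column $g_h$. For rule~\textbf{(i)} at level $h$, the neighbor $(i,h-1)$ must lie in $\cC$: column $h-1$ has not yet been touched, so $(i,h-1)\in D$ forces $(i,h-1)\in\cC$. For rule~\textbf{(iv)} at level $h$, the defining identity $g_h=b_{h-1}=r(h-1+\la_{h-1}+1)$ places $(h-1,g_h-1)$ in $\cC$ directly, and the constraint $i\leq h$ on the added pair transfers this to $(i,g_h-1)\in\cC$ via the order ideal; the borderline subcase $i=h$ requires additionally tracing how $(h-1,g_h)$ entered $D$ (via an earlier application of rule~\textbf{(i)} at level $g_h$ or rule~\textbf{(iv)} at a higher Phase~2 level).

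The substantive case is rule~\textbf{(iii)}: when $(h,j)$ is added, both neighbors may have been inserted by earlier rules and thus lie in $\partial\cC\setminus\cC$. To handle this I plan to combine the condition $\W(h,j)\colon\mu_h+\mu_j>2k+j-h$ with the relations $\mu_i\geq\la_i$ for $i\leq m$ (a consequence of the raising operators recorded in $S$, all of which have first coordinate at most $m$) and the $k$-strictness of $\la$, which forbids $\la_{h-1}=\la_h$ when $\la_h>k$. Together these promote the non-strict inequality $\la_{h-1}+\la_{j-1}\geq 2k+j-h$ coming from the outer-corner condition into the strict inequality required for $(h-1,j-1)\in\cC$, thereby closing the induction. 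This verification of rule~\textbf{(iii)} is the principal obstacle.
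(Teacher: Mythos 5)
Your proposal takes a genuinely different route from the paper. The paper proves the lemma by a single backward-tracing contradiction: if $(i,j)$ is added to $D$ and $(i-1,j-1)\notin\cC$, then the predecessor $\psi'$ that added $(i-1,j)$ must have level $h'$ with $i\le h\le h'\le j-1$ (because $(i-1,j-1)$, which is already in $D'\ssm\cC$, was itself added at a level $<j$), and one checks that each of rules \textbf{(i)}, \textbf{(iii)}, \textbf{(iv)} would place $h'$ outside this window. You instead verify, for each REPLACE rule separately, that the added pair lands in $\partial\cC$. Your treatment of \textbf{(i)} (column $h-1$ untouched in Phase~1, then use the order ideal) and of \textbf{(iv)} (deduce $(h-1,g_h-1)\in\cC$ from $g_h=b_{h-1}$ and $k$-strictness, then use the order ideal; the subcase $i=h$ needs no extra tracing) are sound.

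The gap is in rule \textbf{(iii)}, which you rightly flag as the hard case, but the plan you sketch does not close it. Neither the induction hypothesis nor the outer-corner condition yields the claimed non-strict bound $\la_{h-1}+\la_{j-1}\geq 2k+j-h$: from $(h,j-1)\in\partial\cC$ one gets $\la_{h-1}+\la_{j-2}>2k+j-h-1$ and from $(h-1,j)\in\partial\cC$ one gets $\la_{h-2}+\la_{j-1}>2k+j-h+1$, but in both the index shift goes the wrong way because $\la$ is weakly decreasing, so neither dominates $\la_{h-1}+\la_{j-1}$ from below. The inequality $\mu_i\geq\la_i$ ($i\le m$) also points the wrong way: to combine $\W(h,j)$ with $\mu_j\le\la_{j-1}$ you would need an upper bound $\mu_h\le\la_{h-1}$, but this can fail, since rule \textbf{(iii)} fires before condition $\X$ is tested and the raising operators $R_{hj}$ in its own REPLACE step increase $\mu_h$, so $\mu_h>\la_{h-1}$ is entirely possible when \textbf{(iii)} is applied. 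The paper's argument sidesteps this by never estimating $\mu_h$ at all: it only uses the levels at which pairs of $D\ssm\cC$ can be created. If you want to keep the rule-by-rule structure, you will need, for rule \textbf{(iii)}, to replace the intended inequality chase with some version of the paper's backward trace on $(h-1,j)$.
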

\begin{proof}
  It is enough to show that if the substitution rule adds the pair
  $(i,j)$ to $D$, then $(i,j) \in \partial\cC$.  Notice first that $i
  \leq h \leq j$.  If $(i,j) \not\in \partial\cC$, then $i>1$ and
  $(i-1,j-1) \not\in \cC$.  Let $\psi' = (D',\mu',S',h')$ be the most
  recent predecessor of $\psi$ such that $(i-1,j) \not\in D'$.  Then
  $\psi'$ meets rule {\bf(i)}, {\bf(iii)}, or {\bf(iv)}, which adds
  the pair $(i-1,j)$ to $D'$.  Since the pair $(i-1,j-1) \in
  D'\ssm\cC$ was added to a predecessor of $\psi'$ of level smaller
  than $j$, it follows that $i \leq h \leq h' \leq j-1$, so $\psi'$
  does not meet {\bf(i)} or {\bf(iii)}.  But $\psi'$ also does not
  meet {\bf(iv)} because $g' \leq j-1$, a contradiction.
\end{proof}

\begin{lemma}\label{phaseonelm}
  If $j > h$ and $(j,j) \not\in D$, then $\mu_j \leq \la_{j-1}$.
\end{lemma}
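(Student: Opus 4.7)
The plan is to prove the lemma by induction on the number of substitution steps used to produce $\psi=(D,\mu,S,h)$. The base case is the initial 4-tuple $\psi_0=(\cC,\nu,\emptyset,\ell+1)$: the hypothesis $j>h=\ell+1$ forces $\mu_j=\nu_j=0\leq\la_{j-1}$ by the definition of $\cN(\la,p)$.

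For the inductive step, three types of child 4-tuple must be handled. When the substitution rule only enlarges $D$ (the first child of rule (i), (iii), or (iv)), the condition $(j,j)\notin D'$ is stronger than $(j,j)\notin D$ while $\mu$ and $h$ are unchanged, so the conclusion is inherited from the inductive hypothesis. When the rule applies a raising operator $R_{ab}$ (the second child), the added pair $(a,b)$ always satisfies $a\leq h$: rule (i) gives $a\leq m\leq h$, rule (iii) gives $a=h$, and rule (iv) stipulates $a\leq h$. Hence for $j>h'=h$ we have $j\neq a$, so $\mu'_j\in\{\mu_j,\mu_j-1\}$, and both values are $\leq\la_{j-1}$ by the inductive hypothesis.

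The main obstacle is the level-decrease step $h'=h-1$, where the only new case is $j=h$: namely, that $(h,h)\notin D$ forces $\mu_h\leq\la_{h-1}$. Since $(h,h)\in\cC\subset D$ whenever $h<m$, I may restrict to Phase 1 with $h\geq m$. My plan is to argue by contradiction: if $\mu_h>\la_{h-1}$, then the failure of rule (ii) forces $D$ to contain an outer corner $(i_0,h)$ in column $h$, while the failure of rule (i) forces $\W(i_0,h)$ to fail for every such corner with $i_0\leq m$. I then aim to exhibit an outer corner for which $\W$ does hold, producing a contradiction.

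The crucial structural observation is that every pair added by the algorithm lies in a column at least the level at which it was added: rule (i) adds in column $h$, rule (iii) adds in column $j\geq h$, and rule (iv) adds in column $g_h$, which satisfies $g_h\geq h$ whenever $h\leq m$ since $\la_{h-1}>k$. As levels are processed in decreasing order, every pair in $D$ sits in a column $\geq h$, so column $h-1$ of $D$ coincides with column $h-1$ of $\cC$. For an outer corner $(i_0,h)$ of $D$ with $i_0<h$, the order-ideal property forces $(i_0,h-1)\in D$, hence $(i_0,h-1)\in\cC$, giving $\la_{i_0}+\la_{h-1}>2k+h-1-i_0$. A short computation using $\la_m\leq k$ rules out $i_0=m$ when $h>m$, so $i_0<m$; since every pair in $S$ has first coordinate $\leq m$ and second coordinate $\geq m$, no operator in $S$ lowers $\mu_{i_0}$, yielding $\mu_{i_0}\geq\nu_{i_0}\geq\la_{i_0}$. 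Combining with $\mu_h\geq\la_{h-1}+1$ gives $\mu_{i_0}+\mu_h>2k+h-i_0$, i.e.\ $\W(i_0,h)$, contradicting our assumption. The remaining boundary subcase $h=m$, $i_0=m$ reduces $\W(m,m)$ to $2\mu_m>2k$, which follows from $\mu_m>\la_{m-1}>k$. I expect the bookkeeping showing that the algorithm only touches columns $\geq h$, together with the $i_0=m$ boundary case, to be the parts that require the most care.
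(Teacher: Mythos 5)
Your proof is correct and takes essentially the same route as the paper's: both arguments reduce to examining the moment the level drops below $j$, where the failure of rule {\bf(ii)} produces an outer corner $(i_0,j)$ and the failure of rule {\bf(i)} is contradicted by showing $\W(i_0,j)$ holds (using $(i_0,j-1)\in\cC$ and $\mu_{i_0}\geq\la_{i_0}$ in the off-diagonal case, and $\mu_m>\la_{m-1}>k$ for the diagonal corner $(m,m)$). The paper simply phrases this more compactly by passing directly to the most recent predecessor of level $j$, rather than carrying the inequality through every intermediate substitution step via induction as you do.
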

\begin{proof}
  Assume that $\mu_j > \la_{j-1}$ and let $\psi' = (D',\mu',S',j)$ be
  the most recent predecessor of $\psi$ of level $j$.  Then $\mu'_j
  \geq \mu_j$, and since $\psi'$ does not meet {\bf(ii)}, it follows
  that $D'$ has an outer corner $(i,j)$ in column $j$.  If $i<j$, then
  since $(i,j-1)\in \cC$ we obtain $\mu'_j + \mu'_i > \la_{j-1} +
  \la_i > 2k+(j-1)-i$, and otherwise we have $i=j=m$ and $\mu'_j >
  \la_{m-1} > k$.  In both cases $\psi'$ satisfies $\W(i,j)$.  But
  then $\psi'$ meets {\bf(i)} and is not the most recent predecessor
  of $\psi$ of level $j$, a contradiction.
\end{proof}

\begin{lemma}\label{fglemma}
  If $2\leq h\leq m$, $\mu_h\geq \la_{h-1}$, and $f<g$, then $\mu_g =
  \la_{g-1}$ and $(h,g)\notin S$.
\end{lemma}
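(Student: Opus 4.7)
I would begin by extracting analytic bounds on $\mu_g$ from the hypotheses. Since $e\leq\la_{h-1}$ always, and the choice $e=\la_{h-1}$ would give $f=r(h+\la_{h-1})=g$, the assumption $f<g$ forces $e<\la_{h-1}$ and thus $[h,\la_{h-1}]\notin R(\mu)$. Because $h\leq m$ implies $\la_{h-1}>k$ and $\mu_h\geq\la_{h-1}$ places $[h,\la_{h-1}]$ in $\mu\ssm\la$ in a column strictly greater than $k$, the defining condition of $R(\mu)$ in Definition~\ref{efg}(b) yields
\[
  \mu_g \;>\; 2k+g-h-\la_{h-1},
\]
which is also the contrapositive of Lemma~\ref{lem:fisg}. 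Moreover, by~\eqref{eqn:bfg} we have $f\geq m$, so $g>f\geq m\geq h$. Since every pair in $D$ has first coordinate at most $m<g$, it follows that $(g,g)\notin D$, and Lemma~\ref{phaseonelm} applied with $j=g$ gives $\mu_g\leq\la_{g-1}$.

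To upgrade these inequalities to the equality $\mu_g=\la_{g-1}$ and the non-membership $(h,g)\notin S$, I would analyze the history of the algorithm. Since every pair $(i,g)\in S$ satisfies $i\leq m<g$, the component $\mu_g$ has been decremented once for each such pair, and such pairs can only be introduced by rule~{\bf(i)} at level~$g$, or by rules~{\bf(iii)} or~{\bf(iv)} at some Phase~2 level $h'\leq m$ with $g_{h'}=g$. My plan is to establish the joint invariant that every 4-tuple $(D',\mu',S',h)$ occurring in the algorithm at level $h$ with $\mu'_h\geq\la_{h-1}$ and $f_h(\mu')<g$ satisfies $\mu'_g=\la_{g-1}$ and $(h,g)\notin S'$, by induction on the number of algorithm steps. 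In the inductive step, one examines which rule was applied to the parent 4-tuple; the only rules that can modify $\mu_g$ or insert $(h,g)$ into $S$ while the current level equals $h$ are rule~{\bf(iii)} with $j=g$ and rule~{\bf(iv)} with $g_{h'}=g$, so it suffices to verify the invariant in these cases.

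The main obstacle will be the detailed case analysis of rules~{\bf(iii)} and~{\bf(iv)} in the inductive step. One has to unwind the interplay among the preconditions $\W(h,g)$ and $\X$, the branching of rule~{\bf(iii)} on $\mu_j\leq\mu_{j-1}$ versus $\mu_j>\mu_{j-1}$, and the fact that rule~{\bf(iv)} only fires when rule~{\bf(iii)} does not. My plan is to use Lemma~\ref{Rflemma} to localize the dependence of $f_h$ on the entries $\mu_j$ for $\max(m,h+1)\leq j\leq g$, together with the monotonicity of Phase~2 at level $h$ (namely, $\mu_h$ can only increase and $\mu_j$ for $j>m$ can only decrease as the algorithm proceeds at this level), to show that any application of~{\bf(iii)} or~{\bf(iv)} that would violate the joint invariant would force the precondition~$\W$ or~$\X$ to fail, yielding the desired contradiction.
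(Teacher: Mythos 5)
Your preliminary bounds are correct: $[h,\la_{h-1}]\notin R(\mu)$ because $f<g$, so $\mu_g > 2k+g-h-\la_{h-1}$, and Lemma~\ref{phaseonelm} gives $\mu_g \le \la_{g-1}$ (indeed $\le \la_m \le k$). But at this point you abandon the direct route and propose to prove a joint invariant by induction over the entire algorithm history, deferring all the real work to a case analysis of rules {\bf(iii)} and {\bf(iv)} that you never carry out. That is a genuine gap: what you have written is a plan, not a proof, and the plan is far more delicate than necessary.

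The missing idea is to introduce $x := 2k+g-h-\mu_g$ and argue arithmetically with the function $r$. From $(h,f)\notin\cC$ (recall $f\ge m$) one gets $(h,g-1)\notin\cC$, hence $\la_h + \la_{g-1} \le 2k+(g-1)-h$, which together with $\mu_g\le\la_{g-1}$ gives $\la_h < x$; your lower bound $\mu_g > 2k+g-h-\la_{h-1}$ gives $x < \la_{h-1}$. Now suppose $\mu_g < \la_{g-1}$. Then $\la_{g-1} > 2k+g-(h+x)$, so $r(h+x)\ge g$; combined with $r(h+x)\le r(h+\la_{h-1})=g$ this forces $r(h+x)=g$. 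But then $\mu_{r(h+x)}=\mu_g = 2k+g-h-x$, so $[h,x]\in R(\mu)$ (note $x>k$ and $\la_h<x\le\mu_h$). Since $x<\la_{h-1}$, minimality of $e$ forces $e>x$, whence $f = r(h+e)\ge r(h+x)=g$, contradicting $f<g$. This gives $\mu_g=\la_{g-1}$ with no induction on the algorithm at all. For the second assertion, if $(h,g)\in S$ then some predecessor $\psi'$ of $\psi$ has $\mu'_g = \mu_g+1$, and since $\psi'$ has level at most $m<g$ and $(g,g)\notin D'$, Lemma~\ref{phaseonelm} gives $\mu'_g\le\la_{g-1}$, so $\mu_g<\la_{g-1}$, contradicting what was just proved. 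Your proposed history induction would, even if it could be pushed through, reprove from scratch facts that are already available directly from the combinatorics of $r$, $e$, $f$ and $R(\mu)$, and its base case (a 4-tuple arriving at level $h$ for the first time) has no obvious reason to satisfy $\mu_g=\la_{g-1}$ without essentially the same arithmetic argument.
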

\begin{proof}
  By assumption we have $h \leq m \leq f < g$, and
  Lemma~\ref{phaseonelm} implies that $\mu_g \leq \la_{g-1} \leq \la_m
  \leq k$.  Set $x = 2k+g-h-\mu_g$.  Since $(h,f) \notin \cC$ we get
  $(h,g-1) \notin \cC$ which implies $\la_h < x$.  Since
  $[h,\la_{h-1}] \notin R$ we also obtain $x < \la_{h-1}$.  Assume
  that $\mu_g < \la_{g-1}$.  Then we get $\la_{g-1} > 2k+g-h-x$, which
  implies that $g \leq r(h+x) \leq r(h+\la_{h-1}) = g$.  The
  definition of $x$ now shows that $[h,x] \in R$, from which we deduce
  that $x < e$ and $g = r(h+x) \leq r(h+e) = f$, a contradiction.
  Finally assume that $(h,g) \in S$.  Since $(h,g-1)\notin\cC$ we
  deduce that the parent of $\psi$ is $\psi' = (D \ssm (h,g), \mu',
  S\ssm(h,g),h)$, where $\mu' = L_{hg}\mu$.  But Lemma~4.2 then
  implies that $\mu_g < \mu'_g \leq \la_{g-1}$, a contradiction.
\end{proof}

We next make some observations concerning condition $\X$ and rules
{\bf (iv)} and {\bf(v)}.

\begin{lemma}\label{lem:stickyx}
  If condition $\X$ holds for $\psi$, then $\X$ also holds for the
  children of $\psi$.  In particular, $\psi$ does not survive the
  algorithm, and all its successors have level $h$.
\end{lemma}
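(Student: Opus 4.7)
The plan is twofold. First, I separate the two assertions of the lemma: once I show that $\X$ is preserved by every child of $\psi$, the ``in particular'' part follows easily. Indeed, $\X$ then persists along every successor chain, and since the algorithm terminates, every maximal chain ends with a STOP. The only STOP action in Phase 2 compatible with $\X$ is rule (v), which keeps the level equal to $h \geq 1$, so every terminal successor lands in $\Psi_1$ rather than $\Psi_0$, and $\psi$ does not survive. In particular, the final ``otherwise'' branch, which would decrease the level, is never reached while $\X$ holds, because that branch fires only when none of rules (iii), (iv), (v) applies, while rule (v) is triggered precisely by $\X$.

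Second, I verify preservation of $\X$ under the REPLACE actions coming from rules (iii) and (iv). In both rules, Child~1 (when present) leaves $\mu$ and $S$ unchanged and only enlarges $D$ by a pair $(i,j)$ with $i \leq h \leq j$. Since $\X$ depends only on the membership $(h,h) \in D$ together with $\mu, \la, S$, and $f_h(\mu)$, none of these data are affected, and $\X$ persists trivially for Child~1. For Child~2 the integer sequence becomes $\mu' = R_{ij}\mu$ and $S$ gains $(i,j)$. In rule (iii) the added pair is $(h,j)$ with $j > h$, so $\mu'_h = \mu_h + 1$ and $\mu'_{h-1} = \mu_{h-1}$; I verify the three disjuncts of $\X$ one by one: $\mu_h \geq \mu_{h-1}$ now holds strictly for $\mu'$; $\mu_h > \la_{h-1}$ is preserved; and the borderline case $\mu_h = \la_{h-1}$ upgrades to $\mu'_h = \la_{h-1}+1 > \la_{h-1}$. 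The subcase $i = h$ of rule (iv) is identical, since there too $\mu'_h = \mu_h + 1$.

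The remaining case is rule (iv) with $i < h$, where $\mu'_h = \mu_h$ and only $\mu_i$ and $\mu_g$ are perturbed, with $g = g_h \geq m \geq h$. When $i < h-1$, neither $\mu_h$ nor $\mu_{h-1}$ changes, so the first two disjuncts of $\X$ are immediately preserved; for the third I use Lemma \ref{Rflemma} together with Lemma \ref{lem:fisg} to control the perturbation of $\mu_g$, concluding that $f_h(\mu') = f_h(\mu)$, after which $(h, f_h(\mu')) \notin S \cup \{(i,g)\}$ because $i < h$. The key difficulty is the subcase $i = h-1$, because $\mu'_{h-1} = \mu_{h-1}+1$, which can violate the disjunct $\mu_h \geq \mu_{h-1}$ precisely when $\mu_h = \mu_{h-1}$. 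The plan is to rule out this borderline configuration by combining $(h,h) \in D$ with the fact that rule (iii) did not fire at $\psi$ and that $(h-1, g)$ must be an outer corner of $D$; this should force either $\mu_h > \la_{h-1}$, so that the second disjunct of $\X$ passes to $\psi'$, or $\mu_h = \la_{h-1}$ together with $(h, f_h(\mu')) \notin S'$, so that the third disjunct witnesses $\X$ for $\psi'$.

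The main obstacle will be this last subcase: disentangling how the outer-corner condition at $(h-1, g)$, the value of $f_h$, and the membership of $(h, f_h(\mu))$ in $S$ together constrain $\mu_h$ and $\mu_{h-1}$, so as to preclude the scenario $\mu_h = \mu_{h-1} < \la_{h-1}$, or to confirm that one of the remaining disjuncts of $\X$ survives the update. The rest of the proof is a fairly mechanical case analysis driven by the arithmetic of raising operators.
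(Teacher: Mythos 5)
Your proposal correctly handles the easy cases: the $S'=S$ child, the rule \textbf{(iii)} child with $S'\ne S$ (where $\mu'_h=\mu_h+1$), and the rule \textbf{(iv)} child with $i=h$, and your final ``in particular'' reduction is fine.  However the core of the lemma --- rule \textbf{(iv)} with $i<h$ --- contains a genuine gap in two places, which you yourself flag only partially.

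First, for $i<h-1$ you assert $f_h(\mu')=f_h(\mu)$ ``using Lemma~\ref{Rflemma} and Lemma~\ref{lem:fisg}.''  This is not available: Lemma~\ref{Rflemma} requires $\mu$ and $\mu'$ to agree on all of $\max(m,h+1)\leq j\leq g_h$, but the child has $\mu'_g=\mu_g-1$, so the hypothesis fails exactly at the entry $g=g_h$ that controls whether $[h,\la_{h-1}]\in R(\mu)$.  In fact $f_h(\mu)<g$ while $f_h(\mu')=g$ is a live possibility (precisely when $\W(h,g)$ holds for $\psi$).  What the paper actually proves is $f_h(\mu')=g$, not $f_h(\mu')=f_h(\mu)$; the conclusion $(h,f_h(\mu'))\notin S'$ then follows because $(h,g)\notin D\supset S$ (the outer corner $(i,g)$ with $i<h$ excludes $(h,g)$ from the order ideal $D$), not because of any relationship with $f_h(\mu)$.

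Second, and more importantly, establishing $f_h(\mu')=g$ via Lemma~\ref{lem:fisg} requires the inequality $\la_{h-1}+\mu'_g\leq 2k+g-h$, which is far from automatic when $\W(h,g)$ holds for $\psi$.  The paper obtains it by returning to the most recent predecessor $\ov\psi=(\ov D,\ov\mu,\ov S,g)$ of level $g$, letting $(a,g)$ be its outer corner in column $g$, observing that $\W(a,g)$ fails for $\ov\psi$ (else it would meet \textbf{(i)}), and chaining this with $k$-strictness ($\la_{h-1}\leq\la_a+a-h+1$) and the monotonicity of entries along the path ($\la_a\leq\ov\mu_a$, $\mu_g\leq\ov\mu_g$).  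This backtracking argument is the crux of the proof and is entirely absent from your proposal.  Your treatment also splits $i<h-1$ from $i=h-1$; the paper instead reduces uniformly to $\mu'_h=\la_{h-1}<\mu'_{h-1}$ (which is legitimate because $\mu_{h-1}\geq\nu_{h-1}\geq\la_{h-1}$ throughout, since no pair in column $h-1$ can enter $S$ before the level drops below $h$) and then runs the argument above.  You correctly identify the scenario $\mu_h=\mu_{h-1}<\la_{h-1}$ as the obstacle to rule out, but offer no mechanism for doing so; the fact $\mu_{h-1}\geq\la_{h-1}$ is the missing ingredient there.
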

\begin{proof}
  Assume that $\psi= (D,\mu,S,h)$ satisfies condition $\X$ and let
  $\psi' = (D',\mu',S',h)$ be a child of $\psi$.  If $S' = S$ then
  $\mu'=\mu$ and $\X$ also holds for $\psi'$, so we may assume that
  $S'\ssm S = \{(i,j)\}$ and $\mu' = R_{ij}\mu$, where $i \leq h$.  We
  can also assume that $\la_{h-1} = \mu_h = \mu'_h < \mu'_{h-1}$.
  Since $(h,h)\in D$ and $(i,j) \not\in D$, we get $i < h < j$.  In
  particular, $\psi$ meets {\bf(iv)} and $j=g$.  Let $\ov{\psi} = (\ov
  D, \ov\mu, \ov S, g)$ be the most recent predecessor of $\psi$ of
  level $g$, and let $(a,g)$ be the outer corner of $\ov D$ in column
  $g$.  Then $a \leq h-1$, and $\W(a,g)$ fails for $\ov\psi$ since it
  does not meet {\bf(i)}.  Using this and $k$-strictness of $\la$, we
  obtain $\la_{h-1}+\mu'_g < \la_{h-1}+\mu_g \leq
  (\la_a+a-h+1)+\ov\mu_g \leq \ov\mu_a+\ov\mu_g+1+a-h \leq 2k+1+g-h$.
  Lemma~\ref{lem:fisg} now implies that $f'=g$.  We conclude that
  $\psi'$ satisfies condition $\X$ as $\mu'_h = \la_{h-1}$ and $(h,f')
  = (h,g) \not\in S'$.
\end{proof}

\begin{lemma}\label{lem:hgbox}
  Assume that $(h,h)\in D$ and $\psi$ satisfies $\X$ or $\W(h,g)$.

  \smallskip\noin{\em(a)} If $h<g$ and $(h,g-1)\notin D$, then $\psi$
  meets {\bf(iii)}.

  \smallskip\noin{\em(b)} If $\psi$ meets {\bf(v)}, then $(h,g) \in D$.
\end{lemma}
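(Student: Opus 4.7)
The plan is to prove part~(a) first and then deduce part~(b) quickly from it. For part~(a), since $(h,h) \in D$ we are in Phase~2 of the substitution rule, so the relevant question is whether rule~{\bf(iii)} applies, namely whether there exists an outer corner $(h, j)$ of $D$ with $j \leq \ell + 1$ satisfying $\W(h, j)$. The candidate I would examine is $(h, j_0)$ where $j_0 > h$ is the smallest column with $(h, j_0) \notin D$; the standing hypothesis $(h, g-1) \notin D$ gives $h < j_0 \leq g - 1$. Minimality of $j_0$ gives $(h, j_0 - 1) \in D$, and the order-ideal property propagates this to $(i, j_0 - 1) \in D$ for all $i \leq h$. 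To show that $(i, j_0) \in D$ for all $i < h$, I would exploit $h \leq m$: the $k$-strictness of $\la$ together with the fact that $\la_i > k$ whenever $i < m$ forces $\la_1 > \la_2 > \cdots > \la_{h-1} > \la_h$ to strictly decrease, so $\la_i \geq \la_{h-1} + (h-1-i)$, whence $i + \la_i + 1 \geq h + \la_{h-1}$ and monotonicity of $r$ yields $r(i + \la_i + 1) \geq r(h + \la_{h-1}) = g > j_0$. The characterization~(\ref{eqn:Cfromr}) then gives $(i, j_0) \in \cC \subset D$, so $(h, j_0)$ is a bona fide outer corner.

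The main obstacle is verifying the width condition $\W(h, j_0)$: $\mu_h + \mu_{j_0} > 2k + j_0 - h$, for which I would handle the two hypotheses separately. If $\W(h, g)$ is assumed, the strategy is to propagate the inequality leftward from $g$ to $j_0$, using Lemma~\ref{phaseonelm} to bound $\mu_j \leq \la_{j-1}$ for those $j \in (j_0, g]$ with $(j,j) \notin D$, and exploiting $k$-strictness of $\la$ to absorb the slack column by column. If $\X$ is assumed, I would break into its three disjuncts---$\mu_h \geq \mu_{h-1}$, $\mu_h > \la_{h-1}$, or $\mu_h = \la_{h-1}$ with $(h, f) \notin S$---and draw on the history of $\mu$ recorded in $S$, together with the definition of $f = f_h(\mu)$, to extract enough lower bound on $\mu_h$ to force $\W(h, j_0)$. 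The combinatorial bookkeeping that ties together the shape of $D$, the values of $\mu$, and the position $f$ is where the real work lies.

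Part~(b) then follows quickly from part~(a). Assume $\psi$ meets~{\bf(v)}; then $\X$ holds and rule~{\bf(iv)} fails. Since $\X$ is true the first clause of~{\bf(iv)} (``$\W(h,g)$ or $\X$'') is satisfied, so the failure must be that $D$ has no outer corner $(i, g)$ with $i \leq h$. Suppose for contradiction that $(h, g) \notin D$. The case $h = g$ contradicts $(h, h) \in D$. If $h < g$ and $(h, g-1) \notin D$, part~(a) forces $\psi$ to meet~{\bf(iii)}, contradicting $\psi$ meeting~{\bf(v)}. Therefore $h < g$ and $(h, g-1) \in D$, which extends to $(i, g-1) \in D$ for all $i \leq h$ by the order-ideal property. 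Let $i_0$ be the smallest $i \in [1,h]$ with $(i_0, g) \notin D$; minimality gives $(i_0 - 1, g) \in D$ (vacuously when $i_0 = 1$), while $(i_0, g-1) \in D$ from above. Hence $(i_0, g)$ is an outer corner of $D$ with $i_0 \leq h$, contradicting the failure of~{\bf(iv)} and completing the proof.
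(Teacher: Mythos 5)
Your part (b) matches the paper's argument essentially verbatim (the paper phrases it more briefly, but the case split on whether $(h,g-1)\in D$ and the construction of the outer corner $(i_0,g)$ is exactly the intended deduction). Your part (a) also picks the same candidate: your $j_0$ is the paper's outer corner $(h,d)$, and your verification that $(h,j_0)$ really is an outer corner — $d\leq g-1$ so $(h-1,d)\in\cC$ — is on target.

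The gap is in verifying $\W(h,j_0)$, which you leave as a two-pronged sketch without the crucial ingredient. The paper's proof rests on the single chain
\[
\mu_g \;\leq\; \la_{g-1} \;\leq\; \la_d \;\leq\; \mu_d \,,
\]
whose first link is Lemma~\ref{phaseonelm}, whose middle link is that $\la$ is a partition and $d\leq g-1$, and whose last link is Lemma~\ref{lem:narrownoeat}(b) together with the observation that $D$ and $\cC$ agree in column $d$ (no box has ever been removed from row $d$). With that chain in hand both cases are one-liners: if $\X$ holds then $\mu_h\geq\la_{h-1}$ (uniformly, in all three disjuncts — rows above the middle only gain boxes, so $\mu_{h-1}\geq\la_{h-1}$), and $(h-1,d)\in\cC$ gives $\la_{h-1}+\la_d > 2k+d-h+1$; if $\W(h,g)$ holds then $\mu_h+\mu_d\geq\mu_h+\mu_g>2k+g-h\geq 2k+d-h$. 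Your plan misses the lower bound $\mu_{j_0}\geq\la_{j_0}$ entirely: for the $\W(h,g)$ branch you only invoke the upper bound $\mu_j\leq\la_{j-1}$, which goes the wrong way and cannot by itself control $\mu_{j_0}$ from below; and for the $\X$ branch you propose to mine the set $S$ and the quantity $f=f_h(\mu)$ for a lower bound on $\mu_h$, a detour that is both unnecessary (the needed fact $\mu_h\geq\la_{h-1}$ follows from $\X$ directly) and insufficient (a bound on $\mu_h$ alone does not yield $\W(h,j_0)$ without a companion bound on $\mu_{j_0}$). As written, the argument for part (a) does not close.
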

\begin{proof}
  Assume that $h<g$ and $(h,g-1) \notin D$, and let $(h,d)$ be the
  outer corner of $D$ in row $h$.  Then $d \leq g-1$ and
  $(h-1,d)\in\cC$.  Using Lemma~\ref{phaseonelm} and the fact that
  $\cC$ and $D$ have equally many pairs in column $d$, we obtain
  $\mu_g \leq \la_{g-1} \leq \la_d \leq \mu_d$.  If $\psi$ satisfies
  condition $\X$, then $\mu_h \geq \la_{h-1}$ and it follows that
  $\mu_h+\mu_d \geq \la_{h-1}+\la_d > 2k+d-h+1$.  If $\psi$ satisfies
  $\W(h,g)$, then $\mu_h+\mu_d \geq \mu_h+\mu_g > 2k+g-h \geq 2k+d-h$.
  This shows that $\psi$ satisfies $\W(h,d)$ and (a) follows.  If
  $\psi$ meets {\bf(v)} and $(h,g)\notin D$, then part (a) implies
  that $D$ has an outer corner in column $g$, so $\psi$ meets
  {\bf(iii)} or {\bf(iv)}.  This contradiction proves (b).
\end{proof}

The following result implies that no term meeting {\bf(iv)} survives
the algorithm, and also that applications of {\bf(iv)} happen in an
uninterrupted sequence.

\begin{lemma}\label{lem:ivdooms}
  Assume that $\psi$ meets {\bf(iv)} and let $\psi' = (D',\mu',S',h)$
  be any successor of $\psi$ of level $h$.

  \smallskip\noin{\em(a)} We have $(h,g-1) \in D$.

  \smallskip\noin{\em(b)} If $(h,g)\notin D'$ then $\psi'$ meets
  {\bf(iii)} or {\bf(iv)}.

  \smallskip\noin{\em(c)} If $(h,g)\in D'$ and $\psi'$ does not meet
  {\bf(v)}, then $S'=S$, the child $(D',\mu,S,h-1)$ of $\psi'$ meets
  {\bf(v)}, and $g_{h-1} = g$.
\end{lemma}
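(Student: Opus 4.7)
The plan is to prove the three parts in order.  Part (a) is essentially a corollary of Lemma~\ref{lem:hgbox}(a), applied to $\psi$ itself.  The three explicit hypotheses are in place because $\psi$ meets {\bf(iv)}: $(h,h)\in D$ and $\W(h,g)\vee\X$ hold by the definition of {\bf(iv)}, and $\psi$ fails {\bf(iii)} because {\bf(iii)} is checked before {\bf(iv)} in the substitution rule.  The remaining hypothesis $h<g$ follows from an order-ideal argument: the outer corner $(i,g)$ of $D$ required by {\bf(iv)} satisfies $(i,g)\notin D$, but $(h,h)\in D$ together with $g\leq h$ would force $(i,g)\leq (h,h)$ and hence $(i,g)\in D$; so $g>h$, and the contrapositive of Lemma~\ref{lem:hgbox}(a) yields $(h,g-1)\in D$.

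For part (b), the plan is to trace the unique path $\psi=\psi_0,\psi_1,\ldots,\psi_N=\psi'$ in the substitution forest and to prove by induction on $N$ that whenever $(h,g)\notin D_N$, the $4$-tuple $\psi_N$ satisfies $\W(h,g)\vee\X$ and admits an outer corner in column $g$ with row $\leq h$.  The outer corner is automatic from part (a): since $(h,g-1)\in D\subset D_N$ and $(h,g)\notin D_N$, the minimal row $i$ with $(i,g)\notin D_N$ satisfies $i\leq h$.  Preservation of $\W(h,g)\vee\X$ is the main content.  If $\X$ holds at any intermediate $\psi_i$, Lemma~\ref{lem:stickyx} propagates it to $\psi_N$.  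Otherwise $\W(h,g)$ is the active disjunct, and I would go step by step along the path: first-child branches leave $\mu$ fixed and trivially preserve $\W(h,g)$; the second-child branch of an {\bf(iv)}-application at $(i,g)$ with $i<h$ decrements $\mu_g$ by one without altering $\mu_h$, and I would verify, using Lemma~\ref{fglemma}, that the decrement either preserves $\W(h,g)$ or activates $\X$.  A parallel, shorter analysis handles any {\bf(iii)}-applications that occur.  The conclusion is that $\psi'$ meets either {\bf(iii)} (if an outer corner in row $h$ newly acquires $\W(h,j)$) or {\bf(iv)}.

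For part (c), once $(h,g)\in D'$, the column-$g$ outer corners with row $\leq h$ are exhausted, so {\bf(iv)} cannot fire at $\psi'$.  If $\psi'$ also fails {\bf(v)}, then $\X$ fails at $\psi'$, which in particular forces $\mu_h\leq\la_{h-1}$; combining this with the inequality $\la_{t-1}\leq 2k+t-h-\la_{h-1}$ that holds for $t>g=r(h+\la_{h-1})$ shows that every outer corner $(h,j^*)$ in row $h$ of $D'$ has $\W(h,j^*)$ false, so {\bf(iii)} cannot fire either.  The algorithm therefore advances to level $h-1$, producing the child claimed in the statement.  To obtain $S'=S$ (and hence $\mu'=\mu$), I would argue that any second-child branch of {\bf(iv)} taken along the path from $\psi$ to $\psi'$ would make $\X$ true at the moment $(h,g)$ enters $D'$, forcing {\bf(v)} to fire and contradicting the assumption.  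The equality $g_{h-1}=g$ reduces to $r(h-1+\la_{h-2})=r(h-1+\la_{h-1})$, which I would deduce from $k$-strictness of $\la$ together with $(h-1,g)\in\partial\cC$ (inherited by applying part (a) one row up).  Finally, verifying that the child at level $h-1$ meets {\bf(v)} amounts to checking $\X$ at level $h-1$, which follows from the configuration of $\mu_{h-1}$ and $S$ established above.

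The main technical obstacle is the case analysis in part (b), namely the preservation of $\W(h,g)\vee\X$ under the second-child branch of {\bf(iv)}, together with the closely parallel argument in part (c) that rules out any second-child branches.  All other steps reduce cleanly to the structural lemmas (Lemmas~\ref{lem:hgbox}, \ref{lem:stickyx}, \ref{fglemma}) already established earlier in this section.
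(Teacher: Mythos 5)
Your outline for part (a) matches the paper and is fine; the paper simply cites Lemma~\ref{lem:hgbox}, and your argument that $h<g$ follows from the order-ideal property is correct though not spelled out there.

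For parts (b) and (c), however, the proposal is missing the technical engine that makes the proof work. The paper begins with a crucial normalization: \emph{replace $\psi$ by the most recent predecessor whose parent did not meet \textbf{(iv)}}. Since the level does not change under \textbf{(iv)}, the element $\psi'$ in the lemma is still a successor of this new $\psi$, so the reduction is legitimate, and it is what allows the paper to deduce that $\W(i,g)$ fails for $\psi$ (otherwise the most recent predecessor of level $g$ would meet \textbf{(i)}). Combining this with $(h-1,g)\notin\cC$ and $\W(h,g)$ gives the exact chain
\[
\mu_h+\mu_g \;\le\; \la_{h-1}+\mu_g \;\le\; \la_i-h+1+i+\mu_g \;\le\; \mu_i+\mu_g+1+i-h \;\le\; 2k+1+g-h,
\]
forcing all inequalities to be equalities, so in particular $\mu_h=\la_{h-1}$ and $\mu_h+\mu_g=2k+1+g-h$. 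Without this pinned-down equality your step-by-step analysis cannot get off the ground: after a second-child \textbf{(iv)}-branch decrements $\mu_g$ by one, $\W(h,g)$ is \emph{necessarily} destroyed (the sum was exactly one above the threshold), and the correct conclusion is that $\X$ is \emph{forced} (via Lemma~\ref{lem:fisg}, since the decrement gives $\la_{h-1}+\mu'_g \le 2k+g-h$, hence $f'=g$; note that the relevant lemma here is \ref{lem:fisg}, not \ref{fglemma}). Your phrase ``either preserves $\W(h,g)$ or activates $\X$'' is therefore not a case split the proof can afford to leave open. Moreover, after the WLOG reduction there are in fact no \textbf{(iii)}-applications to handle along the path from $\psi$ to $\psi'$, so the ``parallel, shorter analysis'' you allude to is for a case that does not arise.

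For part (c), your argument that $\psi'$ cannot meet \textbf{(iii)} relies on $\mu_{j^*}\leq\la_{j^*-1}$ without checking the hypothesis of Lemma~\ref{phaseonelm}, and, more importantly, your route to $g_{h-1}=g$ does not go through. You propose to reduce it to $r(h-1+\la_{h-2})=r(h-1+\la_{h-1})$; the correct identity is $g_{h-1}=r(h-1+\la_{h-2})$ while $g=g_h=r(h+\la_{h-1})$, so what one needs is $r(h-1+\la_{h-2})=r(h+\la_{h-1})$, which is immediate \emph{once one knows} $\la_{h-2}=\la_{h-1}+1$. That equality is not a consequence of $k$-strictness and $(h-1,g)\in\partial\cC$ alone (these only give $\la_{h-2}\geq\la_{h-1}+1$); the paper extracts it from the failure of $\X$ at $\psi'$ together with the equalities from the chain above, in the case $S'=S$ which itself is established by the $S'\supsetneq S\Rightarrow\X$ dichotomy. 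In short, the overall shape of your plan is reasonable, but the missing WLOG step and the resulting exact equalities are where the lemma is actually proved, and without them both (b) and (c) have genuine gaps.
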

\begin{proof}
  Part (a) follows from Lemma~\ref{lem:hgbox}.  If $\psi$ satisfies
  condition $\X$ then assertions (b) and (c) follow from Lemma
  \ref{lem:stickyx}, so we may assume that $\X$ fails and $\W(h,g)$
  holds for $\psi$.  Without loss of generality we can also replace
  $\psi$ with the most recent predecessor whose parent did not meet
  {\bf(iv)}.  Let $(i,g)$ be the outer corner of $D$ in column $g$.
  Since $\psi$ does not meet {\bf(iii)} we have $i<h$.  Furthermore
  $\W(i,g)$ fails for $\psi$, since otherwise the most recent
  predecessor of $\psi$ of level $g$ meets {\bf(i)}.  We obtain
  $\mu_h+\mu_g \leq \la_{h-1}+\mu_g \leq \la_i-h+1+i + \mu_g \leq
  \mu_i+\mu_g+1+i-h \leq 2k+1+g-h$.  Since $\psi$ satisfies $\W(h,g)$
  we deduce that $\la_i-h+1+i = \la_{h-1} = \mu_h = 2k+1+g-h-\mu_g$.

  If $S' \supsetneq S$, then $\mu'_g < \mu_g$, and
  Lemma~\ref{lem:fisg} implies that $f'=g$.  In addition, we have
  either $(h,g) \notin S'$ or $\mu'_h > \mu_h = \la_{h-1}$.  Since
  both possibilities imply that $\psi'$ satisfies condition $\X$, it
  follows that assertions (b) and (c) are true for $\psi'$.

  Otherwise, we have $S' = S$ and $\mu' = \mu$.  Then $\psi'$
  satisfies $\W(h,g)$ and (b) is true.  Assume that $(h,g)\in D'$ and
  $\psi'$ does not meet {\bf(v)}.  Then $\X$ fails for $\psi'$, so we
  must have $\la_{h-2}-1 = \la_{h-1} = \mu_h < \mu_{h-1}$.  Since
  $(h-1,g)\notin S$, we deduce that $(D',\mu,S,h-1)$ satisfies
  condition $\X$.  Furthermore, since $(h-1,g)\notin\cC$ and
  $\la_{h-2}=\la_{h-1}+1$, we obtain $(h-2,g)\notin\cC$, so
  $g_{h-1}=g$.  We conclude that $(D',\mu,S,h-1)$ meets {\bf(v)},
  which completes the proof of (c).
\end{proof}

\begin{defn}
  Let $\partial^1\cC = \{ (i,j) \in \partial\cC \mid 
  (i,j-1)\in\cC \text{ or } i=j=m \}$.
\end{defn}

\begin{cor}\label{cor:addorder}
  Let $(i,j) \in D \ssm \cC$, and if $2 \leq h \leq m$ then assume
  that $j \neq g$.  If $(i,j) \in \partial^1\cC$ then this pair was
  added to $D$ by rule {\bf(i)}, and otherwise it was added by rule
  {\bf(iii)}.
\end{cor}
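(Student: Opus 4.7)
My plan is to proceed by induction on the pairs added to $D$ along the algorithm's path from $(\cC,\nu,\emptyset,\ell+1)$ to $\psi$. The base case, where no pairs have been added, is vacuous. For the inductive step, I fix $(i,j) \in D\ssm\cC$ satisfying the hypothesis, and let $\psi^*=(D^*,\mu^*,S^*,h^*)$ denote the predecessor of $\psi$ at which $(i,j)$ was added; the pair was added by exactly one of rules \textbf{(i)}, \textbf{(iii)}, \textbf{(iv)}.

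First I exclude rule \textbf{(iv)}. Any pair added by \textbf{(iv)} at $\psi^*$ has second coordinate $g_{h^*}$. By Lemma \ref{lem:ivdooms}, every successor of $\psi^*$ has level $h^*$ or $h^*-1$, and part (c) gives $g_{h^*-1}=g_{h^*}$ in the latter case; hence $g_h=g_{h^*}$ for the current $\psi$. The hypothesis $j\ne g$ then excludes this case whenever $2\leq h\leq m$. In the complementary ranges ($h>m$ places us in Phase 1 before any \textbf{(iv)}-step, while $h\leq 1$ cannot be reached past a \textbf{(iv)}-step by the same lemma except in degenerate situations dispatched directly), the case cannot arise at all.

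Next I treat rule \textbf{(i)}. If \textbf{(i)} added $(i,j)$ at $\psi^*$, then $j=h^*\geq m$ and $i\leq m$. The boundary case $i=j=m$ gives $(i,j)=(m,m)\in\partial^1\cC$ by definition. Otherwise $i<j$, and the outer-corner condition requires $(i,j-1)\in D^*$. Because the algorithm visits levels in decreasing order and stays in Phase 1 until $(m,m)$ is added, any earlier \textbf{(i)}-application was at a level $h'>j$ and added pairs in column $h'>j-1$, while \textbf{(iii)} and \textbf{(iv)} (Phase 2 rules) have not yet been triggered along this path. Thus no pair of $D^*\ssm\cC$ lies in column $j-1$, so $(i,j-1)\in\cC$ and $(i,j)\in\partial^1\cC$.

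The hard case is rule \textbf{(iii)}. Here $i=h^*\leq m$ and $j>i$, so $(i,j)\neq(m,m)$. I must show $(i,j-1)\notin\cC$. Suppose for contradiction that $(i,j-1)\in\cC$; equivalently $j=b_i$, so $(i,j)\in\partial^1\cC$. Using that $\la$ is $k$-strict one checks $b_i\geq m$, so the algorithm's path passed through level $j$ in Phase 1 en route to $\psi^*$. Since $(i,j-1)\in\cC$ remains in $D$ throughout and the pairs $(a,j)$ with $a<i$ that belong to $D^*$ were each added by rule \textbf{(i)} at level $j$ by the inductive hypothesis (they are automatically in $\partial^1\cC$), rule \textbf{(i)} must have considered the outer corner $(i,j)$ and skipped it; hence $\W(i,j)$ failed at level $j$. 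The remaining step, which is the main obstacle, is to derive a contradiction with $\W(i,j)$ holding at $\psi^*$. For this I would carefully track the evolution of $\mu_i+\mu_j$ from level $j$ down to $\psi^*$: the hypothesis $j\neq g$ combined with Lemma \ref{lem:ivdooms} rules out any \textbf{(iv)}-application that could touch column $j$; the fact that $(i,j)\notin D^*$ means $R_{ij}$ has not been applied via rule \textbf{(iii)}; and any would-be \textbf{(iii)}-step at an intermediate level $i'\in(i,j)$ affecting column $j$ is constrained by the inductive corollary. Combined with Lemma \ref{phaseonelm}, this bookkeeping forces $\W(i,j)$ to have held at level $j$, contradicting the skip by rule \textbf{(i)} and completing the proof.
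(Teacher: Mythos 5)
Your strategy mirrors the paper's: go to the predecessor where $(i,j)$ was first added, exclude rule \textbf{(iv)} via Lemma~\ref{lem:ivdooms}, handle \textbf{(i)} by noting nothing has been added in column $j-1$, and handle \textbf{(iii)} by arguing the pair would instead have been picked up by \textbf{(i)} at level $j$. But the last case is exactly where you leave a genuine gap, and you acknowledge it yourself: ``The remaining step, which is the main obstacle, is to derive a contradiction with $\W(i,j)$ holding at $\psi^*$. For this I would carefully track the evolution of $\mu_i+\mu_j$\dots''. The listed considerations do not assemble into a proof, and the crucial observation is missing.

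Here is what actually closes the gap. Let $\wt\psi=(\wt D,\wt\mu,\wt S,j)$ be the most recent predecessor of $\psi^*$ of level $j$. Since $(i,j)\in\partial^1\cC$, we have $(i,j-1)\in\cC$, and because $\cC$ is an order ideal this forces $(i,b)\in\cC$ for every $b\leq j-1$; therefore no pair $(i,b)$ with $b<j$ can lie in $D\ssm\cC$, and in particular none is added between $\wt\psi$ and $\psi^*$. Similarly, since $(i,j)\notin D^*$, the validity (order-ideal) property of $D^*$ rules out any pair $(a,j)$ with $a>i$, and pairs $(a,j)$ with $a<i$ are only added at level $j$ (rule \textbf{(i)}) or by \textbf{(iv)} (excluded); so no pair in column $j$ is added between $\wt\psi$ and $\psi^*$. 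Together these give $\wt\mu_i=\mu^*_i$ and $\wt\mu_j=\mu^*_j$, hence $\wt\mu_i+\wt\mu_j=\mu^*_i+\mu^*_j>2k+j-i$, i.e.\ $\W(i,j)$ holds for $\wt\psi$. The same column-$j$ argument shows $(i-1,j)\in\wt D$, so $(i,j)$ is an outer corner of $\wt D$, and $\wt\psi$ would meet \textbf{(i)} --- contradicting that $\wt\psi$ is the most recent predecessor of level $j$. Without this order-ideal reasoning the claim that $\W(i,j)$ ``was skipped at level $j$'' cannot be compared with $\W(i,j)$ holding at $\psi^*$, because $\mu_i$ and $\mu_j$ could, a priori, have changed in between. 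Note also that you do not need an induction on pairs at all once this is in place --- the paper's argument is direct --- and the inductive claim that the pairs $(a,j)$, $a<i$, were added by \textbf{(i)} is true but plays no role in the contradiction.

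Two smaller points: your exclusion of \textbf{(iv)} waves at ``degenerate situations dispatched directly'' for $h\leq 1$; the cleaner observation is that if any predecessor of $\psi^*$ meets \textbf{(iv)}, Lemma~\ref{lem:ivdooms} forces that predecessor to have the same level as $\psi^*$ and all pairs subsequently added lie in column $g$, so $j=g$, which is excluded by hypothesis when $2\leq h\leq m$ and impossible otherwise. And in the rule-\textbf{(i)} case you should justify, not merely assert, that $D^*\ssm\cC$ has no pair in column $j-1$ (this follows because by that point the only pairs added are in columns $\geq j$).
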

\begin{proof}
  Let $\psi' = (D',\mu',S',h')$ be the most recent predecessor of
  $\psi$ for which $(i,j) \notin D'$.  Then $(i,j)$ is an outer corner
  of $D'$.  If $\psi'' = (D'',\mu'',S'',h'')$ is any predecessor of
  $\psi'$ meeting {\bf(iv)}, then Lemma~\ref{lem:ivdooms} implies that
  that $h'' = h'$, $g_{h'}=g$, and $(h',g-1)\in D'$, so we must have
  $j=g$, a contradiction.  It follows that no predecessor of $\psi'$
  meets {\bf(iv)}.  If $\psi'$ meets {\bf(i)} then $h'=j$, and since
  $D'\ssm\cC$ contains no pairs in column $j-1$ we deduce that $(i,j)
  \in \partial^1\cC$.  Finally assume that $\psi'$ meets {\bf(iii)}
  and $(i,j)\in \partial^1\cC$.  Let $\wt\psi = (\wt D, \wt S, \wt\mu,
  j)$ be the most recent predecessor of $\psi'$ of level $j$.  Then
  $(i,j)=(h',j)$ is an outer corner of $\wt D$, since otherwise
  $(h'-1,j) \in D' \ssm \wt D$ was added to a 4-tuple on the path from
  $\wt\psi$ to $\psi'$, which is impossible.  But then the inequality
  $\wt\mu_i+\wt\mu_j = \mu'_i+\mu'_j > 2k+j-i$ implies that $\wt\psi$
  meets {\bf(i)}.  This contradiction finishes the proof.
\end{proof}

We now prove some results that will be used later to show that
surviving 4-tuples $\psi = (D,\mu,S,0) \in \Psi_0$ satisfy $\la \to
\mu$.

\begin{lemma}\label{lem:narrownoeat}
  Let $\psi=(D,\mu,S,h)$ and let $j\leq \ell$ be a positive integer.

  \smallskip\noin{\em(a)} If $h\geq 1$ and $(h,j) \not\in \cC$ and
  $(h+1,j) \in D$, then we have $\mu_j \geq \la_j$.

  \smallskip\noin{\em(b)} If $h \leq 1$ or $(h-1,j) \in \cC$,
  then we have $\mu_j \geq \la_j - 1$.  Moreover, if $\mu_j = \la_j -
  1$, then $D \smallsetminus \cC$ contains exactly one pair in column
  $j$, and this pair is also in $S$.
\end{lemma}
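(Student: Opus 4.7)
The plan is to prove (a) and (b) simultaneously by induction on the number of substitution steps producing $\psi = (D, \mu, S, h)$ from its root $\psi_0 = (\cC, \nu, \emptyset, \ell+1) \in \Psi$. For the base case $\psi = \psi_0$, we have $D = \cC$ and $\mu = \nu$ with $\nu_j \geq \la_j$ for each $j \leq \ell$: the hypothesis of (a) is then vacuous (since $\cC$ has no pair $(\ell+2, j)$ with $j \leq \ell$), and (b) holds with strict inequality, making its uniqueness clause vacuous.

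For the inductive step, I would analyze how the substitution rule produces $\psi$ from its parent $\bar\psi = (\bar D, \bar\mu, \bar S, \bar h)$, treating each rule separately. The guiding observations will be: $\mu$ differs from $\bar\mu$ by at most one raising operator $R_{ij}$, which decreases $\mu_j$ by $1$; every such $R_{ij}$ is triggered by a $\W$-condition $\bar\mu_i + \bar\mu_j > 2k + j - i$; all pairs added to $D$ lie in $\partial \cC$ by Lemma~\ref{lem:DinCDC}; and the second coordinate of any such added pair equals $\bar h$ in rule (i), the column of the added pair in rule (iii), or $g_{\bar h}$ in rule (iv). Combining the $\W$-condition with the bound $\bar\mu_i \leq \la_{i-1}$ from Lemma~\ref{phaseonelm} and the $k$-strictness of $\la$ is what restricts how far $\mu_j$ can drop.

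The subtle case is the level-decrease step $\psi = (\bar D, \bar\mu, \bar S, \bar h - 1)$, in which $D$, $\mu$, $S$ are unchanged but the hypotheses of (a) and (b) shift with the level. Here I exploit that $\bar\psi$ failed to meet any of rules (i)--(v): in Phase 1 this means every outer corner $(i, \bar h)$ of $\bar D$ violates $\W(i, \bar h)$ while $\bar\mu_{\bar h} \leq \la_{\bar h-1}$ (unless no such outer corner exists), and in Phase 2 the failures of (iii), (iv), (v) similarly constrain the outer corners in row $\bar h$ and column $g_{\bar h}$. Translating these constraints using the structure of $\cC$ yields the required properties for $\psi$ at level $\bar h - 1$.

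For the uniqueness assertion in (b), I track both $D \ssm \cC$ and $S$ along the path from $\psi_0$ to $\psi$. Rules (i), (iv), and the first subcase of (iii) produce two children, only one of which places the added pair into $S$; the second subcase of (iii) produces a single child in which the pair lies automatically in $S$. Under the hypothesis of (b), the goal is to show that $\mu_j = \la_j - 1$ can occur only via a single raising operator $R_{ij}$ applied along the path, corresponding to a single pair $(i, j) \in S \subset D \ssm \cC$, and that no "first-child" branch contributes a second pair to $D \ssm \cC$ in column $j$. The main obstacle will be the detailed case analysis for the level-decrease step, together with the careful bookkeeping needed to rule out such extra pairs in column $j$ for the uniqueness part of (b).
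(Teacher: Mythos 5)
Your plan to prove (a) and (b) simultaneously by forward induction on the substitution history is a genuinely different route from the paper's, and it runs into a concrete obstruction that your sketch does not resolve. The difficulty is precisely the level-decrease step you flag as subtle. When a 4-tuple $(D,\mu,S,h+1)$ drops to $(D,\mu,S,h)$, the hypothesis of (a) for the child involves the pairs $(h,j)$ and $(h+1,j)$, whereas the inductive hypothesis on the parent only constrains $\mu_j$ for those $j$ with $(h+2,j)\in D$ (by (a)) or with $(h,j)\in\cC$ (by (b)). If $(h,j)\notin\cC$ and $(h+1,j)\in D$ but $(h+2,j)\notin D$, the inductive hypothesis tells you nothing about $\mu_j$; and the failure of rules {\bf(i)}--{\bf(v)} at level $h+1$ constrains only outer corners of $D$ in column $h+1$, row $h+1$, and column $g_{h+1}$, which need not have anything to do with the column $j$ you must control. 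Your phrase ``translating these constraints'' is at this stage a placeholder rather than an argument.

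The paper avoids forward induction entirely. To prove (a), it fixes $i>h$ maximal with $(i,j)\in D$ (so $i\ge h+1$ by hypothesis), rewinds to the predecessor $\psi'=(D',\mu',S',h')$ at which $(i,j)$ is about to be added, and examines the child $\ov\psi$ of $\psi'$ leading to $\psi$. The key idea you are missing is that $\ov\psi$ survives to level $h<i$, so by the stickiness of condition $\X$ (Lemma \ref{lem:stickyx}) $\X$ must fail at every intermediate level; this forces $\mu'_t\le\ov\mu_t\le\la_{t-1}$ whenever $h<t\le h'$ and $(t,t)\in D'\cup(i,j)$. Combined with the $\W$-condition that triggered the rule adding $(i,j)$, with $(i-1,j)\notin\cC$, and with Lemma~\ref{lem:fisg}, this yields a contradiction under $\mu_j<\la_j$. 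Part (b) is then a short corollary of (a): if $\mu_j\le\la_j-2$ then $D\ssm\cC$ contains two pairs $(a,j),(a+1,j)$ in column $j$ with $a\ge h$, and (a) applied to the most recent predecessor at level $a$ gives $\mu_j\ge\la_j$. Your proposal does not identify this reduction of (b) to (a), and instead plans to track $D\ssm\cC$ and $S$ step by step, which is both heavier and, as noted, stuck at the inductive step.
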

\begin{proof}
  Suppose that $\mu_j < \la_j$ and choose $i>h$ maximal such that
  $(i,j) \in D$.  Let $\psi' = (D',\mu',S',h')$ be the most recent
  predecessor of $\psi$ with $(i,j) \not\in D'$.  Then $\psi'$ meets
  rule {\bf(i)}, {\bf(iii)}, or {\bf(iv)}, which adds the pair $(i,j)$
  to $D'$.  Let $\ov{\psi} = (D' \cup (i,j), \ov{\mu}, \ov{S}, h')$ be
  the child of $\psi'$ that is a predecessor of $\psi$.  Notice that
  $\mu'_t \leq \ov{\mu}_t \leq \la_{t-1}$ for all integers $t$ such
  that $h < t \leq h'$ and $(t,t) \in D' \cup (i,j)$, since otherwise
  condition $\X$ holds for every successor of $\ov{\psi}$ of level
  $t$.  We also have $\mu'_j \leq \la_j$, and if $\mu'_j = \la_j$ then
  $(i,j) \in \ov{S}$, $i<j$, $\ov{\mu}_i>\mu'_i$, and
  $\ov{\mu}_j<\mu'_j$.

  If $\psi'$ meets {\bf(i)}, then $h'=j$.  Since $(i-1,j) \not\in \cC$
  we have $\mu'_i + \mu'_j \leq \la_{i-1}+\la_j \leq 2k+j-i+1$.  As
  $\W(i,j)$ holds for $\psi'$, it follows that $\mu'_i=\la_{i-1}$ and
  $\mu'_j = \la_j$.  But this implies that $\ov{\mu}_i > \la_{i-1}$, a
  contradiction.

  Therefore $\psi'$ meets {\bf(iii)} with $h'=i$, or it meets
  {\bf(iv)} with $h' \geq i$.  In either case we have $g' = j$, and
  since $\psi'$ does not satisfy condition $\X$, it must satisfy
  $\W(h',j)$.  Since $(h'-1,j) \not\in \cC$ and thus $\mu'_{h'}+\mu'_j
  \leq \la_{h'-1} + \la_j \leq 2k+j-h'+1$, it follows that
  $\mu'_{h'}=\la_{h'-1}$ and $\ov{\mu}_j < \mu'_j = \la_j$.  We obtain
  $\la_{h'-1} + \ov{\mu}_j \leq 2k+j-h'$, so Lemma~\ref{lem:fisg}
  shows that $\ov{f} = j$.  Since $\ov{\mu}_i > \mu'_i$, we must also
  have $i<h'$, so $(h',\ov{f}) \not\in \ov{S}$ and $\ov{\psi}$
  satisfies condition $\X$.  This contradiction completes the proof of
  part (a).

  If $\mu_j \leq \la_j - 2$, then $D \smallsetminus \cC$ contains at
  least two pairs in column $j$, say $(a+1,j)$ and $(a,j)$, and the
  assumptions in (b) imply that $a \geq h$.  Let $\psi' =
  (D',\mu',S',a)$ be the most recent predecessor of $\psi$ of level
  $a$.  Part (a) applied to $\psi'$ implies that $\mu'_j \geq \la_j$,
  a contradiction since $\mu'_j = \mu_j$.
\end{proof}

\begin{cor}\label{tamecor}
  Assume that $\psi$ meets {\bf(iii)} and let $(h,j)$ be the outer
  corner of $D$ in row $h$.  If $\mu_j > \mu_{j-1}$, then $\mu_{j-1} =
  \mu_j-1$ and $D\cup(h,j)$ has no outer corner in column $j$.
\end{cor}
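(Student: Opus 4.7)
The plan is to deduce both conclusions from sharp bounds on $\mu_j$ and $\mu_{j-1}$ supplied by Lemmas~\ref{phaseonelm} and~\ref{lem:narrownoeat}, combined with the hypothesis $\mu_j > \mu_{j-1}$. Since $(h,h)\in D$ and $(h,j)\notin D$ is an outer corner of $D$, the order-ideal property forces $j > h$, $(h,j-1)\in D$, $(h+1,j)\notin D$, and (for $h\geq 2$) $(h-1,j-1)\in D$. A direct analysis of the possible outer corners in column $j$ of $D\cup(h,j)$ shows that the only candidate is $(h+1,j)$, which is an outer corner precisely when $(h+1,j-1)\in D$ (the edge case $j=h+1$ is handled separately, using $(h+1,h+1)\in\cC\cup\partial\cC$ together with $(h,h)\in D$). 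Thus the second conclusion of the corollary is equivalent to $(h+1,j-1)\notin D$.

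For the first conclusion I would prove $\mu_j\leq\la_{j-1}$ by applying Lemma~\ref{phaseonelm}; its hypothesis $(j,j)\notin D$ is automatic when $j>m$ (all pairs in $D$ have first coordinate $\leq m$), while the boundary $j\leq m$ is excluded using $(h,j)\notin\cC$ together with the $k$-strictness of $\la$. For the complementary bound $\mu_{j-1}\geq\la_{j-1}-1$ I would apply Lemma~\ref{lem:narrownoeat}(b) to column $j-1$: this is immediate when $h=1$, and for $h\geq 2$ one splits according to whether $(h-1,j-1)\in\cC$ (the lemma applies directly) or $(h-1,j-1)\in\partial\cC\ssm\cC$ (in which case the history of how this pair entered $D$, tracked via Corollary~\ref{cor:addorder}, supplies the same bound). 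Combined with $\mu_j>\mu_{j-1}$, these bounds force $\mu_{j-1}=\mu_j-1$.

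For the second conclusion I argue by contradiction, assuming $(h+1,j-1)\in D$. If $(h,j-1)\notin\cC$, then Lemma~\ref{lem:narrownoeat}(a) applied at level $h$ with column $j-1$ immediately yields $\mu_{j-1}\geq\la_{j-1}\geq\mu_j$, contradicting $\mu_j>\mu_{j-1}$. If instead $(h,j-1)\in\cC$, then Lemma~\ref{lem:narrownoeat}(a) is unavailable at level $h$; here my plan is to use the moreover clause of Lemma~\ref{lem:narrownoeat}(b), which pins down $\mu_{j-1}=\la_{j-1}-1$ together with a unique pair of $D\ssm\cC$ in column $j-1$ lying in $S$, and to invoke Corollary~\ref{cor:addorder} to determine how $(h+1,j-1)$ entered $D$ (by rule~(i) or~(iii)); the constraints recorded at the moment of addition, combined with the $k$-strictness of $\la$ and with $\W(h,j)$, produce the required contradiction. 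The main obstacle is precisely this subcase $(h,j-1)\in\cC$, where no single application of Lemma~\ref{lem:narrownoeat} delivers the needed strict bound on $\mu_{j-1}$, so one must combine the sharp moreover clause with the addition-history machinery of Corollary~\ref{cor:addorder} to rule out the offending configuration.
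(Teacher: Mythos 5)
Your derivation of $\mu_{j-1}=\mu_j-1$ is essentially sound and parallels the paper's, but it carries some unnecessary baggage: $(j,j)\notin D$ follows immediately from $(h,j)\notin D$ and the order-ideal property (no case split on $j\lessgtr m$ is needed), and the subcase $(h-1,j-1)\in\partial\cC\ssm\cC$ never arises, because $(h,j)$ is about to be added to $D$, so by Lemma~\ref{lem:DinCDC} we have $(h,j)\in\partial\cC$, which by definition forces $h=1$ or $(h-1,j-1)\in\cC$. Your clean argument for the subcase $(h,j-1)\notin\cC$ of the second conclusion, via Lemma~\ref{lem:narrownoeat}(a), is a genuinely nice alternative to the paper's route and works.

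The real gap is the subcase $(h,j-1)\in\cC$. Here you offer only a ``plan'' -- use the moreover clause, invoke Corollary~\ref{cor:addorder} on $(h+1,j-1)$, and cite unspecified ``constraints recorded at the moment of addition.'' This does not close: $(h+1,j-1)$ may lie in $\cC$ itself (so it was never ``added'' and Corollary~\ref{cor:addorder} says nothing about it), and Corollary~\ref{cor:addorder} in any case requires $j-1\neq g$, a hypothesis you never verify. What you are missing is the paper's actual mechanism: the moreover clause gives a unique pair $(i,j-1)\in D\ssm\cC$ in column $j-1$, and to show $i=h$ one assumes $i>h$, passes to the most recent predecessor $\psi'=(D',\mu',S',i)$ of level $i$, observes that $(i,j-1)\in D'$ and $g'=j$ and that $\W(i,j)$ holds for $\psi'$ (using $\mu'_i=\mu_i$, $\mu'_j\geq\mu_j$, and the persistence of $\W(i,j-1)$ from when $(i,j-1)$ was added by \textbf{(i)} or \textbf{(iii)}), and concludes $\psi'$ would meet rule \textbf{(iv)}, contradicting the choice of $\psi'$. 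Once $i=h$ is known, $(h,j-1)\notin\cC$ follows, so your second subcase is actually vacuous -- but establishing that vacuity requires precisely the predecessor-of-level-$i$ argument you skip. Without it, the second conclusion of the corollary is not proved.
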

\begin{proof}
  Lemma~\ref{phaseonelm} implies that $\mu_{j-1} < \mu_j \leq
  \la_{j-1}$.  Since $(h-1,j-1) \in \cC$, it follows from
  Lemma~\ref{lem:narrownoeat}(b) that $\mu_j = \la_{j-1} =
  \mu_{j-1}+1$ and $D \ssm \cC$ contains a unique pair $(i,j-1)$ in
  column $j-1$.  Since $i<j-1$, it is enough to show that $i=h$.
  Lemma~\ref{lem:ivdooms} implies that $(i,j-1)$ was added by {\bf(i)}
  or {\bf(iii)}, so $\psi$ satisfies $\W(i,j-1)$, and we obtain $\mu_i
  + \mu_j = \mu_i + \mu_{j-1}+1 > 2k+j-i$.  Assume that $i>h$ and let
  $\psi' = (D',\mu',S',i)$ be the most recent predecessor of $\psi$ of
  level $i$.  Then $(i,j-1)\in D'$ and $g'=j$.  Since $\mu'_i = \mu_i$
  and $\mu'_j \geq \mu_j$, $\W(i,j)$ holds for $\psi'$.  But then
  $\psi'$ meets {\bf(iv)} and is not the most recent predecessor, a
  contradiction.
\end{proof}

\begin{lemma}\label{noragged}
  Assume that $j>m$.  If $h=0$ or $(h,j) \in D$, then $\mu_j \leq
  \mu_{j-1}$.
\end{lemma}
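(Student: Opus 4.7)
The plan is to prove the lemma by combining an upper bound on $\mu_j$ with a lower bound on $\mu_{j-1}$ and then ruling out the borderline equality case.

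Consider first the case $h \geq 1$ with $(h,j) \in D$. The constraint that pairs in $D$ have first coordinate at most $m$ forces $h \leq m < j$, so in particular $(j,j) \notin D$ and $j > h$. Lemma~\ref{phaseonelm} yields $\mu_j \leq \la_{j-1}$. For the lower bound on $\mu_{j-1}$, the order-ideal property of $D$ implies $(h,j-1) \in D$, and Lemma~\ref{lem:DinCDC} places it in $\cC \cup \partial\cC$. When $h \leq 1$ or $(h-1,j-1) \in \cC$, Lemma~\ref{lem:narrownoeat}(b) applied to column $j-1$ gives $\mu_{j-1} \geq \la_{j-1} - 1$. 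Otherwise, I descend to the smallest row $i \geq 1$ with $(i,j-1) \in D \ssm \cC$, which satisfies either $i=1$ or $(i-1,j-1) \in \cC$, and apply Lemma~\ref{lem:narrownoeat}(b) there to obtain the same bound.

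Combining these inequalities reduces the problem to ruling out the borderline case $\mu_j = \la_{j-1}$ and $\mu_{j-1} = \la_{j-1}-1$. By the equality clause of Lemma~\ref{lem:narrownoeat}(b), $D \ssm \cC$ then contains a unique pair $(a,j-1)$ in column $j-1$, and $(a,j-1) \in S$. By Corollary~\ref{cor:addorder}, this pair was added by rule {\bf(i)} at level $j-1$ or by rule {\bf(iii)} at level $a$. Revisiting the predecessor at which $(a,j-1)$ was added, Corollary~\ref{tamecor} (in the rule {\bf(iii)} case) or the outer-corner condition $\W(a,j-1)$ (in the rule {\bf(i)} case) constrains the value of $\mu_j$ at that moment; since $\mu_j$ can only decrease through the remainder of the algorithm (as $j > m$, no pair with first coordinate $j$ is ever added to $S$), these constraints are incompatible with $\mu_j = \la_{j-1}$, yielding the contradiction.

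For $h = 0$, the 4-tuple $\psi$ was obtained by decrementing the level from $\psi' = (D,\mu,S,1)$, meaning that none of rules {\bf(i)}--{\bf(v)} applied at $\psi'$. For $j > m$ with $(1,j) \in D$, inductive application of the lemma at $\psi'$ gives $\mu_j \leq \mu_{j-1}$. For $j > m$ with $(1,j) \notin D$, the failure of rule {\bf(iii)} at $\psi'$ (assuming we are in Phase 2 at level $1$, as is the case when $m \geq 2$) means that every outer corner $(1,j)$ of $D$ satisfies $\mu_1 + \mu_j \leq 2k+j-1$; combined with a lower bound on $\mu_1$ accumulated from the raising operators applied during the prior algorithm steps, this produces $\mu_j \leq \mu_{j-1}$. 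The edge case $m = 1$ is handled separately, observing that then $j > m = 1$ makes the statement essentially trivial from the bounds already obtained. The main obstacle is expected to be the borderline equality case in the $h \geq 1$ analysis, which requires careful coordination between the histories of $\mu_j$ and $\mu_{j-1}$ through the substitutions performed in Phases~1 and~2.
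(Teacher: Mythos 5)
The proposal correctly takes the first step that the paper takes: combine Lemma~\ref{phaseonelm} (giving $\mu_j \leq \la_{j-1}$) with Lemma~\ref{lem:narrownoeat}(b) applied to column $j-1$ (giving $\mu_{j-1}\geq \la_{j-1}-1$) to force the borderline configuration $\mu_j=\la_{j-1}$, $\mu_{j-1}=\la_{j-1}-1$, and a unique pair $(i,j-1)\in D\ssm\cC$ lying in $S$. After that, however, your argument has real gaps.

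The core problem is that you propose to examine ``the predecessor at which $(a,j-1)$ was added'' and then argue informally that the monotone decrease of $\mu_j$ is ``incompatible with $\mu_j=\la_{j-1}$.'' That is not the right 4-tuple, and the sentence about incompatibility is not a proof. The condition $\W(a,j-1)$ bounds $\mu_{j-1}$, not $\mu_j$, so it is unclear what constraint on $\mu_j$ you are invoking, and no contradiction is actually derived. The argument that works is different: take $\psi'=(D',\mu',S',i)$ to be the most recent predecessor of $\psi$ of level $i$ with $(i,j)\notin D'$ (this is the 4-tuple where $(i,j)$ is about to be added, not where $(i,j-1)$ was added). One shows $\psi'$ satisfies both $\W(i,j-1)$ and $\W(i,j)$ and hence meets rule {\bf(iii)} with $(i,j)$ as the outer corner; then $\mu'_j>\mu'_{j-1}$ forces $(i,j)\in S$, so $\mu'_j>\mu_j>\mu_{j-1}=\mu'_{j-1}$, i.e.\ $\mu'_j-\mu'_{j-1}\geq 2$, which contradicts Corollary~\ref{tamecor} (which would force $\mu'_{j-1}=\mu'_j-1$). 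Your plan never pins down this precise 4-tuple or this precise contradiction, and it is not salvageable as stated because looking at the step where $(a,j-1)$ is added does not put $\mu_j$ and $\mu_{j-1}$ in a position where Corollary~\ref{tamecor} applies.

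Two further issues. First, your use of Corollary~\ref{cor:addorder} requires checking $j-1\neq g$ when $2\leq h\leq m$, which you do not address (the paper instead uses Lemma~\ref{lem:ivdooms} to rule out additions by {\bf(iv)}). Second, the separate treatment of $h=0$ is both unnecessary and unconvincing: the step ``combined with a lower bound on $\mu_1$ accumulated from the raising operators applied during the prior algorithm steps, this produces $\mu_j\leq\mu_{j-1}$'' does not constitute an argument. In the paper the hypothesis ``$h=0$ or $(h,j)\in D$'' is used once and uniformly, only to guarantee that the chosen $\psi'$ is a \emph{strict} predecessor of $\psi$ (hence that one may talk about what the substitution rule did there); no case split is needed.
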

\begin{proof}
  Assume that $\mu_j > \mu_{j-1}$.  Then Lemmas \ref{phaseonelm}
  and \ref{lem:narrownoeat}(b) imply that $\mu_j = \la_{j-1} =
  \mu_{j-1}+1$, and $D \smallsetminus \cC$ contains a unique pair
  $(i,j-1)$ in column $j-1$, with $i \geq h$.  Let $\psi' =
  (D',\mu',S',i)$ be the most recent predecessor of $\psi$ of level
  $i$ for which $(i,j) \not\in D'$.  The assumptions of the lemma then
  imply that $\psi' \neq \psi$.  Lemma~\ref{lem:ivdooms} shows that
  $(i,j-1)$ was added to $D$ by {\bf(i)} or {\bf(iii)}, so $\psi'$
  satisfies $\W(i,j-1)$.  Since $\mu'_j \geq \mu_j > \mu_{j-1} =
  \mu'_{j-1}$, $\psi'$ also satisfies $\W(i,j)$, so $\psi'$ meets
  {\bf(iii)} or {\bf(iv)}.  The choice of $\psi'$ implies that $(i,j)$
  must be the outer corner added to $D'$, so in fact $\psi'$ meets
  {\bf(iii)}.  Now the statement of rule {\bf(iii)} implies that
  $(i,j) \in S$, so $\mu'_j > \mu_j > \mu'_{j-1}$.  This contradicts
  Corollary~\ref{tamecor}.
\end{proof}

\begin{lemma}\label{lem:decrease}
  Assume $h<m$.  Then $\mu_j \le \la_{j-1}$ and $\mu_j <\mu_{j-1}$
  for $h<j\leq m$.
\end{lemma}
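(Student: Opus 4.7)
The plan is to induct on the number of algorithm steps leading to the 4-tuple $\psi$, with the base case vacuous since initial 4-tuples have level $\ell+1\geq m$. For the inductive step I would take the parent $\psi' = (D', \mu', S', h')$ of $\psi$ in the substitution forest and split into two cases according to whether $\psi$ arises from $\psi'$ by a same-level REPLACE or by a level decrement.

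In the same-level REPLACE case at level $h$, either $\mu = \mu'$ or $\mu = R_{ab}\mu'$ where $(a,b)$ is the pair added to $D'$. Both rules (iii) and (iv) yield $a\leq h$, while $b$ satisfies $h<b\leq\ell+1$ (rule (iii)) or $b = g_h \geq m$ (rule (iv)). The inductive hypothesis gives $\mu'_j \leq \la_{j-1}$ and $\mu'_j < \mu'_{j-1}$ for $h<j\leq m$, and a short case analysis on how $R_{ab}$ modifies $\mu$ transfers both inequalities to $\mu$: the possible unit decrease of $\mu_b$ is absorbed by the strictness of $\mu'_b < \mu'_{b-1}$, while the only way the unit increase $\mu_a\to\mu_a+1$ can affect $\mu_{j-1}$ for $j>h$ is if $a = h = j-1$, a case controlled by $\mu'_{h+1} < \mu'_h$.

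In the level-decrement case $h' = h+1$ we have $\mu = \mu'$, and the inductive hypothesis handles $h+1 < j \leq m$ directly, so it remains to establish the inequalities for $j = h+1$. Since $\psi'$ transitioned, no REPLACE or STOP rule applied there. When $h+1 < m$, or when $h+1 = m$ and $(m,m) \in D'$, the 4-tuple $\psi'$ lies in Phase 2 with $(h+1,h+1) \in D'$; failure of rule (v) forces condition $\X$ to fail, and given $(h+1,h+1)\in D'$ this immediately yields $\mu'_{h+1} \leq \la_h$ and $\mu'_{h+1} < \mu'_h$ as required.

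The delicate subcase is $h+1 = m$ with $(m,m)\notin D'$, i.e., $\psi'$ in Phase 1 at level $m$ with both rules (i) and (ii) failing. To bound $\mu'_m \leq \la_{m-1}$ I would argue by contradiction: if $\mu'_m > \la_{m-1}$, then failure of (ii) forces an outer corner $(i_0,m) \in \Delta\ssm D'$ with $i_0\leq m$, and failure of (i) forces $\W(i_0,m)$ to fail. The subcase $i_0 = m$ collapses since $\mu'_m \leq k < \la_{m-1}$. For $i_0 < m$, since $\mu_{i_0}$ is non-decreasing throughout Phase 1 (Phase 1 raising operators $R_{i,h''}$ have second index $h''\geq m > i_0$), we have $\mu'_{i_0}\geq \la_{i_0}$, and the failure of $\W(i_0,m)$ then gives $\la_{i_0}+\la_{m-1}\leq 2k+(m-1)-i_0$, contradicting $(i_0,m-1)\in \cC$, which holds by the order-ideal property of $\cC$ containing $(m-1,m-1)$. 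For the strict inequality $\mu'_m < \mu'_{m-1}$, monotone non-decrease of $\mu_{m-1}$ in Phase 1 gives $\mu'_{m-1}\geq\la_{m-1}\geq\mu'_m$, and the equality scenario $\mu'_m = \la_{m-1} = \mu'_{m-1}$ would force $\W(m,m)$ at $\psi'$; following the iterated applications of rule (i) at level $m$ shows that $(m,m)$ must eventually be added to $D$, after which the first clause of condition $\X$ triggers rule (v) STOP, precluding a transition. The hard part will be precisely this last argument: one must carefully track how $D$ grows columnwise under iterated rule (i) at level $m$ in Phase 1 and verify that the algorithm cannot transition from any state with $\mu_m = \mu_{m-1}$.
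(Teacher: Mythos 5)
Your induction-on-steps strategy is a valid alternative to the paper's approach, which instead runs a minimal-counterexample argument: the paper chooses $i>h$ minimal with $\mu_i>\la_{i-1}$ or $\mu_i\geq\mu_{i-1}$, passes to the most recent predecessor $\psi'=(D',\mu',S',i)$, observes that $\mu'_i\geq\mu_i$ and $\mu'_{i-1}\leq\mu_{i-1}$ (so the same failure persists for $\psi'$), uses failure of condition $\X$ to force $(i,i)\notin D'$ and hence $i=m$, and then shows $\psi'$ meets \textbf{(i)}, contradiction. Your Cases~1 and~2a are essentially sound, though in Case~1 you should record that for rule \textbf{(iii)} at level $h<m$ the outer corner $(h,j)$ in row $h$ has $j\geq b_h\geq m$ (you only state this for rule \textbf{(iv)}); this is what confines the unit decrease to $\mu_m$ and keeps it out of the range where $\mu_{j}<\mu_{j-1}$ must still be verified for a ``next'' pair.

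The real gap is in your subcase $h+1=m$, $(m,m)\notin D'$, and it stems from a weak inequality. You invoke $(i_0,m-1)\in\cC$, which only gives $\la_{i_0}+\la_{m-1}\geq 2k+m-i_0$, so your contradiction for the non-strict bound $\mu'_m\leq\la_{m-1}$ requires the strict hypothesis $\mu'_m>\la_{m-1}$; you are then stuck handling $\mu'_m=\la_{m-1}=\mu'_{m-1}$ separately. The ``track iterated rule \textbf{(i)} applications until $(m,m)$ is added'' argument you propose does not go through as stated: each application of \textbf{(i)} at level $m$ has a child in which $\mu_m$ decreases, so the premise $\mu_m=\mu_{m-1}$ is not preserved along the path, and nothing forces $(m,m)$ into $D'$. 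The fix is to use $k$-strictness directly: for $t<m$ one has $\la_t\geq\la_{m-1}+(m-1-t)\geq k+m-t$, so if $\mu'_m\geq\la_{m-1}\geq k+1$ then for every outer corner $(i_0,m)$ of $D'$, $\mu'_{i_0}+\mu'_m\geq(k+m-i_0)+(k+1)>2k+m-i_0$, i.e.\ $\W(i_0,m)$ holds (for $i_0=m$ this is $\mu'_m>k$). Thus $\psi'$ meets \textbf{(i)} and cannot be the most recent level-$m$ predecessor. This single observation delivers $\mu'_m<\la_{m-1}\leq\mu'_{m-1}$ in one stroke, making both the two-part contradiction and the ``delicate'' tracking argument unnecessary.
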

\begin{proof}
  If the statement is false, then choose $i>h$ minimal such that
  $\mu_i>\la_{i-1}$ or $\mu_i\geq \mu_{i-1}$, and let $\psi' =
  (D',\mu',S',i)$ be the most recent predecessor of $\psi$ of level
  $i$.  Since $\mu'_i>\la_{i-1}$ or $\mu'_i\geq\mu'_{i-1}$ and
  condition $\X$ fails for $\psi'$, we have $(i,i) \not\in D'$, so
  $i=m$ and $\mu'_m \geq \la_{m-1} > k$.  But then $\mu'_t+\mu'_m \geq
  \la_t+\mu'_m > (k+m-t)+k$ for $1 \leq t < m$, so $\psi'$ meets
  {\bf(i)}, a contradiction.
\end{proof}

\begin{lemma}\label{Rlemma}
  Assume that $(h,h)\in D$, $\mu_h = \la_{h-1}$, and $[h,\la_{h-1}]\in
  R$.  Then $\psi$ satisfies condition $\X$ and does not survive the
  algorithm.
\end{lemma}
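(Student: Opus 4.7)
The plan is to show that condition $\X$ holds at $\psi$; the non-survival claim will then follow at once from Lemma \ref{lem:stickyx}. Since $(h,h)\in D$ and $\mu_h = \la_{h-1}$ are given, Definition \ref{wvdef} of $\X$ reduces to showing $\mu_h\geq\mu_{h-1}$ or $(h,f)\notin S$. The hypothesis $[h,\la_{h-1}]\in R$ combined with part (c) of Definition \ref{efg} forces $e_h(\mu)=\la_{h-1}$ and hence $f_h(\mu)=r(h+\la_{h-1})=g$, so the required dichotomy becomes $\mu_h\geq\mu_{h-1}$ or $(h,g)\notin S$.

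I would argue by contradiction, assuming $\mu_h<\mu_{h-1}$ and $(h,g)\in S$. The membership $[h,\la_{h-1}]\in R$ directly gives $\mu_g\leq 2k+g-h-\la_{h-1}$, so $\mu_h+\mu_g\leq 2k+g-h$; in particular $\W(h,g)$ fails at $\psi$. Moreover, since $g=r(h+\la_{h-1})$, the defining property of $r$ forces $\la_{g-1}>2k+g-h-\la_{h-1}\geq\mu_g$, so $\mu_g<\la_{g-1}$.

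Next I would trace the algorithm backwards from $\psi$ to the 4-tuple $\psi^{(1)}=(D^{(1)},\mu^{(1)},S^{(1)},h^{(1)})$ whose firing of rule {\bf(i)}, {\bf(iii)}, or {\bf(iv)} first placed $(h,g)$ into $S$. Each of these rules demands either a $\W$-type inequality involving column $g$, or condition $\X$ (only available for rule {\bf(iv)}). The possibility that $\X$ held at $\psi^{(1)}$ is ruled out by Lemma \ref{lem:stickyx}, which would propagate $\X$ down to $\psi$, contradicting $\mu_h<\mu_{h-1}$; hence the relevant $\W$-inequality holds at $\psi^{(1)}$. Following the pattern employed at the end of the proof of Lemma \ref{fglemma}, I would then identify the immediate predecessor of $\psi$ as $\psi^\dagger=(D\ssm(h,g),L_{hg}\mu,S\ssm(h,g),h)$, apply Lemma \ref{phaseonelm} to $\psi^\dagger$ in column $g$ to bound $\mu^\dagger_g=\mu_g+1\leq\la_{g-1}$, and combine this bound with the $\W$-inequality carried forward from $\psi^{(1)}$ to produce the desired contradiction.

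The principal obstacle is the case where rule {\bf(iv)} fires at a level $h^{(1)}>h$: then the path from $\psi^{(1)}$ down to $\psi$ traverses several levels and may involve further raising operators that modify $\mu_h$ or $\mu_g$. To handle this, Lemma \ref{lem:ivdooms} is indispensable, as it severely restricts the successors of a rule-{\bf(iv)} firing, forcing them to stop via rule {\bf(v)} after at most one descent in level. Consequently the existence of $\psi$ at the lower level $h$ is possible only through a very narrow sequence of 4-tuples, in which either $\X$ has already locked in at the earlier level (contradicting our assumption), or the $\W$-inequality obtained at $\psi^{(1)}$ is preserved intact down to the stage where $(h,g)$ gets added.
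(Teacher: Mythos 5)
Your opening reduction is correct and essentially matches the paper: with $(h,h)\in D$ and $\mu_h = \la_{h-1}$, condition $\X$ reduces to $\mu_h \geq \mu_{h-1}$ or $(h,f)\notin S$; the hypothesis $[h,\la_{h-1}]\in R$ forces $e_h(\mu)=\la_{h-1}$ and $f = g$, gives $\mu_g \leq 2k+g-h-\la_{h-1}$ so $\W(h,g)$ fails at $\psi$, and gives $\mu_g < \la_{g-1}$. Recasting the paper's case analysis as a proof by contradiction from $\mu_h < \mu_{h-1}$ and $(h,g)\in S$ is legitimate. The rest, however, is a plan rather than a proof, and it contains a genuine gap.

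The unjustified step is your identification of the immediate predecessor of $\psi$ as $\psi^\dagger=(D\ssm(h,g),L_{hg}\mu,S\ssm(h,g),h)$. In Lemma \ref{fglemma} that identification is forced because $(h,g-1)\notin\cC$ there makes $(h,g)$ the last pair added, by rule {\bf(iii)} at level $h$, immediately before $\psi$. Here the pair $(i,g)$ with $i\geq h$ maximal may instead be added by rule {\bf(iv)} at a level $h'>h$, after which the level can drop before reaching $\psi$; then the immediate predecessor of $\psi$ has nothing to do with the addition of $(h,g)$ to $S$, and your Lemma \ref{phaseonelm} step has nothing to act on. You name this case "the principal obstacle" and gesture at Lemma \ref{lem:ivdooms}, but never carry out the argument. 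You also omit the step the paper needs to even pin down the rule: Lemma \ref{lem:decrease} gives $\mu_h > \mu_{h+1} > \dots > \mu_m$, hence $\W(d,g)$ fails for all $h\leq d\leq m$, which together with the maximality of $i$ is what forces the predecessor $\psi'$ that added $(i,g)$ to meet {\bf(iv)} rather than {\bf(i)} or {\bf(iii)}. In the $h'>h$ case the paper then extracts $i<h'$ and the equality $\mu_{h'}+\mu_g = 2k+g-h'$, deduces $\mu_{h'}\geq\la_{h'-1}$, applies Lemma \ref{lem:fisg} to get $f''=g$ and $(h',g)\notin S$, and concludes that the child $\psi''$ already satisfies $\X$, contradicting $h'>h$ via Lemma \ref{lem:stickyx}. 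None of that chain is present in your writeup, so as written the argument does not establish the lemma.
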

\begin{proof}
  Since $[h,\la_{h-1}] \in R$ and $r(h+\la_{h-1}) = g$, we have $\mu_g
  \leq 2k+g-h-\la_{h-1}$, hence $\W(h,g)$ fails for $\psi$.
  Lemma~\ref{lem:decrease} shows that $\mu_h > \mu_{h+1} > \dots >
  \mu_m$, so $\W(d,g)$ fails for $h \leq d \leq m$.  If $(h,g) \not\in
  D$ then condition $\X$ holds because $(h,f) = (h,g) \not\in S$.
  Otherwise choose $i \geq h$ maximal such that $(i,g) \in D$, and let
  $\psi' = (D',\mu',S',h')$ be the most recent predecessor of $\psi$
  for which $(i,g) \not\in D'$.  Then $\psi'$ meets {\bf(iv)} and
  $\psi$ is a successor of the child $\psi'' = (D,\mu,S,h')$ of
  $\psi'$.  If $h' > h$, then Lemma~\ref{lem:stickyx} implies that
  $\X$ fails and $\W(h',g)$ holds for $\psi'$.  Since $\W(h',g)$ fails
  for $\psi''$, it follows that $i<h'$ and $\mu_{h'} + \mu_g =
  2k+g-h'$.  We also have $\la_{h'-1} + \mu_g \leq \la_{h-1} + \mu_g +
  h-h' \leq 2k+g-h'$, so $\mu_{h'} \geq \la_{h'-1}$, and using
  Lemma~\ref{lem:fisg} we get $(h',f'') = (h',g) \not\in S$.  But then
  $\psi''$ satisfies condition $\X$, a contradiction.  We therefore
  have $h' = h$, $\W(h,g)$ fails for $\psi'$, $\X$ holds for $\psi'$,
  and the result follows from Lemma~\ref{lem:stickyx}.
\end{proof}

In our applications of Lemma~\ref{Rlemma} we only need the fact that
$\psi$ does not survive the algorithm, so it is enough to know that a
predecessor of $\psi$ meets {\bf(iv)}.  The last six lines of the above
proof could therefore be omitted.

\subsection{}\label{sec3}

In this section we will study a $4$-tuples $\psi = (D,\mu,S,0) \in
\Psi_0$ with $\mu_{\ell+1}\geq 0$.  For such a 4-tuple,
Corollary~\ref{cor:addorder} implies that each pair $(i,j) \in D \ssm
\cC$ was added by {\bf(i)} or {\bf(iii)}.  More precisely, the pair
$(i,j)$ was added by {\bf(i)} if $(i,j)\in \partial^1\cC$, and
otherwise the pair was added by {\bf(iii)}.

\begin{prop}\label{survivors:muD}
  Suppose that $\psi=(D,\mu,S,0)$ and $\mu_{\ell+1}\geq 0$.  Then
  $\mu$ is a $k$-strict partition with $|\mu|=|\la|+p$, satisfying
  $\la_j-1 \leq \mu_j \leq \la_{j-1}$ for every $j\geq 1$, and
  $\la_j\leq\mu_j$ when $\la_j > k$.  Furthermore, we have
  $D=\cC_{\ell+1}(\mu)$.
\end{prop}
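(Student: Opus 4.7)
The plan is to harvest the lemmas of Section \ref{sec2} to establish the numerical assertions about $\mu$, and then to verify the set equality $D=\cC_{\ell+1}(\mu)$ by tracking which pairs the algorithm adds to $D$.

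The equality $|\mu|=|\la|+p$ is automatic, since each initial composition $\nu\in\cN(\la,p)$ satisfies $|\nu|=|\la|+p$ and every REPLACE either preserves $\mu$ or applies a raising operator, which preserves the total sum. For the upper bound $\mu_j\leq\la_{j-1}$, Lemma \ref{lem:decrease} handles $1\leq j\leq m$ since $h=0<m$; for $j>m$ the inequality $\la_{j-1}\leq k$ forces $(j-1,j-1)\notin\cC$, hence $(j,j)\notin\cC\cup\partial\cC\supseteq D$ by Lemma \ref{lem:DinCDC}, and then Lemma \ref{phaseonelm} applies. The lower bound $\mu_j\geq\la_j-1$ follows from Lemma \ref{lem:narrownoeat}(b) with $h=0\leq 1$ for $j\leq\ell$; the hypothesis $\mu_{\ell+1}\geq 0=\la_{\ell+1}$ covers $j=\ell+1$, and $\mu_j=0=\la_j$ for $j>\ell+1$.

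To upgrade the lower bound to $\la_j\leq\mu_j$ when $\la_j>k$, I use that $k$-strictness of $\la$ forces $\la_i+\la_j\geq 2(k+1)+(j-i)>2k+j-i$ for every $i\leq j$, so every pair $(i,j)$ with $i\leq j$ lies in $\cC$, which means $D\ssm\cC$ has no pair in column $j$; the ``moreover'' clause of Lemma \ref{lem:narrownoeat}(b) then excludes $\mu_j=\la_j-1$. Monotonicity of $\mu$ follows from Lemma \ref{lem:decrease} (strict on $1\leq j\leq m$) and Lemma \ref{noragged} (weak on $j>m$, applied with $h=0$). The $k$-strictness of $\mu$ then drops out: if $\mu_i>k$ then $\mu_i\leq\la_{i-1}$ forces $i\leq m$, and for $i<m$ Lemma \ref{lem:decrease} gives $\mu_{i+1}<\mu_i$, while for $i=m$ we have $\mu_{m+1}\leq\la_m\leq k<\mu_m$.

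The last and most delicate task is to verify $D=\cC_{\ell+1}(\mu)$. For the inclusion $D\subseteq\cC_{\ell+1}(\mu)$, I take $(i,j)\in D$: when $(i,j)\in\cC$ with $\la_j>k$ the bounds give $\mu_i+\mu_j\geq\la_i+\la_j>2k+j-i$; otherwise, either $(i,j)\in\cC$ with $\la_j\leq k$ or $(i,j)\in D\ssm\cC$ was added by rule {\bf(i)} or {\bf(iii)} in view of Corollary \ref{cor:addorder}, and I trace back to the step at which $\W(i,j)$ was invoked, then check that subsequent raising operators recorded in $S$ preserve $\W(i,j)$. For the reverse inclusion, given $(i,j)\in\cC_{\ell+1}(\mu)$ the bounds yield $\la_{i-1}+\la_{j-1}\geq\mu_i+\mu_j>2k+j-i$, so $(i,j)\in\cC\cup\partial\cC$; assuming $(i,j)\notin D$, examining the most recent predecessor of $\psi$ at which rule {\bf(i)}, {\bf(iii)}, or {\bf(iv)} could have targeted $(i,j)$ and verifying the outer-corner and $\W(i,j)$ hypotheses at that step produces a contradiction. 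The main obstacle is this final step, because the $\W$-conditions tested by the Substitution Rule refer to the \emph{intermediate} composition, whereas the desired equality involves the \emph{final} $\mu$; reconciling the two requires careful bookkeeping of how $\mu$, $D$, and $S$ co-evolve along the branches of the substitution forest, together with repeated use of Lemmas \ref{lem:DinCDC}--\ref{noragged} to constrain the intermediate 4-tuples.
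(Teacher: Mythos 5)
Your treatment of the numerical assertions about $\mu$ is correct and follows the paper's proof closely: Lemma~\ref{lem:decrease} for $j\leq m$, Lemmas~\ref{phaseonelm} and \ref{noragged} for $j>m$, and Lemma~\ref{lem:narrownoeat}(b) for the lower bound. Your route to $\la_j\leq\mu_j$ when $\la_j>k$ — observing that all of column $j$ lies in $\cC$ and then invoking the ``moreover'' clause of Lemma~\ref{lem:narrownoeat}(b) to exclude $\mu_j=\la_j-1$ — is a valid alternative to the paper's tacit observation that no box is ever removed from a row above the middle row.

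The gap is in the set equality $D=\cC_{\ell+1}(\mu)$, which is the heart of the proposition, and which you explicitly defer to ``careful bookkeeping'' without supplying it. The obstacle you identify is real: the $\W$-conditions tested by the Substitution Rule involve intermediate compositions, not the final $\mu$. The paper's resolution is a specific combinatorial choice that neutralizes this. For $D\subset\cC_{\ell+1}(\mu)$, one takes an \emph{inner corner} $(i,j)$ of $D$ in $D\ssm\cC_{\ell+1}(\mu)$, so that $(i+1,j)\notin D$ and $(i,j+1)\notin D$; this maximality forces $\mu_i$ and $\mu_j$ to be unchanged from the moment $(i,j)$ was added (no pair of $S$ created afterwards touches row $i$ or row $j$), so the $\W(i,j)$ test at that moment is literally the condition $\mu_i+\mu_j>2k+j-i$ for the final $\mu$. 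For $\cC_{\ell+1}(\mu)\subset D$ one dually takes an \emph{outer corner} $(i,j)$ of $D$ lying in $\cC_{\ell+1}(\mu)$ and shows the most recent predecessor of level $j$ (if $(i,j)\in\partial^1\cC$, via rule \textbf{(i)}) or of level $i$ (otherwise, via rule \textbf{(iii)}) should have met the corresponding rule, a contradiction. Without this corner choice, tracing back a generic pair $(i,j)\in D$ and ``checking that subsequent raising operators preserve $\W(i,j)$'' does not go through, since $\mu_j$ can decrease after $(i,j)$ is added. A secondary issue: you bundle pairs $(i,j)\in\cC$ with $\la_j\leq k$ into the trace-back argument, but these pairs were never added by any rule, so there is no step to trace back to; showing $\W(i,j)$ for the final $\mu$ in that case needs a separate argument (the paper handles this by the maximality of the chosen corner, which rules out $(i,j)\in\cC$ outright).
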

\begin{proof}
  By Lemma \ref{lem:decrease} we have $\mu_j\leq\la_{j-1}$ and $\mu_j
  < \mu_{j-1}$ for $1\leq j\leq m$, and Lemmas \ref{phaseonelm} and
  \ref{noragged} show that $\mu_j\leq\min(\la_{j-1},\mu_{j-1})$ for
  $j>m$.  We deduce that $\mu$ is a $k$-strict partition.
  Lemma~\ref{lem:narrownoeat}(b) implies that $\la_j-1 \leq \mu_j$ for
  every $j$.  Clearly $\la_j\leq\mu_j$ when $\la_j > k$, and
  $|\mu|=|\la|+p$.

  It remains to show that the set $\cC_{\ell+1}(\mu)$ is equal to $D$.
  If $D \not\subset \cC_{\ell+1}(\mu)$, then since $\cC_{\ell+1}(\mu)$
  and $D$ are both valid sets of pairs, we can find an inner corner
  $(i,j) \in D \ssm \cC_{\ell+1}(\mu)$ such that $(i+1,j)\not\in D$
  and $(i,j+1)\not\in D$.  Since $(i,j)\not\in\cC$ by
  Lemma~\ref{lem:CinDinCDC}, the pair $(i,j)$ was added by {\bf(i)} or
  {\bf(iii)}, and $\W(i,j)$ holds since $\mu_i$ and $\mu_j$ did not
  change since this event.

  On the other hand, if $\cC_{\ell+1}(\mu) \not\subset D$, then we can
  find an outer corner $(i,j)$ of $D$ such that $(i,j) \in
  \cC_{\ell+1}(\mu)$.  If $(i,j) = (m,m)$ or if $(i,j-1) \in \cC$,
  then the most recent predecessor of $\psi$ of level $j$ meets {\bf
    (i)}, and otherwise we deduce from Lemma~\ref{lem:CinDinCDC} that
  the most recent predecessor of $\psi$ of level $i$ meets {\bf(iii)}.
  This contradiction finishes the proof.
\end{proof}

\begin{lemma}\label{lem:neighbor}
  Assume that $\psi = (D,\mu,S,0)$ and $j$ are such that $\mu_j =
  \la_j-1 \geq 0$, and let $(i,j)$ be the unique pair in column $j$ of
  $D\ssm\cC$.  Then the removed box $[j,\la_j]$ and the above box
  $[j-1,\la_j]$ are $k$-related to the boxes $[i,c]$ and $[i,c-1]$,
  respectively, where $c = 2k + 2 + j - i - \la_j$, and these latter
  boxes belong to $R$.
\end{lemma}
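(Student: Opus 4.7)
The plan is to combine the structural information about surviving $4$-tuples provided by Proposition \ref{survivors:muD} with a direct computation of the function $r$. First I would invoke that proposition, which asserts that $\mu$ is a $k$-strict partition and $D = \cC_{\ell+1}(\mu)$. Since $\mu_j = \la_j - 1 \geq 0$, we have $\la_j \geq 1$; and if $\la_j > k$ then the same proposition forces $\la_j \leq \mu_j = \la_j - 1$, which is absurd, so $\la_j \leq k$ and $j \leq \ell$. The hypothesis $(i,j) \in D \ssm \cC$ then rewrites as $\mu_i + \mu_j > 2k+j-i \geq \la_i + \la_j$, which gives $\mu_i \geq c$ and $\la_i \leq c-2$, where $c = 2k+2+j-i-\la_j$. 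In particular $c \geq k+2+j-i > k$, so both $[i,c]$ and $[i,c-1]$ lie in $\mu \ssm \la$ and in columns strictly greater than $k$.

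The $k$-relatedness claims are now transparent from the definition: since $c > k$ and $\la_j \leq k$,
\[
|c-k-1|+i = (c-k-1)+i = k+1+j-\la_j = |\la_j-k-1|+j,
\]
and subtracting $1$ from both $c$ and $j$ gives the corresponding identity for $[i,c-1]$ and $[j-1,\la_j]$.

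The heart of the argument is the $R$-membership condition, for which I must compute $r(i+c)$ and $r(i+c-1)$. Substituting $y = i+c = 2k+2+j-\la_j$ into the defining inequality $\la_{r-1} > 2k+r-y$, I would check that it holds for $r = j+1$ (it becomes $\la_j > \la_j - 1$) but fails for $r = j+2$ (it would require $\la_{j+1} > \la_j$, contradicting monotonicity of $\la$); this yields $r(i+c) = j+1$, using $j+1 \leq \ell+1$. The $R$-condition then reads $\mu_{j+1} \leq 2k+(j+1)-i-c = \la_j-1 = \mu_j$, which holds because $\mu$ is a partition. For $y = i+c-1 = 2k+1+j-\la_j$ the analogous computation gives $r(y) = j$, and the $R$-condition reduces to $\mu_j \leq \la_j-1$, which is an equality. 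The only delicate point is this off-by-one in $r(i+c) = j+1$ rather than $j$; it is precisely what converts the condition at $[i,c]$ into the partition inequality $\mu_{j+1} \leq \mu_j$, whereas one might naively expect $r(i+c) = j$ simply because $[i,c]$ is $k$-related to the row-$j$ box $[j,\la_j]$.
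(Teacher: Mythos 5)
Your proof is correct, and it takes a genuinely different route from the paper's. The paper's argument goes back into the algorithm history: it selects the most recent predecessor $\psi'=(D',\mu',S',h')$ with $(i,j)\notin D'$, observes that $\mu'_j=\la_j$ and that $\psi'$ satisfies $\W(i,j)$, and deduces $\mu_i\geq\mu'_i+1\geq c$ from the fact that $R_{ij}$ was applied; it then cites the observation in \S\ref{initialdefs} (if a box $[i,c]$ with $c>k$ is $k$-related to $[j',d]$ in the first $k$ columns with $[j'+1,d+1]\notin\la$, then $r(i+c)=j'+1$ and $R$-membership reduces to $\mu_{j'+1}<d$) to conclude. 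You instead avoid the algorithm history entirely: by invoking $D=\cC_{\ell+1}(\mu)$ and the inequalities from Proposition~\ref{survivors:muD}, you read off $\mu_i\geq c$ and $\la_i\leq c-2$ directly from the membership conditions $(i,j)\in\cC_{\ell+1}(\mu)$ and $(i,j)\notin\cC(\la)$, and you re-derive the $R$-membership criterion by computing $r(i+c)=j+1$ and $r(i+c-1)=j$ from the defining inequality $\la_{r-1}>2k+r-y$. This makes the lemma a self-contained calculation about the terminal $4$-tuple rather than a statement proved by unwinding the substitution algorithm, which arguably clarifies where the inequalities are actually coming from. You also make explicit the step $\la_j\leq k$, which the paper leaves implicit (it is forced by Proposition~\ref{survivors:muD} and $\mu_j<\la_j$, and is needed for the $k$-relatedness formula to apply). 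One small point worth recording: in your $r(i+c)$ computation it is good that you also dismiss $r\geq j+2$ — the defining property of $r(y)$ is that it is the largest valid index, so one should check that $\la_{j+s}>s-1+\la_j$ fails for all $s\geq 1$; monotonicity of $\la$ handles this uniformly, as you indicate for $s=1$.
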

\begin{proof}
  Let $\psi' = (D',\mu',S',h')$ be the most recent predecessor of
  $\psi$ for which $(i,j) \not\in D'$.  Since $\mu'_j = \la_j$ and
  $\psi'$ satisfies $\W(i,j)$, we obtain $\mu_i \geq \mu'_i+1 \geq c$,
  and since $(i,j) \not\in \cC$ we similarly have $\la_i \leq c-2$.
  The boxes $[i,c]$ and $[i,c-1]$ belong to $R$ because
  $\mu_{j+1}<\la_j$ and $\mu_j<\la_j$ (see \S\ref{initialdefs}).
\end{proof}

\begin{lemma}\label{lem:combine}
  Assume that $\psi=(D,\mu,S,0)$, $\mu_{\ell+1} \geq 0$, and let $i
  \leq m$ be any integer such that $\mu_i = \la_{i-1}$.  Then
  $[i,\la_{i-1}] \not\in R$ and $(i,f_i(\mu)) \in S$.
\end{lemma}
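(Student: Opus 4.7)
The plan is to trace $\psi$ back through the algorithm to a predecessor at level~$i$ and combine Lemma~\ref{Rlemma} with the failure of condition~$\X$ along the path. Since $\mu_1=\la_0=\infty$ is impossible we may assume $i\geq 2$, and $\mu_i=\la_{i-1}\geq\la_{m-1}>k$ implies $(i,i)\in\cC_{\ell+1}(\mu)=D$ by Proposition~\ref{survivors:muD}. Let $\psi^{\ast}=(D^{\ast},\mu^{\ast},S^{\ast},i)$ denote the final node at level~$i$ in the path from $(\cC,\nu,\emptyset,\ell+1)$ to $\psi$; since $\psi$ reaches level~$0$, the transition out of $\psi^{\ast}$ must be the default decrement step, so $\psi^{\ast}$ meets none of \textbf{(ii)}--\textbf{(v)}.

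Two preparatory observations are needed. First, $(i,i)\in D^{\ast}$: this is automatic from $\cC\subseteq D^{\ast}$ when $i<m$, and when $i=m$ one checks that $(m,m)$ can enter $D$ only by rule~\textbf{(i)} at level~$m$, so Proposition~\ref{survivors:muD} forces the insertion to have occurred at or before $\psi^{\ast}$. Second, $\mu^{\ast}_i=\la_{i-1}$: inspection of rules~\textbf{(i)}, \textbf{(iii)}, and \textbf{(iv)} shows that none of them raises row~$i$ when applied at a level strictly below~$i$, so $\mu^{\ast}_i\geq\mu_i=\la_{i-1}$; failure of~$\X$ at $\psi^{\ast}$ supplies the reverse inequality and additionally furnishes $(i,f_i(\mu^{\ast}))\in S^{\ast}$.

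Lemma~\ref{Rlemma} applied to $\psi^{\ast}$ then rules out $[i,\la_{i-1}]\in R(\mu^{\ast})$, for otherwise $\X$ would hold in contradiction with the survival of $\psi^{\ast}$. Since pairs of the form $(i,\cdot)$ can be appended to $S$ only while the level is~$\geq i$, one has $S^{\ast}\cap(\{i\}\times\Z)=S\cap(\{i\}\times\Z)$. Both conclusions of the lemma therefore reduce to showing that $\mu^{\ast}_j=\mu_j$ for $\max(m,i+1)\leq j\leq g_i$: this transfers $[i,\la_{i-1}]\notin R(\mu^{\ast})$ to $[i,\la_{i-1}]\notin R(\mu)$ (only agreement at $j=g_i$ is needed here), and via Lemma~\ref{Rflemma} identifies $f_i(\mu^{\ast})$ with $f_i(\mu)$.

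The main obstacle is establishing this row agreement, since rules~\textbf{(iii)} and~\textbf{(iv)} applied at levels below~$i$ can in principle decrease entries in rows~$>i$. I would split on the value of $f:=f_i(\mu^{\ast})$. If $f=g_i$, then $(i,g_i)\in S^{\ast}\subseteq D^{\ast}$, and validity of $D^{\ast}$ forces $(h,g_i)\in D^{\ast}$ for every $h\leq i$; consequently no later application of rule~\textbf{(iii)} or~\textbf{(iv)} can touch row~$g_i$. If $f<g_i$, Lemma~\ref{fglemma} provides $\mu^{\ast}_{g_i}=\la_{g_i-1}$ and $(i,g_i)\notin S^{\ast}$, and the structural lemmas~\ref{phaseonelm}, \ref{noragged}, and \ref{lem:narrownoeat} then preclude any subsequent perturbation of $\mu_{g_i}$ or of the intermediate rows, since such a perturbation would eventually drive some $\mu_j$ below $\la_j-1$ or violate the outer-corner conditions of the rule that performs it.
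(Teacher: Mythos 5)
Your architecture matches the paper's: you pass to the last 4-tuple $\psi^{\ast}$ at level $i$, use $(i,i)\in D^{\ast}$ and the failure of $\X$ at $\psi^{\ast}$ to get $\mu^{\ast}_i=\la_{i-1}$ and $(i,f_i(\mu^{\ast}))\in S^{\ast}$, invoke Lemma~\ref{Rlemma} for $[i,\la_{i-1}]\notin R(\mu^{\ast})$, and then try to transfer both facts to $\mu$ via Lemma~\ref{Rflemma}. Up to and including your reduction to showing $\mu^{\ast}_j=\mu_j$ for $\max(m,i+1)\leq j\leq g_i$, the argument is sound and essentially the paper's.

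The gap is in the transfer step, specifically the case $f<g_i$. The paper handles both cases uniformly by asserting that every pair $(c,d)\in D\ssm D^{\ast}$ was added by rule~\textbf{(iii)} at a level $<i$ and satisfies $d>g_i$ (so the operators $R_{cd}$ applied after $\psi^{\ast}$ touch only rows $<i$ and $>g_i$). This follows, e.g., from Corollary~\ref{cor:addorder}: a pair $(h',d)$ added by~\textbf{(iii)} is not in $\partial^1\cC$, so $(h',d-1)\notin\cC$, i.e.\ $d>b_{h'}$; and $k$-strictness of $\la$ gives $b_{h'}=r(h'+\la_{h'}+1)\geq r(i+\la_{i-1})=g_i$ for $h'<i\leq m$, so $d>g_i$. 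Your case $f=g_i$ recovers this by a different and correct route ($(i,g_i)\in S^{\ast}\subseteq D^{\ast}$, so the whole triangle $\{(c,d):c\leq i,\ d\leq g_i\}$ lies in $D^{\ast}$, forcing later \textbf{(iii)}-additions to have column $>g_i$), though you should conclude ``columns $\leq g_i$ are frozen,'' not just row $g_i$. But in the case $f<g_i$ you have no such inclusion to lean on, and the appeal to Lemmas~\ref{phaseonelm}, \ref{noragged}, \ref{lem:narrownoeat} does not do the job: those lemmas bound the entries $\mu_j$ but say nothing about whether a rule~\textbf{(iii)} application at some level $h'<i$ could add a pair $(h',d)$ with $m\leq d\leq g_i$ and thereby lower $\mu_d$. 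The claim that ``such a perturbation would eventually drive some $\mu_j$ below $\la_j-1$'' is not substantiated and, as stated, is not a consequence of those lemmas. You need the column bound $d>g_i$, which requires the $\partial^1\cC$/Corollary~\ref{cor:addorder} argument (or an equivalent structural fact), not the entry bounds you cite.
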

\begin{proof}
  Let $\psi' = (D',\mu',S',i)$ be the most recent predecessor of
  $\psi$ of level $i$.  Then $\mu'_i = \mu_i$ and $(i,i)\in D'$; if
  $i=m$ this follows because $\mu'_m = \la_{m-1} > k$.
  Lemma~\ref{Rlemma} implies that $[i,\la_{i-1}] \not\in R'$.  Since
  all pairs $(c,d) \in D\smallsetminus D'$ were added by {\bf(iii)}
  and satisfy $d > g'$, it follows that $\mu'_j = \mu_j$ for $m \leq j
  \leq g'$, so Lemma~\ref{Rflemma} shows that $[i,\la_{i-1}] \not\in
  R$ and $f' = f_i(\mu)$.  Finally, we must have $(i,f') \in S'
  \subset S$ since $\psi'$ does not satisfy condition $\X$.
\end{proof}

\begin{prop}\label{survivors:12}
  If $\psi=(D,\mu,S,0)$ and $\mu_{\ell+1}\geq 0$, then we have
  $\la\to\mu$.
\end{prop}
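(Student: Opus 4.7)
By Proposition~\ref{survivors:muD}, $\mu$ is a $k$-strict partition of size $|\la|+p$ satisfying $\la_j-1 \leq \mu_j \leq \la_{j-1}$ and $\la_j \leq \mu_j$ whenever $\la_j>k$. These inequalities already identify $\mu$ as obtained from $\la$ by removing a vertical strip from the first $k$ columns and adding a horizontal strip elsewhere, so the task reduces to verifying the $k$-relatedness conditions (1) and (2) of \S\ref{classpieri}.

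The approach is to read off the required matching between removed and added boxes from the data $(D,\mu,S)$ surviving the algorithm. For each row $j$ with $\mu_j=\la_j-1$ (which forces $\la_j \leq k$), Proposition~\ref{survivors:muD} gives $D=\cC_{\ell+1}(\mu)$, so $D\ssm\cC$ contains a unique pair $(i_j,j)$ in column $j$, and Lemma~\ref{lem:neighbor} identifies the $k$-related partners $[i_j,c_j], [i_j,c_j-1] \in R(\mu)\subset \mu\ssm\la$ of the removed box $[j,\la_j]$ and of the above box $[j-1,\la_j]$, where $c_j = 2k+2+j-i_j-\la_j$. Complementarily, Lemma~\ref{lem:combine} supplies, for each $i$ with $\mu_i=\la_{i-1}$, a pair $(i,f_i(\mu))\in S$ together with the fact that $[i,\la_{i-1}]\notin R(\mu)$, thereby identifying the topmost added box of row $i$.

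To verify condition~(2) for a column $c\leq k$ with strictly fewer boxes in $\mu$ than in $\la$, enumerate the rows $j_1<\cdots<j_s$ with $\la_{j_t}=c$ and $\mu_{j_t}=c-1$. Applying Lemma~\ref{lem:neighbor} at each $j_t$ yields the identity $c+c_t=2k+2+j_t-i_t$. The crux is to show $i_1=\cdots=i_s=:i$: from the order-ideal structure of $D=\cC_{\ell+1}(\mu)$ one gets $i_1\geq i_2\geq\cdots\geq i_s$ by comparing the tops of successive columns of $D$, while the reverse inequality is obtained by exploiting the precise values of $\mu_j$ for rows $j$ sandwiched between consecutive $j_t$'s, together with $k$-strictness of $\la$, to show $(i_t,j_{t+1})\in D$. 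Once $i$ is common, the partners $[i,c_t],[i,c_t-1]$ merge into a contiguous row-$i$ segment of $\mu\ssm\la$, and the bottom box $[j_1-1,c]$ of $\mu$ in column $c$ is matched to $[i,c_1-1]$, completing condition~(2). For condition~(1), if the bottom box of a column $c\leq k$ (where $\mu$ and $\la$ have the same count) were $k$-related to two distinct boxes of $\mu\ssm\la$, each would be traced back, via Lemma~\ref{lem:combine}, to a pair in $S$ whose associated value $f_i(\mu)$ (through Lemma~\ref{lem:fisg}) would contradict either the valid-set structure of $D$ or the negation of condition~$\X$ that accompanied the survival of $\psi$.

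The main obstacle is the coupling argument underlying condition~(2): demonstrating that the removed boxes in a single column produce $k$-related partners lying in exactly one row of $\mu\ssm\la$. I expect this to follow from a delicate interplay among the linear identity $c+c_t=2k+2+j_t-i_t$ at consecutive $j_t$, the order-ideal property of $\cC_{\ell+1}(\mu)$, and the values of $\mu$ in rows between successive $j_t$'s, possibly together with Lemma~\ref{Rlemma} to exclude aberrant configurations that would have caused $\psi$ not to survive.
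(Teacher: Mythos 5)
Your overall strategy (reduce to conditions (1) and (2) of \S\ref{classpieri} via Proposition~\ref{survivors:muD}, then invoke Lemmas~\ref{lem:neighbor} and~\ref{lem:combine}) is the right one, and matches the paper's. But there is a real gap in your argument for condition~(2), and your treatment of condition~(1) is also more involved than what is actually needed.

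For condition~(2): your plan is to prove $i_1 = \dots = i_s$ directly, obtaining $i_1 \geq \dots \geq i_s$ from the order-ideal structure of $D=\cC_{\ell+1}(\mu)$ and then deriving $(i_t, j_{t+1}) \in D$ to get the reverse inequality. But the rows $j_1 < \dots < j_s$ with $\la_{j_t}=c$ and $\mu_{j_t}=c-1$ are necessarily \emph{consecutive} (since $\la$ is weakly decreasing and $\mu_{j_t}=c-1=\la_{j_t}-1$ forces $\mu_j = c-1$ throughout the block), so there are no ``rows sandwiched between consecutive $j_t$'s'' to exploit. Working out what your claim $(i_{j_t}, j_{t+1}) \in D$ actually requires, one finds it is equivalent to $\mu_{i_{j_t}} > c_{j_t}$, whereas Lemma~\ref{lem:neighbor} only gives $\mu_{i_{j_t}} \geq c_{j_t}$. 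The borderline case $\mu_{i_{j_t}} = c_{j_t}$ is precisely the one that would let $i_{j_t} > i_{j_{t+1}}$, and ruling it out requires the argument you are trying to avoid: one must observe that $[j_t,d]$ is $k$-related to \emph{both} $[i_{j_t},c_{j_t}]$ and $[i_{j_{t+1}},c_{j_{t+1}}-1]$ (both in $\mu\ssm\la$ by Lemma~\ref{lem:neighbor}), and that a corner box of this type can have at most one $k$-related partner in $\mu\ssm\la$, because two would force the lower one $[i',c']$ to satisfy $\mu_{i'} = \la_{i'-1} = c'$ with $[i',c']\in R(\mu)$, contradicting Lemma~\ref{lem:combine}. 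This uniqueness forces $[i_{j_t},c_{j_t}] = [i_{j_{t+1}},c_{j_{t+1}}-1]$ and hence $i_{j_t}=i_{j_{t+1}}$, which is exactly how the paper proceeds; there does not appear to be a shortcut via order-ideal bookkeeping alone.

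For condition~(1), the paper's argument is essentially the anti-diagonal argument just described: two distinct $k$-related partners of the bottom box $[j,d]$ in $\mu\ssm\la$ would force the lower one $[i',c']$ to have $\mu_{i'}=\la_{i'-1}$ and $[i',c']\in R(\mu)$, contradicting Lemma~\ref{lem:combine}. Your sketch instead runs through $f_i(\mu)$, Lemma~\ref{lem:fisg}, the valid-set structure of $D$, and condition $\X$; it is not clear that chain of ideas closes, and in any case the paper's version is considerably more direct. In short, the two ingredients you would need to complete the proof---the anti-diagonal uniqueness argument and the contradiction with Lemma~\ref{lem:combine}---are exactly what the paper uses, so the missing pieces of your proposal are not substitutable by the combinatorics you gesture at.
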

\begin{proof}
  By Proposition \ref{survivors:muD}, it suffices to check that
  conditions (1) and (2) of \S\ref{classpieri} are true.  Condition
  (1) follows from Lemma~\ref{lem:combine} since $[i,\la_{i-1}]
  \not\in R$ for each $i$.  Suppose that $\mu_j+1 = \la_j = d$ for
  $j_1 \leq j \leq j_2$.  According to Lemma~\ref{lem:neighbor}, each
  removed box $[j,d]$ for $j_1 \leq j \leq j_2$ is $k$-related to some
  box $[i_j,c_j] \in \mu\ssm\la$, and the box $[i_j,c_j-1]$ is also in
  $\mu\ssm\la$.  Condition (1) implies that each box $[j,d]$ is
  $k$-related to at most one box of $\mu\ssm\la$.  It follows that if
  $j<j_2$, then $[i_j,c_j] = [i_{j+1},c_{j+1}-1]$, so all the boxes
  $[i_j,c_j]$ lie in the same row of $\mu\ssm\la$.  Condition (2)
  follows from this since we also know that the box $[j_1-1,d]$ is
  $k$-related to $[i_{j_1},c_{j_1}-1]$.
\end{proof}

\subsection{}\label{sec4}

Propositions \ref{survivors:muD} and \ref{survivors:12} tell us that
if $\psi = (D,\mu,S,0)$ is any 4-tuple in $\Psi_0$ with $\mu_{\ell+1}
\geq 0$, then $\mu$ is a $k$-strict partition with $\la \to \mu$, $D =
\cC_{\ell+1}(\mu)$ is uniquely determined by $\mu$, and $\ev(\psi) =
T(\cC(\mu),\mu)$ is a term appearing in the Pieri rule
(\ref{pieriTla}).  To account for the multiplicities, we give an
explicit construction of the possible sets $S$ in these 4-tuples.  In
this section we fix an arbitrary $k$-strict partition $\mu$ such that
$\la \to \mu$ and $|\mu|=|\la|+p$.

A {\em component} means an (edge or vertex) connected component of the
set $\A$ of \S \ref{classpieri}.  We say that a box $B$ of $\A$ is
{\em distinguished} if the box directly to the left of $B$ does not
lie in $\A$.  We say that $B$ is {\em optional} if it is the rightmost
distinguished box in its component.  Notice that $\NN(\la,\mu)$ is
equal to the number of optional distinguished boxes in $\A$.  To each
distinguished box $B = [i,c]$ we associate the pair $(i,j) =
(i,r(i+c))$.  The inequality $\la_{i-1} > 2k+i-(i+c)$ implies that
$i\leq j$, so $(i,j) \in \Delta$.  Let $E$ (respectively $F$) be the
set of pairs associated to optional (respectively non-optional)
distinguished boxes.  We furthermore let $G$ be the set of all pairs
$(i,j) \in \Delta$ for which some box in row $i$ of $\mu\ssm\la$ is
$k$-related to a box in row $j$ of $\la\ssm\mu$.

\begin{lemma}\label{lem:assoc}
  {\rm(a)} We have $E \cup F \cup G \subset \cC_{\ell+1}(\mu)
  \cap \partial\cC$. \smallskip

  \noindent{\rm(b)} Each pair in $E \cup F$ is associated to exactly
  one distinguished box of $\A$. \smallskip

  \noindent{\rm(c)} The sets $E$, $F$, and $G$ are pairwise disjoint.
  \smallskip

  \noindent{\rm(d)} If $(i,j) \in F$, then $j = f_i(\mu)$.
  \smallskip

  \noindent{\rm(e)} If $(i,j) \in E \cup F \cup G$, $i<j$, and
  $(i,j-1) \not\in \cC$, then $\mu_j < \la_{j-1}$.
\end{lemma}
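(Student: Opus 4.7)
My plan is to prove each of (a)--(e) by splitting into the $E\cup F$ case, where the pair $(i,j)$ comes from a distinguished box $[i,c]\in\A$ with $j=r(i+c)$, and the $G$ case, where $(i,j)$ comes from a $k$-relation between a box $[i,c]\in\mu\ssm\la$ with $c>k$ and a removed box $[j,d]\in\la\ssm\mu$ with $d\leq k$, so that $c+d=2k+2+j-i$. In the first case the arguments unwind the definition $\A=(\mu\ssm\la)\ssm R(\mu)$ together with the maximality in the definition of $r(y)$; in the second case they exploit condition~(2) of \S\ref{classpieri}, which forces all $k$-relatives in $\mu\ssm\la$ of removed boxes from a single column to lie consecutively in a single row.

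For part~(a), the four required inequalities are verified in each case. In the $E\cup F$ case, $[i,c]\notin R(\mu)$ gives $\mu_j>2k+j-i-c$, so $\mu_i+\mu_j\geq c+\mu_j>2k+j-i$ and $(i,j)\in\cC_{\ell+1}(\mu)$; the maximality of $j=r(i+c)$ gives $\la_j\leq 2k+j+1-i-c$, and $\la_i\leq c-1$ then gives $\la_i+\la_j\leq 2k+j-i$, so $(i,j)\notin\cC$; Proposition~\ref{survivors:muD} yields $\la_{i-1}\geq\mu_i\geq c$, and combined with $\la_{j-1}\geq 2k+j-i-c+1$ (from the definition of $r$) it follows that $(i-1,j-1)\in\cC$ when $i\geq 2$. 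In the $G$ case, $\la_j=d$ follows from $\la_j-1\leq\mu_j<d\leq\la_j$, and condition~(2) produces two horizontally adjacent boxes of $\mu\ssm\la$ in row $i$, one being $[i,c]$, whence $\la_i\leq c-2$; then $\la_i+\la_j\leq 2k+j-i$, $\la_{i-1}+\la_{j-1}\geq c+d=2k+2+j-i$, and $\mu_i+\mu_j\geq c+d-1>2k+j-i$ establish all three memberships.

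For part~(b), I show that within a fixed row $i$ the map sending a distinguished $[i,c]$ to $j=r(i+c)$ is injective. If two distinguished boxes $[i,c_1],[i,c_2]$ have $c_1<c_2$ and the same $j$, let $c^*\geq c_1$ be maximal with $[i,c_1],\dots,[i,c^*]\in\A$; then $[i,c^*+1]$ is not in $\A$ but lies in $\mu\ssm\la$ (since $\la_i<c_1\leq c^*$ and $\mu_i\geq c_2\geq c^*+2$) with column $>k$, so $[i,c^*+1]\in R(\mu)$, and monotonicity of $r$ forces $r(i+c^*+1)=j$; this gives $\mu_j\leq 2k+j-i-c^*-1$, contradicting $\mu_j\geq 2k+j-i-c_1+1$ from $[i,c_1]\in\A$. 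Part~(c) is then immediate: $E\cap F=\emptyset$ follows from (b) since each distinguished box is either rightmost in its component or not; and $(E\cup F)\cap G=\emptyset$ follows because $\mu_j\geq\la_j$ for pairs in $E\cup F$ (combining $\mu_j\geq 2k+j-i-c+1$ with $\la_j\leq 2k+j+1-i-c$) while $\mu_j=\la_j-1<\la_j$ for pairs in $G$.

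Parts~(d) and~(e) are the delicate points. For~(d), I exploit that $\mu\ssm\la$ in columns $>k$ is a horizontal strip, so a component of $\A$ is a vertex-connected path moving rightward with row changes of at most one per column, and distinguished boxes are the leftmost box of the path together with the boxes where a row shift occurs. If $(i,j)\in F$, the non-optional distinguished $[i,c]$ is followed in its component by at least one further row shift, so the path must continue rightward through row $i$ via a mixture of $\A$-boxes and $R(\mu)$-boxes until a shift; tracking this path shows that $\mu_i\geq\la_{i-1}$ (so that $e_i(\mu)$ and $f_i(\mu)$ are defined) and that $e_i(\mu)$ lies on the same $r$-plateau as $c$, hence $j=r(i+c)=r(i+e_i(\mu))=f_i(\mu)$. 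The main obstacle is the careful bookkeeping of how $R(\mu)$-boxes fit between consecutive distinguished boxes in row $i$, since this determines precisely where $e_i(\mu)$ is placed. For~(e), the case $(i,j)\in G$ is immediate from $\mu_j=\la_j-1<\la_{j-1}$. For $(i,j)\in E\cup F$, I argue by contradiction: if $\mu_j=\la_{j-1}$, the hypothesis $(i,j-1)\notin\cC$ gives $\la_i+\la_{j-1}\leq 2k+j-i-1$, whence $c\geq\la_i+2$; the box $[i,c-1]$ is then in $\mu\ssm\la$ with column $>k$, and since $[i,c]$ is distinguished it lies in $R(\mu)$; writing $j''=r(i+c-1)\leq j$, the $R(\mu)$-condition yields $\mu_{j''}\leq 2k+j''-i-c+1$, which combined with the forced equality $\la_{j-1}=2k+j-i-c+1$ (from $\mu_j=\la_{j-1}$ together with $\mu_j>2k+j-i-c$) and the strict inequality $\la_{j''-1}>2k+j''-i-c+1$ defining $r(i+c-1)=j''$ yields a contradiction.
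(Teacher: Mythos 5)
Your overall strategy (case-split into the $E\cup F$ situation, driven by a distinguished box $[i,c]$ with $j=r(i+c)$, versus the $G$ situation, driven by a $k$-relation with a removed box) matches the paper, and your arguments for (a), (b), and (c) are essentially correct — with (a) proved by directly verifying $(i-1,j-1)\in\cC$ rather than by invoking Lemma~\ref{lem:CinDinCDC}, a harmless variant.

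However, there are genuine gaps in (d) and (e). For (d), your sketch describes the component continuing ``rightward through row $i$ via a mixture of $\A$-boxes and $R(\mu)$-boxes,'' but components of $\A$ consist only of $\A$-boxes; since $\mu\ssm\la$ in columns $>k$ is a horizontal strip with row $j$ ending at column $\mu_j\leq\la_{j-1}$ and row $j-1$ starting at column $\la_{j-1}+1$, a component's restriction to any single row is a single contiguous run of $\A$-boxes, and the only row-to-row transition is between $[i,\mu_i]$ and $[i-1,\la_{i-1}+1]$ when $\mu_i=\la_{i-1}$. Consequently if $[i,c]$ is non-optional, the run $[i,c],\ldots,[i,\la_{i-1}]$ must lie entirely in $\A$ and one gets the \emph{equality} $c=e_i(\mu)$, not merely that $c$ and $e_i(\mu)$ share the same $r$-plateau. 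You explicitly defer ``the careful bookkeeping,'' but that bookkeeping is the whole content of (d), so this is an unfinished proof rather than a proof.

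For (e), you assert that $[i,c-1]$ has column $>k$, i.e.\ $c>k+1$, without argument. This is automatic when $\la_i>k$ (from $c\geq\la_i+2$), but when $\la_i\leq k$ it requires the paper's observation that $c=k+1$ forces $\mu_i>k\geq\la_{i-1}$ for $i>m$ (impossible) so $i=m$, and then $j=r(m+k+1)=m=i$, contradicting $i<j$; that step is missing. Also, your ``forced equality $\la_{j-1}=2k+j-i-c+1$'' is not forced by the reasons you give — $\mu_j=\la_{j-1}$ and $\mu_j>2k+j-i-c$ only yield the inequality $\la_{j-1}\geq 2k+j-i-c+1$. The contradiction can still be extracted (from $\mu_{j''}\leq 2k+j''-i-c+1$, monotonicity of $\mu$, and the strict inequality $\la_{j''-1}>2k+j''-i-c+1$, one first gets $j''=j$ and then $\mu_j=2k+j-i-c+1<\la_{j-1}$), but the reasoning as written is incorrect and should be repaired along these lines.
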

\begin{proof}
  Let $(i,j) \in G$.  Then $\mu_j = \la_j-1$ and the boxes $[j,\la_j]$
  and $[j-1,\la_j]$ are $k$-related to $[i,d]$ and $[i,d-1]$, where $d
  = 2k+2+j-i-\la_j$.  We also have $\la_i+1 < d \leq \mu_i$.
  Therefore $\la_i + \la_j < d+\la_j-1 = 2k+1+j-i$ and $\mu_i+\mu_j
  \geq d+\mu_j = 2k+1+j-i$, so $(i,j) \in \cC_{\ell+1}(\mu) \ssm \cC$.
  Assume that $(i,j)$ is associated to a distinguished box $[i,c] \in
  \A$.  Since $r(i+c) = j < j+1 = r(i+d)$, we must have $c < d$, hence
  $\mu_j = 2k+1+j-i-d \leq 2k+j-i-c$.  But then $[i,c] \in R(\mu)$, a
  contradiction.  It follows that $G \cap (E \cup F) = \emptyset$.

  Now let $[i,c] \in \A$ be distinguished and set $j = r(i+c)$.  Then
  $j \geq r(i+\la_i+1)$, so (\ref{eqn:Cfromr}) shows that $(i,j)
  \not\in \cC$.  Since $[i,c] \not\in R(\mu)$ we get $\mu_j > 2k+j-i-c
  \geq 2k+j-i-\mu_i$, hence $(i,j) \in \cC_{\ell+1}(\mu)$.  Using
  Lemma \ref{lem:CinDinCDC}, this establishes (a).  If $[i,c'] \in
  R(\mu)$ is any box with $c' > c$, then we must have $j < r(i+c')$,
  since otherwise $\mu_j \leq 2k+j-i-c' < 2k+j-i-c$.  This proves (b)
  and finishes the proof of (c).  If $(i,j) \in F$, then $\mu_i =
  \la_{i-1}$, $c = e_i(\mu)$, and $j = f_i(\mu)$.  This establishes
  part (d).

  In the situation of (e), notice that if $\mu_j = \la_{j-1}$, then
  $(i,j) \in E \cup F$ is associated to a distinguished box $[i,c] \in
  \A$.  We must have $c > k+1$, since otherwise $\la_i\leq k$, $i=m$,
  and $j = r(m+k+1) = i$.  Since $[i,c]\not\in R(\mu)$ and
  $(i,j-1)\not\in \cC$, we also have $c > 2k+j-i-\mu_j =
  2k+j-i-\la_{j-1} \geq \la_i+1$.  It follows that $[i,c-1] \in
  R(\mu)$.  Set $j' = r(i+c-1)$.  Then $j' \leq j$ and $\mu_{j'}-j'
  \leq 2k-i-c+1 \leq \mu_j-j$, which shows that $j'=j$ and $\mu_j =
  2k+j-i-c+1 < \la_{j-1}$.  This contradiction proves (e).
\end{proof}

To every subset $E'$ of $E$ we associate the set of pairs $S(E') :=
E'\cup F\cup G$.  This is a disjoint union, and there are exactly
$2^{\NN(\la,\mu)}$ sets of this form.  The following proposition
therefore completes the proof of Claim 1.

\begin{prop}
  Let $S \subset \Delta$ be any subset.  Then
  $(\cC_{\ell+1}(\mu),\mu,S,0) \in \Psi_0$ if and only if $S = S(E')$
  for some subset $E' \subset E$.
\end{prop}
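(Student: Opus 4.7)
The plan is to prove the two directions of the equivalence separately. For the forward direction, assume $\psi = (\cC_{\ell+1}(\mu),\mu,S,0) \in \Psi_0$; the goal is to show $F \cup G \subset S$ and then $S \subset E \cup F \cup G$. The inclusion $G \subset S$ is immediate from the earlier lemmas: for $(i,j) \in G$, the definition forces $\mu_j = \la_j - 1$, so Lemma~\ref{lem:narrownoeat}(b) (with $h=0$) says that $D \ssm \cC$ contains exactly one pair in column $j$, lying in $S$, and Lemma~\ref{lem:neighbor} identifies this pair as $(i,j)$ via its $k$-related box structure.

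For $F \subset S$, I fix $(i,j) \in F$ with associated non-optional distinguished box $[i,c] \in \A$. Non-optionality means $[i,c]$ is not the rightmost distinguished box in its connected component of $\A$. Tracing along the component via shared edges or corners, the chain of boxes forces row $i$ of $\mu$ to be maximal, i.e.\ $\mu_i = \la_{i-1}$; the key is that the rightmost distinguished box in the component below row $i$ contributes a $k$-related box pair that pushes $\mu_i$ all the way up. Once this equality is established, Lemma~\ref{lem:combine} gives $(i,f_i(\mu)) \in S$, and Lemma~\ref{lem:assoc}(d) identifies $f_i(\mu) = j$. For the reverse inclusion $S \subset E \cup F \cup G$, I take $(i,j) \in S$ and use Corollary~\ref{cor:addorder} together with Lemma~\ref{lem:ivdooms} to conclude that $(i,j)$ was added to $D$ by rule \textbf{(i)} or \textbf{(iii)} (never \textbf{(iv)}). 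The condition $\W(i,j)$ held at that moment, and tracking the subsequent raising shows that either the box transferred to row $i$ is a distinguished box of $\A$ associated to $(i,j)$ --- placing $(i,j)$ in $E \cup F$ --- or else $\mu_j < \la_j$ so $(i,j) \in G$ by Lemma~\ref{lem:neighbor}.

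For the converse direction, I fix $E' \subset E$ and set $S = E' \cup F \cup G$. I define the initial composition $\nu = \prod_{(i,j)\in S} L_{ij}\mu$, where $L_{ij}$ is the inverse of $R_{ij}$. I would first verify $\nu \in \cN(\la,p)$ by checking $|\nu| = |\mu| = |\la|+p$ and $\nu_t \geq \la_t$ for each $t \leq \ell+1$, using the structural properties of $E$, $F$, $G$ and the fact that $\la \to \mu$. Next I describe an explicit path through the substitution rule from $(\cC,\nu,\emptyset,\ell+1)$ to $(\cC_{\ell+1}(\mu),\mu,S,0)$: the path first proceeds through Phase~1 as $\ell+1 \geq h \geq m$, filling in the pairs of $\partial^1\cC \cap (\cC_{\ell+1}(\mu) \ssm \cC)$ column by column using rule \textbf{(i)}; it then enters Phase~2 as $m \geq h \geq 1$, adding the remaining pairs row by row using rule \textbf{(iii)}. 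At each branch choice, the raising branch is taken precisely when the pair being added lies in $S$.

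The main obstacle is twofold. In the forward direction, Step~2 is delicate because it requires understanding how connectivity of distinguished boxes in $\A$ propagates along rows and through the $k$-relation to force $\mu_i = \la_{i-1}$; this requires carefully iterating the structural lemmas of \S\ref{sec2}. In the converse direction, the principal challenge is to confirm that the constructed path never STOPs prematurely at rule \textbf{(v)} due to condition $\X$, and never triggers rule \textbf{(iv)}. For condition $\X$: whenever $(h,h) \in D$ and $\mu_h = \la_{h-1}$, the very definition of $F$ and Lemma~\ref{lem:assoc}(d) guarantee that $(h,f_h(\mu)) \in F \subset S$, which defeats $\X$. For rule \textbf{(iv)}: at any level $h \leq m$ with $(h,h) \in D$, the failure of $\W(h,g_h)$ at the appropriate moment follows from the structural constraints imposed by $G$ via Lemma~\ref{lem:assoc}(e) and the characterization of $R(\mu)$.
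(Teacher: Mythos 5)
Your proposal follows the same broad two-direction outline as the paper's proof and identifies the correct supporting lemmas, but several of the central steps are asserted rather than proven, and at least one of the asserted mechanisms is wrong.

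In the forward direction, the inclusion $F \subset S$ requires knowing that whenever $[i,c]$ is a non-optional distinguished box, then $\mu_i = \lambda_{i-1}$ (so that Lemma~\ref{lem:combine} applies). Your stated mechanism --- ``the rightmost distinguished box in the component below row $i$ contributes a $k$-related box pair that pushes $\mu_i$ all the way up'' --- is not a correct description of what happens; $k$-relatedness is not what forces $\mu_i$ to be maximal, and no chain of $k$-related pairs plays the role you describe. The actual reason is a structural fact about how $\A$ fragments into components when $\mu_i < \lambda_{i-1}$ (the component of any box in row $i$ then stays within columns $\le \mu_i$ and rows $\ge i$), which makes every distinguished box of that component optional; this deserves a real argument and is not supplied. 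The reverse inclusion $S \subset E \cup F \cup G$ is similarly asserted: ``tracking the subsequent raising shows that the box transferred to row $i$ is a distinguished box associated to $(i,j)$'' is a conclusion, not a proof. The paper at this point performs a genuine case analysis, splitting on whether $i=j=m$, whether $\lambda_i + \mu_j \ge 2k+j-i$, and whether $(i,j-1)\in\cC$, with each case requiring a separate computation to exhibit the distinguished box $[i,\lambda_i+1]$ or $[i,c+1]$ and check $(i,j)=(i,r(i+c))$. Without some version of that analysis, the inclusion is unsupported.

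In the converse direction, you correctly define $\nu$ and outline the intended path, but the hard content is proving that the path stays inside the substitution forest and never STOPs. The paper's machinery for this is the notion of a ``good'' 4-tuple (satisfying $D_{h'} \subset D' \subset D_{h'-1}$ and $S' = S \cap D'$) and the induction that a good 4-tuple always has a good child on the path. You do not introduce any such invariant, so your check that $\X$ fails and {\bf(iv)} never fires cannot actually be carried out: the claim that ``$(h,f_h(\mu)) \in F \subset S$ defeats $\X$'' only works if you already know $S' = S \cap D'$ and $f' = f_{h'}(\mu)$ at the moment in question, which is exactly what the good-4-tuple invariant (together with Lemma~\ref{Rflemma}) establishes and which your sketch leaves hanging. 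As written, then, there is a genuine gap: the proposal identifies the right decomposition of the problem but does not supply the arguments that make either direction close.
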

\begin{proof}
  We first assume that $\psi = (\cC_{\ell+1}(\mu),\mu,S,0) \in
  \Psi_0$.  Lemmas \ref{lem:combine} and \ref{lem:assoc}(d) then imply
  that $F \subset S$.  We next show that $G = \{(i,j) \in S \mid \mu_j
  < \la_j \}$.  If $\mu_j < \la_j$ then $S$ contains a unique pair
  $(i,j)$ in column $j$.  Lemma~\ref{lem:neighbor} shows that
  $[j,\la_j]$ is $k$-related to a box in row $i$ of $\mu\ssm\la$, and
  condition (1) implies that no other box in $\mu\ssm\la$ is
  $k$-related to $[j,\la_j]$.  It follows that $(i,j)$ is also the
  unique pair of $G$ in column $j$.

  Let $(i,j) \in S \ssm G$.  We will show that $(i,j)$ is the pair
  associated to a distinguished box of $\A$.  If $i=j=m$, then $\la_m
  \leq k < \mu_m$ and $(m,m)$ is associated to the distinguished box
  $[m,k+1] \in \A$.  We can therefore assume that $i<j$, hence $\mu_i
  > \la_i$.  Since $(i,j) \not\in G$ we also have $\la_j \leq \mu_j$.
  If $\la_i+\mu_j \geq 2k+j-i$, then the inequality $\la_{j-1} \geq
  \mu_j > 2k+j-i-\la_i-1$ implies that $j \leq r(i+\la_i+1)$.  Since
  $(i,j) \not\in \cC$, it follows from (\ref{eqn:Cfromr}) that $j =
  r(i+\la_i+1)$.  We deduce that $[i,\la_i+1] \in \A$ is a
  distinguished box and $(i,j)$ is the associated pair.

  Otherwise we have $\la_i+\mu_j < 2k+j-i$.  In this case we set $c =
  2k+j-i-\mu_j$.  Since $(i,j) \in \cC_{\ell+1}(\mu)$ we have $\la_i <
  c < \mu_i$.  We also have $c > k$; if $i=m$ this follows because
  $\mu_j \leq \la_m \leq k$.  We claim that $\mu_j < \la_{j-1}$.  If
  $(i,j-1) \in \cC$, then this follows because $\mu_j < 2k+j-i-\la_i
  \leq \la_{j-1}$, so assume that $(i,j-1) \not\in \cC$.  Then we must
  have $j>m$, and $(i,j)$ was added to $S$ in Phase 2 of the
  algorithm.  By Lemma~\ref{phaseonelm} the first predecessor
  $(D',\mu',S',i)$ of $\psi$ of level $i$ satisfies that $\mu'_j \leq
  \la_{j-1}$.  Since $(i,j) \not\in S'$, this implies that $\mu_j <
  \la_{j-1}$, as claimed.

  The inequality $\la_{j-1} > \mu_j = 2k+j-i-c$ implies that $r(i+c)
  \geq j$, and since $\la_j \leq \mu_j$ we also have $r(i+c+1) \leq
  j$.  We deduce that $r(i+c) = r(i+c+1) = j$, $[i,c] \in R(\mu)$, and
  $[i,c+1] \in \A$.  This shows that $[i,c+1]$ is distinguished and
  $(i,j)$ is the associated pair.  We conclude that the set $E' := S
  \ssm (F \cup G)$ is a subset of $E$, hence $S = S(E')$ has the
  required form.

  Now let $E' \subset E$ be an arbitrary subset and set $S = S(E')$.
  We must show that $(\cC_{\ell+1}(\mu), \mu, S, 0) \in \Psi_0$.  Set
  $\nu = \prod_{(i,j)\in S} L_{ij}\, \mu$.  The definition of $S$
  ensures that $\nu \geq \la$, so $\nu\in \cN(\la,p)$.  We now
  construct a path $\cP$ in the substitution forest by applying the
  substitution rule of \S\ref{ss:subrule} repeatedly to the initial
  4-tuple $(\cC,\nu,\emptyset,\ell+1)$.  Whenever the substitution
  rule assigns two children to a 4-tuple $\psi'$ of $\cP$, we use the
  set $S$ to determine which child is to follow $\psi'$ on the path.
  More precisely, if $\psi' = (D',\mu',S',h')$ meets {\bf(i)} or
  {\bf(iii)} and has two children, and if $(i,j)$ is the outer corner
  being added to $D'$, then we choose the child $\psi'' =
  (D'',\mu'',S'',h')$ for which $S'' \ssm S' = S \cap \{(i,j)\}$.  We
  will show that $\cP$ terminates in the 4-tuple $(\cC_{\ell+1}(\mu),
  \mu, S, 0)$.

  For $h \geq 0$ we set $D_h = \cC \cup \{ (i,j) \in \cC_{\ell+1}(\mu)
  \mid i>h \text{ or } (j>h \text{ and } (i,j-1) \in \cC) \}$.
  Lemma~\ref{lem:CinDinCDC} implies that this is a valid set of pairs.
  We will say that the 4-tuple $\psi' = (D',\mu',S',h')$ is {\em
    good\/} if it satisfies $D_{h'} \subset D' \subset D_{h'-1}$ and
  $S' = S \cap D'$.  It is enough to show that if $\psi'$ is any
  good 4-tuple on $\cP$ with $h'>0$, then $\psi'$ has a good child
  that also belongs to $\cP$.

  Let $\psi'$ be a good 4-tuple of $\cP$.  We then have $\mu =
  \prod_{(i,j)\in S\ssm D'} R_{ij}\, \mu'$.  We first show that if
  $\psi'$ meets {\bf(i)} or {\bf(iii)}, and $(i,j)$ is the pair being
  added to $D'$, then $(i,j)$ is also in $D_{h'-1}$.  If $\mu_i+\mu_j
  = \mu'_i+\mu'_j$ then this is true because $\psi'$ satisfies
  $\W(i,j)$, and otherwise $S \ssm D'$ must contain at least one pair
  in row $i$ or column $j$, which implies that $(i,j) \in D_{h'-1}$ by
  Lemma~\ref{lem:assoc}(a).  On the other hand, assume that $D'
  \subsetneq D_{h'-1}$.  Then $D_{h'-1} \ssm D_{h'}$ contains an outer
  corner $(i,j)$ of $D'$.  If we choose $c \geq j$ maximal such that
  $(i,c) \in \cC_{\ell+1}(\mu)$, then the inequalities $\mu'_i +
  \mu'_j \geq \mu_i + \mu_j - (c-j) \geq \mu_i + \mu_c -c+j > 2k+j-i$
  show that $\psi'$ satisfies $\W(i,j)$.  If $h'=j$ then $\psi'$ meets
  {\bf(i)}, and otherwise we have $h'=i<j$ and $\psi'$ meets
  {\bf(iii)}.  Notice also that if $\psi'$ meets {\bf(iii)} and
  $\mu'_j > \mu'_{j-1}$, then we must have $(h',j) \in S$ since $\mu$
  is a partition.  These observations show that $\psi'$ meets {\bf(i)}
  or {\bf(iii)} if and only if $D' \subsetneq D_{h'-1}$, and in this
  case $\psi'$ is succeeded on $\cP$ by a good child.

  Now consider a good 4-tuple $\psi'$ of $\cP$ such that $D' =
  D_{h'-1}$.  It remains to show that the substitution rule simply
  decreases the level of $\psi'$, i.e.\ $\psi'$ does not meet
  {\bf(ii)}, {\bf(iv)}, or {\bf(v)}.  Assume that $\psi'$ meets
  {\bf(ii)} and choose $i \geq 1$ minimal such that $(i,h') \not\in
  D'$.  Then $\mu'_{h'} > \la_{h'-1}$ and $(i,h')$ is not an outer
  corner of $D'$.  We have $i<h'$ and $(i,h'-1) \not\in \cC$.  Using
  Lemma~\ref{lem:CinDinCDC} we deduce that $(i,h')\in S$ and $\mu_{h'}
  = \la_{h'-1}$; however this contradicts Lemma~\ref{lem:assoc}(e).

  If $\psi'$ satisfies condition $\X$, then $h' \leq m$ and $\mu'_{h'}
  = \mu_{h'} = \la_{h'-1}$.  It follows that $\A$ contains a
  non-optional distinguished box in row $h'$, and
  Lemma~\ref{lem:assoc}(d) implies that $(h',f_{h'}(\mu)) \in F$ is
  the associated pair.  But Lemma~\ref{Rflemma} shows that $f' =
  f_{h'}(\mu)$, so $(h',f') \in S \cap D_{h'-1} = S'$.  We conclude
  that condition $\X$ fails for $\psi'$.  In particular, $\psi'$ does
  not meet {\bf(v)}.  Finally, if $\psi'$ meets {\bf(iv)}, then
  $(h',g') \not\in \cC_{\ell+1}(\mu)$, and since $\mu'_{h'}=\mu_{h'}$
  and $\mu'_{g'}=\mu_{g'}$, we deduce that $\W(h',g')$ fails for
  $\psi'$.  This contradiction finishes the proof that the level of
  $\psi'$ is decreased.
\end{proof}

\begin{example}
\label{bigexample}
  Consider the partitions $\la = (22,21,18,16,14,7,5,4,3,3,1)$
  and $\mu = (25,21,19,17,15,14,6,5,3,2,2)$, and set $k=5$.
  Then $\la \to \mu$.  The diagrams of $\la$ and $\mu$ are displayed
  in Figure \ref{figureforbigexample}, with boxes from $R(\mu)$
  labeled with {\bf R} and distinguished boxes labeled with {\bf O}
  for optional and {\bf N} for non-optional.  The figure also shows
  the pairs in $\cC(\la)$ and $\cC_{\ell+1}(\mu)$,
  where $\ell=11$.  The pairs from $E$, $F$, and $G$ are labeled
  accordingly, and the one additional pair from $\cC_{\ell+1}(\mu)
  \ssm \cC(\la)$ is labeled {\bf D}.  The skew dotted lines
  help to identify the pairs in $E$ and $F$ associated to
  the distinguished boxes.  The compositions $\nu$ for which some
  4-tuple $(\cC_{\ell+1}(\mu), \mu, S, 0)$ originates from
  $(\cC,\nu,\emptyset,\ell+1)$ may or may not include the boxes
  labelled with question marks, which can be traded for boxes from the
  rows of corresponding optional distinguished boxes.  There are $2^5$
  such compositions $\nu$, and for each of them there are two sets
  $S$, one of which contains the diagonal pair $(7,7) \in E$.
  \vspace{3mm}

\begin{figure}
\centering
\[
\begin{array}{cc}
\includegraphics[scale=0.45]{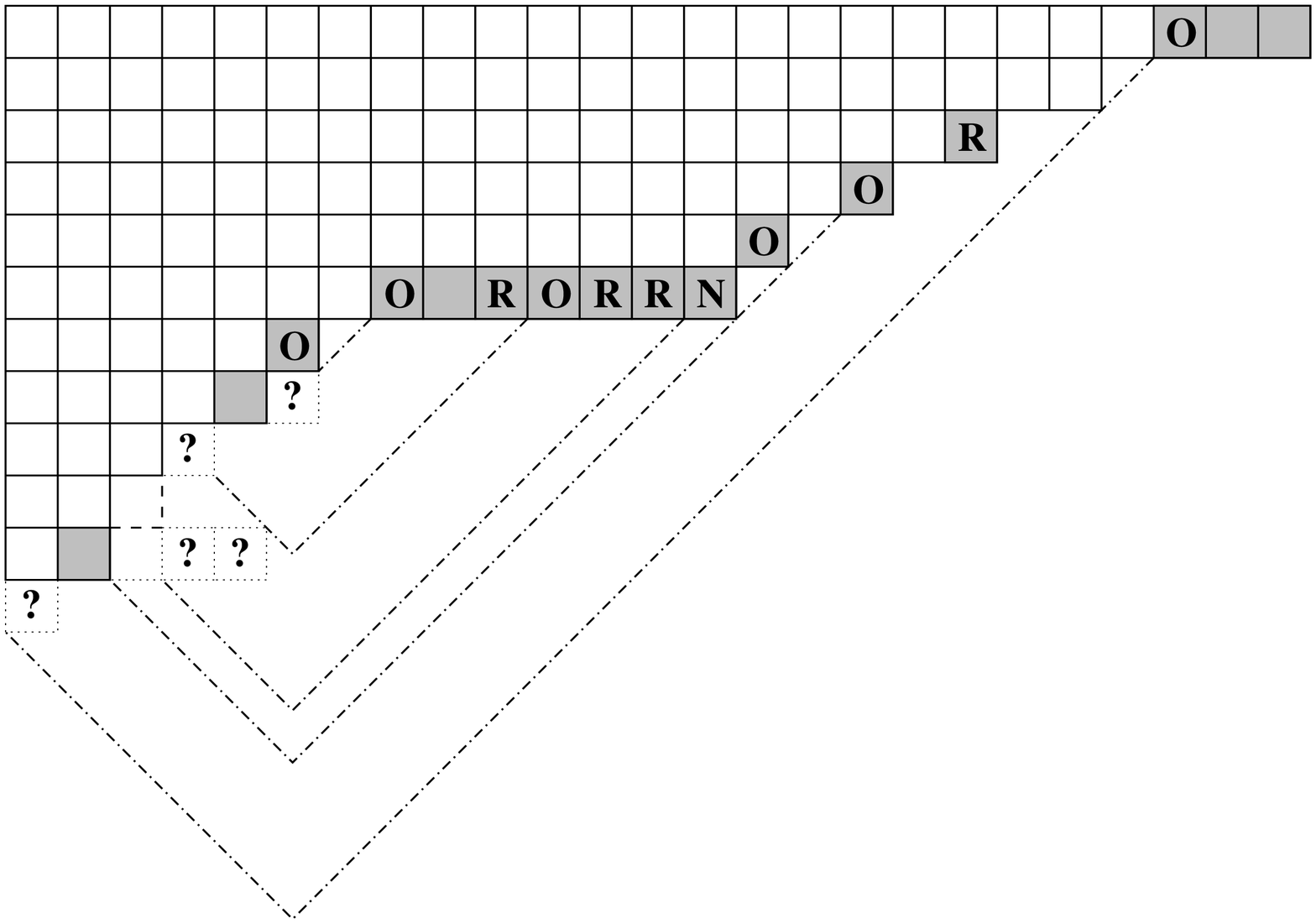} & \raisebox{140pt}{(i)} \\
\includegraphics[scale=0.65,viewport=0 10 50 50]{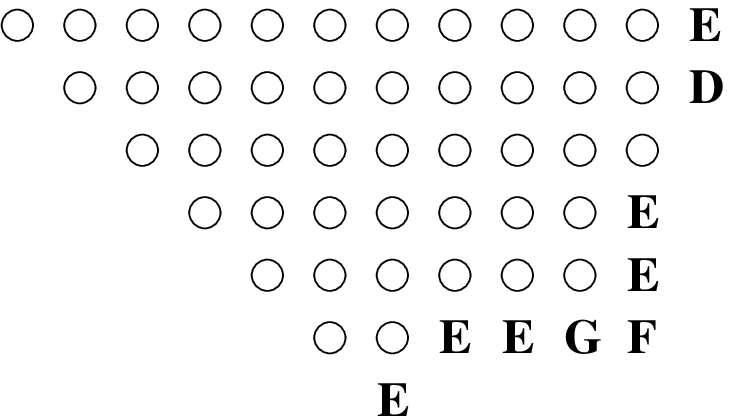} & \raisebox{26pt}{(ii)}
\end{array}
\]
\caption{(i) $\lambda$ and $\mu$, (ii) $\cC(\la)$ and
$\cC_{\ell+1}(\mu)$ in Example \ref{bigexample}.}
\label{figureforbigexample}
\end{figure}
\end{example}

\begin{remark}
  The most subtle ingredient of the Substitution Rule is the reference
  to condition $\W(h,g)$ in rule {\bf (iv)}.  In fact, if we modify
  the algorithm so that 4-tuples can meet {\bf(iv)} only when they
  satisfy condition $\X$, then Claim 1 still holds but Claim 2 fails.
  To see this, let $\psi = (D,\mu,S,h)$ be a 4-tuple that meets
  {\bf(iv)} but does not satisfy condition $\X$, and assume that the
  parent of $\psi$ does not meet {\bf(iv)}.  Then the modified
  algorithm replaces $\psi$ with $(D,\mu,S,h-1)$, and the arguments in
  the proofs of Lemmas \ref{lem:stickyx} and \ref{lem:ivdooms} can be
  used to show that the latter 4-tuple satisfies $\X$ and does not
  survive the modified algorithm.  It follows from this that Claim 1
  is true.  However, if the modified algorithm is applied with $\la =
  (4,3,1,1)$, $p=6$, and $k=1$, then the resulting set $\Psi_1$ of
  4-tuples meeting {\bf(ii)} or {\bf(v)} contains 1119 elements, and
  543 of these have non-zero evaluations.  This implies that there is
  no involution $\iota : \Psi_1 \to \Psi_1$ such that $\ev(\psi) +
  \ev(\iota(\psi)) = 0$ for every $\psi\in\Psi_1$.
\end{remark}

\subsection{}
\label{sec5}

In this section we construct a sign-reversing involution $\iota :
\Psi_1 \to \Psi_1$ and show that it has the properties required by
Claim 2.

Given a valid 4-tuple $\psi = (D,\mu,S,h)$ with $h \geq 2$ and $\mu_h
\geq \la_{h-1}$, we define a new 4-tuple $\iota\psi$ as follows.  If
$(h,h) \notin D$, then set $\iota\psi = (D,\wt\mu,S,h)$, where the
composition $\wt\mu$ is defined by $\wt\mu_{h-1} = \mu_h-1$,
$\wt\mu_h = \mu_{h-1}+1$, and $\wt\mu_t = \mu_t$ for $t \notin
\{h-1,h\}$.  If $(h,h) \in D$ and $\mu_{h-1} = \mu_h$, then set
$\iota\psi = \psi$.  Assume that $(h,h) \in D$ and $\mu_{h-1} \neq
\mu_h$.  Let $\varpi$ be the involution of $\Delta$ that exchanges
$(h-1,g)$ with $(h,f)$, and fixes all other pairs.  Then set
$\iota\psi = (D,\wt\mu,\wt S,h)$, where $\wt S = \varpi(S)$,
and $\wt\mu$ is the composition obtained from $\mu$ by switching
the parts $\mu_{h-1}$ and $\mu_h$.

\begin{lemma}\label{lem:ii}
  If $\psi\in \Psi_1$, then $\iota(\iota\psi)=\psi$ and $\ev(\psi) +
  \ev(\iota\psi) = 0$.
\end{lemma}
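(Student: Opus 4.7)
The proof divides into three cases matching the three clauses in the definition of $\iota\psi$. Before the cases I would verify that $\iota$ is well-defined on $\Psi_1$, i.e., that every $\psi=(D,\mu,S,h)\in\Psi_1$ satisfies $h\geq 2$ and $\mu_h\geq\la_{h-1}$. This follows because $\Psi_1$ is populated only by 4-tuples meeting rule \textbf{(ii)} or \textbf{(v)}: the former gives $\mu_h>\la_{h-1}$ directly, while the latter uses condition $\X$ together with Lemma \ref{lem:decrease} and the fact that $\mu_i\geq\la_i$ throughout the algorithm (since each $R_{ij}$ step only increases row $i$, and initially $\nu\in\cN(\la,p)$ dominates $\la$ componentwise).

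In each case the sign-flip identity $\ev(\psi)+\ev(\iota\psi)=0$ reduces to a commutation lemma. In Case 1 ($(h,h)\notin D$, so $\psi$ meets \textbf{(ii)}) I would apply Lemma \ref{commuteA} at $j=h-1$: the absence of an outer corner in column $h$ combined with $(h,h)\notin D$ forces columns $h-1$ and $h$ of $D$ to contain the same pairs in rows $1,\dots,h-2$ and forbids $(h-1,h)\in D$ -- otherwise one could exhibit an outer corner of $D$ in column $h$. With $(r,s)=(\mu_{h-1},\mu_h)$ the lemma yields $T(D,\mu)=-T(D,\wt\mu)$. In Cases 2 and 3 ($(h,h)\in D$, meeting \textbf{(v)}) I would apply Lemma \ref{commuteC} at $j=h-1$: $(h-1,h)\in D$ is immediate from validity, and the symmetry $(h-1,h')\in D\Leftrightarrow(h,h')\in D$ for $h'>h$ needs an analysis of how Phase 2 builds $D$ row-by-row, using Lemma \ref{lem:DinCDC} to constrain $D$ between $\cC$ and $\cC\cup\partial\cC$. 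The inequality $\mu_{h-1}+\mu_h>2k$ follows from $\mu_h\geq\la_{h-1}>k$ (valid because $(h,h)\in D\subset\cC\cup\partial\cC$ and $h\leq m$). Lemma \ref{commuteC} then yields $T(D,\mu)=0$ in Case 2 and $T(D,\mu)=-T(D,\wt\mu)$ in Case 3.

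The involution property $\iota(\iota\psi)=\psi$ is transparent: in Case 1 the transformation $(r,s)\mapsto(s-1,r+1)$ is self-inverse; Case 2 is tautological; in Case 3 both the swap of $\mu_{h-1},\mu_h$ and the transposition $\varpi$ square to the identity, and $\wt S=\varpi(S)$ remains a subset of $D\ssm\cC$ because $\varpi$ permutes pairs of $D$. It remains to check that $\iota\psi$ actually lies in $\Psi_1$, i.e., is produced by the algorithm and again meets \textbf{(ii)} or \textbf{(v)}. Case 1 is straightforward: $\wt\mu_h=\mu_{h-1}+1>\la_{h-1}$, so $\iota\psi$ again meets \textbf{(ii)}, and the unchanged $S$ records a valid algorithm trajectory leading to $\iota\psi$. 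Case 2 is trivial. For Case 3, I would show that the path in the substitution forest producing $\psi$ can be ``twisted'' by $\varpi$: the exchange $(h-1,g)\leftrightarrow(h,f)$ in $S$ corresponds to interchanging two specific branching choices in the substitution rule, and the resulting 4-tuple meets \textbf{(v)} by re-verifying condition $\X$ through Lemmas \ref{lem:fisg} and \ref{Rflemma}.

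The main obstacle is Case 3 -- both the membership $\iota\psi\in\Psi_1$ and the symmetry condition on $D$ required by Lemma \ref{commuteC}. Reconstructing a valid algorithm trajectory from the $\varpi$-modified $S$ requires tracking how each pair recorded in $S$ corresponds to a decision point in the substitution rule, and may demand an auxiliary structural lemma on the shape of $D$ at surviving 4-tuples meeting \textbf{(v)}. The column-symmetry of $D$ should ultimately leverage the fact that rows $h-1$ and $h$ can only disagree in columns $>h$ through the difference between $\cC$ and $\partial\cC$, compensated by the Phase-2 row-$h$ additions that were carefully encoded into the definition of the Substitution Rule.
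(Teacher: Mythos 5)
Your proposal follows the same skeleton as the paper's proof: case-split on whether $(h,h)\in D$, invoke Lemma~\ref{commuteA} for the $(h,h)\notin D$ case (rule \textbf{(ii)}), and invoke Lemma~\ref{commuteC} for the $(h,h)\in D$ cases (rule \textbf{(v)}), with Lemma~\ref{Rflemma} and Lemma~\ref{lem:hgbox} used to show that $\iota\psi$ is a valid 4-tuple and that $e,f,g$ are preserved so $\iota$ is involutive. So the core reasoning is right. Two points need correction, however.

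First, a scope issue: Lemma~\ref{lem:ii} asserts only $\iota(\iota\psi)=\psi$ and $\ev(\psi)+\ev(\iota\psi)=0$; it does \emph{not} assert $\iota\psi\in\Psi_1$. You spend a fair amount of effort (especially in your Case~3) trying to show $\iota\psi$ is produced by the algorithm and again meets \textbf{(ii)} or \textbf{(v)}. That is the much harder statement $\iota(\Psi_1)\subset\Psi_1$, which the paper handles \emph{separately} in the remainder of \S\ref{sec5} (Lemmas~\ref{lem:valpred}--\ref{lem:cancelX} and Proposition~\ref{prop:invol}). Folding it into this lemma makes the ``proof'' incomplete by your own admission (``may demand an auxiliary structural lemma''); the paper's division of labor avoids this. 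Second, the justification ``$\mu_i\geq\la_i$ throughout the algorithm (since each $R_{ij}$ step only increases row $i$\ldots)'' is false as stated: a raising operator $R_{ij}$ increases $\mu_i$ but \emph{decreases} $\mu_j$, and indeed $\mu_j$ can drop to $\la_j-1$ (see Lemma~\ref{lem:narrownoeat}). What is true, and is what you need, is that $\mu_{h-1}\geq\la_{h-1}$ holds while the level is still $\geq h$, because the rules \textbf{(i)}, \textbf{(iii)}, \textbf{(iv)} only decrease entries $\mu_j$ with $j\geq h$ once the level has fallen to $h$. This is worth stating correctly, since it is exactly what the paper implicitly uses when applying Lemma~\ref{Rflemma} to conclude $\iota(\iota\psi)=\psi$.
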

\begin{proof}
  Assume that $(h,h)\notin D$.  Then $\iota(\iota\psi)=\psi$ is clear,
  and since $\psi$ meets {\bf (ii)}, it follows from Lemma
  \ref{commuteA} that $\ev(\psi) + \ev(\iota\psi) = 0$.  Next, assume
  that $(h,h)\in D$.  Then $\psi$ meets {\bf(v)} and
  Lemma~\ref{lem:hgbox} shows that $(h,g) \in D$.  It follows that
  $\wt S \subset D$ and $\iota\psi$ is a valid 4-tuple.  Lemma
  \ref{Rflemma} implies that the same values of $\e$, $\f$, and $\g$
  are assigned to $\psi$ and $\iota\psi$, so we have $\iota(\iota\psi)
  = \psi$.  Finally, since $\psi$ satisfies condition $\X$, we have
  $\mu_h \geq \la_{h-1} > k$, so it follows from Lemma~\ref{commuteC}
  that $\ev(\psi) + \ev(\iota\psi) = 0$.
\end{proof}

Given any valid 4-tuple $\psi = (D,\mu,S,h)$, a valid set of pairs $D'
\subset D$, and an integer $h' \geq h$, we define the 4-tuple
$\psi(D',h') = (D',\prod_{(i,j)\in S\ssm D'} L_{ij} \mu, S\cap D',
h')$.  Notice that if $\psi$ occurs in the substitution forest, then
every predecessor of $\psi$ can be written as $\psi(D',h')$.  In
particular, the initial 4-tuple leading to $\psi$ is
$\psi(\cC,\ell+1)$.

To finish the proof of Claim 2, we will show that $\iota(\Psi_1)
\subset \Psi_1$.  Fix an element $\psi = (D,\mu,S,h) \in \Psi_1$.  We
will show that the substitution forest has a path leading to
$\iota\psi$ and that this 4-tuple meets {\bf(ii)} or {\bf(v)}.  Define
compositions $\nu$ and $\wt\nu$ by $(\cC, \nu, \emptyset, \ell+1) =
\psi(\cC,\ell+1)$ and $(\cC, \wt\nu, \emptyset, \ell+1) =
\iota\psi(\cC,\ell+1)$.

The following lemma shows that $\iota\psi \in \Psi_1$ whenever $\psi$
meets {\bf(ii)}.  We will say that two valid 4-tuples {\em meet the
same rule}, if both meet the same rule among {\bf(i)}--{\bf(v)} in the
substitution rule, or if the substitution rule decreases the level of
both 4-tuples.

\begin{lemma}
  Assume that $\psi \in \Psi_1$ meets {\bf(ii)}.\smallskip

  \noin{\em(a)} We have $\wt\nu \in \cN(\la,p)$.\smallskip

  \noin{\em(b)} Let $\psi' = \psi(D',h')$ be any predecessor of $\psi$.
  Then $\iota\psi(D',h')$ meets the same rule as $\psi'$.  In
  particular, $\iota\psi$ meets {\bf(ii)}.
\end{lemma}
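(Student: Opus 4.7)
My plan is to exploit two structural facts about $\psi = (D,\mu,S,h) \in \Psi_1$ meeting \textbf{(ii)}. Since $\psi$ is in Phase 1 at level $h \geq m$ with $(h,h) \notin D$, every pair $(i,j) \in S$ satisfies $i \leq m$ and $j \in [h, \ell+1]$; no pair $(m,j) \in S$ can exist, as it would force $(m,m) \in D$ by validity of $D$, contradicting $(h,h) \notin D$ when $h = m$ and being precluded at $h > m$ since $(m,m)$ only enters $D$ at the instant the level reaches $m$. When $h = m$, no pair $(m-1,j) \in S$ exists either: validity would give $(m-1,m) \in D$, making $(m,m)$ an outer corner, and $\mu_m > \la_{m-1} > k$ yields $\W(m,m)$, so rule \textbf{(i)} would fire and preempt \textbf{(ii)}. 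These facts imply $\mu_{h-1} = \nu_{h-1}$ in every case, and $\mu_h = \nu_h - b$ where $b := \#\{(i,h) \in S\}$ with all such $i < m$.

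For part (a), I would first check $|\wt\nu| = |\wt\mu| = |\la|+p$ and the length bound at $\ell+1$ from the definition of $\iota$ and of $\psi(D',h')$. For $\wt\nu \geq \la$, noting that $\wt\mu - \mu$ is supported at positions $h-1, h$ and that each lowering operator shifts a position by the same additive amount in both expressions, I would compute
\[
\wt\nu_{h-1} = \nu_{h-1} + \mu_h - \mu_{h-1} - 1 = \mu_h - 1, \qquad
\wt\nu_h = \nu_h + \mu_{h-1} + 1 - \mu_h = \nu_{h-1} + 1 + b,
\]
using $\mu_{h-1} = \nu_{h-1}$ and $\mu_h = \nu_h - b$. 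The inequality $\wt\nu_{h-1} \geq \la_{h-1}$ reduces to $\mu_h > \la_{h-1}$, which is \textbf{(ii)}, and $\wt\nu_h \geq \la_h$ follows from $\nu_{h-1} \geq \la_{h-1} \geq \la_h$ together with $b \geq 0$.

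For part (b), I would set $\psi'' = \iota\psi(D', h') = (D', \mu'', S \cap D', h')$ and establish the key identity $\mu''_t - \mu'_t = \wt\mu_t - \mu_t$ for every $t$; this holds since each $L_{ij}$ with $(i,j) \in S \ssm D'$ shifts position $t$ by the same additive amount for both $\mu$ and $\wt\mu$. Thus $\mu''$ is obtained from $\mu'$ by the same local swap at positions $h-1, h$ as $\wt\mu$ from $\mu$. Specializing to $\psi' = \psi$ gives the ``in particular'' statement: $\iota\psi$ meets \textbf{(ii)} because $D$ is unchanged (so there is still no outer corner in column $h$) and $\wt\mu_h = \mu_{h-1}+1 = \nu_{h-1}+1 > \la_{h-1}$. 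For strict predecessors $\psi'$ in Phase 1, the rule met is \textbf{(i)} (involving an outer corner $(i,h')$ and the condition $\W(i,h')$ with $i \leq m$) or the default decrement; the structural observations from the first paragraph ensure that the relevant values of $\mu''$ agree with those of $\mu'$, since position $h' \geq h$ lies outside $\{h-1,h\}$ when $h' > h$, and position $i \leq m$ is unaffected by the involution when $h > m+1$. The hard part will be handling the boundary cases $h \in \{m, m+1\}$ and the same-level predecessors $h' = h$, where the involution's local changes at positions $h-1, h$ can interact with the first-coordinate range of $S$-pairs; here the structural restrictions on $S$ (in particular, that every $(i,h) \in S$ has $i < m$) must be invoked carefully to check that the outer corners and $\W$-conditions agree between $\psi'$ and $\psi''$.
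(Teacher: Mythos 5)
Your part (a) is essentially the paper's argument. It is, however, more circuitous than necessary, because the hypotheses of rule \textbf{(ii)} already force $h>m$: if $(m,m)\notin D$, then (as the pairs $(i',j')$ with $j'<m$ all lie in $\cC$ by $k$-strictness) a valid set $D$ always has an outer corner in column $m$, contradicting the premise of \textbf{(ii)}. The paper records $h>m$ at the outset; with it, your separate treatment of the case $h=m$ never arises, and the identity $\mu_{h-1}=\nu_{h-1}$ is immediate.

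Part (b) has genuine gaps, which you yourself flag as ``the hard part.'' For strict predecessors at level $h'>h$, the missing observation is that any outer corner $(i,h')$ of $D'$ satisfies $i\leq m-1$, not merely $i\leq m$: since $(m,m)\notin D'\subset D$ and valid sets are order ideals, $(m,h')$ cannot be an outer corner of $D'$. Combined with $h>m$ this gives $i\leq m-1<h-1$ unconditionally, so both positions $i$ and $h'$ appearing in $\W(i,h')$ are unaffected by the involution and the boundary case $h=m+1$ disappears. Your proposed fix --- to invoke that every $(i,j)\in S$ has $i<m$ --- is not the right tool: the relevant constraint is on $D'$, not on $S$, and the fact $(m,m)\notin D$ (which you do observe) should be applied to the outer corner of $D'$. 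Separately, the case $h'=h$ is left completely unaddressed, and it does need an argument, since $\W(i,h)$ involves position $h$, which the involution changes. The paper disposes of it via Lemma~\ref{phaseonelm}; one direct route is to note that an outer corner $(i,h)$ of $D'$ at level $h$ has $(i,h-1)\in\cC$, hence $\la_i+\la_{h-1}\geq 2k+h-i$, while $\mu'_i\geq\nu_i\geq\la_i$ and $\wt\mu'_h\geq\wt\mu_h=\mu_{h-1}+1\geq\la_{h-1}+1$, so $\W(i,h)$ still holds for $\iota\psi(D',h)$. As written, your proposal identifies these difficulties but does not resolve them, so the proof of (b) is incomplete.
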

\begin{proof}
  Notice first that $h>m$, as $D$ has no outer corner in column $h$
  and $(h,h)\notin D$.  Part (a) is true because $\wt\nu_{h-1} =
  \wt\mu_{h-1} = \mu_h-1 \geq \la_{h-1}$, $\wt\nu_h \geq \wt\mu_h =
  \mu_{h-1}+1 > \la_{h-1} \geq \la_h$, and $\wt\nu_t = \nu_t \geq
  \la_t$ for $t \notin \{h-1,h\}$.  The inequality $\wt\mu_h >
  \la_{h-1}$ implies that $\iota\psi$ meets {\bf(ii)}.  Let
  $\psi(D',h')$ be a strict predecessor of $\psi$.  If $h'>h$ and
  $(i,h')$ is an outer corner of $D'$, then $i \leq m-1 < h-1$ and
  $\W(i,h')$ holds for $\iota\psi(D',h')$ if and only if it holds for
  $\psi(D',h')$.  Part (b) follows from this when $h'>h$, and when
  $h'=h$ it follows from Lemma~\ref{phaseonelm}.
\end{proof}

From now on we assume that $\psi$ meets {\bf(v)} and that $\mu_{h-1}
\neq \mu_h$, so that $\iota\psi \neq \psi$.  We have $(h,h) \in D$,
$\psi$ satisfies condition $\X$, and $\mu_h \geq \la_{h-1}$.  We also
have $\wt\mu_h = \mu_{h-1} \geq \la_{h-1}$, and in case of equality
$\mu_{h-1}=\la_{h-1}$ we must have $(h-1,g)\notin S$ or equivalently
$(h,f) \notin \wt S$.  This shows that $\iota\psi$ satisfies
condition $\X$.

\begin{lemma}
  We have $\wt\nu \in \cN(\la,p)$.
\end{lemma}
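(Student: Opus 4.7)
The plan is to establish the three defining properties of $\cN(\la,p)$ for $\wt\nu$: that $\wt\nu$ is an integer vector with $\wt\nu\geq\la$ entrywise, that $|\wt\nu|=|\la|+p$, and that $\wt\nu_j=0$ for $j>\ell+1$.

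The last two properties are straightforward. Since $\wt\mu$ is obtained from $\mu$ by swapping the entries in positions $h-1$ and $h$, and since every lowering operator $L_{ij}$ preserves the sum of entries, we get $|\wt\nu|=|\wt\mu|=|\mu|=|\nu|=|\la|+p$. For the support condition, $\wt\mu$ is supported in $\{1,\ldots,\ell+1\}$ because $\mu$ is and $h\leq m\leq\ell+1$, and every pair $(i,j)\in\wt S\subset D$ satisfies $j\leq\ell+1$, so $\wt\nu$ is also supported in $\{1,\ldots,\ell+1\}$.

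For the inequalities $\wt\nu\geq\la$, I would begin by writing $\delta_1=1$ if $(h-1,g)\in S$ (and $0$ otherwise), and likewise $\delta_2$ for $(h,f)\in S$. Using the formulas $\nu_t=\mu_t+\#\{i:(i,t)\in S\}-\#\{j:(t,j)\in S\}$ and the analogous formula for $\wt\nu$, a direct computation exploiting that $\wt\mu=s_h\mu$ and $\wt S=\varpi(S)$ shows that $\wt\nu_t=\nu_t$ for every $t\notin\{h-1,h,f,g\}$; in particular $\wt\nu_t\geq\la_t$ is inherited from $\nu\in\cN(\la,p)$ in those rows. For the four exceptional rows the same calculation yields the explicit shifts
\[
  \wt\nu_{h-1}-\nu_{h-1}=(\mu_h-\mu_{h-1})+(\delta_1-\delta_2),\qquad
  \wt\nu_h-\nu_h=(\mu_{h-1}-\mu_h)+(\delta_2-\delta_1),
\]
together with $\wt\nu_f-\nu_f=\delta_1-\delta_2$ and $\wt\nu_g-\nu_g=\delta_2-\delta_1$ in the generic situation in which the labels $h-1,h,f,g$ are distinct (the coincident cases produce further cancellation that can only help).

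I would then finish by case-splitting on which clause of condition $\X$ is witnessed by $\psi$, and within each case on the signs of the quantities above. When $\mu_h\geq\mu_{h-1}$ the shift in row $h-1$ is nonnegative, and the needed inequality in row $h$ is obtained from $\nu_h\geq\la_h$ together with Lemma~\ref{lem:decrease} (which forces $\mu_{h-1}\geq\la_h$ when $h-1<m$) and the boundary identity $\mu_m\geq\la_{m-1}>k\geq\la_m$ at $h=m$. When $\mu_h>\la_{h-1}$, the inequality $\wt\mu_{h-1}=\mu_h>\la_{h-1}$ provides immediate slack. When $\mu_h=\la_{h-1}$ and $(h,f)\notin S$, the hypothesis $\delta_2=0$ eliminates the one pair that could otherwise push $\wt\nu_{h-1}$ below $\la_{h-1}$. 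The bookkeeping for rows $f$ and $g$ uses that whenever $\delta_1\neq\delta_2$, the relevant pair in $S$ was introduced by rule {\bf(i)} or {\bf(iii)} at an earlier step of the algorithm, so the corresponding $\W$-condition held there; tracing the subsequent changes to $\mu_g$ or $\mu_f$ then supplies the strict inequality $\nu_g\geq\la_g+1$ or $\nu_f\geq\la_f+1$ that absorbs the $-1$ shift introduced by $\varpi$.

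The main obstacle will be the coordinated case analysis in rows $h-1$ and $h$ when the swap $\delta_1\leftrightarrow\delta_2$ shifts the defect from one row to the other. There Lemma~\ref{Rflemma} must be invoked to ensure that $\iota\psi$ is assigned the same values of $e$, $f$, and $g$ as $\psi$, so that $\varpi$ consistently identifies the pair positions being exchanged; and the three clauses of condition $\X$ must be combined with the fact that $\psi$ does not meet {\bf(iv)} to rule out the problematic configurations in which $\mu_h>\mu_{h-1}$ but $\nu_h-\la_h$ is too small to absorb the shift $(\mu_h-\mu_{h-1})+(\delta_1-\delta_2)$.
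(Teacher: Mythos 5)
Your scaffolding is the same as the paper's: both arguments are direct coordinate-by-coordinate verifications that $\wt\nu\geq\la$, using the observation that $\wt\nu$ differs from $\nu$ only in rows $h-1,h,f,g$, and your shift formula $\wt\nu_{h-1}-\nu_{h-1}=(\mu_h-\mu_{h-1})+(\delta_1-\delta_2)$ is correct. But there is a genuine gap in how you close the estimate in the remaining rows, and the auxiliary lemmas you invoke are not the ones that actually carry the weight.

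For row $h$: when $\mu_h>\mu_{h-1}$ the shift $\wt\nu_h-\nu_h=(\mu_{h-1}-\mu_h)+(\delta_2-\delta_1)$ is negative, so $\nu_h\geq\la_h$ gives you nothing. Lemma~\ref{lem:decrease} only constrains rows $j$ with $j>h$, and the fact $\mu_{h-1}\geq\la_h$ (which anyway just follows from $\nu_{h-1}=\mu_{h-1}-\delta_1\geq\la_{h-1}$ since $(h-1,g)$ is the sole pair of $D\smallsetminus\cC$ in row $h-1$) is too weak: the number of pairs of $\wt S$ in row $h$ can be as large as $\mu_h-\la_h+1$, destroying the bound. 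The ingredient you are missing is Lemma~\ref{lambdabg}, i.e.\ $\la_{h-1}-\la_h\geq g_h-b_h+1$. Since row $h$ of $D\smallsetminus\cC$ sits in $\partial\cC$ and hence in columns $b,\dots,g$, one gets $\wt\nu_h\geq\wt\mu_h-(g-b+1)=\mu_{h-1}-(g-b+1)\geq\la_{h-1}-(g-b+1)\geq\la_h$. Without Lemma~\ref{lambdabg} the row $h$ inequality does not close.

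For rows $f$ and $g$: the argument you sketch (``trace the $\W$-condition back to the step where the pair entered $S$'') is too loose and does not produce the needed strict slack. What the paper actually uses is: when $f<g$, Lemma~\ref{fglemma} gives $\mu_g=\la_{g-1}$ and $(h,g)\notin S$, hence $\wt\nu_g\geq\wt\mu_g=\mu_g=\la_{g-1}\geq\la_g$; and for row $f$, the defining condition $[h,e]\notin R(\mu)$ of $e=e_h(\mu)$ translates (via $f=r(h+e)$) into $\mu_f\geq 2k+f+1-h-e\geq\la_f$, together with a short verification that row $f$ of $\wt S$ is empty in the delicate case $f=m>h$ (because $(h,f)\notin\cC$ forces $(f,f+1)\notin\cC\cup\partial\cC\supset D$). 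These specific structural facts---Lemmas~\ref{lambdabg} and \ref{fglemma} and the $[h,e]\notin R$ criterion---are what make the shift bookkeeping go through; they cannot be replaced by the heuristic bounds you propose.
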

\begin{proof}
  Notice that $\nu \geq \la$ and $\wt\nu_i = \nu_i$ for $i \notin
  \{h-1,h,f,g\}$. Observe also that row $h-1$ of $D \ssm \cC$ contains
  the single pair $(h-1,g)$.  If $\wt\mu_{h-1} > \la_{h-1}$ we
  therefore obtain $\wt\nu_{h-1} \geq \wt\mu_{h-1}-1 \geq \la_{h-1}$.
  Otherwise we have $\mu_h = \wt\mu_{h-1} = \la_{h-1}$, and since
  $\mu_{h-1}\neq\mu_h$ and $\psi$ satisfies condition $\X$, it follows
  that $(h,f) \notin S$, so $(h-1,g) \notin \wt S$ and $\wt\nu_{h-1} =
  \la_{h-1}$.  Using Lemma~\ref{lambdabg} we also obtain $\wt\nu_h
  \geq \wt\mu_h - (g-b+1) \geq \la_h$.  Notice that if $h < f=g$,
  then $\wt\nu_g = \nu_g \geq \la_g$, so we may assume that $f<g$.
  Lemma~\ref{fglemma} then implies that $\wt\nu_g \geq \wt\mu_g =
  \mu_g = \la_{g-1} \geq \la_g$.  Using that $[h,e] \notin R$ and the
  definition of $f$, we obtain $\mu_f \geq 2k+f+1-h-e \geq \la_f$.  If
  $f > h$ then we also have $\wt\nu_f \geq \wt\mu_f$; when $h < m = f$
  this is true because $(h,f)\notin\cC$ implies $(f,f+1)\notin D$.  We
  conclude that $\wt\nu_f \geq \wt\mu_f = \mu_f \geq \la_f$, as
  required.
\end{proof}

For $t \in \N$ we define the valid set of pairs $D_t = \cC \cup
\{(i,j) \in D \mid i < t\}$.  Let $z \geq 1$ be the smallest positive
integer for which $(z,g)\notin\cC$ and $\psi(D_z,g)$ does not meet
{\bf(i)}, and choose $\wt z \geq 1$ minimal such that $(\wt
z,g)\notin\cC$ and $\iota\psi(D_{\wt z},g)$ does not meet {\bf(i)}.
We also write $z_1 = \min(z,\wt z)$ and $z_2 = \max(z,\wt z)$, and
define the sets $F = \{(i,j) \in D\ssm\cC \mid j<g\}$ and $G = \{(i,g)
\mid z_1 \leq i < z_2 \}$.  Let $h_0$ be maximal such that $(h_0,g)
\in D$.  Notice that $h \leq h_0$, $D_{z_2} = D_{z_1}\cup G$, and
$G\subset\partial^1\cC$.

\begin{lemma}\label{lem:valpred}
  {\em(a)} Both $\psi(D_z,g)$ and $\psi(D_z\cup F,h_0)$ are
  predecessors of $\psi$.\smallskip

  \noin{\em(b)} We have $z_2 \leq h_0+1$.
\end{lemma}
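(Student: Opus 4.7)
The plan is to locate the 4-tuples $\psi(D_z, g)$ and $\psi(D_z \cup F, h_0)$ as specific waypoints on the unique path of the substitution forest leading to $\psi$, and to derive (b) via the maximality of $h_0$ combined with the Phase 1 stopping criterion.

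Recall the phase structure described in \S\ref{sec1}: pairs are added to $D$ in Phase 1 (at levels $h' \geq m$) by rule {\bf(i)}, column by column from column $\ell+1$ down to $m$ and row by row from the top within each column, and in Phase 2 (at levels $h' \leq m$) by rules {\bf(iii)} and {\bf(iv)}, row by row from $m$ down to $h$ with additional column-$g_{h'}$ additions that, by Lemma \ref{lem:ivdooms}(c), occur contiguously and share a common column. Since $\psi$ meets {\bf(v)}, Lemma \ref{lem:hgbox}(b) gives $(h,g)\in D$, so $h \leq h_0$.

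For (a), consider $\psi(D_z, g)$ first. By construction $z$ is the Phase 1 stopping row for column $g$: at the state $D_z$ at level $g$ with composition $\mu_z = \prod_{(i,j)\in S\ssm D_z} L_{ij}\mu$, rule {\bf(i)} fails. Thus, provided $D_z$ coincides with the actual algorithm state at that moment, $\psi(D_z,g)$ is a predecessor. This coincidence splits into two claims: (I) every pair of $D\ssm\cC$ in a column $>g$ has row $<z$; and (II) every pair of $D\ssm\cC$ with row $<z$ lies in a column $\geq g$. For (I), descending from column $\ell+1$ to $g+1$, the Phase 1 stopping rows obey a monotonicity along the boundary of $\cC$ that yields the required row bound. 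For (II), Phase 1 contributions to columns in $[m,g-1]$ and Phase 2 rule-{\bf(iii)} contributions in rows $[h,m]$ are constrained by the conditions of $\X$, Lemma \ref{phaseonelm}, and Lemma \ref{lem:narrownoeat}, ruling out pairs with row $<z$ in columns $<g$. The 4-tuple $\psi(D_z\cup F, h_0)$ is identified analogously: it is the algorithm state just after Phase 2 rule {\bf(iii)} has processed rows $m,m-1,\ldots,h_0$ (contributing exactly the pairs of $F$) and immediately before the contiguous sequence of rule-{\bf(iv)} additions in column $g$ at levels $h_0, h_0-1, \ldots, h$ begins.

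For (b), I would show $z \leq h_0+1$ and $\wt z \leq h_0+1$ separately. The first holds because if rule {\bf(i)} were to trigger at state $D_{h_0+1}$ at level $g$ for $\psi$, the pair $(h_0+1, g)$ would eventually be added to $D$, contradicting the maximality of $h_0$; hence $z \leq h_0+1$. The second follows by the parallel argument for $\iota\psi$, noting that $\iota\psi$'s composition $\wt\mu$ differs from $\mu$ only at rows $h-1,h$, both of which are $\leq h < h_0+1$, so the $\W$-check at row $h_0+1$ versus row $g$ is unchanged. The main obstacle will be verifying the structural claims (I) and (II) in (a), which require a careful column-by-column and row-by-row analysis of how the Phase 1 stopping rows and the Phase 2 rule-{\bf(iii)} outer corners interact with the stopping condition {\bf(v)} satisfied by $\psi$.
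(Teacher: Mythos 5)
Your overall strategy for part (a) — identify $\psi(D_z,g)$ and $\psi(D_z\cup F,h_0)$ as actual waypoints by verifying that $D_z$ and $D_z\cup F$ are the algorithm states at the relevant moments — is the same as the paper's, but the paper does not prove your claims (I) and (II) directly. Instead it asserts that all pairs $(i,j)\in D\ssm\cC$ with $i<z$ were added in Phase 1 and that the column-$g$ pairs with $i\geq z$ were added in Phase 2, then invokes Lemma~\ref{lem:ivdooms} to place the latter at level $h_0$ and after $F$. Your monotonicity claim for Phase 1 stopping rows and your appeal to $\X$, Lemma~\ref{phaseonelm} and Lemma~\ref{lem:narrownoeat} for (II) are plausible but left unverified, so part (a) in your writeup is only a sketch, not a proof.

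Part (b) is where you genuinely diverge from the paper, and there is a gap. You argue: if rule {\bf(i)} fired at $\psi(D_{h_0+1},g)$ then $(h_0+1,g)$ would be added to $D$, contradicting the maximality of $h_0$. But the outer corner of $D_{h_0+1}$ in column $g$ is $(h_0+1,g)$ only when $(h_0+1,g)\in\partial^1\cC$ (or in the special diagonal case $h_0+1=g=m$); in the complementary case $(h_0+1,g-1)\notin\cC$, $D_{h_0+1}$ has \emph{no} outer corner in column $g$, so {\bf(i)} never fires there and the bound is automatic — the paper opens with ``we may assume $(h_0+1,g)\in\partial^1\cC$'' for precisely this reason, which you never note. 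Moreover, ``would eventually be added to $D$'' presupposes that $\psi(D_{h_0+1},g)$ actually lies on the path to $\psi$, which requires part (a) plus a further accounting of the Phase-1 level-$g$ states. The paper sidesteps all of this by showing directly that $\W(h_0+1,g)$ fails for $\psi$ itself (in two cases: $h_0+1<m$, using that the most recent predecessor of $\psi$ at level $h_0+1$ does not meet {\bf(iii)} or {\bf(iv)}, and $h_0+1=g=m$, using that $\W(z,m)$ fails for $\psi(D_z,m)$), then transfers the failure to $\psi(D_{h_0+1},g)$ and to $\iota\psi(D_{h_0+1},g)$ via the inequalities on the $\mu$-components; this is cleaner and also handles $\wt z$ without any appeal to $\iota\psi$ occurring in the forest, which has not yet been established at this point. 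Your observation that $\wt\mu$ differs from $\mu$ only in rows $h-1,h$ is the right germ for the $\wt z$ comparison, but you should also verify that $S\ssm D_{h_0+1} = \wt S\ssm D_{h_0+1}$, which uses that $\varpi$ only moves the pairs $(h-1,g)$ and $(h,f)$, both of which lie in $D_{h_0+1}$.
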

\begin{proof}
  All pairs $(i,j) \in D\ssm\cC$ with $i < z$ were added to $D$ in
  Phase 1 of the algorithm, while all pairs $(i,g) \in D\ssm\cC$ with
  $i \geq z$ were added in Phase 2.  Lemma~\ref{lem:ivdooms} implies
  that the latter pairs were added to predecessors of $\psi$ of level
  $h_0$, and this happened after all pairs of $F$ were added.  Part
  (a) follows from this.

  For (b) we may assume that $(h_0+1,g) \in \partial^1\cC$.  Since
  $\wt\mu_{h_0+1} = \mu_{h_0+1}$ and $\wt\mu_g = \mu_g$, it is enough
  to show that $\W(h_0+1,g)$ fails for $\psi$.  If $h_0+1 < m$, then
  this follows because the most recent predecessor of $\psi$ of level
  $h_0+1$ does not meet {\bf(iii)} or {\bf(iv)}.  Otherwise we have
  $h_0+1=g=m$, in which case the inequality $\mu_m \leq k$ follows
  because $\W(z,m)$ fails for $\psi(D_z,m)$.
\end{proof}

Given two valid 4-tuples $\psi'$ and $\psi''$, we write $\psi' \leq
\psi''$ if $\psi'$ is a predecessor of $\psi''$, with $\psi'=\psi''$
allowed.  We will write $\psi' < \psi''$ if $\psi'\leq\psi''$ and
$\psi'\neq\psi''$.  The next proposition implies that $\iota\psi$
appears in the substitution forest; in fact we have
$\iota\psi(\cC,\ell+1) \leq \iota\psi(D_{\wt z},g) \leq
\iota\psi(D_{\wt z}\cup F,h_0) \leq \iota\psi$.

\begin{prop}\label{prop:invol}
  Let $\psi' = \psi(D',h')$ be a predecessor of $\psi$.\smallskip

  \noin{\em(a)} If $\psi' < \psi(D_{z_1},g)$ then $\iota\psi(D',h')$
  meets the same rule as $\psi'$.\smallskip

  \noin{\em(b)} If $z_1 \leq t < {\wt z}$, then
  $\iota\psi(D_t,g)$ meets {\bf(i)}.\smallskip

  \noin{\em(c)} If $\psi(D_{z},g) \leq \psi' <
  \psi(D_{z}\cup F,h_0)$, then $\iota\psi(D' \,\triangle\, G,
  h')$ meets the same rule as $\psi'$.  Here $D' \,\triangle\, G =
  (D'\cup G) \ssm (D'\cap G)$ is the symmetric difference.\smallskip

  \noin{\em(d)} If ${\wt z} \leq t \leq h_0$, then
  $\iota\psi(D_t\cup F,h_0)$ meets {\bf(iii)} or {\bf(iv)}.\smallskip

  \noin{\em(e)} If $\psi(D,h_0) \leq \psi'$, then $\iota\psi(D',h')$
  meets the same rule as $\psi'$.
\end{prop}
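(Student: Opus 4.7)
The plan is to trace out the entire path in the substitution forest from the initial 4-tuple $\iota\psi(\cC,\ell+1)$ down to $\iota\psi$, organized around the landmarks
\[
\iota\psi(D_{z_1},g),\ \iota\psi(D_{\wt z},g),\ \iota\psi(D_{\wt z}\cup F,h_0),\ \iota\psi(D,h_0).
\]
The five parts of the proposition correspond precisely to the five segments of this path. Parts (a), (c), (e) concern segments which run parallel to the known path $\psi(\cC,\ell+1)<\cdots<\psi$, in the sense that the substitution rule makes the ``same'' decision (same rule met, same outer corner chosen). Parts (b) and (d) describe the two divergence segments, where the pairs of $G=\{(i,g):z_1\leq i<z_2\}$ are filled in in a different order along the $\iota\psi$-path: in (b) some of them are added earlier via rule \textbf{(i)} (still in Phase 1, level $g$), while in (d) others are added later via rule \textbf{(iii)} or \textbf{(iv)} (in Phase 2, level $h_0$).

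For parts (a), (c), (e) I would compare the compositions $\mu'=\prod_{(i,j)\in S\ssm D'}L_{ij}\mu$ attached to $\psi'$ and $\wt\mu'$ attached to its proposed $\iota$-counterpart. The decisive point is that the involution $\varpi$ and the row swap $\mu_{h-1}\leftrightarrow\mu_h$ are compatible: outside the swap zone (rows $h-1,h$ and columns $f,g$) the two compositions agree, and the effect inside that zone is designed so that every test of the form $\W(i,j)$ invoked by the substitution rule evaluates identically on both sides. In (a), since $h'>g$, outer corners in column $h'$ involve only the agreeing entries; in (c) the symmetric difference $D'\triangle G$ precisely reshuffles which pairs of $G$ are ``already placed'' so that the remaining pairs in $S\cap D'$ vs.\ $\wt S\cap D'$ yield matching $\W$-tests; in (e) the two paths have $D'=\wt D'$ and matching suffixes of $S$ once the $G$-segment is complete, so the analysis reduces to an unambiguous parallel simulation using Lemma \ref{lem:stickyx}.

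For parts (b) and (d) I would verify directly that the triggering rule fires. In (b), minimality of $\wt z$ forces that for each $z_1\leq t<\wt z$ the 4-tuple $\iota\psi(D_t,g)$ has an outer corner $(i,g)$ with $i\leq m$ on which $\W(i,g)$ holds in $\wt\mu$; the estimate uses $\wt\mu_{h-1}=\mu_h\geq\la_{h-1}$ and the fact that the pairs already added along the $\psi$-path up to $\psi(D_{z_1},g)$ forced $\W(i,g)$ to hold there, plus Lemma \ref{phaseonelm} to propagate the inequality to the $\iota\psi$-side. In (d), for $\wt z\leq t\leq h_0$ one uses Lemma \ref{lem:valpred}(b) to show the remaining pairs of $G$ lie at rows $\leq h_0+1$ and get added in Phase 2 of the algorithm; the choice between rules \textbf{(iii)} and \textbf{(iv)} is dictated by whether the outer corner lies in row $h_0$, with Lemma \ref{lem:hgbox} and Lemma \ref{lem:ivdooms} ensuring that \textbf{(iv)} is available precisely when needed (exploiting that $\iota\psi$ itself satisfies condition $\X$, so $\W(h,g)$ or $\X$ holds at intermediate 4-tuples).

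The chief obstacle is the bookkeeping in part (c): one has to show that the recipe $D'\mapsto D'\triangle G$ really lands in a valid 4-tuple occurring on the $\iota\psi$-path, and that the substitution rule there makes the identical decision as at $\psi'$. This requires a simultaneous double swap (the $\varpi$-swap in $S$ and the $G$-symmetric-difference in $D$) and a careful accounting of outer corners, using Lemma \ref{fglemma} to pin down $\mu_g=\la_{g-1}$ when $f<g$, Lemma \ref{Rflemma} to guarantee that $f$ is the same for $\psi$ and $\iota\psi$, and Lemma \ref{lem:CinDinCDC} together with Corollary \ref{cor:addorder} to ensure that each added pair in $G$ actually lies in $\partial^1\cC$ and hence is eligible for rule \textbf{(i)}. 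Once these five parts are established, combining them with Lemma \ref{lem:valpred} yields the chain $\iota\psi(\cC,\ell+1)<\cdots<\iota\psi$, so $\iota\psi$ appears in the substitution forest; together with Lemma \ref{lem:ii} and the fact that $\iota\psi$ meets \textbf{(ii)} or \textbf{(v)}, this completes the proof of Claim \ref{claim2}.
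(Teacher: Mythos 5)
Your macro-level decomposition is the same as the paper's: five segments of the path, with landmarks at $\iota\psi(D_{z_1},g)$, $\iota\psi(D_{\wt z},g)$, $\iota\psi(D_{\wt z}\cup F,h_0)$, $\iota\psi(D,h_0)$, and the identification of (a), (c), (e) as parallel stretches and (b), (d) as divergence stretches. However, the technical heart of the argument is missing, and this is a genuine gap rather than mere "bookkeeping."

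The claim you treat as a design feature of the involution --- that "every test of the form $\W(i,j)$ invoked by the substitution rule evaluates identically on both sides" --- is precisely Lemma~\ref{lem:cancelweight}, which has a substantial and delicate proof. For instance, when $i=h$ and $j<g$, showing that $\W(h,j)$ holds for the $\iota\psi$-side requires the chain of inequalities using $(h-1,g-1)\in\cC$, condition $\X$ on $\iota\psi$, and ultimately Lemma~\ref{fglemma} to derive a contradiction from the boundary case; when $i>h$ and $j=f$, one must separately verify that $(h,f)\in D'$ (via Corollary~\ref{cor:addorder}) so that $S\ssm D'$ and $\wt S\ssm D'$ agree in column $f$. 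These steps are not automatic consequences of the swap being "compatible." Similarly, you never establish that the $\iota\psi$-side does not meet rule {\bf(ii)} (Lemma~\ref{lem:cancel2}, itself a nontrivial analysis that requires ruling out the case $h'=f$, $(h,f-1)\notin\cC$) or that it does not satisfy condition $\X$ at intermediate levels (Lemma~\ref{lem:cancelX}, whose proof runs through $\ov\mu_h>\la_h$ and a careful claim $\ov\mu_{g_{h'}}=\mu'_{g_{h'}}$ requiring Lemma~\ref{lem:ivdooms}(c)). Without these three lemmas there is nothing preventing the $\iota\psi$-side from stopping prematurely or taking a different branch, which would destroy the whole chain.

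Two smaller points. Your sketch of (b) is more involved than necessary and arguably circular: the paper's (b) is immediate from the minimality in the definition of $\wt z$ --- no appeal to $\W$-estimates or Lemma~\ref{phaseonelm} is needed, and the latter route risks reproving the definition. And in (d), while you correctly flag that $\iota\psi$ satisfies $\X$, the actual proof needs the preliminary step that $(h_0,h_0)\in D_t\cup F$, plus the case distinction $h<h_0$ versus $h=h_0$, the latter of which requires Lemma~\ref{lem:fisg} applied twice (once to $\iota\psi$ to get $f=g$, once to $\ov\mu$). These steps should be made explicit.
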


\begin{example}\label{ex:invol}
  Let $\la=(3,1)$, $p=4$, and $k=1$.  Then $\cC=\{11,12\}$, and the
  4-tuple $\psi = (D,341,\{22\},2) \in \Psi_1$ meets {\bf(v)}, where
  $D=\{11,12,13,22,23\}$.  We compute $f=2$, $g=3$, and $\iota\psi =
  (D,431,\{13\},2)$.  We also have $z = 1$, $\wt z = 2$, $h_0=2$, $F =
  \{22\}$, and $G = \{13\}$.  The paths of the substitution forest
  leading to $\psi$ and $\iota\psi$ are displayed below.  Notice how
  the path from $\psi(D_z,g)$ to $\psi(D_z\cup F,h_0)$ translates to a
  path from $\iota\psi(D_{\wt z},g)$ to $\iota\psi(D_{\wt z}\cup
  F,h_0)$, and the path from $\psi(D_z\cup F,h_0)$ to $\psi(D_{\wt
    z}\cup F,h_0)$ translates to a path from $\iota\psi(D_z,g)$ to
  $\iota\psi(D_{\wt z},g)$.\bigskip \medskip

  \psfrag{(i)}{{\bf(i)}} \psfrag{(iv)}{{\bf(iv)}}
  \psfrag{(iii)}{{\bf(iii)}}
  \psfrag{L1}{\hspace{-25mm}$\scriptstyle\psi(D_z,g) \ = \
    (\{11,12\},341,\emptyset,3)$}
  \psfrag{L2}{\hspace{-11mm}$\scriptstyle(\{11,12\},341,\emptyset,2)$}
  \psfrag{L3}{\hspace{-35mm}$\scriptstyle\psi(D_z\cup F,h_0) \ = \
    (\{11,12,22\},341,\{22\},2)$}
  \psfrag{L4}{\hspace{-37mm}$\scriptstyle\psi(D_{\wt z}\cup F,h_0) \ =
    \ (\{11,12,13,22\},341,\{22\},2)$}
  \psfrag{L5}{\hspace{-24mm}$\scriptstyle\psi \ = \
    (\{11,12,13,22,23\},341,\{22\},2)$}

  \psfrag{R1}{\hspace{-10mm}$\scriptstyle(\{11,12\},332,\emptyset,3) \
    = \ \iota\psi(D_z,g)$}
  \psfrag{R2}{\hspace{-14mm}$\scriptstyle(\{11,12,13\},431,\{13\},3) \
    = \ \iota\psi(D_{\wt z},g)$}
  \psfrag{R3}{\hspace{-14mm}$\scriptstyle(\{11,12,13\},431,\{13\},2)$}
  \psfrag{R4}{\hspace{-16mm}$\scriptstyle(\{11,12,13,22\},431,\{13\},2)
    \ = \ \iota\psi(D_{\wt z}\cup F,h_0)$}
  \psfrag{R5}{\hspace{-16mm}$\scriptstyle(\{11,12,13,22,23\}, 431,
    \{13\}, 2) \ = \ \iota\psi$}
  \mbox{}\hspace{36mm}\pic{.25}{examinvol2}
\end{example}
\medskip

The proof of Proposition~\ref{prop:invol} is based on the following
lemmas.

\begin{lemma}\label{lem:ieq25c}
  If $h_0 < h' \leq m$ and $g_{h'} = g$, then $\wt\mu_{h'} = \mu_{h'}
  < \la_{h'-1}$.
\end{lemma}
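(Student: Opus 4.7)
The equality $\wt\mu_{h'}=\mu_{h'}$ is immediate from the definition of $\iota$ in the relevant case $(h,h)\in D$ (since $\psi$ meets {\bf(v)}): the involution alters only the entries of $\mu$ at positions $h-1$ and $h$, while the hypotheses give $h'>h_0\geq h\geq 2$, so $h'\notin\{h-1,h\}$.

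For the strict inequality $\mu_{h'}<\la_{h'-1}$, the plan is to first observe that $h<m$, since otherwise $h_0\geq h=m$ would contradict $h'>h_0$ and $h'\leq m$. Lemma~\ref{lem:decrease} applied to $\psi$ then gives $\mu_{h'}\leq\la_{h'-1}$, and I aim to derive a contradiction by assuming equality.

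Suppose $\mu_{h'}=\la_{h'-1}$, and let $\psi'=(D',\mu',S',h')$ be the most recent predecessor of $\psi$ of level $h'$. Since rules {\bf(i)}, {\bf(iii)}, {\bf(iv)} preserve the level and rule {\bf(v)} stops the algorithm, while $\psi$ has strictly smaller level $h<h'$, the substitution rule must fall through to the default level-decreasing clause at $\psi'$, so all five rules fail there. One checks that $(h',h')\in D'$ (from $(h',h')\in\cC$ when $h'<m$, and by tracking when $(m,m)$ first enters $D$ when $h'=m$). Between $\psi'$ and $\psi$, the value $\mu_{h'}$ can only be modified at a level-$t$ 4-tuple $(t<h')$ by rule {\bf(iii)} with $j=h'$ or by rule {\bf(iv)} with $g_t=h'$, both of which strictly decrease $\mu_{h'}$; hence $\mu'_{h'}\geq\mu_{h'}=\la_{h'-1}$. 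Combined with $\mu'_{h'}\leq\la_{h'-1}$ forced by the failure of $\X$ at $\psi'$ (with $(h',h')\in D'$), this yields $\mu'_{h'}=\la_{h'-1}$ and $(h',f')\in S'$, where $f'=f_{h'}(\mu')$.

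Now $(h',f')\in S'\subset D$, and the equality $f'=g$ would give $(h',g)\in D$, contradicting $h'>h_0$; so $f'<g$ by (\ref{eqn:bfg}). Applying Lemma~\ref{fglemma} at $\psi'$ yields $\mu'_g=\la_{g-1}$, and combining with the defining inequality $\la_{g-1}>2k+g-h'-\la_{h'-1}$ of $g=g_{h'}$ gives $\mu'_{h'}+\mu'_g>2k+g-h'$, so condition $\W(h',g)$ holds at $\psi'$. Since $\X$ fails but $\W(h',g)$ holds, the failure of rule {\bf(iv)} forces $D'$ to contain no outer corner $(i,g)$ with $i\leq h'$. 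The main obstacle will be to rule this out: writing $h_0':=\max\{i:(i,g)\in D'\}<h'$, the only candidate outer corner of $D'$ in column $g$ at or below row $h'$ is $(h_0'+1,g)$, and one must show $(h_0'+1,g-1)\in D'$ so that $(h_0'+1,g)$ is indeed an outer corner. This should follow from tracing how pairs in row $h_0'+1$ enter $D'$ (via Corollary~\ref{cor:addorder}) together with the presence of $(h',f')\in D'$ and the order-ideal property of $D'$, yielding the required contradiction.
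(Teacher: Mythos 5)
Your argument tracks the paper's proof closely: you pass to the most recent predecessor $\psi'$ of level $h'$, use $\X$ failing at $\psi'$ together with $(h',h')\in D'$ to force $\mu'_{h'}=\la_{h'-1}$ and $(h',f')\in S'$, deduce $f'<g$ and invoke Lemma~\ref{fglemma} to get $\mu'_g=\la_{g-1}$, and conclude that $\W(h',g)$ holds for $\psi'$. Up to that point your reasoning is sound. But the final step, the contradiction, is left unfinished, and the route you sketch for it is misdirected.

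The issue is that you have not used Lemma~\ref{lem:hgbox}(a), which is exactly the tool that closes the argument. Since $\X$ and $\W(h',g)$ hold for $\psi'$ and we have $h'<g$ (because $h'\leq m\leq f'<g$), Lemma~\ref{lem:hgbox}(a) says that if $(h',g-1)\notin D'$, then $\psi'$ meets \textbf{(iii)} --- already a contradiction. In the remaining case $(h',g-1)\in D'$, the order-ideal property alone (no need for Corollary~\ref{cor:addorder} or any tracing of how pairs enter $D'$) gives $(i,g-1)\in D'$ for all $i\leq h'$; in particular $(h_0'+1,g-1)\in D'$ with $h_0'+1\leq h'<g$, so $(h_0'+1,g)$ is an outer corner of $D'$ in column $g$ at or below row $h'$, and $\psi'$ meets \textbf{(iv)}. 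Either way $\psi'$ is not level-decreased, contradicting its choice. What you describe as ``the main obstacle'' is thus resolved by a case split driven by Lemma~\ref{lem:hgbox}(a), not by tracing the history of $D'$.

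A secondary point: your parenthetical claim that $(h',h')\in D'$ follows ``by tracking when $(m,m)$ first enters $D$ when $h'=m$'' is not substantiated and is not true in general. The paper handles this case directly: if $(h',h')\notin D'$ then $h'=m$, $(m,m)$ is an outer corner of $D'$ (since $(m-1,m)\in\cC$), and the failure of \textbf{(i)} at $\psi'$ gives $\mu'_m\leq k<\la_{m-1}$, which already yields the desired strict inequality without any contradiction argument. You should treat that case separately rather than asserting it away.
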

\begin{proof}
  Let $\psi' = (D',\mu',S',h')$ be the most recent predecessor of
  $\psi$ of level $h'$, and notice that $\wt\mu_{h'} = \mu_{h'} =
  \mu'_{h'}$.  If $(h',h') \notin D'$ then $h'=m$, and since $\psi'$
  does not meet {\bf(i)} we obtain $\mu'_m \leq k < \la_{m-1}$.  We
  may therefore assume that $(h',h') \in D'$.  Since condition $\X$
  fails for $\psi'$ by Lemma~\ref{lem:stickyx}, we have $\mu'_{h'}
  \leq \la_{h'-1}$.  Suppose that $\mu'_{h'} = \la_{h'-1}$.  Then
  $(h',f_{h'}(\mu')) \in S'$, and since $(h',g)\notin D$ we must have
  $f_{h'}(\mu') < g = g_{h'}$.  Lemma~\ref{fglemma} then implies that
  $\mu'_g = \la_{g-1}$, and since $(h'-1,g-1) \in \cC$ we deduce that
  $\W(h',g)$ holds for $\psi'$.  But then Lemma~\ref{lem:hgbox}(a)
  implies that $\psi'$ meets {\bf(iii)} or {\bf(iv)}, contradicting
  the choice of $\psi'$.  We conclude that $\mu'_{h'} < \la_{h'-1}$,
  as required.
\end{proof}

In the next three lemmas, we let $\psi(D',h') = (D',\mu',S',h')$
denote a predecessor of $\psi$, let $\ov D$ be a valid set of pairs
such that $D'\ssm G \subset \ov D \subset D' \cup G$, and write
$\iota\psi(\ov D,h') = (\ov D,\ov\mu,\ov S, h')$.

\begin{lemma}\label{lem:cancelweight}
  Choose $(i,j)\in \partial\cC\ssm \ov D$ with $j \neq g$.  Assume
  that {\em (a)} $(i,j-1)\in\cC$ or $i \geq h$, and $(i,j)$ is an
  outer corner of $\ov D$, or {\em (b)} $(i,j) = (h',g_{h'}) \neq
  (m,m)$.  Then $\psi(D',h')$ satisfies $\W(i,j)$ if and only if
  $\iota\psi(\ov D,h')$ satisfies $\W(i,j)$.
\end{lemma}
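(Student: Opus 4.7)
My plan is to prove the stronger statement that $\mu'_i+\mu'_j=\ov\mu_i+\ov\mu_j$ in the situations covered by hypotheses (a) or (b), from which the equivalence of $\W(i,j)$ follows immediately. Writing $T=S\ssm D'$ and $\wt T=\wt S\ssm\ov D$, the explicit formula
$\mu'_t=\mu_t-\#\{b:(t,b)\in T\}+\#\{a:(a,t)\in T\}$
and its analogue for $\ov\mu_t$ reduce the task to a combinatorial comparison of lowering-operator contributions at positions $i$ and $j$.

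First I would record three structural observations. \emph{(i)} $\wt\mu$ differs from $\mu$ only at positions $h-1$ and $h$, with $\wt\mu_{h-1}=\mu_h$ and $\wt\mu_h=\mu_{h-1}$; in particular $\wt\mu_{h-1}+\wt\mu_h=\mu_{h-1}+\mu_h$. \emph{(ii)} By the construction of $\iota\psi$, the sets $S$ and $\wt S$ coincide except possibly for the $\varpi$-swap of $(h-1,g)$ with $(h,f)$. \emph{(iii)} Since $D'\ssm G\subset\ov D\subset D'\cup G$ with $G$ a subset of column $g$, the symmetric difference $\ov D\triangle D'$ lies entirely in column $g$. Lemma~\ref{Rflemma} guarantees that $e$, $f$, and $g$ are the same invariants for $\psi$ and $\iota\psi$, so these indices are unambiguous throughout.

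The core of the argument is then a case analysis on whether $\{i,j\}$ meets $\{h-1,h,f,g\}$. In the generic case, where $i,j\notin\{h-1,h,f\}$ and $i\neq g$, no pair in $T\triangle\wt T$ can affect position $i$ or $j$, so the two sums are trivially equal. When one of $i,j$ lies in $\{h-1,h,f\}$ or when $i=g$, I would combine the identity $\wt\mu_{h-1}+\wt\mu_h=\mu_{h-1}+\mu_h$ with the observation that the $\varpi$-swap preserves the total lowering-operator contribution over $\{h-1,h\}$ (and over $\{f,g\}$) to show that the adjustments cancel. The two hypotheses enter here: condition (a) forces $(i,j-1)\in\cC$ or $i\geq h$, which together with $(i,j)$ being an outer corner of $\ov D$ rules out interference from column-$j$ pairs of $D\ssm\cC$; condition (b) identifies $(i,j)=(h',g_{h'})$ with $g_{h'}\neq g=g_h$, forcing $h'\neq h$ and, since $\psi(D',h')\leq\psi$, then $h'>h$, which keeps $(i,j)$ disjoint from the $\varpi$-swap pairs.

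The main obstacle will be the simultaneous bookkeeping in the subcase where the $\varpi$-swap actually occurs \emph{and} $\ov D$ differs from $D'$ in column $g$, so that both $T\triangle\wt T$ and $\wt\mu-\mu$ contribute non-trivially at rows $h-1$ and $h$. Handling this requires using that $\psi$ meets {\bf (v)} and satisfies condition $\X$ (so $\mu_h\geq\la_{h-1}$ and the membership of $(h,f)$ in $S$ is controlled by the condition $(h,f)\notin S\Rightarrow\mu_h>\la_{h-1}$), together with Lemmas~\ref{lem:stickyx} and~\ref{lem:ivdooms} to track precisely which column-$g$ pairs enter $S$ or $\wt S$. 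The expected outcome is that every discrepancy between $T$ and $\wt T$ that affects position $i$ or $j$ is exactly compensated by the corresponding change from $\mu$ to $\wt\mu$.
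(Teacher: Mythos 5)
The proposed route has a genuine gap: the ``stronger statement'' $\mu'_i+\mu'_j=\ov\mu_i+\ov\mu_j$ you want to establish is in fact false in the central case $i=h$, and there the lemma needs a qualitatively different argument.

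To see the problem, recall that $\wt\mu$ is obtained from $\mu$ by swapping the two entries $\mu_{h-1}$ and $\mu_h$, which are assumed unequal, and $\wt S=\varpi(S)$ differs from $S$ only by exchanging $(h-1,g)$ and $(h,f)$. The effect of this data on the predecessor values is that the two \emph{sums} $\mu'_{h-1}+\mu'_h$ and $\wt\mu'_{h-1}+\wt\mu'_h$ agree, and likewise over $\{f,g\}$; but the individual entry $\mu'_h$ need not equal $\wt\mu'_h$ (for instance, if neither $(h-1,g)$ nor $(h,f)$ lies in $S\ssm D'$, then $\wt\mu'_h-\mu'_h=\wt\mu_h-\mu_h=\mu_{h-1}-\mu_h\neq 0$). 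Now take $i=h$ in hypothesis (a), so $(h,j)$ is an outer corner of $\ov D$ with $b\leq j<g$, and suppose $j\neq f$. Then $j\notin\{h-1,h,f,g\}$, so $\mu'_j=\wt\mu'_j$, and the discrepancy in $\mu'_h$ is not compensated; the sums $\mu'_h+\mu'_j$ and $\wt\mu'_h+\wt\mu'_j$ genuinely differ. Your plan to invoke the ``total lowering-operator contribution over $\{h-1,h\}$'' cannot help here, since the pair $\{i,j\}=\{h,j\}$ is not $\{h-1,h\}$. The hypotheses (a) and (b) control which outer corners arise but do not exclude $i=h$ with $j\neq f$; indeed, this is exactly the case produced in Proposition~\ref{prop:invol}(c) when a pair of $F$ in row $h$ is added by rule {\bf(iii)}.

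The paper resolves the case $i=h$ not by equating sums but by showing that $\W(h,j)$ holds for \emph{both} $\psi(D',h')$ and $\iota\psi(D',h')$, each by its own argument: the first because $(h,j)\in D$ and $j<g$ force the pair to have been added by rule {\bf(i)} or {\bf(iii)} (so the Pieri inequality was met), and the second by a direct lower bound using $\wt\nu_j\geq\la_j$, the chain $(h,t)\in\wt S$ for $j\leq t\leq g$, and condition $\X$, with a contradiction via Lemma~\ref{fglemma} if equality were to occur. So the biconditional is established by proving both sides true, not by matching the quantities; any write-up along your lines would have to split off the $i=h$ case and supply this separate argument. (Your treatment of the remaining cases $i\neq h$, where the paper does prove the entry-wise identities $\mu'_i=\wt\mu'_i$ and $\mu'_j=\wt\mu'_j$, is consistent in spirit with the paper, modulo the preliminary reduction from $\ov D$ to $D'$ that the paper performs first.)
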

\begin{proof}
  Notice first that $i<g$, since otherwise $i=m=g<j$ and
  $(i,j)\notin\partial\cC$.  We also have $j \geq z_2$, as otherwise
  $z_2-1=m=j$, $(m,g)$ is an outer corner of $D_m$, and $g=m$.
  Furthermore, if $z_1 \leq i < z_2$, then we obtain
  $(i,g)\in\partial^1\cC$, $j>g$, $i<h$, and $(i,j-1) \in \cC$, which
  is a contradiction.  This shows that $i,j \notin
  \{z_1,z_1+1,\dots,z_2-1,g\}$, so condition $\W(i,j)$ holds for
  $\iota\psi(\ov D,h')$ if and only if it holds for
  $\iota\psi(D',h')$.  Henceforth we will work with the latter
  4-tuple, which we denote $\iota\psi(D',h') = (D',\wt\mu',\wt
  S',h')$.  We then have $\mu'_t = \wt\mu'_t$ for $t \notin
  \{h-1,h,f,g\}$.

  Assume first that $i=h$.  Then we have $m \leq j < g$, and $(h,j)$
  is an outer corner of $D'$.  Since $(h,j)\in D$ it follows that
  $\W(h,j)$ holds for $\psi(D',h')$.  We must prove that also
  $\iota\psi(D',h')$ satisfies $\W(h,j)$.  If $j=h$, then we have
  $i=j=h=m$, and since $(h-1,g-1)\in\cC$ we obtain
  \[
  \wt\mu'_h \geq \wt\mu_h-g+h = \mu_{h-1}-g+h \geq \la_{h-1}-g+h >
  2k-\la_{g-1} \geq k \,.
  \]

  We can therefore assume that $i=h<j$.  We have $\wt\mu'_j =
  \wt\nu_j \geq \la_j$, and since $\iota\psi$ satisfies condition
  $\X$ we also have $\wt\mu'_h+g-j+1 \geq \wt\mu_h \geq \la_{h-1}$.
  Since $(h-1,g-1)\in\cC$ we obtain 
  \[
  \wt\mu'_h+\wt\mu'_j \geq
  \la_{h-1}+\la_j-g+j-1 \geq \la_{h-1}+\la_{g-1}-g+j-1 \geq 2k+j-h \,.
  \]
  If $\W(h,j)$ fails for $\iota\psi(D',h')$, then we must have
  equality $\wt\mu'_h+g-j+1 = \wt\mu_h = \la_{h-1}$, hence $(h,t) \in
  \wt S$ for $j \leq t \leq g$.  Since $\la_{h-1} \leq
  \wt\mu_{h-1} \neq \wt\mu_h$ we also obtain $\wt\mu_h <
  \wt\mu_{h-1}$.  Since $\iota\psi$ satisfies condition $\X$, we
  deduce that $(h,f) \notin \wt S$.  But then $f<g$, and
  Lemma~\ref{fglemma} implies that $(h,g)\notin S$ and hence $(h,g)
  \notin \wt S$, a contradiction.

  We next show that if $i \neq h$, then the identities
  $\mu'_i=\wt\mu'_i$ and $\mu'_j=\wt\mu'_j$ hold.  If $i<h$, then
  $j>g$ and $(i,j-1)\in\cC$.  Since this implies that $i<h-1$, the
  identities are true in this case.  Otherwise we have $h < i \leq j
  \leq b \leq f$, and the identities are clear unless $j=f$.  In this
  case we have $b=f<g$.  Notice also that $(h,f)\in D'$; in case (a)
  this is true because $(i,f)$ is an outer corner of $D'$, and in case
  (b) it follows from Corollary~\ref{cor:addorder} because $h'=i<j=f$
  and $(h,f) \in D\cap \partial^1\cC$.  The sets of pairs $S \ssm D'$
  and $\wt S \ssm D'$ therefore agree in column $f$, and since $\mu_f
  = \wt\mu_f$ by construction, we deduce that $\mu'_f = \wt\mu'_f$, as
  required.
\end{proof}

\begin{lemma}\label{lem:cancel2}
  The 4-tuple $\iota\psi(\ov D,h')$ does not meet {\bf(ii)}.
\end{lemma}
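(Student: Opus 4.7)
The plan is to derive a contradiction by assuming $\iota\psi(\ov D, h')$ meets \textbf{(ii)} and exploiting the fact that $\psi(D', h')$, as a predecessor of $\psi$, cannot meet \textbf{(ii)} (since $\psi$ itself meets \textbf{(v)}, and any predecessor meeting \textbf{(ii)} would have terminated the algorithm). So at least one of the three defining conditions of \textbf{(ii)} must fail for $\psi(D',h')$: either $(h',h') \in D'$, or $D'$ has an outer corner in column $h'$, or $\mu'_{h'} \leq \la_{h'-1}$. The strategy is to transfer the failing condition to $\iota\psi(\ov D,h')$.

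First I would observe that $\ov D$ and $D'$ differ only by pairs in $G = \{(i,g) : z_1 \leq i < z_2\}$, so outer corners in columns other than $g$ coincide, and $(h',h') \in D' \Leftrightarrow (h',h') \in \ov D$ whenever $h' \neq g$. Similarly $\ov\mu$ and $\mu'$ can differ only at indices in $\{h-1, h, f, g\}$, because $\wt\mu$ differs from $\mu$ only at positions $h-1$ and $h$, and $\wt S = \varpi(S)$ differs from $S$ only in the two entries $(h-1,g)$ and $(h,f)$, so the relevant lowering operators $L_{ij}$ in the formula defining $\ov\mu$ affect only entries of $\{h-1,h,f,g\}$. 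Consequently, when $h' \notin \{h-1,h,f,g\}$, all three defining conditions of \textbf{(ii)} for $\iota\psi(\ov D,h')$ agree with those for $\psi(D',h')$, giving an immediate contradiction.

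The remaining four values of $h'$ each require a separate argument. The case $h' = h-1$ is vacuous: since $h \leq m$ we have $\la_{h-1} > k$, hence $(h-1,h-1) \in \cC \subset \ov D$, contradicting $(h',h') \notin \ov D$. For $h' = h$, the requirement $(h,h)\notin \ov D$ together with $(h,h) \in D$ (as $\psi$ is in Phase 2) forces $h = m$, and I would use $\wt\mu_h = \mu_{h-1}$ together with condition $\X$ for $\psi$ and Lemma~\ref{lambdabg} to derive $\ov\mu_h \leq \la_{h-1}$, a contradiction. For $h' = f$ or $h' = g$ (with $h' \notin \{h-1,h\}$), I would split on whether $f<g$ or $f=g$, applying Lemmas~\ref{fglemma} and~\ref{lem:fisg} to bound $\ov\mu_{h'}$ by $\la_{h'-1}$, or Lemma~\ref{lem:cancelweight} to show that $\ov D$ inherits an outer corner in column $h'$ from $D'$ via a neighboring pair. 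In the $h'=g$ sub-case, an additional estimate using Lemma~\ref{phaseonelm} handles how the block $G$ shifts between $D'$ and $\ov D$.

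The main obstacle will be the bookkeeping in the $h' = g$ case, where both perturbations interact: pairs of $G$ migrate between $D'$ and $\ov D$ at the same column where one entry of $S$ is moved by $\varpi$. Tracking precisely which outer corners in column $g$ survive, and computing $\ov\mu_g - \mu'_g$ as the net effect of adding/removing pairs of $G$ from $\ov S$ together with the swap $(h,f) \leftrightarrow (h-1,g)$, is the delicate step. The arguments nevertheless follow the template already established in Lemmas~\ref{lem:stickyx}--\ref{lem:ivdooms} and Lemma~\ref{lem:cancelweight}, which provide all the inequalities required.
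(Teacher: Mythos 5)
Your high-level shape (assume $\iota\psi(\ov D,h')$ meets \textbf{(ii)}, compare with $\psi(D',h')$ which cannot, reduce to a few values of $h'$) points in roughly the right direction, but there are real gaps at the crucial steps.

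The most serious problem is the claim that $\ov\mu$ and $\mu'$ can differ only at indices in $\{h-1,h,f,g\}$. You justify it by looking at $\mu$ vs.\ $\wt\mu$ and $S$ vs.\ $\wt S$, but you have forgotten that $\ov\mu$ and $\mu'$ also depend on $\ov D$ vs.\ $D'$, and these two sets differ by pairs of $G=\{(i,g):z_1\leq i<z_2\}$. Each such pair $(i,g)$ contributes a lowering operator $L_{ig}$ to one of $\ov\mu,\mu'$ but not the other, which changes the $i$-th entry; so $\ov\mu$ and $\mu'$ can differ at every position in $[z_1,z_2)$ as well. This is harmless only because those positions all lie in $[1,m]$, whereas one can show that $h'>m$. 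But you never establish $h'>m$, and it is not automatic: all you get from $(h',h')\notin\ov D\supset\cC$ is $h'\geq m$, and the borderline case $h'=m$ must be excluded separately. The paper's proof does this, and then uses $h'>m$ together with Lemma~\ref{phaseonelm} and the bound on $D\ssm\ov D$ in column $h'$ to force the strong equalities $\ov\mu_{h'}-1=\mu_{h'}=\mu'_{h'}=\la_{h'-1}$, which is what pins down $h'\in\{f,g\}$ in one stroke. Your plan instead tries to ``transfer'' the failing condition of \textbf{(ii)}, but condition (c) (that $\mu'_{h'}\leq\la_{h'-1}$) does not transfer: in the surviving cases one actually has $\ov\mu_{h'}=\mu'_{h'}+1>\mu'_{h'}$, so the naive transfer breaks down precisely where the proof is interesting.

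The sub-cases you list are also not the ones that arise. Once $h'\in\{f,g\}$, the paper's split for $h'=f$ is on whether $(h,f-1)\in\cC$, not on $f<g$ vs.\ $f=g$; the $(h,f-1)\notin\cC$ branch requires a genuinely new argument (producing a box $[h,e-1]\in R(\mu)$ via the definitions of $e$ and $R(\mu)$, then using Lemma~\ref{noragged} to reach $\mu_f<\la_{f-1}$), which does not follow from Lemmas~\ref{fglemma} or \ref{lem:fisg}. And for $h'=g$, which you flag as ``delicate'' and defer, the argument in the paper is actually short once the equalities are in hand: the absence of an outer corner forces $(h-1,g)\in\ov D$ (resp.\ $(h,f)\in\ov D$ when $h'=f$), whence $\wt S\ssm\ov D$ and $S\ssm D'$ agree in column $h'$ and $\ov\mu_{h'}\leq\mu'_{h'}$, a contradiction. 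That mechanism is not present in your outline, and ``follows the template of Lemmas~\ref{lem:stickyx}--\ref{lem:ivdooms} and \ref{lem:cancelweight}'' is not a substitute for it.
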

\begin{proof}
  Suppose that $\iota\psi(\ov{D},h')$ meets {\bf(ii)}.  Without loss
  of generality, we may also assume that $\psi(D',h')$ is the most
  recent predecessor of $\psi$ of level $h'$.  We have $h'>m$, and
  since $D\ssm\ov{D}$ contains at most one pair in column $h'$, we
  obtain $\la_{h'-1} \leq \ov\mu_{h'}-1 \leq \wt\mu_{h'} = \mu_{h'}
  \leq \mu'_{h'}$.  Lemma~\ref{phaseonelm} therefore implies that
  $\ov\mu_{h'}-1 = \mu_{h'} = \mu'_{h'} = \la_{h'-1}$.  In particular,
  we have $h' \in \{f,g\}$.

  Assume that $h'=f$ and $(h,f-1)\notin\cC$.  Then $[h,\la_{h-1}]
  \notin R(\mu)$, since otherwise $f=r(h+\la_{h-1})$ and $\mu_f \leq
  2k+f-h-\la_{h-1} < \la_{f-1}$.  We also have $e>k+1$, since
  otherwise $\la_h\leq k$, $h=m$, and $f=r(m+k+1)=m$.  Since
  $[h,e]\notin R(\mu)$ and $(h,f-1)\notin\cC$, we obtain $e >
  2k+f-h-\mu_f = 2k+f-h-\la_{f-1} \geq \la_h+1$.  It follows that
  $[h,e-1] \in R(\mu)$.  Set $f_1 = r(h+e-1)$.  Then $f_1 \leq f$ and
  $\mu_{f_1}-f_1 \leq 2k-h-e+1 \leq \mu_f-f$.  Since
  Lemma~\ref{noragged} implies that $\mu_f \leq \mu_{f_1}$, we
  therefore obtain $f_1 = f$ and $\mu_f = 2k+f-h-e+1 < \la_{f-1}$, a
  contradiction.
  
  In view of the above, the absence of an outer corner in
  column $h'$ of $\ov{D}$ implies that either $h'=f$ and $(h,f)\in
  \ov{D}$, or $h'=g$ and $(h-1,g)\in\ov{D}$.  It follows that the sets
  of pairs $S\ssm\ov{D}$ and $\wt S\ssm\ov{D}$ agree in column $h'$,
  so $\ov\mu_{h'} \leq \mu'_{h'}$.  This contradiction finishes the
  proof.
\end{proof}

\begin{lemma}\label{lem:cancelX}
  If $h'>h_0$, or if $h'>h$ and $\ov D = D'$, then $\iota\psi(\ov
  D,h')$ does not satisfy condition $\X$.
\end{lemma}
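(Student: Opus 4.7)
The plan is to argue by contradiction: assume that $\ov\psi := \iota\psi(\ov D, h') = (\ov D, \ov\mu, \ov S, h')$ satisfies condition $\X$, so $(h', h') \in \ov D$ and $h' \leq m$. In both cases of the hypothesis we have $(h', h') \in D$: in Case~1 ($h' > h_0$), membership in $G$ would force $h' = g$ and $g < z_2 \leq h_0 + 1$ (by Lemma~\ref{lem:valpred}(b)); combined with $h_0 \leq g$ (since $(h_0,g)\in D$), this forces $g = h_0 = h'$, contradicting $h' > h_0$. In Case~2 ($\ov D = D'$), this is immediate. Since $h' > h$ in both cases, the path leading to $\psi$ contains a most recent predecessor $\psi_1 = (D_1, \mu_1, S_1, h')$ of level $h'$, and Lemma~\ref{lem:stickyx} forces $\X$ to fail for $\psi_1$; otherwise every successor of $\psi_1$ would satisfy $\X$ and retain level $h'$, contradicting that $\psi$ has level $h < h'$.

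The strategy is to transfer the failure of $\X$ from $\psi_1$ to $\ov\psi$ by comparing the data at positions $h'$ and $h'-1$. Using $h' > h$, which forces $\wt\mu_t = \mu_t$ for $t \in \{h', h'-1\} \ssm \{h-1, h\}$, I will track how the three modifications separating $\ov\psi$ from $\psi_1$ --- namely the swap $\wt S = \varpi(S)$ exchanging $(h-1, g) \leftrightarrow (h, f)$, the compensating swap $\mu_{h-1} \leftrightarrow \mu_h$, and (in Case~1) the symmetric difference $\ov D \triangle D' \subset G$ --- interact with the lowering operators defining $\ov\mu$. The key observations are that $G$ only contains pairs in rows $[z_1, z_2) \subset [1, h_0 + 1)$, so in Case~1 the $G$-adjustment cannot touch row $h'$, and the swap pairs lie in rows $\{h-1, h\}$ and columns $\{f, g\}$, constraining their influence on rows $h'$ and $h'-1$. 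After establishing $\ov\mu_{h'} = \mu_{1, h'}$ and $\ov\mu_{h'-1} = \mu_{1, h'-1}$, Lemma~\ref{Rflemma} yields $f_{h'}(\ov\mu) = f_{h'}(\mu_1)$, and the equivalence $(h', f_{h'}(\ov\mu)) \in \ov S \Leftrightarrow (h', f_{h'}(\mu_1)) \in S_1$ follows from $\ov S = \wt S \cap \ov D$ together with $S_1 = S \cap D_1$, once one checks that this pair is unaffected by the swap.

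The hard part will be the boundary sub-cases in which the modifications directly disturb row $h'$ or $h'-1$: namely $h' = h+1$ (where the compensating swap shifts $\mu_h$ into position $h'-1$), $h' = f$, and (in Case~2) $h' = g$. In Case~1, Lemma~\ref{lem:ieq25c} provides $\wt\mu_{h'} = \mu_{h'} < \la_{h'-1}$ whenever $g_{h'} = g$, directly defeating the second disjunct of $\X$ and restricting the third, while Corollary~\ref{cor:addorder} controls when the pair $(h, f)$ can lie in $\ov D$ and how it entered. In Case~2, the identity $\ov D = D'$ eliminates the $G$-complications, and the delicate sub-case $h' = h+1$ will be handled by a careful accounting showing that the sign flip of $\ov\mu_{h'-1} - \mu_{1, h'-1}$ induced by the compensating swap is matched by the corresponding shift at position $h'$, so that the inequality $\ov\mu_{h'} < \ov\mu_{h'-1}$ inherited from the failure of the first disjunct of $\X$ at $\psi_1$ is preserved. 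A uniform case analysis based on these observations produces a contradiction in every configuration, completing the argument.
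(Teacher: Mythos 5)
Your overall strategy---compare $\iota\psi(\ov D,h')$ with a predecessor of $\psi$ at level $h'$ that fails condition $\X$, then transfer that failure---is the same as the paper's, but there is a significant gap in how you plan to invoke Lemma~\ref{Rflemma}. That lemma requires that the two integer sequences agree in {\em all\/} positions $j$ with $\max(m,h'+1) \leq j \leq g_{h'}$, not merely at positions $h'$ and $h'-1$. You state "After establishing $\ov\mu_{h'} = \mu_{1,h'}$ and $\ov\mu_{h'-1} = \mu_{1,h'-1}$, Lemma~\ref{Rflemma} yields $f_{h'}(\ov\mu)=f_{h'}(\mu_1)$," but the decisive (and genuinely delicate) position is $j=g_{h'}$. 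The paper devotes a full paragraph to proving $\ov\mu_{g_{h'}} = \mu'_{g_{h'}}$, splitting into the sub-cases $g_{h'}<f$, $g_{h'}=f<g$, and $g_{h'}=f=g$, and invoking Corollary~\ref{cor:addorder} (to locate $(h,f)$ in $D'$), Lemma~\ref{lem:ieq25c}, and Lemma~\ref{lem:ivdooms}(c) (to eliminate the possibility $(h-1,g)\in D'$, $(h,g)\notin D'$ with $(h-1,g)$ added by rule {\bf(iv)}). None of this appears in your plan, and without it the hypotheses of Lemma~\ref{Rflemma} are simply not verified.

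A secondary issue is your introduction of $\psi_1$, the most recent predecessor of level $h'$. The lemma's hypotheses fix a specific predecessor $\psi(D',h')$, and $\ov\mu$ is obtained by removing from $\wt\mu$ exactly the raisings recorded in $\wt S\ssm\ov D$, which is calibrated against $D'$ (up to pairs in $G$), not against the possibly larger $D$-component of $\psi_1$. Comparing $\ov\mu$ to $\mu_1$ rather than to $\mu'=\mu'(D',h')$ therefore introduces a mismatch you would need to reconcile; the paper avoids this entirely by working with $\mu'$ throughout. Finally, you should note the paper's opening step---proving the strict inequalities $\mu'_h > \la_h$ and $\ov\mu_h > \la_h$ from Lemma~\ref{lambdabg} and the structure of $S$ and $\wt S$---which is what drives the sub-case $h'-1 = h$ of the comparison $\ov\mu_{h'} < \ov\mu_{h'-1}$. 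Your "careful accounting" gestures at this but does not identify the needed inequality or where it comes from.
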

\begin{proof}
  We first show that $\mu'_h > \la_h$ and $\ov\mu_h > \la_h$.  In
  fact, Lemma~\ref{lambdabg} implies that $\mu_h = \wt\mu_{h-1} \geq
  \la_{h-1} \geq \la_h+g-b+1$.  If $\la_h \geq \mu'_h$, then we must
  have $\wt\mu_{h-1} = \la_{h-1}$ and $(h,j) \in S$ for $b \leq j \leq
  g$.  The equality shows that $(h-1,g) \notin \wt S$, which in turn
  implies that $(h,f) \notin S$, a contradiction.  The inequality
  $\ov\mu_h > \la_h$ is proved by interchanging $\psi(D',h')$ and
  $\iota\psi(\ov D,h')$.
  
  For proving the lemma we may assume that $(h',h') \in \ov{D}$ and
  $\ov{\mu}_{h'} \geq \la_{h'-1}$.  The assumptions imply that
  $\ov{D}$ and $D'$ agree in all rows $i$ with $i \geq h'$.  In
  particular $(h',h') \in D'$.  Since $\wt\mu_{h'} = \mu_{h'}$ and
  $\wt S$ agrees with $S$ in row $h'$, we deduce that $\ov{\mu}_{h'} =
  \mu'_{h'}$.  Since $\psi(D',h')$ does not satisfy condition $\X$ by
  Lemma~\ref{lem:stickyx}, we obtain $\ov{\mu}_{h'} = \mu'_{h'} =
  \la_{h'-1} < \mu'_{h'-1}$ and $(h',f_{h'}(\mu')) \in S'$.  We claim
  that $\ov{\mu}_{h'} < \ov{\mu}_{h'-1}$.  If $h'-1=h$, then this is
  true because $\ov{\mu}_{h'} = \la_h < \ov{\mu}_h$.  On the other
  hand, if $h'-1>h$ and the claim fails, then $\ov{D}$ and $D'$ differ
  in row $h'-1$, and this implies that $h'>h_0$ and $(h'-1,g)\in G$,
  hence $h'-1=h_0>h$.  Since $(h_0,g) \in G \subset \partial^1\cC$ we
  obtain $g_{h'} = b_{h_0} = g$, so Lemma~\ref{lem:ieq25c} implies
  that $\mu'_{h'} \leq \mu_{h'} < \la_{h'-1}$, a contradiction.
  
  We claim that $\ov{\mu}_{g_{h'}} = \mu'_{g_{h'}}$.  Notice that
  $g_{h'} \leq f$, and the claim is clearly true if $g_{h'} < f$.  If
  $g_{h'} = f < g$, then Corollary~\ref{cor:addorder} implies that
  $(h,f)$ was added to $D'$ by rule {\bf(i)}, hence $\ov{\mu}_f =
  \mu'_f$.  If $g_{h'} = f = g$, then Lemma~\ref{lem:ieq25c} implies
  that $h' \leq h_0$ and $\ov{D}=D'$, so the claim is clear unless
  $(h-1,g)\in D'$ and $(h,g)\notin D'$.  In this case the inclusion
  $(h',f_{h'}(\mu')) \in D'$ implies that $f_{h'}(\mu') < g$.
  Lemma~\ref{fglemma} then implies that $\mu'_g = \la_{g-1}$, and
  since $(h,g-1)\in\cC$ we obtain $\mu'_h + \mu'_g > \la_h + \la_{g-1}
  \geq 2k+g-h$.  Since the outer corner $(h,g)$ of $D'$ was not added
  by {\bf(i)}, it follows that $(h-1,g)$ was added by {\bf(iv)} when
  the substitution rule was applied to the parent of $\psi(D',h')$.
  By applying Lemma~\ref{lem:ivdooms}(c) to the parent of
  $\psi(D',h')$ and using that $h'>h$, we obtain $(h-1,g)\notin S$ and
  $(h,g) \notin S$.  This shows that $\mu'_g = \ov{\mu}_g$, proving
  the claim.  Finally, Lemma~\ref{Rflemma} implies that
  $f_{h'}(\ov{\mu}) = f_{h'}(\mu')$, therefore $(h',f_{h'}(\ov\mu))
  \in \ov{S}$, and we conclude that $\iota\psi(\ov{D},h')$ does not
  satisfy $\X$.
\end{proof}

\begin{proof}[Proof of Proposition~\ref{prop:invol}]
  Write $\psi' = \psi(D',h') = (D',\mu',S',h')$.

  (a).  If $\psi' < \psi(D_{z_1},g)$, then $(h',h')\notin D'$.  If
  $h'>g$, then Lemmas \ref{lem:cancelweight} and \ref{lem:cancel2}
  imply that $\iota\psi(D',h')$ meets the same rule as $\psi'$, and if
  $h'=g$, then both $\psi'$ and $\iota\psi(D',h')$ meet {\bf(i)}.

  (b).  This part follows from the definition of $\wt z$.

  (c).  Set $\ov{D} = D' \,\triangle\, G$, which is a valid set of
  pairs.  If $\psi'$ meets {\bf(i)} or {\bf(iii)}, then the pair
  $(i,j)$ that is added to $D'$ belongs to $F$, so $j<g$, and it
  follows from Lemma~\ref{lem:cancelweight} that
  $\iota\psi(\ov{D},h')$ meets the same rule as $\psi'$.  Otherwise,
  the substitution rule decreases the level of $\psi'$, so $h' \geq
  h_0+1 \geq \wt z$.  If $h'=g$, then $D' = D_z$, and it follows from
  Lemma~\ref{lem:cancel2} that the level of $\iota\psi(\ov{D},h') =
  \iota\psi(D_{\wt z},g)$ is decreased.  We may therefore assume that
  $h' < g$.  If $(h',h')\notin D'$, then Lemmas \ref{lem:cancelweight}
  and \ref{lem:cancel2} imply that the level of $\iota\psi(\ov{D},h')$
  is decreased, so assume that $(h',h') \in D'$.
  Lemma~\ref{lem:cancelX} implies that $\iota\psi(\ov{D},h')$ does not
  satisfy condition $\X$.

  Write $\iota\psi(\ov{D},h') = (\ov{D},\ov{\mu},\ov{S},h')$.  If the
  level of $\iota\psi(\ov{D},h')$ is not decreased, then this 4-tuple
  meets {\bf(iii)} or {\bf(iv)}, and Lemma~\ref{lem:cancelweight}
  implies that a pair from column $g$ is added to $\ov{D}$.  It
  follows that $\ov{D}=D_{\wt z} \cup F$ and $\iota\psi(\ov{D},h')$
  satisfies $\W(h',g)$.  Notice that $(\wt z,g) \in \partial^1\cC$,
  since otherwise we obtain $\wt z=h'=h_0+1$, $\ov{D}=D$, and
  $\ov{\mu}=\wt\mu$, hence $\psi$ satisfies $\W(h',g)$, and this
  contradicts that the most recent predecessor of $\psi$ of level $h'$
  does not meet {\bf(iii)} or {\bf(iv)}.  The definition of $\wt z$
  therefore implies that $\W(\wt z,g)$ fails for $\iota\psi(D_{\wt
  z},g)$, and the same is true for $\iota\psi(\ov{D},h')$.  Since the
  latter 4-tuple satisfies $\W(h',g)$, we deduce that $\ov\mu_{\wt z}
  - \ov\mu_{h'} < h' - \wt z$.  Now notice that $\la_{\wt z} \leq
  \ov\mu_{\wt z}$; if $\wt z = m$ then this follows because $h'=m<g$
  and hence $(m-1,m) \in \cC$.  Lemma~\ref{lem:ieq25c} therefore
  implies that $\la_{\wt z} - \la_{h'-1} < \ov\mu_{\wt z} -
  \wt\mu_{h'} < h'-\wt z$, which contradicts the fact that $\la$ is
  $k$-strict.
  
  (d).  We first show that $(h_0,h_0) \in D_t\cup F$.  If this is
  false, then we must have $h_0=g=m$, so the pair $(m,m)$ was added to
  $D$ by rule {\bf(i)}.  This implies that $\mu_m>k$ and therefore
  $\wt\mu_m>k$.  Since $\wt\mu_i \geq \la_i \geq k+m-i$ for all $i<m$,
  it follows that $\iota\psi$ satisfies $\W(i,m)$ for $1 \leq i \leq
  m$, hence ${\wt z} = m+1 > h_0$, a contradiction.

  Write $\iota\psi(D_t\cup F,h_0) = (D_t\cup F,\ov\mu,\ov S,h_0)$ and
  assume that both $\W(h_0,g)$ and $\X$ fail for this 4-tuple.  Then
  $\W(h_0,g)$ also fails for $\iota\psi$.  If $h<h_0$, then
  $\W(h_0,g)$ fails for $\psi$ as well, so the pair $(h_0,g)$ was
  added to $D$ by rule {\bf(iv)}.  It follows that the predecessor
  $(D,\mu,S,h_0)$ of $\psi$ meets {\bf(v)}, which is a contradiction.

  We therefore have $h=h_0$ and $(h,g)\notin \ov{S} \subset D_t\cup
  F$.  Since $\iota\psi$ satisfies $\X$ we have $\wt\mu_h \geq
  \la_{h-1}$, and Lemma~\ref{lem:fisg} applied to $\iota\psi$ shows
  that $f=g$.  It follows that $\ov\mu_h \geq \la_{h-1}$, since
  otherwise we must have $\ov\mu_h < \la_{h-1} = \wt\mu_h$ and
  $(h,f)=(h,g)\in \wt S$, contradicting that $\iota\psi$ satisfies
  $\X$.  Now Lemma~\ref{lem:fisg} implies that $f_h(\ov\mu) = g$, so
  $\iota\psi(D_t\cup F,h_0)$ satisfies $\X$ anyway and meets
  {\bf(iii)} or {\bf(iv)}.

  (e).  If $h'>h$, then the substitution rule decreases the level of
  $\psi'$, and Lemma~\ref{lem:cancelX} implies that the same thing
  happens to $\iota\psi(D,h')$.  Finally, if $h'=h$, then both $\psi'=\psi$
  and $\iota\psi(D',h')=\iota\psi$ satisfy condition $\X$ and meet
  {\bf(v)}.
\end{proof}

This completes the proof of Claim 2, and of Theorem \ref{mainthm}.

\section{Theta Polynomials}
\label{thetasec}

\subsection{}
\label{theta1}
In this section we develop the theory of theta polynomials
systematically; the exposition is influenced by that in Macdonald's
text \cite[III.8]{M}. We shall show that these polynomials share many
common features with the Schur $Q$-functions. One exception is that when 
$k>0$, we do not have a natural Hopf algebra structure.

Given any power series $\sum_{i \geq 0} c_i t^i$ in the variable $t$
and an integer sequence $\al = (\al_1,\al_2,\dots,\al_\ell)$, we write
$c_\al = c_{\al_1} c_{\al_2} \cdots c_{\al_\ell}$ and set $R\,c_{\al}
= c_{R\al}$ for any raising operator $R$.  We will always work with
power series with constant term 1, so that $c_0=1$ and $c_i=0$ for
$i<0$. The formal identities (\ref{E:formal}) imply that the equations
\begin{equation}
\label{jtformal}
\prod_{i<j}(1-R_{ij})\, c_\al = 
\det (c_{\al_i+j-i})_{i,j}
\end{equation}
and 
\[
\prod_{i<j}\frac{1-R_{ij}}{1+R_{ij}}\, c_\al = 
\Pf(C_{\al_i,\al_j})_{i<j}
\]
where
\[
C_{\al_i,\al_j} = \frac{1-R_{12}}{1+R_{12}} \, c_{\al_i,\al_j} = 
c_{\al_i}c_{\al_j} - 2 c_{\al_i+1}c_{\al_j-1} + 2 c_{\al_i+2}c_{\al_j-2}
- \cdots
\]
are valid in the polynomial ring $\Z[c_1,c_2,\ldots]$.

Let $x=(x_1,x_2,\ldots)$ be a list of commuting
independent variables and let $\Lambda=\Lambda(x)$ be the ring of
symmetric functions in $x$. Consider the generating functions
\[
E(x\,;t) = \prod_{i=1}^{\infty}(1+x_it) = \sum_{r=0}^{\infty}
e_r(x)t^r\ \ \mathrm{and} \ \
H(x\,;t) = \prod_{i=1}^{\infty}\frac{1}{1-x_it} = \sum_{r=0}^{\infty}h_r(x)t^r
\]
for the elementary and complete symmetric functions $e_r$ and $h_r$,
respectively.  Fix an integer $k\geq 0$, let $y=(y_1,\ldots,y_k)$, and
for each $r$ define $\ti_r=\ti_r(x\,;y)$ by
\[
\ti_r = \sum_{i\geq 0} q_{r-i}(x) e_i(y).
\]
We let $\Gamma^{(k)}$ be the subring of
$\Lambda\otimes\Z[y_1,\ldots,y_k]^{S_k}$
generated by the $\ti_r$:
\[
\Gamma^{(k)}= \Z[\ti_1,\ti_2,\ti_3,\ldots].
\]

Set $\Ti(t)=\sum_{r\geq 0} \ti_r t^r$; we then have
\[
\Ti(t)= \prod_{i=1}^\infty\frac{1+tx_i}{1-tx_i}
\prod_{j=1}^k(1+y_jt) = E(x\,;t)H(x\,;t)E(y\,;t)
\]
and hence
\[
\Ti(t)\Ti(-t)= E(y\,;t)E(y\,;-t) = \sum_{m=0}^{2k} (-1)^m e_m(y^2) t^{2m}\,,
\]
where $y^2$ denotes $(y_1^2,\ldots,y_k^2)$.  It follows that
\begin{equation}
\label{first}
\sum_{i+j=r}(-1)^i \ti_i\ti_j = \begin{cases}
0 & \text{if $r$ is odd} \\
(-1)^{r/2}e_{r/2}(y^2) & \text{if $r$ is even.}
\end{cases}
\end{equation}
In particular, when $r=2m>2k$, equation (\ref{first}) gives
\begin{equation}
\label{later}
\ti_m^2 = 2\sum_{i=1}^m(-1)^{i+1}\ti_{m+i}\ti_{m-i}.
\end{equation}

\begin{defn}
Given any $k$-strict partition $\la$, we let $\la^1$ be the strict
partition obtained by removing the first $k$ columns of $\la$, and let
$\la^2$ be the partition of boxes contained in the first $k$ columns
of $\la$.
\[ \pic{0.50}{lapair} \]
We say that a partition $\lambda$ is {\em $k$-odd} if all its parts
which are greater than $2k$ are odd. 
\end{defn}

\begin{prop}
\label{thbasis}
{\em (a)} The $\ti_{\la}$ for $\la$ $k$-strict form a $\Z$-basis of
$\Gamma^{(k)}$.

\medskip
\noindent 
{\em (b)} The $\ti_{\la}$ for $\la$ $k$-odd form a $\Q$-basis of
$\Gamma_{\Q}^{(k)} := \Gamma^{(k)} \otimes_{\Z} \Q$.
\end{prop}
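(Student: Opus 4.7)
The plan is to work inside the ambient ring $A := \Gamma\otimes_\Z\Z[y_1,\dots,y_k]^{S_k}$ and use the filtration by $y$-degree. Combining the $\Z$-basis $\{q_\nu(x)\}_{\nu \text{ strict}}$ of $\Gamma$ (see \cite[III.8]{M}) with the monomial $\Z$-basis $\{e_\mu(y)\}_{\mu_1\leq k}$ of $\Z[y]^{S_k}=\Z[e_1,\dots,e_k]$, the ring $A$ carries the $\Z$-basis $\{q_\nu(x)\,e_\mu(y)\}$. A direct computation from $\ti_r=\sum_i q_{r-i}(x)\,e_i(y)$ shows that the top $y$-degree part of $\ti_r$ equals $e_r(y)$ when $r\leq k$ and $q_{r-k}(x)\,e_k(y)$ when $r>k$. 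Multiplying over the parts of a $k$-strict partition $\la$, the top $y$-degree component of $\ti_\la$ is exactly $q_{\la^1}(x)\,e_{\la^2}(y)$, a distinct basis element of $A$ for each $k$-strict $\la$ since $\la$ can be recovered from the pair $(\la^1,\la^2)$.

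The linear independence in (a) is then immediate: if $\sum_\la c_\la \ti_\la=0$ with $c_\la \in \Z$ and $p$ is the maximum of $|\la^2|$ over indices with $c_\la\neq 0$, the $y$-degree $p$ component of the relation reads $\sum_{|\la^2|=p} c_\la\,q_{\la^1}(x)\,e_{\la^2}(y)=0$, forcing $c_\la=0$ for each such $\la$ by independence of basis elements---contradicting the choice of $p$. For the spanning statement in (a), I would apply relation~(\ref{later}): if $\ti_\alpha$ is a monomial whose sorted index $\alpha$ has a repeat $\alpha_i=\alpha_{i+1}=m>k$, expanding $\ti_m^2=2\sum_{j=1}^m(-1)^{j+1}\ti_{m+j}\ti_{m-j}$ rewrites $\ti_\alpha$ as a $\Z$-combination of $\ti_\beta$ with $\beta$ strictly above $\alpha$ in dominance order. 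Since dominance has no infinite ascending chains among partitions of fixed size, iterating terminates in a $\Z$-combination of $\ti_\mu$ with $\mu$ $k$-strict.

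Part (b) will follow from (a) by a rank comparison. Applied to $r=2m>2k$, relation~(\ref{first}) becomes $2\ti_{2m}+(-1)^m\ti_m^2+2\sum_{i=1}^{m-1}(-1)^i\ti_i\ti_{2m-i}=0$, since $e_m(y^2)=0$; over $\Q$ this solves for $\ti_{2m}$ as a polynomial in $\ti_r$ with $r<2m$. Iterating on $m>k$, the ring $\Gamma_\Q^{(k)}$ is generated as a $\Q$-algebra by $\{\ti_r : r\text{ is $k$-odd}\}$, so the monomials $\{\ti_\la : \la\text{ is $k$-odd}\}$ span $\Gamma_\Q^{(k)}$. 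A short manipulation using Euler's identity $\prod_{r\geq 1}(1+t^r)=\prod_{r\text{ odd}}(1-t^r)^{-1}$ shows that the generating functions of $k$-strict and $k$-odd partitions agree; combined with the rank count from (a), this forces the $k$-odd spanning set to be a $\Q$-basis.

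The main obstacle is the first step: identifying the top $y$-degree component of $\ti_\la$ with the basis element $q_{\la^1}(x)\,e_{\la^2}(y)$ and verifying the injectivity of $\la\mapsto(\la^1,\la^2)$ on $k$-strict partitions. Once this triangularity of $\ti_\la$ with respect to the $y$-degree filtration is in hand, both statements reduce to routine dominance-order and generating-function manipulations.
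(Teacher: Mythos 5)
Your proof is correct and follows essentially the same route as the paper's: the paper extracts the \emph{lowest $x$-degree} summand $q_{\la^1}(x)e_{\la^2}(y)$ of $\ti_\la$, which (since each term of $\ti_\la$ has total degree $|\la|$) is exactly your \emph{highest $y$-degree} component, and then uses relation (\ref{later}) for spanning; part (b) is also proved via (\ref{first}) and the same generating-function count. The only difference is the cosmetic choice of $y$-degree versus $x$-degree grading.
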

\begin{proof}
  It follows from (\ref{later}) that for any partition $\la$, either
  $\la$ is $k$-strict, or $\ti_{\la}$ is a $\Z$-linear combination of
  the $\ti_{\mu}$ such that $\mu$ is $k$-strict and $\mu \succ \la$
  (dominance order).  Furthermore, we have
  \[
  \ti_{\la}(x\,;y)= \sum_{\alpha} q_{\la-\alpha}(x)e_{\alpha}(y),
  \]
  the sum over all compositions $\alpha$ with $0\leq \alpha_i\leq k$
  for all $i$.  If $\la$ is $k$-strict, we deduce that the
  homogeneous summand of $\ti_\la$ of lowest $x$-degree is equal to
  $q_{\la^1}(x)e_{\la^2}(y)$. Part (a) follows because the set of all
  products $q_{\la^1}(x) e_{\la^2}(y)$, given by $k$-strict partitions
  $\la$, is linearly independent over $\Z$.

  Equation (\ref{first}) implies that $\ti_{2m} \in
  \Q[\ti_1,\ldots,\ti_{2m-1}]$ for $m>k$. By induction on $m$ it
  follows that $\ti_{2m} \in \Q[\ti_1,\ldots,\ti_{2k}, \ti_{2k+1},
  \ti_{2k+3},\ldots,\ti_{2m-1}]$ for all $m>k$, hence the monomials
  $\ti_\la$ indexed by $k$-odd partitions $\la$ span $\Gamma^{(k)}_\Q$
  as a vector space over $\Q$.  Finally, for each $d \in \N$, the
  number of $k$-odd partitions of $d$ is equal to the number of
  $k$-strict partitions of $d$, as verified by the equality of
  generating functions
  \begin{gather*}
    \sum_{\la \ k\text{-odd}} t^{|\la|} =
    \prod_{r=1}^{2k}\frac{1}{1-t^r}\prod_{r > k}\frac{1}{1-t^{2r-1}}
    = \prod_r \frac{1}{1-t^r} \prod_{r>k}(1-t^{2r}) \\
    = \prod_{r=1}^k\frac{1}{1-t^r}\prod_{r>k}(1+t^r)=
    \sum_{\la \ k\text{-strict}} t^{|\la|}\,.
  \end{gather*}
  This completes the proof of the proposition.
\end{proof}

\subsection{}\label{theta2}
Recall the notation of \S\ref{S:delta0}.

\begin{defn}
\label{Thdef}
For any valid set of pairs $D\subset \Delta^\circ$ and
integer sequence $\la$, define the
polynomial $\Ti(D,\la)$ by the raising operator formula
$\Ti(D,\la) = R^D\ti_\la$.  Equivalently, we recursively set
\[
  \Ti(D,\la) = \sum_\al (-1)^{|\al|} 2^{m(D,\al,\ell)}
  \Ti(D,\mu+\al) \ti_{r-|\al|}\,,
\]
where $\la=(\mu,r)$ has length $\ell$
and the sum is over all $(D,\ell)$-compatible
vectors $\al \in \N^{\ell-1}$.  For any $k$-strict partition $\la$,
the {\em theta polynomial} $\Ti_\la(x\,;y)$ is defined by
$\Ti_{\la} = \Ti(\cC(\la),\la)=R^{\la}\ti_\la$.
\end{defn}

Definition~\ref{Thdef} and equation (\ref{later}) imply that
each polynomial $\Ti_{\lambda}$ can be written in the form
  \[
  \Ti_{\la} = \ti_{\la} + \sum_{\mu \succ \la} a_{\la\mu} \ti_{\mu}
  \]
where the sum is over $k$-strict partitions $\mu \succ \la$ and
$a_{\la\mu} \in \Z$.  We deduce from
Proposition~\ref{thbasis}(a) that the polynomials $\Theta_\la$ indexed
by $k$-strict partitions $\la$ form a $\Z$-basis of $\Gamma^{(k)}$.

Let
\[
\IH(\IG_k) = \lim_{\longleftarrow}\HH^*(\IG(n-k,2n),\Z)
\]
be the stable cohomology ring of $\IG$; that is, the inverse limit in
the category of {\em graded} rings of the system
\[
\cdots \leftarrow \HH^*(\IG(n-k,2n),\Z) \leftarrow
\HH^*(\IG(n+1-k,2n+2),\Z) \leftarrow \cdots
\]
From the presentation of $\HH^*(\IG(n-k,2n),\Z)$ given in
\cite[Thm.\ 1.2]{BKT}, we deduce that $\IH(\IG_k)$ is isomorphic to
the polynomial ring $\Z[\s_1,\s_2,\ldots]$ modulo the relations
\[
\s_m^2+ 2\sum_{i=1}^m(-1)^i\s_{m+i}\s_{m-i} = 0
\]
for all $m>k$.  Since the generators $\ti_r$ of $\Gamma^{(k)}$ satisfy
(\ref{later}), we have a surjective ring homomorphism
$\phi:\IH(\IG_k)\to\Gamma^{(k)}$ sending $\s_r$ to $\ti_r$ for each
$r$.  Theorem \ref{mainthm} implies that $\phi(\s_{\la}) = \Ti_\la$
for any $k$-strict partition $\la$.  Since the $\Ti_\la$ form a basis
of $\Gamma^{(k)}$, we conclude that $\phi$ is an isomorphism.  This
completes the proof of Theorem \ref{productthm}.

\begin{remark}
For any $k$-strict partition $\la$ and formal power series 
$c=\sum_{i\geq 0} c_i t^i$, define the polynomial $\Ti_{\la}(c) =
R^{\la}c_\la$. The $\Ti_\la(c)$ are Giambelli polynomials for both the
classical and quantum cohomology of isotropic Grassmannians (Theorem
\ref{mainthm} and \cite{BKT2}) and appear in more general Giambelli
formulas which hold in the equivariant cohomology ring of isotropic
partial flag varieties \cite{T3}.
\end{remark}

\subsection{}
\label{theta3}
Consider the analogues of the polynomials $\ti_r$ when the $e_r(y)$
are replaced by complete symmetric functions $h_r(y)$.  Define
for each $r$ a function $\oti_r=\oti_r(x\,;y)$ by
\[
\oti_r = \sum_i q_{r-i}(x) h_i(y)
\]
and set $\oTi(t)=\sum_{r\gequ 0} \oti_r t^r$.  We then have
$\Ti(t)\oTi(-t)=1$, or equivalently,
\begin{equation}
\label{duality}
\sum_{r=0}^n(-1)^r\ti_r\oti_{n-r}=0, \ \ \ \ n\gequ 1.
\end{equation}
As in \cite[I.2, (2.$9'$)]{M}, the equations (\ref{duality}) imply that
for any partition $\lambda$,
\begin{equation}
\label{oti}
\det\left(\ti_{\lambda_i+j-i}\right) =
\det\left(\oti_{\lambda'_i+j-i}\right).
\end{equation}
Here $\lambda'$ is the partition conjugate to $\lambda$, i.e.,
$\la'_i=\#\{h\ |\ \la_h \geq i\}$ for all $i$.

If $k=0$, then $\oti_r=\ti_r=q_r$ for every $r\geq 0$. Let $(1^r)$
denote the partition $(1,1, \ldots,1)$ of length $r$.
\begin{prop}
\label{otiTh}
Assume that $k\geq 1$ and $r\in\N$. Then  
$\oti_r(x\,;y) = \Theta_{(1^r)}(x\,;y)$.  
\end{prop}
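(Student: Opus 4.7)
The plan is to reduce $\Theta_{(1^r)}$ to an $r\times r$ Jacobi--Trudi style determinant in the $\vartheta_i$, and then invoke the duality identity (\ref{oti}) to recognize this determinant as $\overline{\vartheta}_r$.

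First I would analyze the raising operator $R^{\lambda}$ attached to $\la = (1^r)$. For any pair $(i,j)$ with $1 \leq i < j \leq r$ we have $\la_i + \la_j = 2$, while the assumption $k\geq 1$ gives $2k+j-i \geq 2k+1 \geq 3$. Hence there is no pair $(i,j) \in \Delta^\circ$ with $\la_i + \la_j > 2k+j-i$, so the second product in Definition~\ref{maindefn} is empty and
\[
R^{(1^r)} \;=\; \prod_{1 \leq i<j\leq r}(1-R_{ij}).
\]
Applying the Vandermonde-type formal identity (\ref{jtformal}) with $c_i = \vartheta_i$, I obtain
\[
\Theta_{(1^r)}(x\,;y) \;=\; R^{(1^r)}\vartheta_{(1^r)} \;=\; \det\bigl(\vartheta_{1+j-i}\bigr)_{1\leq i,j\leq r}.
\]

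Next I would apply the dual Jacobi--Trudi identity (\ref{oti}) to the partition $\la=(1^r)$. Since $\la'=(r)$, the right-hand side of (\ref{oti}) is a $1\times 1$ determinant equal to $\overline{\vartheta}_r$, and therefore
\[
\det\bigl(\vartheta_{1+j-i}\bigr)_{1\leq i,j\leq r} \;=\; \overline{\vartheta}_r(x\,;y).
\]
Combining the two displays gives $\Theta_{(1^r)}(x\,;y) = \overline{\vartheta}_r(x\,;y)$, as desired.

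There is no real obstacle here: the statement is essentially a formal consequence of the shape of $\cC((1^r))$ (which is empty once $k\ge1$) together with the determinantal identity (\ref{oti}) that was established from the duality relations (\ref{duality}). The only thing that must be checked carefully is the emptiness of $\cC((1^r))$, including the diagonal: the pairs $(i,i)$ with $\la_i>k$ are excluded since $\la_i = 1 \leq k$, so $\cC((1^r))$ contains no pairs at all and $R^{(1^r)}$ really is the pure ``determinantal'' raising operator. Everything else is a direct substitution into identities already proved in the paper.
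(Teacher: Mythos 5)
Your proof is correct and follows the same route as the paper's: observe that $\cC((1^r))=\emptyset$ when $k\ge 1$, so $R^{(1^r)}$ is the pure Jacobi--Trudi raising operator, apply (\ref{jtformal}) to get $\det(\vartheta_{1+j-i})_{1\le i,j\le r}$, and then invoke the duality identity (\ref{oti}) with $\lambda'=(r)$ to identify this determinant with $\oti_r$. The paper's proof is just a terser version of exactly this argument.
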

\begin{proof}
Observe that $\cC(1^r) = \emptyset$. It follows from this, the
identity (\ref{jtformal}), and equation (\ref{oti}) that
\[
\Theta_{(1^r)} = \prod_{i<j}(1-R_{ij})\,\ti_{(1^r)} =
\det(\ti_{1+j-i})_{1\leq i,j \leq r} = \oti_r.
\qedhere
\]
\end{proof}

Equation (\ref{duality}) and the Whitney sum formula prove that the
polynomials $\oti_r=\Theta_{(1^r)}$ map to the Chern classes of the
dual of the tautological subbundle $\Sh\to\IG$ under the isomorphism
$\phi$ of \S \ref{theta2}. A Pieri rule for the products $\oti_r \cdot
\Theta_\la$ was obtained by Pragacz and Ratajski \cite{PRpieri} (see
also \cite[Ex.\ 4]{T3}).  

\begin{prop}
The $\oti_{\la}$ for $\la$ $k$-strict form a $\Q$-basis of
$\Gamma_{\Q}^{(k)}$.
\end{prop}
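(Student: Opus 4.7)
The plan is to combine a dimension count with a spanning argument. By Proposition~\ref{thbasis}(a), the $\Q$-dimension of $\Gamma^{(k)}_{\Q,d}$ equals the number of $k$-strict partitions of $d$, matching the cardinality of $\{\oti_\la : \la\text{ $k$-strict},\,|\la|=d\}$. Hence the statement reduces to proving that these $\oti_\la$ span $\Gamma^{(k)}_\Q$, or equivalently, that each $\ti_\mu$ (for $\mu$ $k$-strict) lies in their $\Q$-linear span.

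To prove spanning, I would argue by induction on $|\mu|$. From $\Ti(t)\oTi(-t)=1$ one obtains the identity
\[
\ti_n = \sum_{i=1}^n (-1)^{i+1}\oti_i\,\ti_{n-i}
\]
in $\Gamma^{(k)}_\Q$ for each $n\geq 1$. Applying this to the largest factor of $\ti_\mu = \ti_{\mu_1}\ti_{\mu_2}\cdots$ realizes $\ti_\mu$ as a $\Q$-linear combination of products $\oti_i\cdot\ti_{\mu'}$ with $|\mu'|<|\mu|$. The inductive hypothesis then reduces each $\ti_{\mu'}$ to a $\Q$-combination of $\oti_\nu$'s for $k$-strict $\nu$, so $\ti_\mu$ becomes a combination of products $\oti_i\,\oti_\nu=\oti_{(i,\nu)^+}$, where $(i,\nu)^+$ denotes the partition obtained by inserting $i$ into $\nu$ and reordering.

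The main obstacle is that $(i,\nu)^+$ may fail to be $k$-strict: this occurs exactly when $i>k$ is already a part of $\nu$, producing a factor $\oti_i^2$. I would reduce such factors using
\[
\oti_i^2 \;=\; (-1)^i h_i(y^2) + 2\sum_{j=1}^i (-1)^{j+1}\oti_{i+j}\oti_{i-j},
\]
derived by expanding $\oTi(t)\oTi(-t)=1/(\Ti(t)\Ti(-t))=\prod_{j=1}^k(1-y_j^2 t^2)^{-1}$ in powers of $t$. The element $h_j(y^2)\in\Gamma^{(k)}$ is itself a $\Q$-polynomial in $\oti_1,\dots,\oti_{2k}$: for $j\leq k$ the same identity directly gives an expression whose indexing partitions have parts bounded by $2k$ (and are automatically $k$-strict, since parts $\leq k$ may repeat while any single part in $(k,2k]$ appears at most once), and for $j>k$ one writes $h_j(y^2)$ as a polynomial in $h_1(y^2),\dots,h_k(y^2)$. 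The subtle step is that the expansion of $h_i(y^2)$ may re-introduce a $\oti_i^2$ factor, so the relation has to be solved for $\oti_i^2$; this works because the resulting linear equation has nonzero leading coefficient, as in the illustrative case $k=1$, $i=2$, where one obtains $\oti_2^2=\tfrac{4}{3}\oti_{2,1,1}-\tfrac{1}{3}\oti_{1,1,1,1}-\tfrac{2}{3}\oti_{3,1}+\tfrac{2}{3}\oti_4$. An induction on an appropriate complexity measure—for instance, the number of pairs of equal parts of the indexing partition exceeding $k$—combined with this solvability argument then completes the spanning proof, yielding the desired $\Q$-basis.
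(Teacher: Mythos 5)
Your route is genuinely different from the paper's. The paper establishes \emph{linear independence} of $\{\oti_\la : \la\ k\text{-strict}\}$ by isolating in each $\oti_\la$ a leading term $q_{\la^1}(x)h_{\la^2}(y)$ and observing that these are linearly independent, and then lets the dimension count supply spanning; you instead prove \emph{spanning} directly by an inductive reduction and let the dimension count supply independence. Both are legitimate starting points; the paper's is very short while yours is more algorithmic but requires controlling a recursion.

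However, your reduction has a genuine termination gap. The measure you propose -- the number of pairs of equal parts exceeding $k$ -- need not decrease. With $k=3$ and index $(6,5,5)$, the $\oti_5^2$ relation yields the term $\oti_6\oti_4\oti_6=\oti_{(6,6,4)}$, again with a repeated part $>k$. Likewise, expressing $h_i(y^2)$ for $i>k$ through $h_1(y^2),\dots,h_k(y^2)$ and substituting the $j\leq k$ identity into each factor can re-introduce $\oti_j^2$ with $k<j\leq 2k$: for $k=3$, the expansion of $h_2(y^2)^2$ contributes $4\,\oti_4^2$, and $(4,4)$ is not $3$-strict. Finally, the assertion that ``the resulting linear equation has nonzero leading coefficient'' is verified only for $k=1$, $i=2$, with no general argument. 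To close the induction you would need a well-founded ordering that genuinely improves -- for instance, filter first by maximal $x$-degree (the $h_i(y^2)\oti_{\nu'}$ term drops strictly in maximal $x$-degree, since $h_i(y^2)$ has $x$-degree $0$), and then by dominance order of the indexing partition (each $\oti_{i+j}\oti_{i-j}\oti_{\nu'}$ with $j\geq 1$ strictly dominates $\oti_i^2\oti_{\nu'}$).
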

\begin{proof}
It is clear from the equations (\ref{oti}) that
$\Gamma^{(k)}=\Z[\oti_1,\oti_2,\oti_3,\ldots]$. Since
$\oti_{\la}(x\,;y)= \sum_{\alpha\geq 0}
q_{\la-\alpha}(x)h_{\alpha}(y)$, we deduce that if $\la$ is
$k$-strict, the homogeneous summand of $\oti_\la$ of lowest $x$-degree
is equal to $q_{\la^1}(x)h_{\la^2}(y)$. Moreover, the set of
products $q_{\la^1}(x) h_{\la^2}(y)$ for all $k$-strict partitions
$\la$ is linearly independent over $\Q$. The result now follows by
a dimension count.
\end{proof}

\begin{example} When $k=1$, we have
\[
3\,\Ti_{31} = 2 \,\oti_4 - 5\, \oti_{31} + 4 \,\oti_{211} - \oti_{1111}.
\]
We deduce that the $\oti_{\la}$ for $\la$ $k$-strict do {\em not}
form a $\Z$-basis of $\Gamma^{(k)}$. Furthermore, the transition matrix
between the $\Q$-bases $\{\oti_\la\}$ and $\{\Ti_\la\}$ of
$\Gamma^{(k)}_\Q$ is not triangular with respect to the dominance
order.
\end{example}

\subsection{}
\label{theta4}
We next introduce an analogue of the Schur $S$-functions in the ring
$\Gamma^{(k)}$.
\begin{defn}
\label{Sdef}
For any two finite integer sequences $\la$, $\mu$, define the function
$S^{(k)}_{\la/\mu}\in \Gamma^{(k)}$ by setting
\[
S^{(k)}_{\la/\mu}(x\,;y) = \det(\ti_{\la_i-\mu_j+j-i}(x\,;y))_{i,j}.
\]
\end{defn}

Assume that $\la$ and $\mu$ are two partitions.  Then, arguing as in
\cite[I.5]{M}, the skew function $S_{\la/\mu}^{(k)}(x\,;y)$ is zero
unless $\la_i\geq \mu_i$ for each $i$. The functions $S_{\la/\mu}(x)
:= S^{(0)}_{\la/\mu}(x\,;y)$ are well known (see \cite[III.8, Ex.\
7]{M} and \cite[Sec.\ 2.7]{W}).  We also let
\[
s_{\la'/\mu'}(y) = \det(e_{\la_i-\mu_j+j-i}(y))_{i,j}
\]
denote the (ordinary) skew Schur polynomial in the variables $y$.  We
have that $s_{\la'/\mu'}(y) = 0$ unless $0\leq \la_i-\mu_i\leq k$ for
each $i$.  The functions $S_{\la/\mu}(x)$ (respectively,
$s_{\la'/\mu'}(y)$) are known to be linear combinations of Schur
$Q$-functions $Q_{\nu}(x)$ (respectively, Schur $S$-polynomials
$s_{\nu'}(y)$) with positive integer coefficients.

\begin{prop}
\label{Sprop}
For any partitions $\la$, $\mu$ with $\mu\subset\la$, we have
\begin{equation}
\label{skewA}
S^{(k)}_{\la/\mu}(x\,;y) =
\sum_{\nu}S_{\la/\nu}(x)s_{\nu'/\mu'}(y)=
\sum_{\nu}S_{\nu/\mu}(x)s_{\la'/\nu'}(y)
\end{equation}
summed over all partitions $\nu$ such that $\mu\subset\nu
\subset\la$.
\end{prop}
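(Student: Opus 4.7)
The plan is to realize the matrix $M_{ij} = \ti_{\la_i-\mu_j+j-i}(x\,;y)$ as a product of two infinite matrices (in two different ways, yielding the two equalities), and then apply the Cauchy--Binet formula. The key observation is that the generating function relation $\Ti(t) = Q(x;t)\,E(y;t)$ is precisely the statement $\ti_r(x\,;y) = \sum_a q_a(x)\, e_{r-a}(y)$, which identifies each entry of $M$ as a convolution that factors through an auxiliary index $a$.

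Fix $n \geq \ell(\la)$ and define infinite matrices $A = (A_{ir})_{1 \leq i \leq n,\, r \in \Z}$ and $B = (B_{rj})_{r \in \Z,\, 1 \leq j \leq n}$ by
\[
A_{ir} = q_{\la_i - i + n - r}(x), \qquad B_{rj} = e_{r - \mu_j + j - n}(y).
\]
The substitution $a = \la_i - i + n - r$ shows that $(AB)_{ij} = \sum_r A_{ir} B_{rj} = \sum_a q_a(x)\, e_{\la_i - \mu_j + j - i - a}(y) = \ti_{\la_i - \mu_j + j - i}(x\,;y)$, so $M = AB$. The Cauchy--Binet formula gives
\[
\det(M) = \sum_K \det(A^K)\, \det(B_K),
\]
where $K = \{k_1 > k_2 > \cdots > k_n\}$ ranges over strictly decreasing $n$-tuples of integers, and $A^K$, $B_K$ denote the submatrices with columns (resp.\ rows) $k_1, \ldots, k_n$ taken in this order.

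Reparameterize via $\nu_\ell := k_\ell - n + \ell$, so that the conditions $k_1 > \cdots > k_n$ translate into a weakly decreasing sequence $\nu_1 \geq \nu_2 \geq \cdots \geq \nu_n$. Under this substitution, $A_{i, k_\ell} = q_{\la_i - \nu_\ell + \ell - i}(x)$ and $B_{k_\ell, j} = e_{\nu_\ell - \mu_j + j - \ell}(y)$, so Definition~\ref{Sdef} (specialized to $k=0$, where $\ti_r = q_r$) identifies $\det(A^K) = S_{\la/\nu}(x)$, while the dual Jacobi--Trudi identity gives $\det(B_K) = s_{\nu'/\mu'}(y)$. Since $S_{\la/\nu}(x)$ vanishes unless $\nu \subset \la$ and $s_{\nu'/\mu'}(y)$ vanishes unless $\mu \subset \nu$, only partitions with $\mu \subset \nu \subset \la$ contribute, proving the first equality.

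The second equality follows from the same argument applied to the alternative factorization $M = B'A'$ with $B'_{ir} = e_{\la_i - i + n - r}(y)$ and $A'_{rj} = q_{r - \mu_j + j - n}(x)$; after the analogous reparameterization, the minors of $B'$ and $A'$ become $s_{\la'/\nu'}(y)$ and $S_{\nu/\mu}(x)$ respectively. The proof is therefore a direct application of the classical Cauchy--Binet method adapted to $\ti_r$; the only obstacle is the bookkeeping needed to verify that the offsets convert strictly decreasing column/row sets into partitions $\nu$ with the expected range.
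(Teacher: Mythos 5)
Your proof is correct and complete: the factorization $\ti_r = \sum_a q_a(x)\,e_{r-a}(y)$ turns the matrix $M=(\ti_{\la_i-\mu_j+j-i})$ into a product $AB$, the sums are finite because $q_a=0$ for $a<0$ and $e_i(y)=0$ for $i<0$ or $i>k$, negative $\nu_n$ forces a zero row of $B_K$, and the reparameterization produces the two Jacobi--Trudi determinants, so Cauchy--Binet delivers the identity. The paper instead observes that the proposed formula is the image of Macdonald's branching identity $s_{\la'/\mu'}(\wt x,y)=\sum_\nu s_{\la'/\nu'}(\wt x)\,s_{\nu'/\mu'}(y)$ under the ring homomorphism $\Z[e_\bull(\wt x)]\otimes\Z[e_1(y),\dots,e_k(y)]\to\Gamma^{(k)}$ sending $e_i(\wt x)\mapsto q_i(x)$; this is legitimate precisely because $S^{(k)}_{\la/\mu}$ is defined by a polynomial expression (the Jacobi--Trudi determinant) in the $\ti_r$, which are themselves the images of $e_r(\wt x,y)$. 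The two arguments are essentially equivalent under the hood---Macdonald's I.(5.10) is itself typically proved by Cauchy--Binet on the dual Jacobi--Trudi determinant using $e_r(\wt x,y)=\sum_a e_a(\wt x)\,e_{r-a}(y)$---but the paper's route is shorter and emphasizes the conceptual point that $\Gamma^{(k)}$-identities of this type are inherited functorially from the symmetric-function setting, whereas yours is more self-contained and makes the mechanism explicit rather than importing the branching formula as a black box.
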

\begin{proof}
  Let $\wt{x}=(\wt{x}_1,\wt{x}_2,\ldots)$ be another 
  infinite list of variables and define the ring
  $\wt{\Lambda}=\Z[e_1(\wt{x}),e_2(\wt{x}),\ldots]
  \otimes\Z[e_1(y),\ldots,e_k(y)]$. According to
  \cite[I.(5.10)]{M}, we have
\[
s_{\la'/\mu'}(\wt{x},y) =
\sum_{\nu}s_{\la'/\nu'}(\wt{x})s_{\nu'/\mu'}(y)=
\sum_{\nu}s_{\nu'/\mu'}(\wt{x})s_{\la'/\nu'}(y)
\]
in $\wt{\Lambda}$.  This is mapped to (\ref{skewA}) under the
ring homomorphism $\wt{\Lambda} \to \Gamma^{(k)}$ defined by sending
$e_i(\wt{x})$ to $q_i(x)$ and $e_j(y)$ to $e_j(y)$.
\end{proof}

The definition of $S_{\la}^{(k)}$ implies that $S_{\la}^{(k)} =
\ti_{\la}+ \sum_{\mu \succ \la} d_{\la\mu}\ti_\mu$ for some integers
$d_{\la\mu}$, and therefore that the set of $S_{\la}^{(k)}$ for $\la$
$k$-strict forms another $\Z$-basis of $\Gamma^{(k)}$. For any integer
sequence $\al$ and raising operator $R$, set $R\, S^{(k)}_\al =
S^{(k)}_{R\al}$.  The next result follows from the identity
$S^{(k)}_\la(x;y) = \Theta(\emptyset,\la)$, which is derived from
(\ref{jtformal}).

\begin{prop}
\label{kstrictS}
For any $k$-strict partition $\la$, we have
\[
\Theta_{\la}(x\,;y) = \prod_{(i,j)\in \cC(\la)}\,
(1-R_{ij}+R_{ij}^2-\cdots)\,S^{(k)}_{\la}(x\,;y).
\]
\end{prop}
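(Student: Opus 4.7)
The plan is to factor the raising operator defining $\Theta_\la$ into two pieces: a ``determinantal'' piece $\prod_{i<j}(1-R_{ij})$ which produces $S^{(k)}_\la$ via the Jacobi--Trudi identity, and a ``corrective'' piece $\prod_{(i,j)\in\cC(\la),\, i<j}(1+R_{ij})^{-1}$ which then acts on $S^{(k)}_\la$ and whose geometric expansion yields the claimed formula.

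The first step is to invoke (\ref{jtformal}) with the power series $c_r = \vartheta_r$ to rewrite Definition~\ref{Sdef} in raising-operator form:
\[
S^{(k)}_\la(x\,;y) = \det(\vartheta_{\la_i+j-i})_{i,j} = \prod_{i<j}(1-R_{ij})\,\vartheta_\la,
\]
which is precisely the relation already noted just before the proposition ($S^{(k)}_\la = \Theta(\emptyset,\la)$). Next, by Definition~\ref{maindefn} and Definition~\ref{Thdef}, the theta polynomial factors as
\[
\Theta_\la(x\,;y) \;=\; R^\la \vartheta_\la \;=\; \Bigl(\prod_{\substack{(i,j)\in\cC(\la)\\ i<j}}(1+R_{ij})^{-1}\Bigr)\,\prod_{i<j}(1-R_{ij})\,\vartheta_\la.
\]
Combining these two displays, and using the convention $R\,S^{(k)}_\alpha = S^{(k)}_{R\alpha}$ from Section~\ref{theta4}, gives
\[
\Theta_\la(x\,;y) \;=\; \prod_{\substack{(i,j)\in\cC(\la)\\ i<j}}(1+R_{ij})^{-1}\,S^{(k)}_\la(x\,;y).
\]

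The final step is to expand each factor $(1+R_{ij})^{-1}$ as the geometric series $1 - R_{ij} + R_{ij}^2 - \cdots$, yielding the statement of the proposition. This expansion is legitimate because, for any fixed index sequence, only finitely many powers $R_{ij}^m$ act non-trivially on it: after finitely many applications the $j$-th coordinate becomes negative, so $\vartheta_{\mu_j} = 0$ and the corresponding determinant $S^{(k)}_{R_{ij}^m \la}$ vanishes.

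There is essentially no obstacle here; the proposition is a direct repackaging of the raising-operator definition of $\Theta_\la$, once the determinantal part $\prod_{i<j}(1-R_{ij})\vartheta_\la$ has been recognized as $S^{(k)}_\la$ via Jacobi--Trudi. The only mild subtlety is that $\cC(\la)$, as defined in Section~\ref{igtoog}, may include diagonal pairs $(i,i)$ with $\la_i>k$; but the raising operator $R^\la$ is defined using only pairs with $i<j$ (see Definition~\ref{maindefn} and the definition of $R^D$ in Section~\ref{prelims}), so the product on the right-hand side is understood to range over pairs $(i,j)\in\cC(\la)$ with $i<j$, and the diagonal pairs contribute trivially.
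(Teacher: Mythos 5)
Your proof is correct and is essentially the paper's own argument: the paper simply notes that $S^{(k)}_\la = \Theta(\emptyset,\la) = \prod_{i<j}(1-R_{ij})\,\ti_\la$ by (\ref{jtformal}), so that $\Theta_\la = R^\la\ti_\la$ factors as the remaining $\prod_{(i,j)\in\cC(\la)}(1+R_{ij})^{-1}$ applied to $S^{(k)}_\la$. Your handling of the diagonal pairs and the termination of the geometric series is a fine, if slightly more explicit, rendering of the same observation (though I'd phrase it as ``the diagonal pairs are excluded by convention,'' not that they ``contribute trivially,'' since $R_{ii}$ is the identity and would make the geometric series formally divergent).
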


\subsection{}
\label{theta5}
In this section, we give the proof of Theorem \ref{thcor}.  Let $\la$
be a $k$-strict partition of length $\ell$.  Note that if $\la_i +
\la_j \lequ 2k+j-i$ for all $i<j$, then $\cC(\la)=\emptyset$, and we
deduce from (\ref{jtformal}) that
$\Ti_{\la} = S^{(k)}_{\la}$.  Part (a) of the theorem then follows by
setting $\mu=0$ in (\ref{skewA}).

Suppose now that $\la_i + \la_j > 2k+j-i$ for all $i<j\lequ \ell$.
Then $\la$ is a {\em strict\/} partition. For any strict partition
$\mu \subset \la$ with $\ell(\mu) \geq \ell-1$, we define the shifted
skew shape
\[ \Sh(\la/\mu) = (\la+\epsilon_\ell) / (\mu+\epsilon_\ell) \,, \]
where $\epsilon_\ell = (0,1,\dots,\ell-1)$. 

Following \cite[Chp.\ 9]{HH} and \cite[III.8, (8.8) and Ex.\ 8(c)]{M},
for any integer sequence $\gamma$ of length $\ell$, the (generalized)
Schur $Q$-function $Q_\gamma$ is defined by
\[
Q_\gamma = \prod_{1\leq i<j \leq \ell} \frac{1-R_{ij}}{1+R_{ij}}\,
q_\gamma = R^\la q_\gamma.
\]
Given any raising operator $R$, we have
\[
R\,\ti_{\la} (x\,;y)=\ti_{R\la} (x\,;y)=
\sum_{\alpha} e_{\alpha}(y)\, q_{R\la-\alpha}(x)
=\sum_{\alpha} e_{\alpha}(y) \,R \, q_{\la-\alpha}(x).
\]
It follows that
\begin{equation}
\label{key}
\Ti_{\lambda} = \sum_{\alpha} e_{\alpha}(y) \,
R^{\la} q_{\la-\alpha}(x) = \sum_{\alpha} Q_{\la-\alpha}(x) e_{\al}(y).
\end{equation}
where the sums run over all compositions $\alpha$ with
$0\leq \alpha_i\leq k$ for each $i$.

Since $\la$ is strict and $\la_{\ell-1}+\la_{\ell} > 2k+1$, we see
that $\la_i >\al_i$ for all compositions $\al$ indexing the sum
(\ref{key}) and every $i$ except possibly $i =\ell$. 
We deduce from \cite[III.8, Ex.\ 8(c)]{M} or Lemma \ref{commuteC} that
$Q_\gamma$ is skew symmetric in $\gamma$ for all integer vectors
$\gamma=\la-\al$ which appear.  It follows that we may rewrite
(\ref{key}) as
\[
\Ti_{\lambda}= \sum_{\mu}
\sum_{w\in S_{\ell}} (-1)^w Q_{\mu}(x)e_{\la-w(\mu)}(y)
\]
summed over strict partitions $\mu$ with $\ell(\mu) \in
\{\ell-1,\ell\}$.  Part (b) follows from this because
\[
\sum_{w \in S_\ell} (-1)^w e_{\la-w(\mu)}(y) =
\det(e_{\la_i-\mu_j}(y))_{1 \leq i,j \leq \ell} = s_{\Sh(\la/\mu)'}(y) \,.
\]

\section{Schubert Polynomials for Isotropic Grassmannians}\label{BH}

\subsection{}\label{bh1}

The polynomials $\Ti_\la(x\,;y)$ fall within the Billey-Haiman theory
of type C Schubert polynomials $\CS_w(x,z)$.  We will prove and
discuss this in detail in this section.  Let $W_n$ be the
hyperoctahedral group of signed permutations on the set
$\{1,\ldots,n\}$, and define $W_\infty = \bigcup_nW_n$.  The group
$W_{\infty}$ is generated by the simple transpositions $s_i=(i,i+1)$
for $i\geq 1$, and the sign change $s_0$ defined by $s_0(1)=\ov{1}$
and $s_0(p)=p$ for $p>1$.  Let $w \in W_\infty$.  A {\em reduced
  factorization\/} of $w$ is a product $w = u v$ in $W_n$ such that
$\ell(w) = \ell(u) + \ell(v)$.  We say that $w$ has a {\em descent\/}
at position $i$ if $\ell(w s_i) < \ell(w)$; this is equivalent to the
inequality $w(i) > w(i+1)$ if we set $w(0)=0$.  The signed permutation
$w$ is called $k$-{\em Grassmannian\/} if $k$ is the only descent
position for $w$.

The elements of $W_n$ index the Schubert classes in the cohomology
ring of the flag variety $\Sp_{2n}/B$, which contains
$\HH^*(\IG(n-k,2n),\Z)$ as the subring spanned by Schubert classes
given by $k$-Grassmannian elements.  In particular, each $k$-strict
partition $\la$ in $\cP(k,n)$ corresponds to a $k$-Grassmannian
element $w_\la \in W_n$ which we proceed to describe; more details
and relations to other indexing conventions can be found in \cite[\S
4]{T1}.

Notice that a $k$-strict partition $\la$ belongs to $\cP(k,n)$ if and
only if its Young diagram fits inside the shape $\Pi$ obtained by
attaching an $(n-k)\times k$ rectangle to the left side of a staircase
partition with $n$ rows.  When $n=7$ and $k=3$, this shape looks as
follows.
\[ \Pi \ \ = \ \ \ \raisebox{-36pt}{\pic{.6}{pi}} \] The boxes of the
staircase partition that are outside $\la$ are organized into
south-west to north-east diagonals.  Such a diagonal is called {\em
  related\/} if it is $k$-related to one of the bottom boxes in the
first $k$ columns of $\la$, or to any box $[1,i+1]$ for which $\la_1 <
i \leq k$; the remaining diagonals are {\em non-related}.  The
$k$-Grassmannian element for $\la$ is defined by 
\[
w_\la =
(r_1,\dots,r_k, \ov{(\la^1)_1}, \dots, \ov{(\la^1)_p},
u_1,\dots,u_{n-k-p}),
\] 
where $r_1 < \dots < r_k$ are the lengths of the related diagonals,
$p=\ell(\la^1) = \ell_k(\la)$, and $u_1 < \dots < u_{n-k-p}$ are the
lengths of the non-related diagonals.  For example, the partition $\la
= (7,4,2) \in \cP(3,7)$ corresponds to the element $w_\la =
(2,5,6,\ov{4},\ov{1},3,7)$.
\[
\lambda \ = \ \ \raisebox{-53pt}{\pic{.6}{lamdiag}}
\]
The signed permutation $w_\la \in W_\infty$ depends on $\la$ and $k$,
but is independent of $n$.  Furthermore, if $\la_1 \leq k$, then
$w_\la \in S_\infty$ is the type A Grassmannian permutation for the
conjugate partition $\la'$ with descent at position $k$.

\subsection{}\label{bh2}

A sequence $a=(a_1,\ldots,a_m)$ is called {\em unimodal} if for some
index $r$ we have
\[
a_1 > a_2 > \cdots > a_{r-1} > a_r < a_{r+1} < \cdots < a_m .
\]
A {\em subsequence\/} of $a$ is any sequence $(a_{i_1}, \dots,
a_{i_p})$ with $1 \leq i_1 < \dots < i_p \leq m$.

Let $w \in W_{\infty}$ and let $\la$ be a strict partition such that
$|\la| = \ell(w)$.  A {\em Kra\'skiewicz tableau} \cite{Kr} for $w$ of
shape $\la$ is a filling $T$ of the boxes of $\lambda$ with
nonnegative integers such that, if $T_i$ is the sequence of integers
in row $i$ from left to right, then (a) the row word $T_{\ell(\la)}
\ldots T_1$ is a reduced word for $w$; and (b) for each $i$, $T_i$ is
a unimodal subsequence of maximum length in the word $T_{\ell(\la)}
\ldots T_{i+1} T_i$.

For each $w\in W_{\infty}$ one has a {\em type C Stanley symmetric
function} $F_w(x)$, which is a positive linear combination of Schur
$Q$-functions \cite{BH,FK,L}. There exist several combinatorial
interpretations for the coefficients in this expression.  We will use
a result of Lam \cite{L} stating that
\begin{equation}
\label{Feq}
F_w(x) = \sum_{\la} e_w^{\la} \, Q_{\la}(x)
\end{equation}
where $e_w^{\la}$ equals the number of Kra\'skiewicz
tableaux for $w$ of shape $\la$.

\begin{example}\label{maxGr}
  Assume that $k=0$ and let $w_{\la}$ be the $0$-Grassmannian element
  defined by a strict partition $\la$.  In this case there exists a
  unique Kra\'skiewicz tableau $T_\la$ for $w_\la$.  This tableau has
  shape $\la$ and its $i$th row contains the integers between $0$ and
  $\la_i-1$ in decreasing order.  For example, we have
  \[
  T_{(6,5,2)} =
  \young(543210,43210,10)
  \ .
  \]
  To see this, one checks that every reduced word for $w_\la$ can be
  obtained from the row word of $T_\la$ by using the commuting
  relations $s_i s_j = s_j s_i$ for $|i-j|>2$; condition (b) above
  then implies that any Kra\'skiewicz tableau for $w_\la$ has the same
  top row as $T_\la$, and the remaining rows are determined by
  induction on $\ell(\la)$.  We deduce that $F_{w_{\la}}(x) =
  Q_{\la}(x)$.
\end{example}

\subsection{}\label{bh3}

Following Billey and Haiman, each $w\in W_{\infty}$ defines a type C
Schubert polynomial $\CS_w(x,z)$.  Here $z=(z_1,z_2,\ldots)$ is
another infinite set of variables and each $\CS_w$ is a polynomial in
the ring $A=\Z[q_1(x),q_2(x),\ldots;z_1,z_2,\ldots]$.  The polynomials
$\CS_w$ for $w\in W_{\infty}$ form a $\Z$-basis of $A$, and their
algebra agrees with the Schubert calculus on symplectic flag varieties
$\Sp_{2n}/B$, when $n$ is sufficiently large.  According to
\cite[Thm.\ 3]{BH}, for any $w\in W_n$ we have
\begin{equation}
\label{BHgeneq}
\CS_w(x,z) = \sum_{uv=w} F_u(x) \AS_v(z) \,,
\end{equation}
summed over all reduced factorizations $w = uv$ in $W_n$ for which
$v\in S_n$.  Here $\AS_v(z)$ denotes the type A Schubert polynomial of
Lascoux and Sch\"utzenberger \cite{LS}.

We next show that each theta polynomial $\ti_r$ agrees with the
Billey-Haiman Schubert polynomial indexed by the $k$-Grassmannian
element $w_{(r)}\in W_\infty$ corresponding to $\la = (r)$.  It is
easy to see that $w_{(r)}$ has a unique reduced expression, given by
$w_{(r)} = s_{k-r+1} s_{k-r+2} \cdots s_k$ when $1 \leq r \leq k$, and
by $w_{(r)} = s_{r-k-1} s_{r-k-2} \cdots s_1 s_0 s_1 \cdots s_k$ when
$r \geq k+1$.  It follows that if $w_{(r)} = u v$ is any reduced
factorization of $w_{(r)}$ with $v \in S_\infty$, then $v = w_{(i)}$
for some integer $i$ with $0 \leq i \leq k$.  The type A Schubert
polynomial for $w_{(i)}$ is given by $\AS_{w_{(i)}}(z) =
e_i(z_1,\dots,z_k)$, and (\ref{Feq}) implies that the type C Stanley
symmetric function for $u = w_{(r)} w_{(i)}^{-1}$ is $F_u(x) =
q_{r-i}(x)$.  We conclude from (\ref{BHgeneq}) that
\begin{equation}\label{theqC}
  \CS_{w_{(r)}}(x,z) = \sum_{i=0}^k q_{r-i}(x_1,x_2,\ldots)
  e_i(z_1,\ldots,z_k) = \ti_r(x\,;z) \,,
\end{equation}
as required. Since the Schubert polynomials $\CS_w$ multiply like the
Schubert classes on symplectic flag varieties, Theorem \ref{mainthm}
and (\ref{theqC}) imply the following result.

\begin{prop}\label{bhprop}
  The ring $\Gamma^{(k)}$ of theta polynomials is a subring of the
  ring of Billey-Haiman Schubert polynomials of type C.  For every
  $k$-strict partition $\la$ we have $\Ti_{\la}(x\,;y)=
  \CS_{w_{\la}}(x,y)$.
\end{prop}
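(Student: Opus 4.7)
The proposition has two assertions: first, that $\Gamma^{(k)}$ embeds as a subring of the Billey--Haiman ring $A = \Z[q_1(x),q_2(x),\ldots;z_1,z_2,\ldots]$; and second, that $\Ti_\la(x\,;y) = \CS_{w_\la}(x,y)$ after the identification $z_j = y_j$ for $1 \leq j \leq k$. The first assertion is immediate from equation (\ref{theqC}), since $\Gamma^{(k)} = \Z[\ti_1,\ti_2,\ldots]$ and each generator $\ti_r$ is already the Billey--Haiman polynomial $\CS_{w_{(r)}}$ under this identification.

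For the second assertion, the plan is to show that both polynomials represent the same Schubert class in the cohomology of every sufficiently large flag variety $\Sp_{2n}/B$, and then upgrade this to equality in $A$ using the Billey--Haiman basis. Let $\pi_n \colon A \to \HH^*(\Sp_{2n}/B,\Z)$ be the ring homomorphism of \cite{BH} sending $\CS_w$ to $[X_w]$ for $w \in W_n$ and to $0$ otherwise, and let $p_n \colon \HH^*(\IG(n-k,2n),\Z) \to \HH^*(\Sp_{2n}/B,\Z)$ be the pullback under the natural projection, which carries $\s_\mu$ to $[X_{w_\mu}]$ for every $\mu \in \cP(k,n)$. For $n$ sufficiently large, equation (\ref{theqC}) combined with the description of $w_{(r)}$ in \S\ref{bh1} yields $\pi_n(\ti_r) = p_n(\s_r)$ for each $r \geq 1$.

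Since $\Ti_\la = R^\la \ti_\la$ expands as a finite signed integer combination $\sum_R c_R\, \ti_{R\la} = \sum_R c_R \prod_i \ti_{(R\la)_i}$, and $\pi_n$ is a ring homomorphism, multiplicativity gives
\[
\pi_n(\Ti_\la) \;=\; \sum_R c_R \prod_i p_n(\s_{(R\la)_i}) \;=\; p_n(R^\la m_\la)
\]
for $n$ large, where $m_\la = \prod_i \s_{\la_i} \in \HH^*(\IG(n-k,2n))$. Theorem \ref{mainthm} identifies $R^\la m_\la$ with $\s_\la$, so $\pi_n(\Ti_\la) = p_n(\s_\la) = [X_{w_\la}] = \pi_n(\CS_{w_\la})$.

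To promote this identity from cohomology to $A$ itself, I expand the difference $\Ti_\la - \CS_{w_\la} = \sum_w a_w \CS_w$ in the Billey--Haiman basis of $A$. The vanishing of $\pi_n(\Ti_\la - \CS_{w_\la})$ for every sufficiently large $n$, combined with the linear independence of $\{[X_w] : w \in W_n\}$ in $\HH^*(\Sp_{2n}/B,\Z)$, forces $a_w = 0$ for every $w \in W_\infty$, hence $\Ti_\la = \CS_{w_\la}$. The main point that needs care is the compatibility of indexing conventions, namely that the element $w_\mu$ defined in \S\ref{bh1} is precisely the one whose Schubert class in $\HH^*(\Sp_{2n}/B)$ coincides with the pullback $p_n(\s_\mu)$; this is a standard property of partial flag variety projections, since the $k$-Grassmannian elements are the minimal coset representatives for the parabolic subgroup stabilizing the base point of $\IG(n-k,2n)$.
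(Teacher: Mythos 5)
Your proof is correct and follows essentially the same route as the paper: establish $\ti_r = \CS_{w_{(r)}}$ via the reduced factorization analysis, and then invoke the fact that the $\CS_w$ multiply like symplectic Schubert classes together with Theorem \ref{mainthm} to conclude. The paper compresses this into a single sentence ("Since the Schubert polynomials $\CS_w$ multiply like the Schubert classes on symplectic flag varieties, Theorem \ref{mainthm} and (\ref{theqC}) imply the following result"), whereas you have usefully unpacked the mechanism — the evaluation homomorphisms $\pi_n$, $p_n$, the compatibility $\pi_n(\ti_r) = p_n(\s_r)$, and the linear-independence argument that upgrades equality in cohomology to equality in $A$ — but the underlying idea is identical.
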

\noindent
Notice that Proposition \ref{bhprop} may be used to get a different
proof of Theorem \ref{productthm}.

Proposition~\ref{bhprop} and (\ref{BHgeneq}) imply that for every
$k$-strict partition $\la$ we have
\begin{equation}\label{BHeq}
  \Ti_\la(x\,;y) = \sum_{uv=w_{\la}} F_u(x) \AS_v(y) \,,
\end{equation}
where the sum over all reduced factorizations $w_\la = uv$ in
$W_\infty$ with $v\in S_\infty$.  The right factor $v$ in any such
factorization must be a Grassmannian permutation with descent at
position $k$.  In fact, it is not hard to check that the right reduced
factors of $w_\la$ that belong to $S_\infty$ are exactly the
permutations $w_\nu$ given by partitions $\nu \subset \la^2$.  Since
the Schubert polynomial $\AS_{w_\nu}(y)$ is equal to the Schur
polynomial $s_{\nu'}(y)$, we deduce from (\ref{Feq}) that
\begin{equation}\label{E:thm4}
  \Theta_\la(x;y) = \sum_{\mu,\nu} e^\la_{\mu \nu} Q_\mu(x) s_{\nu'}(y) \,,
\end{equation}
where the sum is over partitions $\mu$ and $\nu$ such that $\mu$ is
strict and $\nu \subset \la^2$, and $e^\la_{\mu\nu}$ is the number of
Kra\'skiewicz tableaux for $w_\la w_\nu^{-1}$ of shape $\mu$.  This
completes the proof of Theorem~\ref{bhthm}.

\begin{cor}\label{endcor} 
  Let $\la$ be a $k$-strict partition.\smallskip

  \noin{\em(a)} The homogeneous summand of $\Ti_{\la}(x\,;y)$ of highest
  $x$-degree is the type C Stanley symmetric function $F_{w_\la}(x)$,
  and satisfies $F_{w_{\la}}(x) = R^{\la}q_{\la}(x)$.\smallskip

  \noin{\em(b)} The homogeneous summand of $\Ti_{\la}(x\,;y)$ of
  lowest $x$-degree is $Q_{\la^1}(x)\,s_{(\la^2)'}(y)$.
\end{cor}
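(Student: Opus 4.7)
For part (a), the strategy is to specialize $y = 0$ in the identity $\Ti_\la = R^\la \vartheta_\la$. Since $\vartheta_r(x;0) = q_r(x)$, this immediately yields $\Ti_\la(x;0) = R^\la q_\la(x)$. On the other hand, from the Billey--Haiman expansion \eqref{BHeq} implied by Proposition~\ref{bhprop}, the type A Schubert polynomial $\AS_v(y)$ has vanishing constant term unless $v$ is the identity, so specializing $y = 0$ leaves only the term $u = w_\la$, $v = e$, giving $\Ti_\la(x;0) = F_{w_\la}(x)$. As $\Ti_\la(x;y)$ is bihomogeneous of total degree $|\la|$, its summand of highest $x$-degree is precisely its $y$-degree-zero part, namely $\Ti_\la(x;0)$; both assertions of (a) follow at once.

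For part (b), the plan is to extract the lowest $x$-degree summand from the combinatorial expansion \eqref{E:thm4} established from Theorem~\ref{bhthm}:
\[
  \Ti_\la = \sum_{\mu,\nu} e^\la_{\mu\nu}\, Q_\mu(x)\, s_{\nu'}(y),
\]
where $\mu$ ranges over strict partitions and $\nu \subset \la^2$. Each summand is bihomogeneous of $x$-degree $|\mu|$ and $y$-degree $|\nu|$ with $|\mu| + |\nu| = |\la|$, so the lowest $x$-degree summand corresponds to the largest possible $|\nu|$. Since $\nu$ is a partition contained in $\la^2$, this maximum is uniquely attained at $\nu = \la^2$, forcing $|\mu| = |\la^1|$. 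Grouping these terms and applying the Kra\'skiewicz expansion \eqref{Feq} of the type C Stanley symmetric function, the lowest $x$-degree summand equals $F_{w_\la w_{\la^2}^{-1}}(x)\, s_{(\la^2)'}(y)$.

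To complete (b), I need to identify $F_{w_\la w_{\la^2}^{-1}}(x) = Q_{\la^1}(x)$. The paragraph preceding \eqref{E:thm4} records that $w_{\la^2}$ is a right reduced factor of $w_\la$ in $S_\infty$, so $w_\la = (w_\la w_{\la^2}^{-1}) \cdot w_{\la^2}$ is a reduced factorization with $\ell(w_\la w_{\la^2}^{-1}) = |\la^1|$. A direct comparison of one-line notations, using the explicit description of $w_\la$ from \S\ref{bh1} and the standard form of the Grassmannian permutation $w_{\la^2} \in S_\infty$ with descent at position $k$, will show that the left factor $w_\la w_{\la^2}^{-1}$ coincides with the $0$-Grassmannian element in $W_\infty$ associated to the strict partition $\la^1$. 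Example~\ref{maxGr} then yields $F_{w_\la w_{\la^2}^{-1}}(x) = Q_{\la^1}(x)$, concluding the proof.

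The main obstacle is this last identification $w_\la w_{\la^2}^{-1} = w_{\la^1}$, where $w_{\la^1}$ denotes the $0$-Grassmannian element for $\la^1$. The verification requires careful bookkeeping of how the positive entries $r_1 < \cdots < r_k$ of $w_\la$ (the lengths of the related diagonals) interact with the entries of $w_{\la^2}^{-1}$, so that after multiplication the barred entries $\overline{(\la^1)_1}, \dots, \overline{(\la^1)_p}$ end up in the first $p$ positions of the one-line notation. Although this is a routine combinatorial calculation, it is the nontrivial content of the argument for (b).
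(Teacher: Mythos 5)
Your proof matches the paper's argument: part (a) sets $y=0$ in both the raising-operator expression and (\ref{BHeq}), and part (b) extracts the $\nu=\la^2$ term from (\ref{E:thm4}) and invokes Example~\ref{maxGr} together with the identification $w_\la w_{\la^2}^{-1}=w_{\la^1}$. The paper also states this last identification without carrying out the one-line-notation bookkeeping, so deferring it as you do is consistent with the original.
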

\begin{proof}
  Part (a) is deduced by setting $y=0$ in (\ref{BHeq}) and also in the
  raising operator expression $\Ti_{\la}(x\,;y) =
  R^{\la}\ti_{\la}(x\,;y)$.  Part (b) follows from (\ref{E:thm4}),
  Example~\ref{maxGr}, and the observation that $w_\la w_{\la^2}^{-1}$
  is the 0-Grassmannian Weyl group element corresponding to the strict
  partition $\la^1$.
\end{proof}

\begin{example}
  Let $k=1$ and $\la=(3,2,1)$, with corresponding Weyl group element
  $w_{\la} = (4, \ov{2}, \ov{1}, 3) \in W_4$.  Then we have
  \begin{equation*}
    \Ti_{321} = (Q_{42}+Q_{321}) +(Q_{41}+2\,Q_{32})\,s_{1'} +
    2\,Q_{31}\,s_{11'} + Q_{21}\,s_{111'}
  \end{equation*}
  (with the variables $x$ and $y$ omitted).  The terms in this
  expansion are accounted for by the Kra\'skiewicz tableaux in the
  following table.
  \[
  \begin{array}{c|c|c}
    \nu & w_\la w_\nu^{-1} & \text{Kra\'skiewicz tableaux for $w_\la
      w_\nu^{-1}$}\\ \hline
    \emptyset & (4,\ov{2},\ov{1},3) &
    \,\young(3201,01)\ \ \ \ \ \young(321,10,0)
    \raisebox{8mm}{\mbox{}}\raisebox{-6mm}{\mbox{}}
    \\ \hline
    (1) & (\ov{2},4,\ov{1},3) &
    \,\young(3102,0)\ \ \ \ \ \young(320,01)\ \ \ \ \ \young(302,01)
    \raisebox{6mm}{\mbox{}}\raisebox{-4mm}{\mbox{}}
    \\ \hline
    (1,1) & (\ov{2},\ov{1},4,3) &
    \,\young(310,0)\ \ \ \ \ \young(103,0)
    \raisebox{6mm}{\mbox{}}\raisebox{-4mm}{\mbox{}}
    \\ \hline
    (1,1,1) & (\ov{2},\ov{1},3,4) &
    \,\young(10,0)
    \raisebox{6mm}{\mbox{}}
  \end{array}
  \]
\end{example}

\begin{remark}
  The polynomials $2^{-\ell_k(\la)}\Ti_\la$ given by $k$-strict
  partitions $\la$ multiply like the Schubert classes on odd
  orthogonal Grassmannians $\OG(n-k,2n+1)$ and agree with the
  Billey-Haiman Schubert polynomials of type B indexed by
  $k$-Grassmannian elements $w_\la$.  For the even orthogonal
  Grassmannians $\OG(n-k,2n)$, both the Giambelli formula and the
  corresponding family of polynomials are more involved; we plan to
  develop this theory elsewhere.
\end{remark}

\end{document}